\title[]{Nonlocal operators with Neumann conditions}
\author[]{Krzysztof Bogdan}
\address{Faculty of Pure and Applied Mathematics,
Wroc\l aw University of Science and Technology,
Wyb. Wyspia\'nskiego 27, 50-370 Wroc\l aw, Poland}
\email{krzysztof.bogdan@pwr.edu.pl}
\author[]{Damian Fafu{\l}a}
\address{Faculty of Pure and Applied Mathematics,
Wroc\l aw University of Science and Technology,
Wyb. Wyspia\'nskiego 27, 50-370 Wroc\l aw, Poland}
\email{damian.fafula@pwr.edu.pl}
\author[]{Pawe{\l} Sztonyk}
\address{Faculty of Pure and Applied Mathematics,
Wroc\l aw University of Science and Technology,
Wyb. Wyspia\'nskiego 27, 50-370 Wroc\l aw, Poland}
\email{pawel.sztonyk@pwr.edu.pl}
\date{\today}
\thanks{The study has been supported by  the Opus grant 2017/27/B/ST1/01339 of the National Science Center (Poland).} 
\subjclass[2020]{Primary 60J50; Secondary 60J40, 47D06}
\keywords{fractional Laplacian, Neumann conditions, right process, concatenation}
\newtheorem{lemma}{Lemma}
\numberwithin{lemma}{section}
\newtheorem{theorem}[lemma]{Theorem}
\newtheorem{corollary}[lemma]{Corollary}
\newtheorem{proposition}[lemma]{Proposition}
\theoremstyle{definition}
\newtheorem{remark}[lemma]{Remark}
\DeclareSymbolFont{bbsymbol}{U}{bbold}{m}{n}
\DeclareMathSymbol{\ind}{\mathbin}{bbsymbol}{'061}
\newcommand*\mE{\mathbb{E}}
\newcommand*\mP{\mathbb{P}}
\newcommand{\Rd}{\mathbb{R}^d}
\renewcommand{\P}{\widehat{P}}
\newcommand{\dx}{\mathrm{d}x}
\newcommand{\dy}{\mathrm{d}y}
\newcommand{\dz}{{\rm d}z}
\newcommand{\ds}{{\rm d}s}
\newcommand{\dv}{{\rm d}v}
\newcommand{\E}{\mathcal{E}}
\newcommand{\F}{\mathcal{F}}
\newcommand{\D}{\mathcal{D}}
\newcommand{\R}{\mathbb{R}}
\DeclareMathOperator{\supp}{supp}
\newcommand{\norm}[1]{\left\lVert#1\right\rVert}
\DeclareMathOperator{\dist}{dist}
\newcommand{\dt}{{\rm d}t}
\newcommand{\dw}{{\rm d}w}
\newcommand{\da}{{\rm d}a}
\newcommand{\db}{{\rm d}b}
\newcommand{\dc}{{\rm d}c}
\newcommand{\de}{{\rm d}e}
\newcommand{\du}{{\rm d}u}
\newcommand{\dr}{{\rm d}r}
\numberwithin{equation}{section}
\def\l@section{\@tocline{2}{0pt}{1pc}{6pc}{}}
 \def\l@subsection{\@tocline{2}{0pt}{36pt}{6pc}{}}
\def\l@subsubsection{\@tocline{3}{0pt}{8pc}{8pc}{}}
\begin{document}

\begin{abstract}
    We construct a strong Markov process corresponding to the Dirichlet form of Servadei and Valdinoci and use the process to solve the corresponding Neumann boundary problem for the fractional Laplacian and the half-line.
\end{abstract}

\maketitle

\counterwithout{equation}{section}
\numberwithin{equation}{section}
\numberwithin{figure}{section}

\tableofcontents

\section{Introduction and Preliminaries}

\subsection{Motivation}

Many nonlocal operators considered in PDEs arise as infinitesimal generators of transition semigroups of jump Markov processes, even L\'evy processes. 
In particular, the fractional Laplacian $(-\Delta)^{\alpha/2}$ is 
the infinitesimal generator of the isotropic
$\alpha$-stable L\'evy process; see Sato \cite[Section 31]{MR1739520}, Kwa\'{s}nicki \cite{MR3613319}, Nezza et al. \cite{DINEZZA2012521}, or Silvestre \cite{Silvestre2007} for more information.

Although they form a narrow class of Markov processes, L\'evy processes are rather representative and play an important role in modeling real-world phenomena: For their applications to financial markets, see Barndorff--Nielsen et al. \cite{barndorff2001levy}, Cont and Tankov \cite{MR2042661}, and Schoutens \cite{schoutens2003levy}. 
For applications to genetics, see, e.g., Blomberg et al. \cite{gen1}, Gjessing et al. \cite{gen3} and Landis et al. \cite{gen2}. For applications to fluid mechanics, solid state physics, and polymer chemistry, see, e.g.,  Barndorff--Nielsen et al. \cite{barndorff2001levy}.

In specific problems, it is important to consider \textit{boundary conditions}. For instance, the Dirichlet problem for $(-\Delta)^{\alpha/2}$ and open set $D\subset\Rd$ is 
\begin{align}\label{eq:Dirichlet_problem}
    \begin{cases}
        (-\Delta)^{\alpha/2} u = f, &\textrm{ in } D, \\
        \phantom{(-\Delta)^{\alpha/2}}u = f, &\textrm{ in } 
    \Rd\setminus \overline{D}.
    \end{cases}
\end{align}
The problem has been studied in various settings  for many years, see, e.g.,  the surveys by Ros--Oton \cite{MR3447732} and Bucur, Valdinoci \cite{bucur2016nonlocal} and the papers by Felsinger et al. \cite{MR3318251}, Rutkowski \cite{AR2018}, and Servadei, Valdinoci \cite{MR2879266, MR3002745}. 
Here we focus on the \textit{variational setting}, 
using quadratic forms and Sobolev-type function spaces.
To this end, we recall the Servadei--Valdinoci form 
\begin{align}\label{e.SVf}
    \E_D(u,v) := \frac{1}{2} \iint_{\Rd\times\Rd \setminus D^c\times D^c} (u(x)-u(y))(v(x)-v(y))\nu(x,y)\,\dx\,\dy,
\end{align}
where $u,v\in L^2(\Rd, \dx)$, and $\nu$ is the integral kernel of $(-\Delta)^{\alpha/2}$; see, e.g.,  \cite{MR3651008}. The form \eqref{e.SVf} first appeared in Servadei and Valdinoci \cite{MR3002745}, see also \cite{MR4088505, MR3318251, MR3900821, MR2879266, MR3002745, Voight2017}. Its main virtue for   the Dirichlet problem  is that (finiteness of) the form imposes minimal smoothness assumptions on the values of $f$ on $D^c$ \cite{MR4088505}. As we shall see below, the form is also  natural in the variational setting of the \textit{Neumann problem}  
\begin{align}\label{eq:Neumann_problem}
        \begin{cases}
            \hfill (-\Delta)^{\alpha/2} u &= ~f, \quad \mathrm{in } ~D, \\
            \hfill \mathcal{N}_{\alpha/2} u&= ~f, \quad \,\mathrm{ in } ~\Rd\setminus {D},
        \end{cases}
\end{align}
introduced by Dipierro et al. \cite{MR3651008}. The Neumann problem is the main subject of this paper. Note that  \eqref{eq:Neumann_problem} is a set of two nonlocal equations because 
the \emph{nonlocal normal derivative}
$\mathcal{N}_{\alpha/2}$ is defined as 
\begin{align}
\label{eq:Nonlocal_derivative}
    \mathcal{N}_{\alpha/2} u(x) := \int_D (u(x)-u(y))\nu(x,y)\,\dy, \qquad x\in \Rd\setminus \overline{D}.
\end{align}
Note that \eqref{eq:Neumann_problem} appeared in Dipierro et al. \cite{MR3651008}; 
see Grube and Hensiek \cite{GH2024} for a recent discussion.
The authors of \cite{MR3651008} introduce the following tentative description of a Markov process corresponding to the Neumann problem \eqref{eq:Neumann_problem}. Consider a particle moving randomly inside \( D \) according to the $\alpha$-stable L\'evy motion. When the particle attempts to exit \( D \) to $x\in D^c$, it immediately returns to \( D \) according to $\mathcal{N}_\alpha$. To wit, the distribution of the return position is proportional to \( \nu(x,y)\dy \) for $y\in D$, so in fact, it is $\nu(x,\dy)/\nu(x,D)$. However suggestive, this description has a problem: The operator
$\mathcal{N}_\alpha$ has a \textit{finite} integral kernel so in reality the particle should spend \textit{positive} time at $x$, before returning to $D$. To resolve this difficulty,  Vondra{\v c}ek \cite{MR4245573} considers the form \eqref{e.SVf} on a suitable \textit{weighted} $L^2$ space on $\Rd$. Upon exit, the particle in the model of Vonra\v{c}ek stays at $x\in D^c$ for an exponential time with mean one. This may be considered as a \textit{change of clock} in the model of Servadei--Valdinoci. Put differently, \cite{MR4245573} replaces $\mathcal{N}_\alpha f(x)$ by $\mathcal{N}_\alpha f(x)/\nu(x,D)$. 

\begin{figure}
~\vfill
\centering 
\begin{minipage}{0.49\textwidth}
\begin{tikzpicture}[scale=1]
\pgftext{\vstretch{1}{\includegraphics[scale=0.57]{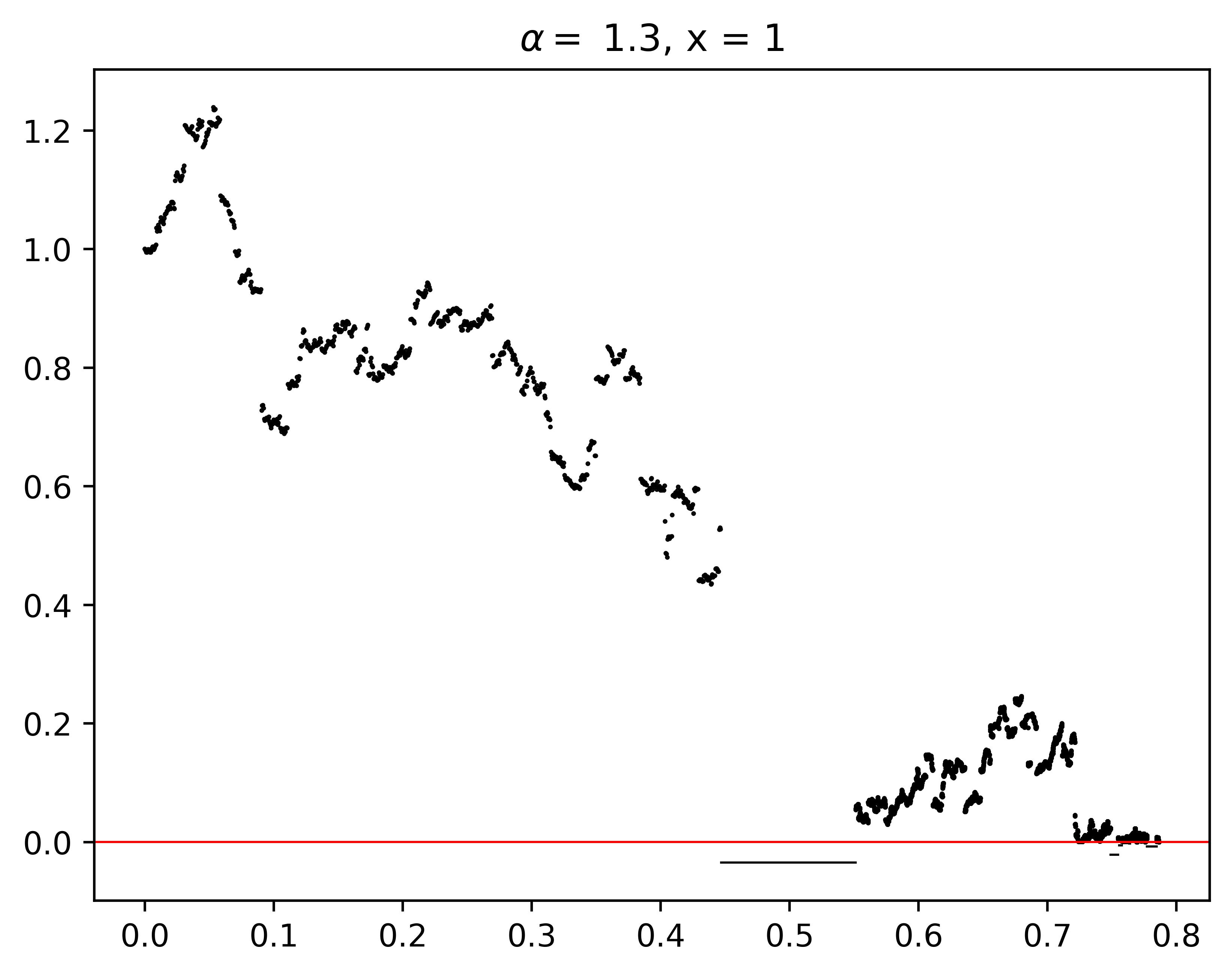}}} at (0pt,0pt);
\draw[draw=blue, thick] (2.85,-2.5) rectangle ++(0.9,0.6);
\end{tikzpicture}
\end{minipage}
\begin{minipage}{0.49\textwidth}
\begin{tikzpicture}[scale=1]
\pgftext{\vstretch{1.2}{\includegraphics[scale=0.57]{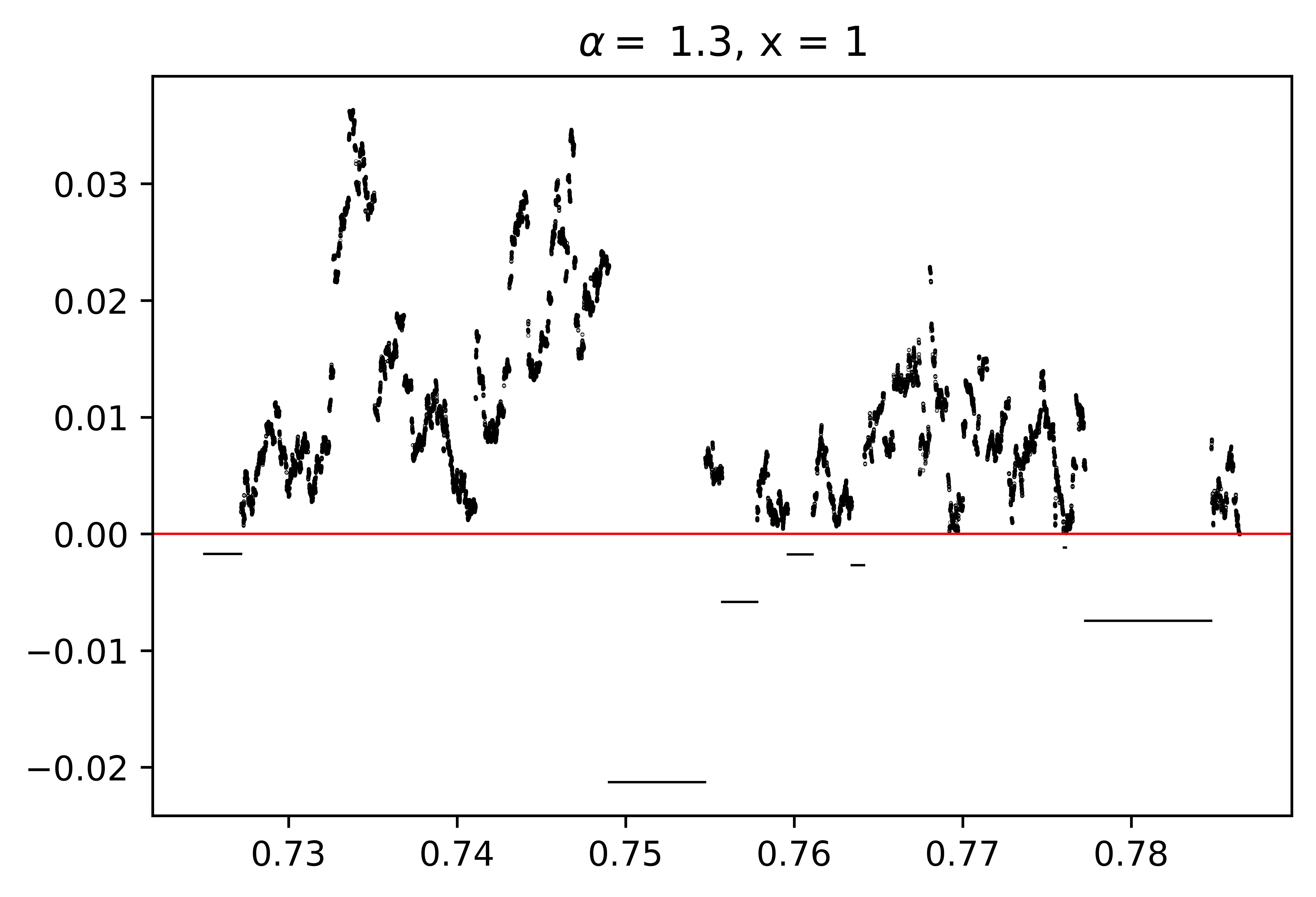}}} at (0pt,0pt);
\end{tikzpicture}
\end{minipage}
\caption{Trajectory of $X$ with $X_0=1$, $\alpha = 1.3$ (left) and its zoom-in at the lifetime (right).}\label{Fig1}

~\vfill
\centering
\begin{minipage}{0.49\textwidth}
\begin{tikzpicture}[scale=1]
\pgftext{\hstretch{1}{\includegraphics[scale=0.6]{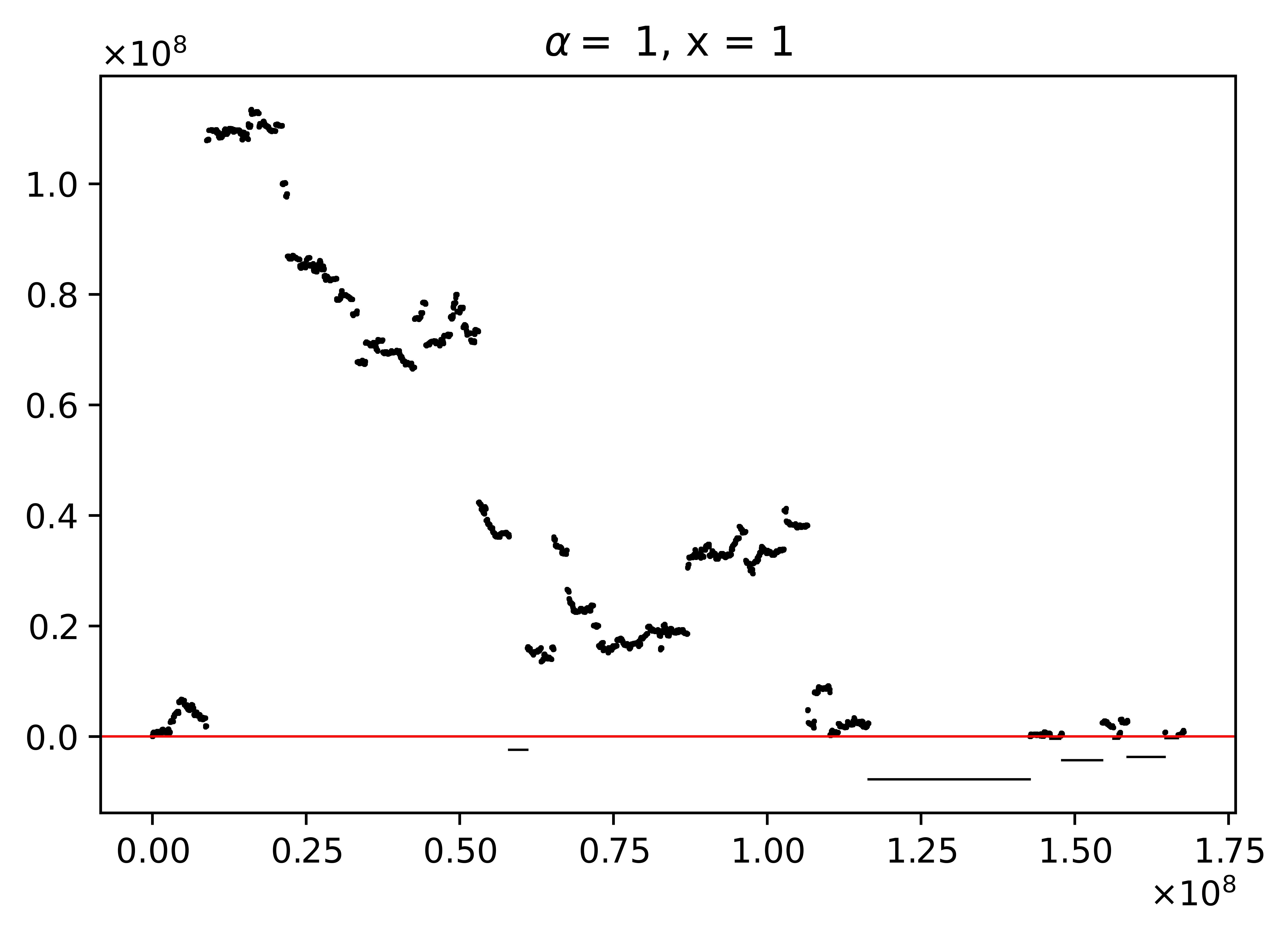}}} at (0pt,0pt);
\end{tikzpicture}
\end{minipage}
\begin{minipage}{0.49\textwidth}
\begin{tikzpicture}[scale=1]
\pgftext{\vstretch{1}{\includegraphics[scale=0.6]{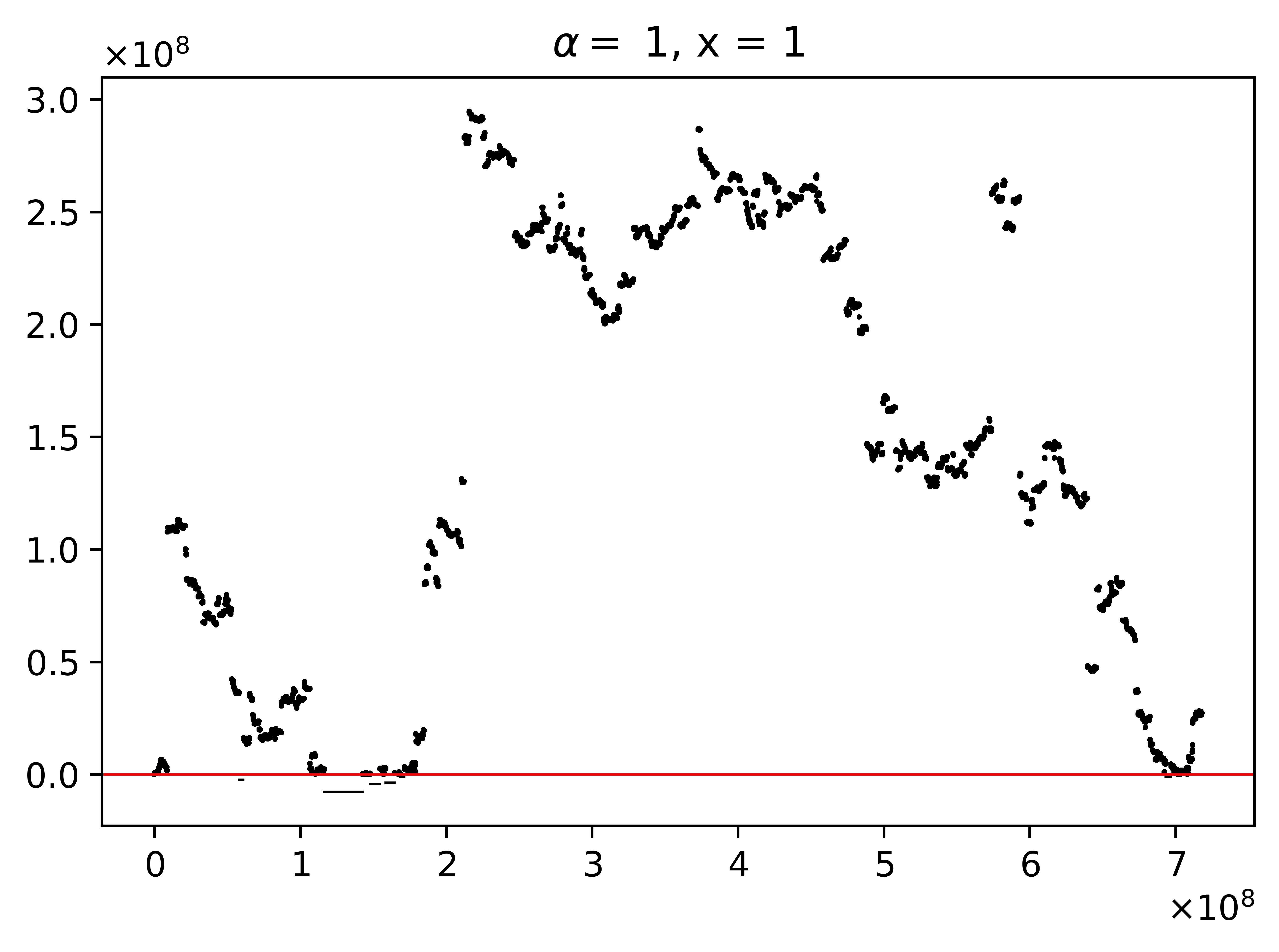}}} at (0pt,0pt);
\draw[draw=blue, thick] (-3.2,-2.05) rectangle ++(1.7,1.9);
\end{tikzpicture}
\end{minipage}
\caption{Trajectory of $X$ with $X_0=1$ for $\alpha = 1$ (left) and its zoom-out (right).}\label{Fig2}

~\vfill
\centering
\begin{minipage}{0.49\textwidth}
\begin{tikzpicture}[scale=1]
\pgftext{\hstretch{1}{\includegraphics[scale=0.6]{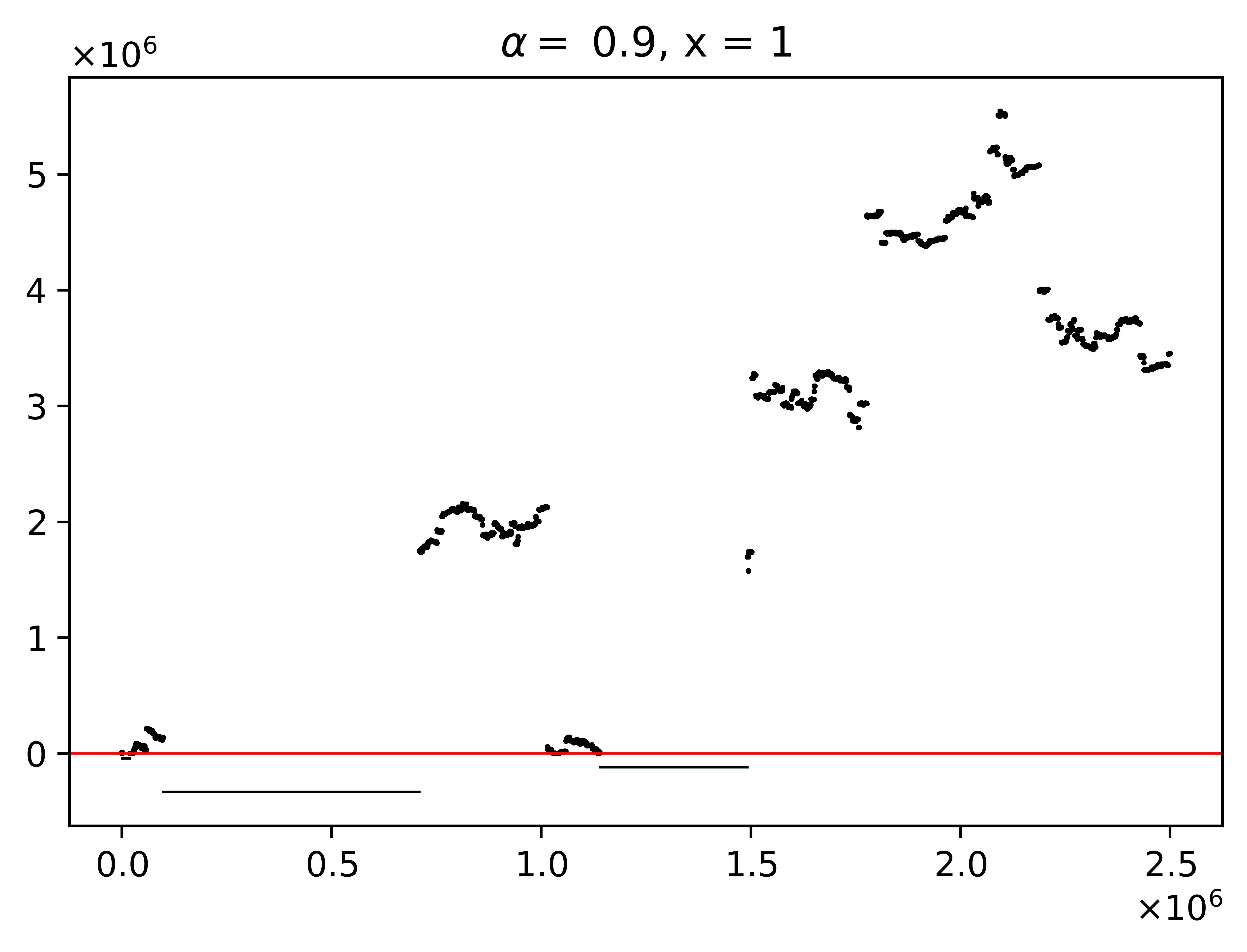}}} at (0pt,0pt);
\end{tikzpicture}
\end{minipage}
\begin{minipage}{0.49\textwidth}
\begin{tikzpicture}[scale=1]
\pgftext{\vstretch{1}{\includegraphics[scale=0.6]{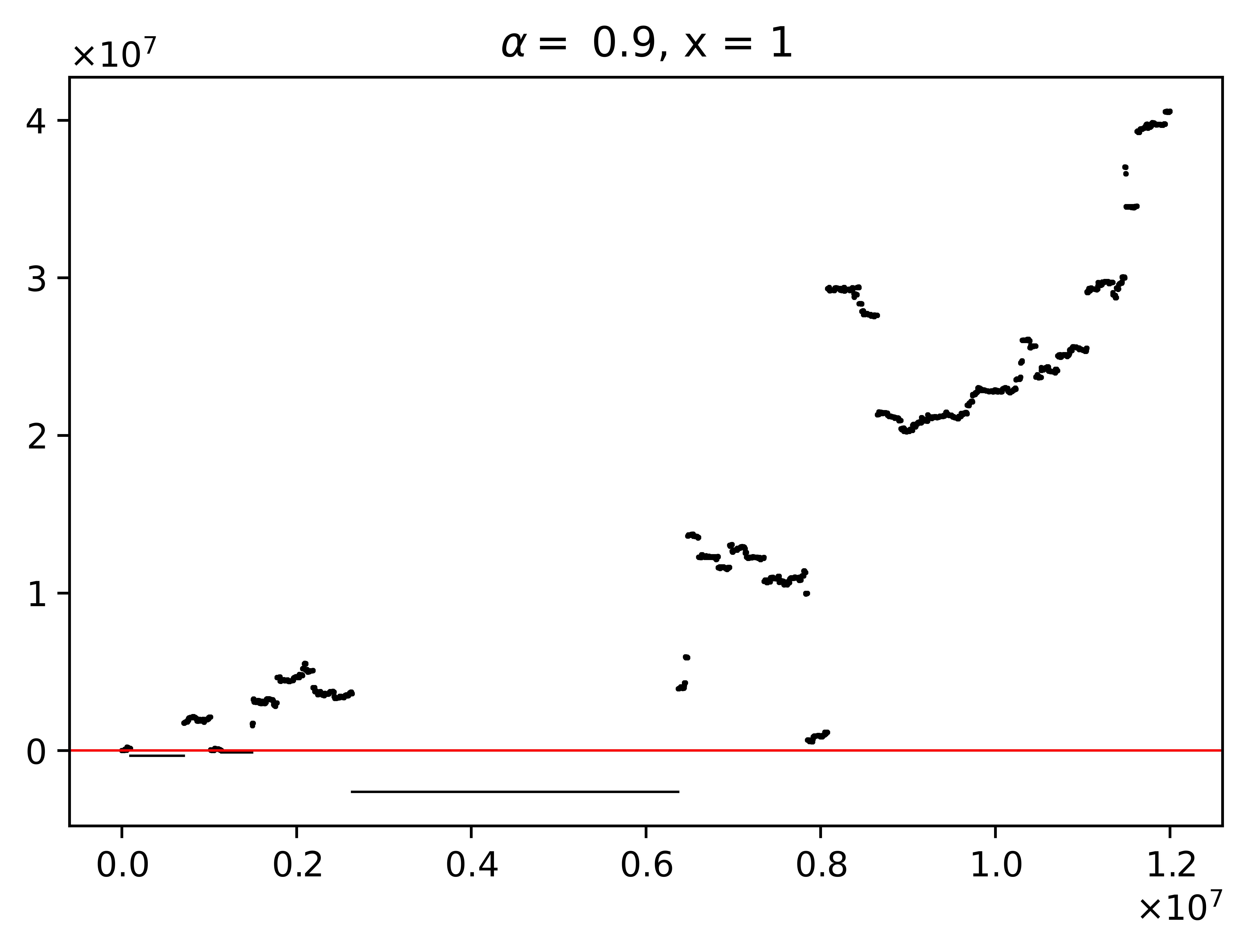}}} at (0pt,0pt);
\draw[draw=blue, thick] (-3.2,-1.825) rectangle ++(1.55,0.8);
\end{tikzpicture}
\end{minipage}
\caption{Trajectory of $X$ with $X_0=1$ for $\alpha = 0.9$ (left) and its zoom out (right).}\label{Fig3}
~\vfill
\end{figure}

Our variant of the Servadei--Valdinoci process $X$ on $\R\setminus\{0\}$ preserves the original nonlocal normal derivative $\mathcal{N}_\alpha$ and the time the process $X$
spends at $x\in \overline{D}^c$ is exponential with intensity $\nu(x,D)$. Thus, starting in $D$, $X$ moves as the isotropic $\alpha$-stable L\'evy process until the first exit time $\tau_D$ from $D$. At this moment, the process jumps from $X_{\tau_D-}\in D$ to a point $y\in \overline{D}^c$ according to the density function $\nu(X_{\tau_D-}, y)/\nu(X_{\tau_D-},D^c)$. Then it spends at $y$ an exponential time with mean $1/\nu(y,D)$, after which it jumps back to $z\in D$ according to density function $\nu(y,z)/\nu(y,D)$, and then starts afresh.
Typical trajectories of the process $X$ are presented in Figures \ref{Fig1}, \ref{Fig2} and \ref{Fig3}.
The main question that arises for this model is whether $X$ can undergo infinitely many reflections in finite time. As we shall see, it depends on $\alpha$.

In what follows, we specialize to the one-dimensional setting of the real line $\R$, so $d:=1$, and let
$$D := (0,\infty),$$ 
the positive open half-line. 
In this setting, we construct the genuine Servadei--Valdinoci  process $X=(X_t)_{t\ge 0}$, investigate the lifetime of $X$, show that $\E_D$ is the Dirichlet form of the process, and give solutions to the Neumann boundary problem \eqref{eq:Neumann_problem}.
Similar results should follow for general \textit{smooth} open sets in $\Rd$---see \cite{MR2006232}---but such extensions are nontrivial and are outside of the scope of the paper.

The structure of the paper is as follows. Below in this section, we state main results and provide necessary definitions. Section \ref{Chap_Servadei_process} presents a construction of $X$ as a strong Markov process by using \textit{concatenation of Markov processes} \cite{MR4247975}. Then we study the transition semigroup \( K = (K_t)_{t \geq 0} \) of $X$ and examine its excessive functions. They yield the lifetime and limiting behavior of $X$, see  Theorem~\ref{lifetime_of_the_process}, which is proved in Section \ref{chap_Lifetime}, Proposition \ref{gen_inequality} describes how the infinitesimal generator of $K$ acts on power functions. In Theorem \ref{nier_Hardyego}, proved in Section \ref{chap_dirichlet}, we discuss the Hardy inequality for the quadratic form $\E$ of $K$. In Section \ref{chap_dirichlet}, we also verify that $\E=\E_D$, propose various characterizations of the domain of the form and, in Proposition \ref{prop:equality_of_domains} and Corollary \ref{corollary_regular}, we show that the form is regular. Theorem~\ref{Neumann_problem}, proved in Section \ref{chap_Neumann}, gives solutions to the Neumann boundary problem  \eqref {eq:Neumann_problem}.

\medskip\medskip
\noindent \textbf{Acknowledgments.} We thank Wolfhard Hansen and Markus Kunze for discussions on the topic of the paper.

\subsection{Main results}
Let us state our main results announced above. 
For $t>0$, we define \[
\mathcal{E}^{(t)}(u,v) := \frac{1}{t}\langle u-K_tu,v\rangle_{L^2(\R)}, \qquad u,v\in L^2(\R).
\]
Let $\E[u]:=\sup_{t>0}\mathcal{E}^{(t)}(u,u)$ for $u\in L^2(\R)$, 
$\F := \{u\in L^2(\R): \E[u]<\infty\}$, and  
$\mathcal{E}(u,v):= \lim_{t\to 0} \mathcal{E}^{(t)}(u,v)$, for $u,v \in  \F$. Note that $\E=\E_D$ on $\F$.

\begin{theorem}
    [Hardy inequality]\label{nier_Hardyego}
    For $u\in L^2(\R)$ and $\alpha\in (0,1)\cup(1,2)$,
    \begin{align}\label{eq:HardyInequality}
    \mathcal{E}[u] \geq (\mathcal{C}_\alpha + \mathcal{D}_\alpha) \int_D u^2(x)|x|^{-\alpha}\,\dx + \mathcal{C}_\alpha \int_{D^c} u^2(x)|x|^{-\alpha}\,\dx,
    \end{align}
    where
    \[
        \mathcal{C}_{\alpha} = \mathcal{A}_{1,\alpha} \Big[\alpha^{-1} - \frac{\big(\Gamma(\tfrac{\alpha+1}{2})\big)^2}{\Gamma(\alpha+1)}\Big]> 0 \qquad and \qquad
        \mathcal{D}_{\alpha} = \mathcal{A}_{1,\alpha} \int_0^1 \frac{(1-t^{(\alpha-1)/2})^2}{(1-t)^{\alpha+1}}\,\dt > 0.
    \]
\end{theorem}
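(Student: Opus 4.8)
The plan is to prove \eqref{eq:HardyInequality} by a \emph{ground state representation} of Frank--Lieb--Seiringer type, applied directly to the Servadei--Valdinoci form. First I reduce to $u\in\F$: since $t\mapsto\E^{(t)}(u,u)$ is nonincreasing, $\E[u]=\E(u,u)$ for $u\in\F$ and $\E[u]=+\infty$ otherwise, so there is nothing to prove unless $u\in\F$, in which case $\E[u]=\E_D[u]$. Writing the domain $\R\times\R\setminus D^c\times D^c$ of $\E_D$ as $D\times D$ together with the symmetric pair $(D\times D^c)\cup(D^c\times D)$ gives
\[
\E[u]=\tfrac12\iint_{D\times D}(u(x)-u(y))^2\nu(x,y)\,\dx\,\dy+\iint_{D\times D^c}(u(x)-u(y))^2\nu(x,y)\,\dx\,\dy .
\]
The case $\alpha=1$ is excluded because the ground state $w$ below is then constant. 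I put $w(x):=|x|^{(\alpha-1)/2}$ for $x\neq0$, write $u=vw$, and use the pointwise identity
\[
(u(a)-u(b))^2=w(a)w(b)(v(a)-v(b))^2+v(a)^2w(a)(w(a)-w(b))+v(b)^2w(b)(w(b)-w(a)) .
\]
Integrating it against $\nu(a,b)\,\da\,\db$ over each of the two regions (the last two terms merging by symmetry) and discarding the nonnegative term $w(a)w(b)(v(a)-v(b))^2$ reduces the whole problem to evaluating the kernel integrals $\int(w(x)-w(y))\nu(x,y)\,\dy$ over the appropriate slice.

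For the cross term this is harmless, because for $x>0$, $y\le0$ one has $|x-y|=x+|y|$ and the diagonal is never touched; substituting $|y|=|x|t$ (resp.\ $|x|=|y|t$) gives, absolutely convergently,
\[
\int_{D^c}(w(x)-w(y))\nu(x,y)\,\dy=\mathcal{A}_{1,\alpha}|x|^{-(\alpha+1)/2}m_\alpha,\qquad \int_{D}(w(y)-w(x))\nu(x,y)\,\dx=\mathcal{A}_{1,\alpha}|y|^{-(\alpha+1)/2}m_\alpha,
\]
with $m_\alpha:=\int_0^\infty(1-t^{(\alpha-1)/2})(1+t)^{-1-\alpha}\,\dt$. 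By $\int_0^\infty(1+t)^{-1-\alpha}\,\dt=\alpha^{-1}$ and the beta integral $\int_0^\infty t^{p-1}(1+t)^{-p-q}\,\dt=\Gamma(p)\Gamma(q)/\Gamma(p+q)$ with $p=q=\tfrac{\alpha+1}{2}$, one gets $m_\alpha=\alpha^{-1}-\Gamma(\tfrac{\alpha+1}{2})^2/\Gamma(\alpha+1)$, i.e.\ $\mathcal{A}_{1,\alpha}m_\alpha=\mathcal{C}_\alpha$. Multiplying by $v^2w$ (the powers of $|x|$ collapse to $|x|^{-\alpha}$) and integrating yields the contribution $\mathcal{C}_\alpha\int_D u^2|x|^{-\alpha}\,\dx+\mathcal{C}_\alpha\int_{D^c}u^2|x|^{-\alpha}\,\dx$. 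Here $\mathcal{C}_\alpha>0$ follows from $\Gamma(\alpha+1)=\alpha\Gamma(\alpha)$ and strict log-convexity of $\Gamma$, which gives $\Gamma(\tfrac{\alpha+1}{2})^2<\Gamma(1)\Gamma(\alpha)=\Gamma(\alpha)$ for $\alpha\neq1$; and $\mathcal{D}_\alpha>0$ since its integrand is positive on $(0,1)$ and integrable there (as $1-\alpha>-1$).

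For the regional term $\tfrac12\iint_{D\times D}$ the same substitution needs $\int_D(w(x)-w(y))\nu(x,y)\,\dy$, which for $\alpha\in(1,2)$ is a genuine principal value. After $y=xt$ it becomes $\mathcal{A}_{1,\alpha}x^{-(\alpha+1)/2}\,\mathrm{p.v.}\!\int_0^\infty(1-t^{(\alpha-1)/2})|1-t|^{-1-\alpha}\,\dt$, and I would use the elementary identity
\[
\mathrm{p.v.}\!\int_0^\infty\frac{1-t^{(\alpha-1)/2}}{|1-t|^{1+\alpha}}\,\dt=\int_0^1\frac{(1-t^{(\alpha-1)/2})^2}{(1-t)^{1+\alpha}}\,\dt
\]
(split at $t=1$, substitute $t\mapsto1/t$ on $(1,\infty)$, and recombine the three resulting terms into the displayed perfect square, whose integral converges since $1-\alpha>-1$). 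This identifies $\int_D(w(x)-w(y))\nu(x,y)\,\dy$ with $\mathcal{D}_\alpha x^{-(\alpha+1)/2}$ for $x>0$, so $\tfrac12\iint_{D\times D}(u(x)-u(y))^2\nu(x,y)\,\dx\,\dy\ge\mathcal{D}_\alpha\int_D u^2|x|^{-\alpha}\,\dx$; adding this to the cross-term bound gives \eqref{eq:HardyInequality}.

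The main obstacle is rigour in the ground state step. I would first prove \eqref{eq:HardyInequality} for $u$ Lipschitz with compact support in $\R\setminus\{0\}$, where $v=u/w$ is again Lipschitz with compact support in $\R\setminus\{0\}$, every cross-term integral is absolutely convergent, and the regional term (for $\alpha\in(1,2)$) is handled by truncating the kernel near the diagonal, $\nu_\eps:=\nu\,\ind_{|x-y|>\eps}$: the identity then integrates term by term for each fixed $\eps>0$, the quadratic forms increase to their limits as $\eps\downarrow0$ by monotone convergence, and the weighted $L^2$ integral converges because $\int_{|x-y|>\eps}(w(x)-w(y))\nu(x,y)\,\dy\to\mathcal{D}_\alpha x^{-(\alpha+1)/2}$ with a bound uniform over the compact $x$-support (obtained by symmetrizing the truncated integral and using $|w(x)-\tfrac12w(y)-\tfrac12w(2x-y)|\le C|x-y|^2$ there). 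For general $u\in L^2(\R)$: if $\E[u]=\infty$ the inequality is trivial, and otherwise $u\in\F$, so regularity of $\E$ (Corollary~\ref{corollary_regular}) supplies Lipschitz $u_n$ with compact support in $\R\setminus\{0\}$ converging to $u$ in the form norm; along an a.e.-convergent subsequence $\E[u_n]\to\E[u]$ while Fatou's lemma bounds the two weighted integrals from below, so \eqref{eq:HardyInequality} passes to the limit. The beta integral and the principal-value identity are the only remaining computations, and both are routine.
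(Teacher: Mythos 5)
Your argument takes a genuinely different route from the paper. The paper applies the Bogdan--Dyda--Kim ground-state representation at the \emph{semigroup} level: it decomposes $\mathcal{E}^{(t)}[vh_\beta]=A_t+B_t$ with $B_t\ge0$ for the transition kernel $K_t$ (using the supermedian property of $h_\beta$ from Corollary~\ref{excesive_function_beta}), lower-bounds $A_t$ via Fatou and the generator computation of Proposition~\ref{gen_inequality}, and finally optimizes over $\beta$. That scheme works verbatim for every $u\in L^2(\R)$ with no density argument. You instead run a Frank--Lieb--Seiringer ground-state representation at the \emph{form} level, applied directly to $\E_D$, with the ground state chosen a priori as $w=|x|^{(\alpha-1)/2}$ (so you skip the optimization), and you compute the kernel integrals $\int(w(x)-w(y))\nu(x,y)\,\dy$ explicitly. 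Your beta-integral evaluation, the principal-value identity (which is the $\beta=(\alpha-1)/2$ specialization of $-\gamma(\alpha,\beta)$ from \cite[Section 5]{MR2006232}), and the positivity argument via log-convexity of $\Gamma$ are all correct and deliver exactly the constants $\mathcal{C}_\alpha,\mathcal{D}_\alpha$.

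However, the extension step is circular as written. You invoke ``regularity of $\E$, Corollary~\ref{corollary_regular}'' to approximate $u\in\F$ by compactly supported smooth functions, and you also assert $\E[u]=\E_D[u]$ for $u\in\F$. But Corollary~\ref{corollary_regular} comes from Proposition~\ref{prop:equality_of_domains}, whose proof uses Corollary~\ref{cor_Hardy}, which is itself a corollary of the Hardy inequality you are trying to prove; and the equality $\E[u]=\E_D[u]$ on all of $\F$ requires $\F\subseteq\overline{C_c^\infty(\R_*)}^{\|\cdot\|_\F}$, which is precisely the part of Proposition~\ref{prop:equality_of_domains} that rests on Hardy. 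Without that chain, the paper only supplies $\E[u]\ge\E_D[u]$ (Lemma~\ref{lem_ED_less_E}) and $\E=\E_D$ on $\overline{C_c^\infty(\R_*)}^{\|\cdot\|_\F}$, neither of which by itself lets you transport the inequality from nice $u$ to arbitrary $u\in\F$ by $\|\cdot\|_\F$-approximation.

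The gap is repairable without touching the computations. First, you only need $\E[u]\ge\E_D[u]$ (Lemma~\ref{lem_ED_less_E}, Hardy-free), so it suffices to prove the inequality for $\E_D$ and all $u\in L^2(\R)$. Second, replace the $\|\cdot\|_\F$-density argument by a purely $\E_D$-internal one: prove the bound for bounded $u$ with compact support in $\R_*$ via your $\nu_\eps$-truncation (here the Lipschitz hypothesis is unnecessary -- the second-order Taylor estimate for $w$ controls the truncated inner integral, and $v^2w$ is bounded with compact support), and then pass to general $u$ by applying the inequality to $u_n:=\bigl((u\wedge n)\vee(-n)\bigr)\ind_{\{1/n\le|x|\le n\}}$, using that $\E_D$ is a jump form hence Markovian, so $\E_D[u_n]\le\E_D[u]$ directly from the pointwise contraction $|u_n(x)-u_n(y)|\le|u(x)-u(y)|$, and Fatou on the right-hand side. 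That removes the reliance on regularity of $\E$ and makes your form-level proof self-contained.
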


The proof of Theorem~\ref{nier_Hardyego} is given in Section~\ref{sec:dnH}.

Let $R_\infty$ be the lifetime of the Servadei--Valdinoci process $X$. 

\begin{theorem}\label{lifetime_of_the_process}
    The following statements hold $\mathbb{P}_x$-a.s. for every $x\neq 0$.
        \begin{enumerate}
            \item If $\alpha\in (0,1)$ then $R_\infty = \infty$ and $\lim\limits_{t\to\infty} |X_t| = \infty$.
            \item If $\alpha\in (1,2)$ then $0<R_\infty<\infty$ and $\lim\limits_{t\nearrow R_\infty} X_t = 0$.
            \item If $\alpha=1$ then $R_\infty = \infty$ and $\lim\limits_{t\to\infty} |X_t|$ does not exist. In particular, 
            \[
            \liminf\limits_{t\to\infty} |X_t|=0 \quad \mathrm{and} \quad \limsup\limits_{t\to\infty} |X_t|=\infty.
            \]
        \end{enumerate}
\end{theorem}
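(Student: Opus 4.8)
The plan is to reduce the whole question to the analysis of the embedded sequence of reflection points. By the construction of $X$ via concatenation, there is a sequence of times $0 = R_0 < R_1 < R_2 < \cdots$ at which $X$ jumps back into $D$ after a reflection, and between consecutive reflection times $X$ behaves (after translation) like the absorbed $\alpha$-stable process in $D$ run until its first exit, followed by an exponential holding time at the exit-side point in $\overline{D}^c$. Set $R_\infty = \sup_n R_n$, the lifetime. Writing $Z_n := |X_{R_n}| > 0$ for the position at the $n$-th re-entry and $T_n := R_{n+1} - R_n$ for the $n$-th excursion-plus-holding duration, the key structural fact is that $(Z_n)_{n\ge 0}$ is a multiplicative Markov chain: by the scaling property of the isotropic $\alpha$-stable process and of the half-line $D=(0,\infty)$, the law of $Z_{n+1}/Z_n$ given $Z_n$ does not depend on $Z_n$. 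So $\log Z_n = \log Z_0 + \sum_{k=1}^n \xi_k$ is a random walk with i.i.d.\ increments $\xi_k = \log(Z_k/Z_{k-1})$, and likewise the conditional law of $T_n$ given $Z_{n-1}$ scales like $Z_{n-1}^\alpha$ times a fixed positive random variable. First I would identify the law of $\xi_1$ (equivalently, the harmonic-measure-type kernel $\nu(x,\cdot)$-driven re-entry density composed with the exit distribution of the absorbed stable process from $(0,\infty)$), compute its mean $m := \mE \xi_1$, and determine the sign of $m$ as a function of $\alpha$. The explicit one-dimensional $\alpha$-stable exit distribution from a half-line is known in closed form (a Beta-type density), and the re-entry density $\nu(y,z)/\nu(y,D)$ on $D$ is an explicit power-law; so $\mE\xi_1$ is an explicit integral in $\alpha$, and I expect $m<0$ for $\alpha\in(1,2)$, $m>0$ for $\alpha\in(0,1)$, and $m=0$ for $\alpha=1$ — this trichotomy is exactly what drives the three cases. (I would also check $\mE|\xi_1|<\infty$ and, for the $\alpha=1$ case, that $\xi_1$ is genuinely nondegenerate so the walk oscillates.)

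With the random walk picture in hand, the three cases follow from classical fluctuation theory. For $\alpha\in(0,1)$: $m>0$ forces $\log Z_n \to +\infty$ a.s., hence $Z_n\to\infty$; moreover $R_\infty = \sum_n T_n$ and, since $T_n \approx Z_{n-1}^\alpha \cdot (\text{positive r.v.})$ with $Z_{n-1}\to\infty$ geometrically, the series diverges a.s., so $R_\infty=\infty$. To get $|X_t|\to\infty$ rather than merely $Z_n\to\infty$ along the embedded chain, I would control the within-excursion fluctuations: the minimum of $|X|$ over the $n$-th excursion is comparable to $Z_{n-1}$ (an excursion of the absorbed stable process from a point at distance $Z_{n-1}$ from $0$ stays, with overwhelming probability, at scale $Z_{n-1}$, and a Borel–Cantelli argument over $n$ removes the exceptional events because $Z_{n-1}$ grows geometrically), so $\inf_{t\in[R_{n-1},R_n)}|X_t|\to\infty$. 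For $\alpha\in(1,2)$: $m<0$ gives $\log Z_n\to-\infty$ a.s., so $Z_n\to 0$ geometrically; then $\mE[T_n \mid \mathcal F_{n-1}] = c\, Z_{n-1}^\alpha$ is summable a.s. (comparing to a geometric series in $n$), whence $R_\infty = \sum_n T_n < \infty$ a.s.; and since the $n$-th excursion starts at distance $Z_{n-1}\to 0$ from $0$, the whole trajectory on $[R_{n-1},R_n)$ is squeezed into a shrinking neighborhood of $0$, giving $X_t\to 0$ as $t\nearrow R_\infty$. For $\alpha=1$: $m=0$ and $\xi_1$ nondegenerate make $(\log Z_n)$ a mean-zero random walk, which oscillates: $\limsup \log Z_n=+\infty$ and $\liminf \log Z_n=-\infty$ a.s.\ (Chung–Fuchs). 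Hence $\limsup_n Z_n=\infty$ and $\liminf_n Z_n=0$; combined with the excursion estimates (excursion stays at scale $Z_{n-1}$), $\limsup_t|X_t|=\infty$ and $\liminf_t|X_t|=0$, so the limit does not exist. That $R_\infty=\infty$ when $\alpha=1$ needs a separate argument: even though $Z_n$ returns near $0$ infinitely often, the excursions that happen when $Z_{n-1}$ is of order one (which occur with positive frequency along a zero-mean walk) each contribute a holding time bounded below in distribution, and there are infinitely many such indices, so $\sum_n T_n = \infty$ a.s.

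The main obstacle I anticipate is twofold. First, the exact computation of the increment law of $\log Z_n$ and the sign of its mean: this requires combining the known boundary exit law of the absorbed $\alpha$-stable process on $(0,\infty)$ with the re-entry kernel $\nu(\cdot,\cdot)$ and showing the resulting integral has the asserted sign for all $\alpha$ in each subinterval — presumably this is where the constants $\mathcal C_\alpha,\mathcal D_\alpha$-type Gamma-function identities from Theorem~\ref{nier_Hardyego} reappear, and in fact the cleanest route may be to invoke the excessive/harmonic-function analysis and the generator action on power functions ($|x|^{-\beta}$) already developed before this theorem rather than to compute the increment density directly: an excessive function of the form $|x|^{\gamma}$ or $|x|^{-\gamma}$ for $X$ immediately yields a supermartingale $|X_t|^{\pm\gamma}$ and hence, via the martingale/optional-stopping convergence theorem, the a.s.\ limiting behavior of $|X_t|$ and the finiteness or infiniteness of the lifetime. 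Second, the transfer from the discrete chain $(Z_n)$ to the continuous-time trajectory $(X_t)$ — i.e., ruling out that $|X_t|$ does something different between reflections than the embedded values suggest, and in the $\alpha\in(1,2)$ case confirming $X_t\to 0$ (not merely $|X_t|\to 0$ while oscillating in sign, which here is the same thing since the limit is $0$) — this is handled by uniform-in-$n$ excursion estimates plus Borel–Cantelli, routine but requiring care about the geometric versus merely-almost-sure growth/decay of $Z_n$.
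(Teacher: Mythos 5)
Your overall strategy -- reduce to the embedded multiplicative chain $Z_n$ at re-entry to $D$, read off the sign of the drift $\mE\log(Z_1/Z_0)$ to get the trichotomy in $\alpha$, and then push the discrete-chain conclusion to continuous time -- is exactly the skeleton of the paper's proof, and your identification of $|x|^{\alpha-1}$ as the excessive power function that drives the convergence of $|X_t|$ is also the key tool the paper eventually uses (via the supermartingale $h_{\alpha-1}(X_t)$ and Doob's downcrossing lemma, rather than your excursion-plus-Borel--Cantelli route). However, there is a genuine gap in the $\alpha\in(1,2)$ case: you assert $\mE[T_n\mid\mathcal F_{n-1}] = c\,Z_{n-1}^\alpha$ with $c<\infty$ and sum a geometric series, but $\mE_1[T]=\infty$. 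Indeed $T\geq\tau_D$ and, for the half-line, $\mP^Y_x(\tau_D>t)\asymp 1\wedge (x^{\alpha/2}/\sqrt t)$, so $\mE^Y_x\tau_D=\infty$ for every $x>0$ and every $\alpha\in(0,2)$. The conditional-expectation comparison therefore collapses. The paper circumvents this by working with a fractional moment: it uses the subadditivity of $r\mapsto r^{(\alpha-1)/(2\alpha)}$ to bound $\mE_x[R_\infty^{(\alpha-1)/(2\alpha)}]\leq \mE_1[T^{(\alpha-1)/(2\alpha)}]\sum_n\mE_x[V_{n-1}^{(\alpha-1)/2}]$, then verifies that $\mE_1[T^{(\alpha-1)/(2\alpha)}]<\infty$ (because $\tfrac{\alpha-1}{2\alpha}<\tfrac12$, so the half-line exit time has this fractional moment) and that $\sum_n \mE_x[V_{n-1}^{(\alpha-1)/2}]=x^{(\alpha-1)/2}/(1-\rho)<\infty$ via the moment estimate $\rho=\mE_1 W^{(\alpha-1)/2}<1$. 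To repair your argument you would need to replace the first-moment comparison with exactly this kind of fractional-moment bookkeeping. Two smaller points: Chung--Fuchs gives neighborhood recurrence ($\liminf|S_n|=0$), not $\limsup S_n=+\infty$; for the latter you want either the standard trichotomy for random walks or, as the paper does, the law of the iterated logarithm after checking $\mE_1[\ln^2 W]<\infty$. And in the $\alpha=1$ case, "positive frequency" of indices with $Z_{n-1}$ of order one overstates what a mean-zero finite-variance walk gives you (returns to a bounded window occur with density $\sim n^{-1/2}$); you only need infinitely many such indices, which is what recurrence provides and what the paper's Borel--Cantelli argument along the subsequence with $V_{n_k}\to\infty$ exploits.
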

The result is a close counterpart of Bogdan et al. \cite[Proposition 4.2]{MR2006232} for the censored stable processes. The proof of Theorem \ref{lifetime_of_the_process} is given in Section~\ref{sec:main_theorem}. 

Let $G$ be the 0-potential operator of $K,$ i.e., 
$Gf(x):=\int_0^\infty K_tf(x)\,\dt.$ 

\begin{theorem}\label{Neumann_problem}
Let $\alpha \in (0,1)\cup (1,2)$ and $f\in C_c(\R\setminus\{0\})$. Then, $G|f|<\infty$ on $\R\setminus\{0\}$ and $u = G f$ is the solution of the following Neumann problem for the fractional Laplacian
    \begin{align*}
        \begin{cases}
            (-\Delta)^{\alpha/2} u &= f, \quad \mathrm{in } ~D, \\
            \phantom{\,\,\,\,\,\,\,\,\,} \mathcal{N}_{\alpha/2} u &= f, \quad \mathrm{in } ~\overline{D}^c.
        \end{cases}
    \end{align*}
\end{theorem}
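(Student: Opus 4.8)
The plan is to exploit the probabilistic representation $u = Gf = \int_0^\infty K_t f\,\dt$ and the fact, established in Section~\ref{chap_dirichlet}, that $\E = \E_D$ is the Dirichlet form of the semigroup $K = (K_t)_{t\ge 0}$. First I would address finiteness: for $f \in C_c(\R\setminus\{0\})$, write $|f| \le c\,\mathbf{1}_A$ for a compact $A \subset \R\setminus\{0\}$, and use the description of $X$ together with Theorem~\ref{lifetime_of_the_process} to bound $G\mathbf{1}_A(x) = \mEx \int_0^{R_\infty} \mathbf{1}_A(X_t)\,\dt$. When $\alpha \in (1,2)$ the lifetime $R_\infty$ is finite $\mPx$-a.s., which already caps the occupation time; when $\alpha \in (0,1)$ the process escapes to infinity, so the Green function of the free $\alpha$-stable process on $\R$ (transient for $\alpha<1$) controls $G\mathbf{1}_A$. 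In both regimes one gets $G|f| < \infty$ pointwise on $\R\setminus\{0\}$. The separate (easier) case needs care only in making the escape/return bookkeeping uniform on compacts.

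Next, the core of the argument: identify the generator. Formally, $Gf$ should satisfy $-\LL(Gf) = f$ where $\LL$ is the generator of $K$, and by Proposition~\ref{gen_inequality} (which computes the action of the generator on power functions, and more relevantly pins down the generator's form) we expect $\LL u(x) = -(-\Delta)^{\alpha/2} u(x)$ for $x \in D$ and $\LL u(x) = -\mathcal{N}_{\alpha/2} u(x)$ for $x \in \overline{D}^c$—this dichotomy is exactly the structure of the concatenated process (stable motion inside $D$, exponential holding with intensity $\nu(x,D)$ outside). The cleanest route is variational: show that for every $v \in C_c^\infty(\R\setminus\{0\})$ (or a suitable dense class in $\F$),
\[
\E_D(u,v) = \langle f, v\rangle_{L^2(\R)},
\]
using $u = Gf$ and the standard identity $\E(Gf, v) = \langle f, v\rangle$ valid because $Gf \in \F$ (its energy is finite: $\E[Gf] = \langle f, Gf\rangle < \infty$ by the finiteness step and Cauchy–Schwarz/monotone convergence on $\E^{(t)}$). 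Then I would unfold $\E_D(u,v)$ by Fubini, splitting the domain of integration according to whether each variable lies in $D$ or $D^c$, and integrate by parts the $D\times D$-piece against the singular kernel to extract $(-\Delta)^{\alpha/2}u$ on $D$, while the mixed and $D^c$-pieces reassemble into $\mathcal{N}_{\alpha/2}u$ on $\overline{D}^c$; matching coefficients of the (independent) test functions $v|_D$ and $v|_{\overline{D}^c}$ yields the two equations. Regularity of $\E_D$ (Corollary~\ref{corollary_regular}) guarantees $C_c(\R\setminus\{0\})$ is rich enough in $\F$ for this test-function argument to determine $u$ pointwise a.e., and continuity of $(-\Delta)^{\alpha/2}u$ on $D$ and of $\mathcal{N}_{\alpha/2}u$ on $\overline{D}^c$ (both inherited from $u = Gf$ being smooth in $D$ by interior regularity of the stable semigroup, and bounded/measurable outside) upgrades this to the genuine pointwise statement.

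The main obstacle I anticipate is the integration-by-parts / Fubini step on the singular kernel: $\nu(x,y) \asymp |x-y|^{-1-\alpha}$ is not integrable near the diagonal, so one cannot naively swap orders of integration in $\E_D(u,v)$ when matching it to $\langle \LL u, v\rangle$. The standard fix is to regularize—replace $\nu$ by $\nu_\eps(x,y) = \nu(x,y)\mathbf{1}_{|x-y|>\eps}$, perform all manipulations for the truncated form, and pass $\eps \to 0$ using the finiteness of $\E_D[u]$ and of $\E_D[v]$ (for $v$ smooth and compactly supported away from $0$, so that the principal-value limit defining $(-\Delta)^{\alpha/2}u$ on $D$ genuinely converges; the boundary $\{0\}$ is harmless because test functions vanish near it). A secondary technical point is justifying $\E(Gf, v) = \langle f, v\rangle$ when $f$ is merely continuous with compact support rather than in $L^2$—but $C_c(\R\setminus\{0\}) \subset L^2(\R)$, so this is immediate once $Gf \in \F$. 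I would also double-check that no extra boundary term at $x = 0$ appears: since $\int_D u^2(x)|x|^{-\alpha}\,\dx < \infty$ by the Hardy inequality (Theorem~\ref{nier_Hardyego}) applied to $u = Gf \in \F$, the mass of $u$ near $0$ is controlled well enough that all the truncated integrals converge and the limit is clean.
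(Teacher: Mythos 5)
Your proposal takes a genuinely different route from the paper: you go variational (identify $u=Gf$ as the weak solution via $\E_D(Gf,v)=\langle f,v\rangle$ and then unwind the Dirichlet form), whereas the paper works pointwise through the \emph{Dynkin characteristic operator} $\mathcal D$, proving $\mathcal D(Gf)=-f$ directly from Dynkin's formula and scaling (Proposition~\ref{Boundary_problem_2}), and then reading off $(-\Delta)^{\alpha/2}$ on $D$ and $\mathcal N_{\alpha/2}$ on $\overline D^c$ from the identification of $\mathcal D$ on each side in \eqref{eq:Dynkin_singular} and \eqref{Dynkin_normal_derivative}. The pointwise route is chosen precisely to avoid the $L^2$ machinery, and this is where your proposal has a real gap.

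The problem is the step ``$Gf\in\F$, hence $\E(Gf,v)=\langle f,v\rangle$''. You never show $Gf\in L^2(\R)$, and the paper's quantitative bound (Lemma~\ref{green_operator_finitness}) does not give it: for compactly supported $f$ one gets $G|f|\lesssim |x|^\beta$ with $\beta\in(\alpha-1,0)$ when $\alpha<1$ (decay too slow at infinity once $\alpha>1/2$) and $\beta\in(0,\alpha-1)$ when $\alpha>1$ (no decay at all at infinity). So $Gf\in L^2(\R)$ is not available from what is proved, and without it the identity $\E(Gf,v)=\langle f,v\rangle$ and the claim $\E[Gf]=\langle f,Gf\rangle<\infty$ are unjustified. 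Your finiteness argument is also thinner than the paper's: for $\alpha\in(1,2)$, ``$R_\infty<\infty$ a.s.\ caps the occupation time'' does not give $\mEx\int_0^{R_\infty}\mathbf 1_A(X_t)\,\dt<\infty$ (one would need an integrable bound on $R_\infty$ or on the occupation measure), and for $\alpha\in(0,1)$ the comparison of the concatenated process with the free stable process is not automatic because the return mechanism can re-inject the particle into $A$ repeatedly; the paper instead uses the excessive function $h_\beta$ and Proposition~\ref{gen_inequality} to bound $Gh_{\beta-\alpha}\lesssim h_\beta$ directly. Finally, the ``unfold $\E_D$ by Fubini and integrate by parts'' step requires interior $C^2$-type regularity of $u=Gf$ on $D$ to make sense of the principal value pointwise; the paper's Dynkin-operator route sidesteps this by working with the characteristic operator, which is defined via exit distributions and needs only continuity of $u$ (Proposition~\ref{K_bounded_continuity}). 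If you wish to salvage the variational route you would need at minimum to prove $Gf\in L^2(\R)$ and interior regularity of $Gf$; the paper's choice of the Dynkin operator is exactly what makes these issues disappear.
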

The proof of Theorem \ref{Neumann_problem} is given at the end of the paper.

\subsection{Notation}

For $A\subset \R$, we let $A^c := \R\setminus A$. 
The \emph{open ball} of radius $r>0$ and centered at $x\in\R$  is denoted by $B(x,r) := \{y\in\R:~ |x-y|<r\}$. We let $D = (0,\infty)$ and $\R_* := \R\setminus\{0\}$.
Below we only consider Borel sets, functions, and measures. As usual, $\ln{x} := \log_e{x}$, $x>0$. For $a,b\in\R$, we set $a\wedge b := \min(a,b)$, $a\vee b := \max(a,b)$, $a_+ := a\vee 0$ and $a_- := (-a)\vee 0$.

For functions $f,g\geq 0$, we write $f(x)\lesssim g(x)$ if there exists a number $C\in (0,\infty)$, called \emph{constant}, such that $f(x)\leq Cg(x)$ for all the considered arguments $x$. We denote constants by $C,C_1,C_2,\ldots$ and they may change value from place to place. We are rarely interested in exact values of the constants, but we write $C = C(a_1,\ldots, a_n)$ if the constant $C$ can be so chosen to depend only on $a_1,\ldots, a_n$; we may write $C_a$ if $C=C(a)$. We similarly interpret the inequality $f(x)\gtrsim g(x)$ and we may write $f(x)\approx g(x)$ if $f(x)\lesssim g(x)$ and $f(x)\gtrsim g(x)$. 

If \( \mathcal{U} \subset \R \) is a nonempty open set, then \( \mathcal{B}(\mathcal{U}) \) denotes the class of all the Borel measurable functions on \( \mathcal{U} \). By \( \mathcal{B}^+(\mathcal{U}) \) and \( \mathcal{B}_b(\mathcal{U}) \), we denote the subclasses of \( \mathcal{B}(\mathcal{U}) \) which consist of non-negative and bounded functions, respectively.
Moreover, $\mathcal{B}_b^+(\mathcal{U}) := \mathcal{B}_b(\mathcal{U}) \cap \mathcal{B}^+(\mathcal{U})$. The class of all the real-valued continuous functions on $\mathcal{U}$ will be denoted by $C(\mathcal{U})$. We also consider subclasses of $C(\mathcal{U})$. Namely, $C_b(\mathcal{U})$ is the class of all the bounded continuous functions on $\mathcal{U}$, $C_c(\mathcal{U})$ is the class of all the compactly supported functions on $\mathcal{U}$, $C_c^\infty(\mathcal{U})$ is the class of all smooth compactly supported functions on $\mathcal{U}$, and $C_0(\mathcal{U})$ is the class of all the continuous functions \emph{vanishing at infinity}, i.e.,
\[
C_0(\mathcal{U}) = \{ f\in C(\mathcal{U}):~ \forall \,{\varepsilon>0} ~~\exists \,\mathrm{compact} \, K \subset \mathcal{U} \,\mathrm{such \, that}\, |f(x)|<\varepsilon \,\,\mathrm{for} \,\, x\notin K\}.
\]
In particular, $C_0(\R_*)$ consists of all the continuous functions $f$ on $\R_*$ such that $\lim\limits_{|x|\to\infty} f(x) = 0$ and $\lim\limits_{x\to 0} f(x) = 0$.
Of course, $C_0(\mathcal{U})$ is a Banach space with the supremum norm $\norm{f}_\infty := \sup\limits_{x\in \mathcal{U}} |f(x)|$. By $L^2(\mathcal{U})$, we denote the class of all the (Borel) square integrable functions on $\mathcal{U}$ equipped with the norm $\norm{u}_{L^2(\mathcal{U})} = \sqrt{\int_\mathcal{U} |u(x)|^2\,\dx}$, and we identify functions equal a.e. with respect to $\dx$.

\medskip
Let $(\mathscr{X},\mathcal{A})$ and $(\mathscr{Y}, \mathcal{B})$ be two measurable spaces. As usual (see, e.g.,  Getoor \cite{getoor1975}, Bliedtner and Hansen \cite[Section II.1]{MR850715}), a (\emph{probability} or \emph{Markov}) \emph{kernel} from $(\mathscr{X},\mathcal{A})$ to $(\mathscr{Y},\mathcal{B})$ is a function $K:\mathscr{X}\times \mathcal{B} \to [0,\infty)$ such that 
\begin{enumerate}
    \item[(i)]   for every $B\in\mathcal{B}$, the function $x\mapsto K(x,A)$ is $\mathcal{A}$-measurable,
    \item[(ii)] for every $x\in \mathscr{X}$, the function $B\mapsto K(x,B)$ is a (probability) measure on $(\mathscr{Y}, \mathcal{B})$.
\end{enumerate}
If $K(x,\mathscr{Y}) \leq 1$ for every $x\in \mathscr{X}$, then $K$ is called \emph{subprobability} or \emph{sub-Markov} kernel. By a kernel on $(\mathscr{X},\mathcal{A})$, we simply mean a kernel from $(\mathscr{X},\mathcal{A})$ to $(\mathscr{X},\mathcal{A})$. 

For every kernel $K$, $x\in \mathscr{X}$ and $f\in \mathcal{B}^+(\mathscr{Y})$,
\begin{align}\label{eq:def_oper}
Kf(x) := \int_\mathscr{Y} f(y) K(x,\dy),
\end{align}
defines the corresponding \emph{integral operator} from $\mathcal{B}^+(\mathscr{Y})$ to $\mathcal{B}^+(\mathscr{X})$, which is additive, positively homogeneous and monotone.\footnote{Monotone means here that $f_n, f\in\mathcal{B}^+(\mathscr{X})$ and $f_n\uparrow f$ imply $Kf_n \uparrow Kf$.} Conversely, every additive, positively homogeneous and monotone operator on $\mathcal{B}^+(\mathscr{X})$ is of the form \eqref{eq:def_oper} for some kernel \cite[Section II.1]{MR850715}. If $K$ is subprobability kernel, then \eqref{eq:def_oper} defines a linear bounded operator from $\mathcal{B}_b(\mathscr{Y})$ to $\mathcal{B}_b(\mathscr{X})$. If the kernel $K$ has a Radon--Nikodym density $k$ with respect to a measure $m$ on $\mathscr{Y}$, that is $K(x,A)=\int_A k(x,y)m(\dy)$, $A\in\mathcal{B}$, then we call $k$ a \emph{kernel density}. 

Let $(\mathscr{X},\mathcal{A}), (\mathscr{Y},\mathcal{B})$ and $(\mathscr{Z},\mathcal{C})$ be measurable spaces. Assume that $K$ is a kernel from $(\mathscr{X},\mathcal{A})$ to $(\mathscr{Y},\mathcal{B})$ and $L$ is a kernel from $(\mathscr{Y},\mathcal{B})$ to $(\mathscr{Z},\mathcal{C})$. Then the  \emph{composition} of kernels $K$ and $L$ is defined as the following kernel $KL$ from $(\mathscr{X},\mathcal{A})$ to $(\mathscr{Z},\mathcal{C})$,
\[
(KL)(x,C) := \int_\mathscr{Y} K(x,\dy)L(y,C),
\]
for $x\in \mathscr{X}$, $C\in\mathcal{C}$. If $K$ and $L$ are probability (resp. subprobability) kernels, then $KL$ is a probability (resp. subprobability) kernel. 

A family $(T_t)_{t\geq 0}$ of probability (resp. subprobability) kernels on $(\mathscr{X},\mathcal{A})$ is called a \emph{probability} (resp. \emph{subprobability}) \emph{transition kernel} if the kernels satisfy the semigroup property $T_{t+s} = T_tT_s$, $t,s\geq 0$. The corresponding family of integral operators, also denoted by $(T_t)_{t\geq 0}$, is called \emph{Markov} (resp. \emph{sub-Markov}) \emph{semigroup of operators}.

A sub-Markov semigroup $(T_t)_{t\geq 0}$ of operators is called a \emph{Feller semigroup} (see, e.g.,  Kallenberg \cite[p. 369]{MR1876169}) if for $0\leq f \leq 1$, we have $0\leq T_tf\leq 1$, $t\geq 0$, and 
\begin{enumerate}
    \item[(P1)] for $f\in C_0(\mathscr{X})$ and $t\geq 0$, $T_tf\in C_0(\mathscr{X})$,
    \item[(P2)] for $f\in C_0(\mathscr{X})$ and $x\in \mathscr{X}$, $T_tf(x)\to f(x)$, as $t\to 0^+$.
\end{enumerate}
It is well known (see \cite[Chapter 19]{MR1876169}), that (P1), (P2) and the semigroup property yield the strong continuity of $(T_t)_{t\geq 0}$: $\lim\limits_{t\to 0^+} \norm{T_tf-f}_\infty = 0$ for $f\in C_0(\mathscr{X})$.

We also consider \emph{strong Feller} semigroups of operators $(T_t)_{t\geq 0}$, i.e. semigroups with the property $T_t\mathcal{B}_b(\mathscr{X})\subset C_b(\mathscr{X})$, $t\geq 0$. Furthermore, \emph{double Feller semigroups} of operators mean the semigroups with both Feller and strong Feller property (see, e.g., Chen and Kuwae \cite{MR2604914}). 

\medskip
The \emph{Euler beta} function may be defined by
\[
\mathfrak{B}(x,y) := \int_0^\infty \frac{t^{x-1}}{(1+t)^{x+y}}\,\dt, \qquad x>0, ~y>0.
\]
It is well known that
\[
\mathfrak{B}(x,y) = \frac{\Gamma(x)\Gamma(y)}{\Gamma(x+y)},
\]
where $\Gamma$ denotes the Euler gamma function.

\subsection{Isotropic \texorpdfstring{$\alpha$}{}-stable L\'evy process}\label{sec:Isotropic_stable_process}

In this section, we introduce the isotropic $\alpha$-stable L\'evy process (as a general reference, see Kwa\'{s}nicki \cite{MR3613319}).

Let $\alpha\in (0,2)$. For $t>0$, let 
\begin{align}\label{p_t_Fourier}
    p_t(x) := \frac{1}{2\pi} \int_\R e^{-i\xi x}e^{-t|\xi|^\alpha}\,\mathrm{d}\xi, \quad t>0, ~x\in\R.
\end{align}
From \eqref{p_t_Fourier}, we see that the function $x\mapsto p_t(x)$ is smooth, being the inverse Fourier transform of a rapidly decreasing function. Furthermore, it is well known that $p_t\geq 0$, $\int_\R p_t(x)\,\dx = 1$,
\begin{align}\label{eq:scaling_pt}
    p_t(x) = t^{-1/\alpha}p_1(xt^{-1/\alpha}), \quad t>0, ~x\in\R,
\end{align}
and
\begin{align}\label{eq:p_t_approx}
    p_t(x) \approx t^{-1/\alpha} \wedge \frac{t}{|x|^{1+\alpha}}, \quad t>0, ~x\in\R,
\end{align}
see Blumenthal and Getoor \cite{MR119247} or P{\`o}lya \cite{Polya1923}; see also \cite{MR3357585}.
For $x,y\in\R$, we denote $p_t(x,y) := p_t(y-x)$. By \eqref{p_t_Fourier},
\[
\int_\R p_s(x,y)p_t(y,z)\,\dy = p_{t+s}(x,z), \quad s,t\geq 0, ~x,z\in\R.
\]
It is a kernel density and defines the transition kernel,
\[
P_t(x,A) := \int_A p_t(x,y)\,\dy, \quad x\in\R,~ A\subset \R.
\]
Of course, the corresponding Markov semigroup of operators is defined by
\begin{align*}
    P_tf(x) = \int_\R f(y)p_t(x,y)\,\dy, \quad f\in\mathcal{B}_b(\R)\cup \mathcal{B}^+(\R) \cup L^2(\R).
\end{align*}
It has the fractional Laplacian as infinitesimal generator (see Kwa\'{s}nicki \cite{MR3613319}):
\begin{align}\label{eq:fractional_laplacian}
     -(-\Delta)^{\alpha/2}\varphi(x) &=\lim_{t\to 0^+} \frac{P_t\varphi(x) - \varphi(x)}{t} =  \mathcal{A}_{1,\alpha} ~\mathrm{p.v.} \int_\R \frac{\varphi(x+y)-\varphi(x)}{|y|^{\alpha+1}}\,\dy
     \\
     &:= \mathcal{A}_{1,\alpha} \lim_{\varepsilon\to 0^+} \int_{|y|>\varepsilon} \frac{\varphi(x+y)-\varphi(x)}{|y|^{\alpha+1}}\,\dy, \quad x\in\R, \quad \varphi\in C_c^\infty(\R), \nonumber
\end{align}
where
\[
\mathcal{A}_{1,\alpha} := \frac{2^\alpha \Gamma((\alpha+1)/2)}{\sqrt{\pi} |\Gamma(-\alpha/2)|}.
\]

The \emph{isotropic $\alpha$-stable L\'evy process} $\big( (Y_t)_{t\geq 0}, (\mP_x^Y)_{x\in\R}\big)$ on $\R$ may be defined as a c\`adl\`ag Markov process satisfying $\mP_x^Y(Y_0 = x) = 1$ with the transition probability:
\[
P_t^Y(x,A) := P_t(x,A), \quad t>0, ~x\in\R, ~A\subset\R.
\]
Here and in what follows, $\mP_x^Y$ and $\mE_x^Y$ denote the distribution and expectation for the process $Y$ starting from $x\in\R$, respectively. Of course, $(Y_t)_{t\geq 0}$ has independent increments;  see \eqref{p_t_Fourier}. It is well known that $(Y_t)_{t\geq 0}$ is a strong Markov process with respect to the standard filtration \cite{MR0264757}. Moreover, for $a>0$, $aY_t \stackrel{\mathrm{d}}{=} Y_{a^\alpha t}$ as processes (in distribution with respect to $\mP_0^Y$), which follows from \eqref{eq:scaling_pt}.

For (Borel) set $B\subseteq \R$, we let 
\[
\tau_B := \inf\{t\geq0:~ Y_t\notin B\},
\]
be the time of the first exit of $Y$ from $B$. If $r>0$ and $t>0$, then, with respect to $\mP_0^Y$,
\begin{align}\label{eq:pair_scling}
    (Y_t, \tau_{B(0,r)}) &= \big( Y_t, \,\inf\{s\geq 0: ~ |Y_s|\geq r\}\big) \nonumber \\
    &= \big( r\cdot \tfrac1r Y_t, \, \inf\{s\geq 0:~ |\tfrac1r Y_s| \geq 1\}\big) \nonumber \\
    &\stackrel{\mathrm{d}}{=} \big( r\cdot Y_{r^{-\alpha}t},  \, \inf\{s\geq 0:~ |Y_{r^{-\alpha}s}|\geq 1\}\big) \nonumber \\
    &=  \big( r\cdot Y_{r^{-\alpha}t},  \, r^\alpha \inf\{r^{-\alpha}s\geq 0:~ |Y_{r^{-\alpha}s}|\geq 1\}\big) \nonumber \\
    &= \big( r\cdot Y_{r^{-\alpha}t},  \, r^\alpha \tau_{B(0,1)} \big).
\end{align}
Similarly we prove that, with respect to $\mP_0^Y$,
\begin{align}\label{eq:scaling_Y_tau}
    Y_{\tau_{B(0,r)}} \stackrel{\mathrm{d}}{=} rY_{\tau_{B(0,1)}}.
\end{align}

\medskip
The measure $\nu(\dx) := \nu(x)\,\dx$, with the density function
\begin{align}\label{eq:nu_def}
    \nu(x) := \mathcal{A}_{1,\alpha} |x|^{-1-\alpha}, \quad x\in\R_*,
\end{align}
is the L\'evy measure of $Y$. In particular, $\int_\R (1\wedge |x|^2)\nu(\dx)<\infty$. As in the case of $p_t$, we slightly abuse the notation and write $\nu(x,y) :=\nu(y-x)$. Furthermore, for $x\in\R$, $A\subset\R$, we let 
\begin{align}\label{eq:nu(x,A)}
    \nu(x,A) := \nu(A-x) = \int_A \nu(x,y)\,\dy.
\end{align}

Recall that $D = (0,\infty) \subset\R$. For later use, we note that
\begin{align}\label{nu_scaling3}
    \nu(kx,ky) = k^{-\alpha-1}\nu(x,y), \quad k>0, ~x,y\in\R,
\end{align}
\begin{align}\label{nu_scaling2}
    \nu(kx,D) = k^{-\alpha} \nu(x,D), \quad k>0, ~x\in\R,
\end{align}
\begin{align}\label{nu_scaling}
    \nu(x,D) = \mathcal{A}_{1,\alpha} |x|^{-\alpha} \int_0^\infty \frac{\dy}{(1+y)^{\alpha+1}} =\mathcal{A}_{1,\alpha} \alpha^{-1}|x|^{-\alpha}, \qquad x<0,
\end{align}
and
\begin{align}\label{nuDc_scaling}
    \nu(x,D^c) = \nu(-x,D) = \mathcal{A}_{1,\alpha} \alpha^{-1} x^{-\alpha}, \qquad x>0.
\end{align}
Of course, $\nu(0,D) = \infty$.


\subsection{Potential theory}
Using $\tau_D$ we define the transition density of the \emph{process killed} upon leaving $D$: 
\begin{align}
\label{eq:Hunt's_formula}
    p_t^D(x,y) = p_t(x,y) - \mE_x^Y \big[ \tau_D<t; ~p_{t-\tau_D}(Y_{\tau_D},y)\big], \quad t>0, ~x,y\in\R.
\end{align}
The above is called the \emph{Hunt formula} (references for this section and more details can be found in \cite{MR2602155}, \cite{104171}, or \cite[Chapter 2]{MR1329992}). We have the scaling property
\begin{equation}\label{eq:scaling}
    p_t^D(x,y) = t^{-1/\alpha} p_1^D(t^{-1/\alpha} x, t^{-1/\alpha} y), \quad x,y\in\R, ~t>0,
\end{equation}
or, equivalently,
\begin{equation}\label{eq:scaling2}
    p_{k^\alpha t}^D(kx,ky) = k^{-1} p_t^D(x,y), \quad x, y\in\R, ~k, t>0.
\end{equation}
We note that $p^D$ is jointly continuous in $t,x,y$. This follows from the scaling property \eqref{eq:scaling} and the continuity of the function $(x,y) \mapsto p_t(x,y)$ by the same proof as in Theorem 2.4 in Chung and Zhao \cite{MR1329992}. Moreover, for $x,y\in\R$ and $t>0$, $p^D$ is \emph{symmetric}, i.e. $p^D_t(x,y) = p^D_t(y,x)$, 
\begin{align}\label{eq:killed_leq_stable}
    0\leq p_t^D(x,y) \leq p_t(x,y),
\end{align}
and $p_t^D(x,y)=0$ whenever $x\leq 0$ or $y\leq 0$. Furthermore, $p^D$ satisfies the Chapman--Kolmogorov equations:
\begin{align}\label{eq:p^D_Ch-K}
    p_{t+s}^D(x,y) = \int_\R p_t^D(x,z)p_s^D(z,y)\,\dz,\quad x,y\in \R, ~s,t>0.
\end{align}
The function $p^D$ is called \emph{Dirichlet heat kernel} of the half-line $D$ for the fractional Laplacian, or transition density of the isotropic $\alpha$-stable process killed when leaving $D$, because
\begin{align}\label{eq:p_D_distribution}
    P_t^D(x,A) := \int_A p_t^D(x,y)\,\dy = \mP_x^Y(Y_t\in A, \tau_D>t), \quad t>0, ~x\in\R, ~A\subset\R.
\end{align}
For $x\in\R$, $t>0$, and nonnegative or bounded functions $f$, we as usual denote
\begin{align*}
    P_t^Df(x) := \int_\R f(y)P_t^D(x,y)\,\dy = \mE_x^Y[f(Y_t); \tau_D>t].
\end{align*}
We also let $P_0^D = I$, the identity operator. 

By Bogdan and Grzywny \cite{MR2602155},
\begin{equation}\label{eq:asymptotic}
    p_t^D(x,y) \approx \mP_x^Y(\tau_D>t)\mP_y^Y(\tau_D>t) p_t(x,y), \quad t>0, ~x,y>0,
\end{equation}
and
\begin{align}\label{eq:asymp_pstwo}
    \mP_x^Y(\tau_D>t) \approx 1\wedge \frac{|x|^{\alpha/2}}{\sqrt{t}}, \quad t>0, ~ x>0.
\end{align}
Thus,
\begin{align}\label{eq:approx_p_D}
    p_t^D(x,y) \approx \Big( 1 \wedge \frac{|x|^{\alpha/2}}{\sqrt{t}}\Big) \Big( 1 \wedge \frac{|y|^{\alpha/2}}{\sqrt{t}}\Big)\Big( t^{-1/\alpha} \wedge \frac{t}{|x-y|^{\alpha+1}}\Big), \quad t>0, ~x,y>0.
\end{align}

\medskip
The \emph{Green function} of $D$ is 
\begin{align}\label{eq:green_fun}
    G_D(x,y) := \int_0^\infty p_t^D(x,y)\,\dt, \quad x,y\in \R.
\end{align}
Obviously, $G_D(x,y) = 0$ whenever $x\leq 0$ or $y\leq 0$.
For each function $f\geq 0$, from Tonelli's theorem and \eqref{eq:p_D_distribution}, we have
\begin{align*}
    \int_D G_D(x,y)f(y)\,\dy = \mE_x^Y \int_0^{\tau_D} f(Y_t)\,\dt, \quad x>0,
\end{align*}
hence $G_D(x,y)$ is the occupation time density of the process $Y$ prior to its first exit from $D$. By \eqref{eq:scaling2},
\begin{align}\label{eq:green_scaling}
    G_{D}(x,y) = x^{\alpha-1} G_{D}(1,y/x), \qquad x,y>0.
\end{align}

It is well known (see \cite{MR0321198}, Corollary 3.5) that $\mP_x(Y_{\tau_D}=0)=0$, for every $x>0$, and from \cite{MR142153} or \cite{MR3737628}, the joint distribution of the triple $(\tau_D, Y_{\tau_D-}, Y_{\tau_D})$ is given by the \emph{Ikeda--Watanabe formula}: 
\begin{align}\label{eq:Ikeda_Watanabe}
    \mP_x^Y\big( \tau_D\in I, ~Y_{\tau_D-}\in A, Y_{\tau_D}\in B\big) = \int_I \dt \int_A\dy \int_B \dz \,p_t^D(x,y)\nu(y,z), \quad x>0.
\end{align}
By \eqref{eq:p_D_distribution} and \eqref{eq:Ikeda_Watanabe}, we thus have
\begin{align}\label{eq:p_survival}
    P_{t}^D(x,D) = \int_D p_t^D(x,y)\,\dy = \mP^Y_{x}(\tau_D > t) = \int_t^\infty \dr \int_D \da \int_{D^c} \db \, p_{r}^D(x,a)\nu(a,b).
\end{align}

We further define the \emph{Poisson kernel} of $D$ for the fractional Laplacian:
\begin{align}\label{eq:poisson_kernel}
    P_D(x,y) = \int_D G_D(x,z)\nu(z,y)\,\dz, \quad x>0, ~y\leq 0.
\end{align}
Then, for $A\subset (-\infty, 0]$,
\begin{align*}
    \mP_x^Y(Y_{\tau_D}\in A) = \int_A P_D(x,y)\,\dy, \quad x>0.
\end{align*}
Using \eqref{eq:green_scaling}, we obtain the following scaling property of the Poisson kernel:
\begin{align}\label{poisson_kernel_scaling}
    P_{D}(x,y) = x^{-1} P_{D}(1,y/x), \qquad x>0, ~y\leq 0.
\end{align}
In fact, the Poisson kernel of the half-line is known explicitly (see \cite[(3.40)]{MR1704245}):
\begin{align}\label{eq:poisson_kernel_halfline}
    P_{D}(x,y) = \frac{1}{\pi}\sin\Big(\frac{\pi\alpha}{2}\Big) \frac{x^{\alpha/2}}{|y|^{\alpha/2}|x-y|}, \quad x>0, ~y\leq 0.
\end{align}

The following results assert the double Feller property of $P^D$. For the convenience of the reader, we give short proofs.

\begin{lemma}
    \label{theorem_p^D_feller_1}
    For every $t>0$, we have $P_t^D\mathcal{B}_b(D)\subset C_b(D)$.
\end{lemma}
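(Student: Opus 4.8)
The plan is to prove that $P_t^D$ maps $\mathcal{B}_b(D)$ into $C_b(D)$ by exploiting the explicit two-sided estimate \eqref{eq:approx_p_D} together with the joint continuity of $p_t^D$ in all variables, which is already recorded in the excerpt. Boundedness is immediate: for $f\in\mathcal{B}_b(D)$ and $x>0$, $|P_t^Df(x)|\le \norm{f}_\infty \int_D p_t^D(x,y)\,\dy = \norm{f}_\infty\,\mP_x^Y(\tau_D>t)\le\norm{f}_\infty$ by \eqref{eq:p_D_distribution}. The real content is continuity in $x$, for which the natural tool is dominated convergence applied to $x\mapsto\int_D f(y)p_t^D(x,y)\,\dy$.

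First I would fix $t>0$ and $x_0>0$, and restrict attention to $x$ in a compact neighbourhood $[a,b]\subset(0,\infty)$ of $x_0$. For such $x$, the upper bound in \eqref{eq:approx_p_D} gives
\[
p_t^D(x,y)\;\lesssim\;\Bigl(1\wedge\frac{|y|^{\alpha/2}}{\sqrt t}\Bigr)\Bigl(t^{-1/\alpha}\wedge\frac{t}{|x-y|^{\alpha+1}}\Bigr)\;\le\;C_{t,a,b}\,g(y),
\]
where $g(y):=\bigl(1\wedge|y|^{\alpha/2}\bigr)\bigl(1\wedge|y-b|^{-\alpha-1}\wedge|y-a|^{-\alpha-1}\cdots\bigr)$—more simply, since $x$ ranges over the fixed compact $[a,b]$, one checks directly that $\sup_{x\in[a,b]}p_t^D(x,y)$ is bounded by an integrable function of $y\in D$: near $y=0$ the factor $|y|^{\alpha/2}$ controls things, for $y$ in a fixed compact the kernel is bounded, and for $y\to\infty$ the factor $t/|x-y|^{\alpha+1}$ decays like $|y|^{-\alpha-1}$, which is integrable. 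Hence $y\mapsto \norm{f}_\infty\sup_{x\in[a,b]}p_t^D(x,y)$ is an integrable dominating function on $D$.

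Then, for any sequence $x_n\to x_0$ in $[a,b]$, the joint continuity of $p_t^D$ gives $p_t^D(x_n,y)\to p_t^D(x_0,y)$ for every $y\in D$, so $f(y)p_t^D(x_n,y)\to f(y)p_t^D(x_0,y)$ pointwise, and dominated convergence yields $P_t^Df(x_n)\to P_t^Df(x_0)$. Since $x_0>0$ was arbitrary, $P_t^Df\in C(D)$, and combined with the boundedness above, $P_t^Df\in C_b(D)$.

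I expect the only mildly delicate point to be the explicit construction of the integrable dominating function—one must verify integrability of $y\mapsto\sup_{x\in[a,b]}p_t^D(x,y)$ over all of $D=(0,\infty)$, handling the three regimes ($y$ near $0$, $y$ in a compact set, $y$ large) separately using \eqref{eq:approx_p_D}; none of this is hard, but it is where the argument has to be written carefully. Everything else is a routine application of dominated convergence once joint continuity (already granted) is invoked.
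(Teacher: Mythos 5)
Your proof is correct, but it takes a different route from the paper's. You construct an explicit integrable dominating function from the sharp two-sided bound \eqref{eq:approx_p_D} and invoke dominated convergence locally near each $x_0$. The paper instead extends $f$ by zero, observes that $P_tf = p_t * f$ is already continuous (because $p_t \in L^1$, $f\in L^\infty$), deduces from the converse direction of Vitali's theorem that the family $y\mapsto p_t(x,y)f(y)$ is uniformly integrable in $x$, transfers this to $p_t^D$ via the pointwise bound $p_t^D \le p_t$ in \eqref{eq:killed_leq_stable}, and then applies Vitali again to get continuity of $P_t^D f$. Both arguments hinge on the (granted) continuity of $x\mapsto p_t^D(x,y)$, and both are standard. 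Your dominated-convergence version is arguably more elementary and self-contained, though you reach for heavier machinery than needed: since $p_t^D\le p_t$, you could dominate directly by $\sup_{x\in[a,b]}p_t(x,y)$ and use only the free-kernel estimate \eqref{eq:p_t_approx}, without invoking the boundary estimate \eqref{eq:approx_p_D} at all. The paper's Vitali argument is slicker in that it sidesteps any explicit estimate by recycling the known continuity of the free semigroup, and it scales more readily to settings where no pointwise kernel bound is available. Either way the result stands.
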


\begin{proof}
    Let $f\in \mathcal{B}_b(D)$. We define $f=0$ on $D^c$. For $t>0$, the function $D\ni x\mapsto P_tf(x) = p_t*f(x)$ is continuous, because $p_t\in L^1(\R)$ and $f\in L^\infty(\R)$. From Vitali's theorem (see Schilling \cite[Theorem 16.6]{Schilling2}), it follows that the functions $y\mapsto p_t(x,y)f(y)$ are uniformly in $x$ integrable with respect to the measure $\ind_D(y)\,\dy$. From \eqref{eq:killed_leq_stable} it follows that $y\mapsto p_t^D(x,y)f(y)$ are also uniformly integrable (see \cite[Definition 16.1 or Theorem 16.8]{Schilling2}). Since the mapping $D\ni x\mapsto p_t^D(x,y)$ is continuous, Vitali's theorem gives the continuity of the function $D\ni x \mapsto P_t^Df(x)$. Of course, $\lVert P_t^Df\rVert_\infty \leq \norm{f}_\infty$, by \eqref{eq:killed_leq_stable}.
\end{proof}

\begin{lemma}\label{theorem_p^D_feller}
$(P_t^D)_{t\geq 0}$ is a Feller semigroup on $C_0(D)$.
\end{lemma}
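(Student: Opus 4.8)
The plan is to establish the Feller property of $(P_t^D)_{t\ge 0}$ on $C_0(D)$ by verifying the two defining conditions (P1) and (P2) from the excerpt, using the heat kernel estimate \eqref{eq:approx_p_D} as the main quantitative tool. First I would recall that $(P_t^D)_{t\ge 0}$ is a sub-Markov semigroup of operators on $\mathcal B_b(D)$, with $0\le P_t^D f\le 1$ for $0\le f\le 1$, by \eqref{eq:killed_leq_stable} and \eqref{eq:p_D_distribution}; the semigroup property is the Chapman--Kolmogorov equation \eqref{eq:p^D_Ch-K}. It thus remains only to check (P1): $P_t^D f\in C_0(D)$ for $f\in C_0(D)$, and (P2): $P_t^D f(x)\to f(x)$ as $t\to 0^+$ for $f\in C_0(D)$ and $x\in D$; strong continuity is then automatic by the cited \cite[Chapter 19]{MR1876169}.

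For (P1), continuity of $P_t^D f$ on $D$ is already contained in Lemma \ref{theorem_p^D_feller_1} (applied to $f\in C_0(D)\subset\mathcal B_b(D)$), so the real content is showing $P_t^D f$ vanishes at infinity \emph{and} at the boundary point $0$, i.e.\ $P_t^D f(x)\to0$ as $x\to0^+$ and as $x\to\infty$. Since $|P_t^D f|\le P_t^D|f|$ and $C_0(D)$ functions can be uniformly approximated by (and dominated by) functions vanishing outside a compact $[a,b]\subset D$, it suffices to bound $P_t^D \ind_{[a,b]}(x)=\int_a^b p_t^D(x,y)\,\dy$. Using \eqref{eq:approx_p_D}: for $x\to\infty$ with $y$ bounded, $|x-y|\to\infty$, so $p_t^D(x,y)\lesssim t/|x-y|^{\alpha+1}\to0$ uniformly for $y\in[a,b]$; for $x\to0^+$, the factor $1\wedge(|x|^{\alpha/2}/\sqrt t)\to0$, again giving $p_t^D(x,y)\to0$ uniformly for $y\in[a,b]$. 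Hence $P_t^D\ind_{[a,b]}(x)\to0$ at both ends of $D$. Combining with a standard $\eps/3$ argument (write $f=g+h$ where $\|h\|_\infty<\eps$ and $g$ is supported in a compact subinterval, use $\|P_t^D h\|_\infty\le\|h\|_\infty$) yields $P_t^D f\in C_0(D)$.

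For (P2), I would use $p^D_t(x,y)\le p_t(x,y)$ and the Hunt formula \eqref{eq:Hunt's_formula}: for $f\in C_0(D)$ extended by $0$ to $\R$, $P_t^D f(x)=P_t f(x)-\mE_x^Y[\tau_D<t;\, P_{t-\tau_D}f(Y_{\tau_D})]$, so $|P_t^D f(x)-P_t f(x)|\le \|f\|_\infty\,\mP_x^Y(\tau_D<t)$. For fixed $x\in D$, $\mP_x^Y(\tau_D<t)\to0$ as $t\to0^+$ because $\tau_D>0$ $\mP_x^Y$-a.s.\ (equivalently, by \eqref{eq:asymp_pstwo}, $\mP_x^Y(\tau_D>t)\to1$), while $P_t f(x)\to f(x)$ since $(P_t)_{t\ge0}$ is Feller on $\R$. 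This gives $P_t^D f(x)\to f(x)$ pointwise, which is exactly (P2). The main obstacle is the behaviour near the boundary point $0$: one must be careful that $C_0(D)$ for $D=(0,\infty)$ forces vanishing at $0$ as well as at $\infty$, and the estimate \eqref{eq:approx_p_D} handling of the $1\wedge(|x|^{\alpha/2}/\sqrt t)$ factor is precisely what makes $P_t^D$ preserve the vanishing at $0$; beyond that the argument is routine. Finally, I would note $P_0^D=I$ maps $C_0(D)$ to itself trivially, completing the verification that $(P_t^D)_{t\ge0}$ is a Feller semigroup on $C_0(D)$.
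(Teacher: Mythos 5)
Your proof is correct and structurally parallels the paper's: verify (P1) and (P2), with continuity supplied by Lemma \ref{theorem_p^D_feller_1} and strong continuity then automatic. The tools you use for individual steps, however, differ. For (P1) at $x\to\infty$ the paper invokes the Feller property of the free semigroup $(P_t)$ on $\R$ together with $p_t^D\le p_t$, whereas you use the sharp heat kernel bound \eqref{eq:approx_p_D} directly after reducing to $\ind_{[a,b]}$ via an $\eps$-approximation; at $x\to 0^+$ the paper uses the survival-probability asymptotics \eqref{eq:asymp_pstwo} through $|P_t^Df(x)|\le\|f\|_\infty p_t^D(x,D)$, which is essentially the same boundary factor $1\wedge(|x|^{\alpha/2}/\sqrt t)$ you read off from \eqref{eq:approx_p_D}, so here the two arguments are really the same estimate in different packaging. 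For (P2) the difference is more substantive: you reduce to the full-space Feller property by writing $P_t^Df=P_tf-\mE_x^Y[\tau_D<t;\,P_{t-\tau_D}f(Y_{\tau_D})]$ via the Hunt formula and bounding the killing term by $\|f\|_\infty\mP_x^Y(\tau_D<t)\to0$, while the paper estimates $|P_t^Df(x)-f(x)|$ directly via a modulus-of-continuity splitting of $\int_D|f(y)-f(x)|p_t(x,y)\,\dy$ plus the convergence $p_t^D(x,D)\to1$. Your route is shorter and offloads the analytic work onto the known Feller property of $(P_t)$; the paper's is more self-contained. Both are valid; note only that your reduction to indicators should (and implicitly does) use that $\|P_t^D h\|_\infty\le\|h\|_\infty$ uniformly in $t$, so the $\eps/3$ argument closes cleanly.
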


\begin{proof}
We have $P_t^DC_0(D) \subset C_0(D)$ for every $t>0$. Indeed, let $f\in C_0(D)$. The continuity of $P_t^Df$ follows from Lemma \ref{theorem_p^D_feller_1}. 

Let $x\to +\infty$. By \eqref{eq:killed_leq_stable},
\[
|P_t^Df(x)| \leq \int_D p_t^D(x,y) |f(y)|\,\dy \leq \int_D p_t(x,y) |f(y)|\,\dy \to 0,
\]
which follows from the Feller property of $P_t$ (see \cite[Theorem 19.10]{MR1876169}).

Moreover, let $x\to 0^+$. By \eqref{eq:p_survival} and \eqref{eq:asymp_pstwo},
\[
|P_t^Df(x)| \leq \int_D p_t^D(x,y) |f(y)|\,\dy \leq \norm{f}_\infty p_t^D(x,D) = \norm{f}_\infty \mP_x^Y(\tau_D > t) \approx 1\wedge \frac{|x|^{\alpha/2}}{\sqrt{t}} \to 0,
\]
as needed.

Next, we will verify that $P_t^Df\to f$ in $C_0(D)$, as $t\to 0^+$. For $x>0$, by \eqref{eq:killed_leq_stable},
\begin{align*}
    \big| P_t^Df(x) - f(x)\big| &= 
    \Big| \int_D (f(y)-f(x))p_t^D(x,y)\,\dy - f(x)(1-p_t^D(x,D)) \Big| \\
    &\leq  \int_D |f(y) - f(x)| p_t(x,y)\,\dy + \norm{f}_\infty \big| 1 - p_t^D(x,D)\big|.
\end{align*}
From \eqref{eq:p_survival}, $p_t^D(x,D)\to 1$ as $t\to 0^+$. 

The estimate of the remaining integral is the same as in the proof of Theorem 1.7 in \cite{MR1329992}: Let $\varepsilon>0$. Since $f\in C_0(D)$, there is $\delta>0$ such that $|f(y)-f(x)|<\varepsilon$ for $|y-x|<\delta$. Therefore, from \eqref{eq:p_t_approx} we get
\begin{align*}
    \int_D &|f(y) - f(x)| p_t(x,y)\,\dy \\
    &\leq \varepsilon \int_{D\cap \{|y-x|<\delta\}} p_t(x,y)\,\dy + \int_{D\cap \{|y-x|\geq\delta\}} |f(y)-f(x)|p_t(x,y)\,\dy \\
    &\leq \varepsilon P_t(x,\R) + 2\norm{f}_\infty \int_{D\cap \{|y-x|\geq\delta\}}p_t(x,y)\,\dy \\
    &\leq \varepsilon + 2C\norm{f}_\infty t\int_{|w|\geq\delta} |w|^{-\alpha-1}\,\dw \\
    &= \varepsilon + \frac{4C}{\alpha} \norm{f}_\infty t \delta^{-\alpha}. \qedhere
\end{align*}
\end{proof}


\subsection{Markov processes and the strong Markov property}

In what follows, we work with Markov processes, so we introduce the basic notions for such processes. Our notation is similar to that of  Werner in \cite{MR4247975}, since we use his results below.

Let $\mathcal{U}\subset\R$ be an open set. A \emph{Markov process} $X$ on $\mathcal{U}$ is defined as a following sextuple (see, e.g.,  \cite{MR0264757}, \cite{MR958914}):
\[
X = (\Omega, \widetilde{\mathcal{F}}, (\mathcal{F}_t)_{t\geq 0}, (X_t)_{t\geq 0}, (\theta_t)_{t\geq 0}, (\mP_x)_{x\in\mathcal{U}}),
\]
where $(X_t)_{t\geq 0}$ is a right continuous, $\mathcal{U}$-valued stochastic process on a measurable space $(\Omega,\widetilde{\mathcal{F}})$, adapted to filtration $(\mathcal{F}_t)_{t\geq 0}$ and equipped with shift operators $(\theta_t)_{t\geq 0}$ on $\Omega$. Moreover, $(\mP_x)_{x\in\mathcal{U}}$ is a family of sub-probability measures such that $\mP_x(X_0=x) = 1$ for all $x\in\mathcal{U}$ ($\mE_x$ are the corresponding expectations). Furthermore, for arbitrary $t\geq 0$ and $A\in\widetilde{\mathcal{F}}$, the function $x\mapsto \mP_x(X_t\in A)$ is measurable, and we have the following \emph{Markov property}: For $f\in\mathcal{B}_b(\mathcal{U})$,
\[
\mE_x[f(X_{t+s})| \mathcal{F}_s] = \mE_{X_s}[f(X_t)], \quad x\in\mathcal{U}, ~s,t\geq 0.
\]

We make a few further standard assumptions. We assume that the filtration $(\mathcal{F}_t)_{t\geq 0}$ is augmented by the null sets and right continuous, and we add an isolated, absorbing \emph{cemetery} point $\Delta$ to $\mathcal{U}$. Thus, we let $\mathcal{U}_\Delta := \mathcal{U}\cup\{\Delta\}$. The lifetime of the process is
\[
\zeta := \inf\{t\geq 0:~X_t=\Delta\},
\]
and we have $X_t = \Delta$ if $t\geq\zeta$. Moreover, there exists a \emph{dead path} $[\Delta] \in \Omega$ for which $\zeta([\Delta]) = 0$.\footnote{If we identify $\omega$ with the trajectory $t\mapsto X_t(\omega)$, then $[\Delta]$ is the trajectory $X_t\equiv \Delta$.} We also assume that $X_\infty = \Delta$, $\theta_\infty\omega = [\Delta]$ for all $\omega\in\Omega$ and for any measurable numerical function $f$, $f(\Delta) = 0$.

The semigroup $(T_t)_{t\geq 0}$ corresponding to the stochastic process $X$ is defined as
\[
T_tf(x) = \mE_x[f(X_t)], \quad t\geq 0, ~x\in\mathcal{U},
\]
where $f$ is bounded or non-negative. 

Let $\lambda\in [0,\infty)$. We say that measurable function $f$ on $\mathcal{U}$ is \emph{$\lambda$--excessive} (see, e.g., \cite{MR850715}[Section II.3]) if $f\ge 0$, $e^{-\lambda t}T_tf\le f$ for $t>0$,  and $e^{-\lambda t}T_tf\to f$ as $t\to 0^+$. Let $\mathcal{S}_\lambda$ be the set of all excessive functions. The process $X$ is called \emph{right process} if it is a Markov process equipped with an augmented and right continuous filtration and for all $\lambda>0$, $f\in \mathcal{S}_\lambda$, the map $t\mapsto f(X_t)$ is a.s. right continuous. 

Right processes have the following \emph{strong Markov property} (see \cite{MR4247975}): for every stopping time $\tau$ with respect to the filtration $(\mathcal{F}_t)_{t\geq 0}$ and bounded $\mathcal{F}_0$-measurable function $f$,\footnote{As usual, $X_t\circ \theta_\tau(\omega)$ means $X_t(\theta_\tau\omega)$.}
\begin{align}
    \mE_x[f(X_t\circ\theta_\tau)|\mathcal{F}_\tau] = \mE_{X_\tau}[f(X_t)], \quad t\geq 0, ~x\in\mathcal{U}.
\end{align}
Here $\mathcal{F}_0$ is the universal completion of $\sigma(X_s:~s\geq 0)$ with respect to all $\mP^x$.
We call $X$ a \emph{Feller process} or a \emph{strong Feller process} if the corresponding semigroup $(T_t)_{t\geq 0}$ is Feller or strong Feller, respectively.

It is well known that the isotropic $\alpha$-stable L\'evy process killed when leaving $D$, namely,
\[
Y_t^D := \begin{cases}  
Y_t, &t<\tau_D,\\
\Delta, &t\geq\tau_D,
\end{cases}
\]
is a right process with lifetime $\xi = \tau_D$. Indeed, it is a Feller process, so a Hunt process (see Chung \cite{MR648601}), and every Hunt process is a right process (see Getoor \cite[Section 9]{getoor1975}).


\section{The Servadei--Valdinoci process}\label{Chap_Servadei_process}

In this section, we construct a stochastic (Servadei--Valdinoci) process on $\R_*$, which is the main focus of the paper. The construction relies on concatenation of right Markov processes, as presented in \cite{MR4247975}. Further, we give the corresponding transition kernel and semigroup of operators, and study their properties. In particular, we prove they are sub-Markovian, strong continuous, 
and introduce auxiliary excessive functions for the semigroup.

\subsection{Construction of the process}

We start with a construction of the stochastic process. Its behaviour depends on the starting point: starting from $x>0$ the process behaves like a  certain right process $Z^1$, while starting from $x<0$ it behaves like a different right process $Z^2$. The processes $Z^1$ and $Z^2$ are defined as follows. Let 
\[
Z^1 = (\Omega^1, \mathcal{F}^1, (\mathcal{F}_t^1)_{t\geq 0}, (Z_t^1)_{t\geq 0}, (\theta_t^1)_{t\geq 0}, (\mP_x^1)_{x>0}),
\]
be the isotropic $\alpha$-stable L\'evy process killed when leaving $D = (0,\infty)$, i.e. $Z^1 := Y^D$. We denote the transition kernel of $Z^1$ by
\[
P^1_t(x,A) := P_t^D(x,A) = \int_A p_t^D(x,y)\,\dy,\quad t>0, ~x>0,~A\subset \R.
\]
Of course, the lifetime of $Z^1$ is $\zeta_1^Z := \tau_D = \sup\{t\geq 0:~Z^1_t>0\}$.

\medskip
The second process we consider lives on $(-\infty,0)$, stays at the starting point $x<0$ for an exponential time with mean $1/\nu(x,D)$ and afterwards it is killed. The process will be denoted by
\[
Z^2 = (\Omega^2, \mathcal{F}^2, (\mathcal{F}_t^2)_{t\geq 0}, (Z_t^2)_{t\geq 0}, (\theta_t^2)_{t\geq 0}, (\mP_x^2)_{x<0}).
\]
Its transition kernel is 
\[
P^2_t(x,A) := \delta_x(A)e^{-\nu(x,D)t}, \quad x<0,~A\subset\R, ~t>0,
\]
and its lifetime $\zeta_2^Z$ is the above exponential random variable with mean $1/\nu(x,D)$.

\medskip
For $t>0$, we define the integral kernel on $\R_*$,
\begin{align*}
    \P_t(x,A) &=
    \begin{cases}
    P^1_t(x,A), &\textrm{ if } x>0, \\
    P^2_t(x,A), &\textrm{ if } x< 0,
    \end{cases}
\end{align*}
where $A\subset \R_*$.
We note that $\P_t(x, \overline{D}^c) = 0$ if $x>0$ and $\P_t(x,D) = 0$ if $x<0$. 

\begin{lemma}\label{F1}
For $t>0$, $\P_t$ is a subprobability transition kernel.
\end{lemma}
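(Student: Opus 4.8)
The plan is to verify directly that $\P_t$ is (i) a kernel for each fixed $t>0$, (ii) subprobabilistic, and (iii) satisfies the semigroup (Chapman--Kolmogorov) identity $\P_{t+s} = \P_t \P_s$. Since $\P_t$ is defined piecewise on the two halves of $\R_*$, and since $\P_t(x,\cdot)$ is supported in $D$ when $x>0$ and in $\overline{D}^c$ when $x<0$, the two pieces never interact and all three properties reduce to the corresponding properties of $P^1$ and $P^2$ separately.

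\smallskip

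First I would check that $\P_t$ is a kernel on $\R_*$. Measurability of $x\mapsto \P_t(x,A)$ on $D$ follows because $x\mapsto p_t^D(x,y)$ is (jointly) continuous, hence $x\mapsto P_t^D(x,A)$ is measurable; measurability on $\overline{D}^c$ follows because $x\mapsto \delta_x(A)e^{-\nu(x,D)t} = \ind_A(x)e^{-\nu(x,D)t}$ is measurable (here $\nu(x,D) = \mathcal{A}_{1,\alpha}\alpha^{-1}|x|^{-\alpha}$ by \eqref{nu_scaling}, which is continuous on $\overline{D}^c$). For each fixed $x$, $A\mapsto \P_t(x,A)$ is a measure: on $D$ it is $P_t^D(x,\cdot)$, a measure by construction; on $\overline{D}^c$ it is the (nonnegative) multiple $e^{-\nu(x,D)t}\delta_x$ of a Dirac measure. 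Next, subprobability: for $x>0$, $\P_t(x,\R_*) = P_t^D(x,D) = \mP_x^Y(\tau_D>t)\le 1$ by \eqref{eq:p_survival}; for $x<0$, $\P_t(x,\R_*) = e^{-\nu(x,D)t}\le 1$.

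\smallskip

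Finally, the semigroup property. Fix $s,t>0$, $x\in\R_*$, and $A\subset\R_*$. If $x>0$, then $\P_t(x,\dy)$ is carried by $D$, so
\[
(\P_t\P_s)(x,A) = \int_{\R_*} \P_t(x,\dy)\,\P_s(y,A) = \int_D P_t^D(x,\dy)\,P_s^D(y,A) = P_{t+s}^D(x,A) = \P_{t+s}(x,A),
\]
using the Chapman--Kolmogorov equation \eqref{eq:p^D_Ch-K} for $p^D$. If $x<0$, then $\P_t(x,\dy) = e^{-\nu(x,D)t}\delta_x(\dy)$ is carried by $\{x\}\subset\overline{D}^c$, so
\[
(\P_t\P_s)(x,A) = e^{-\nu(x,D)t}\,P_s^2(x,A) = e^{-\nu(x,D)t}\,e^{-\nu(x,D)s}\,\delta_x(A) = e^{-\nu(x,D)(t+s)}\,\delta_x(A) = \P_{t+s}(x,A).
\]
This establishes $\P_{t+s} = \P_t\P_s$ and completes the proof.

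\smallskip

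\emph{Remark on difficulty.} There is essentially no obstacle here: everything is bookkeeping, and the only mild point worth stating explicitly is that the supports of $\P_t(x,\cdot)$ force the composition integral to stay on one half of $\R_*$, so that no cross terms between $P^1$ and $P^2$ arise. The Chapman--Kolmogorov identity for $p^D$ is already recorded as \eqref{eq:p^D_Ch-K}, and the exponential semigroup identity $e^{-\lambda t}e^{-\lambda s} = e^{-\lambda(t+s)}$ is trivial; measurability and joint continuity of $p^D$ were noted right after \eqref{eq:scaling2}.
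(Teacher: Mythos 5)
Your proof is correct and takes essentially the same approach as the paper's: verify nonnegativity/subprobability piecewise and reduce the Chapman--Kolmogorov identity to \eqref{eq:p^D_Ch-K} for $x>0$ and the exponential semigroup identity for $x<0$. The only additions are your explicit check of the kernel measurability and the (correct, useful) remark that the disjoint supports of $\P_t(x,\cdot)$ on the two half-lines prevent cross terms.
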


\begin{proof}
We clearly have $\P_t(x,A)\geq 0$ for all $x\in\R$ and $A\subset\R$. Moreover,
\[
\P_t(x,\R) = \int_{\R} p_t^D(x,y)\,\dy \leq \int_{\R} p_t(x,y)\,\dy = 1,\quad x>0,
\]
and
\[
\P_t(x,\R) = e^{-t\nu(x,D)} \leq 1,\quad x< 0.
\]
It remains to verify the Chapman--Kolmogorov equations. Let $s,t>0$ and $A\subset \R$. For $x>0$, we use \eqref{eq:p^D_Ch-K}. For $x<0$, we have
\begin{align*}
    \int_{\R} \P_t(x,\dy)\P_s(y,A) &=  e^{-t\nu(x,D)} \P_s(x,A) = e^{-t\nu(x,D)} e^{-s\nu(x,D)}\delta_x(A) \\
    &= \delta_x(A) e^{-(s+t)\nu(x,D)} = \P_{s+t}(x,A),
\end{align*}
which completes the proof.
\end{proof}

As usual, we let 
\[
\widehat{P}_tf(x) := \int_\R f(y)\widehat{P}_t(x,\dy),
\]
if $f$ is nonnegative or bounded. Additionally, we let $\widehat{P}_0 := I$.

\begin{lemma}\label{P_t_feller}
$(\widehat{P}_t)_{t\geq 0}$ is a Feller semigroup on $C_0(\R_*)$.
\end{lemma}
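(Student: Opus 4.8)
**Proof proposal for Lemma (that $(\widehat{P}_t)_{t\ge 0}$ is a Feller semigroup on $C_0(\R_*)$).**

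The plan is to verify the two Feller axioms (P1) and (P2) separately on the two components $D=(0,\infty)$ and $\overline D^c=(-\infty,0)$, exploiting that $\widehat P_t$ does not mix them: $\widehat P_t(x,\overline D^c)=0$ for $x>0$ and $\widehat P_t(x,D)=0$ for $x<0$. First I would recall from Lemma~\ref{F1} that $(\widehat P_t)$ is a subprobability transition semigroup, so $0\le f\le 1$ implies $0\le \widehat P_tf\le 1$; it remains to check (P1) $\widehat P_tf\in C_0(\R_*)$ for $f\in C_0(\R_*)$, and (P2) $\widehat P_tf(x)\to f(x)$ as $t\to 0^+$.

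For (P1): Given $f\in C_0(\R_*)$, restrict to $x>0$, where $\widehat P_tf(x)=P_t^Df(x)$. The restriction $f|_D$ need not lie in $C_0(D)$ because $f$ may fail to vanish near $0$; however $f|_D$ is continuous and bounded on $D$, so by Lemma~\ref{theorem_p^D_feller_1}, $P_t^Df$ is continuous and bounded on $D$. To get the correct decay, I would argue as in the proof of Lemma~\ref{theorem_p^D_feller}: as $x\to+\infty$, $|P_t^Df(x)|\le \int_D p_t(x,y)|f(y)|\,\dy\to 0$ by the Feller property of $P_t$ on $\R$ (here using that $f$ extended by $0$ outside a suitable compact set is close to a $C_0(\R)$ function, or directly dominated convergence with \eqref{eq:p_t_approx}); and as $x\to 0^+$, $|P_t^Df(x)|\le \|f\|_\infty\,\mP_x^Y(\tau_D>t)\approx 1\wedge |x|^{\alpha/2} t^{-1/2}\to 0$ by \eqref{eq:asymp_pstwo}. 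Hence $\widehat P_tf|_D\in C_0(D)$. On $x<0$, $\widehat P_tf(x)=f(x)e^{-\nu(x,D)t}$; the map $x\mapsto\nu(x,D)=\mathcal A_{1,\alpha}\alpha^{-1}|x|^{-\alpha}$ is continuous on $(-\infty,0)$ by \eqref{nu_scaling}, so $\widehat P_tf$ is continuous there, and since $|f(x)e^{-\nu(x,D)t}|\le |f(x)|$ with $f\in C_0$, it vanishes as $x\to-\infty$ and as $x\to 0^-$. Therefore $\widehat P_tf\in C_0(\R_*)$.

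For (P2): On $x>0$, the estimate in the proof of Lemma~\ref{theorem_p^D_feller} shows $|P_t^Df(x)-f(x)|\to 0$ (pointwise) as $t\to 0^+$, using $p_t^D(x,D)\to 1$ from \eqref{eq:p_survival} and the modulus-of-continuity splitting of $\int_D|f(y)-f(x)|p_t(x,y)\,\dy$ via \eqref{eq:p_t_approx}. On $x<0$, $\widehat P_tf(x)-f(x)=f(x)(e^{-\nu(x,D)t}-1)\to 0$ as $t\to0^+$ since $\nu(x,D)<\infty$ for $x\ne0$. Combining both cases gives (P2). The semigroup property is Lemma~\ref{F1}, and strong continuity on $C_0(\R_*)$ then follows automatically from (P1), (P2), and the semigroup property by the standard fact quoted after the definition of Feller semigroup. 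The only mildly delicate point is the behavior of $P_t^Df$ near the boundary point $0$ when $f$ does not vanish at $0$; this is exactly where the survival-probability estimate \eqref{eq:asymp_pstwo} is essential, so I expect that to be the main (though routine) obstacle.
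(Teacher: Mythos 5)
Your proof is correct and essentially follows the paper's approach of handling $D$ and $\overline{D}^c$ separately, exploiting that $\widehat P_t$ does not mix them. The one place where you diverge is the ``delicate point'' you flag at the end: you worry that $f|_D$ may fail to lie in $C_0(D)$ because ``$f$ may fail to vanish near $0$,'' and so you re-derive the boundary decay of $P_t^Df$ as $x\to 0^+$ and $x\to+\infty$. But this worry rests on a misreading of the paper's convention: $C_0(\R_*)$ is defined as the class of continuous functions with $|f|<\varepsilon$ off some compact $K\subset\R_*$, and the paper explicitly notes that such $f$ satisfy $\lim_{x\to 0}f(x)=0$ as well as $\lim_{|x|\to\infty}f(x)=0$. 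Consequently $f|_D\in C_0(D)$ automatically, and the paper's proof simply cites Lemma~\ref{theorem_p^D_feller} (the Feller property of $(P_t^D)$ on $C_0(D)$) to obtain both (P1) and (P2) on $D$ in one stroke. Note also that in your argument for $x<0$ you do use that $f(x)\to 0$ as $x\to 0^-$, which is slightly inconsistent with your earlier claim that $f$ need not vanish near $0$. The extra calculations you carry out are valid and in fact re-prove part of Lemma~\ref{theorem_p^D_feller}, so no gap results, but they are superfluous given the stated definition of $C_0(\R_*)$.
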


\begin{proof}
By Lemma \ref{F1}, $(\widehat{P}_t)_{t\geq 0}$ is a sub-Markov semigroup on $C_0(\R_*)$. Let $f\in C_0(\R_*)$. We will show that $\widehat{P}_tf\in C_0(\R_*)$. The function $D\ni x\mapsto \P_tf(x) = P_t^Df(x)$ is in $C_0(D)$ by Lemma \ref{theorem_p^D_feller}, and of course, $\overline{D}^c\ni x\mapsto \P_tf(x) = e^{-\nu(x,D)t} f(x)$ is in $C_0(\overline{D}^c)$.

To finish the proof, we show that for $f\in C_0(\R_*)$,  $\P_tf(x) \to f(x)$, as $t\to 0^+$. For $x>0$,
\[
|\P_t f(x) - f(x)| = |P_t^Df(x) - f(x)| \to 0,
\]
as $t\to 0^+$, which follows from Lemma \ref{theorem_p^D_feller}. In the case $x<0$, we have 
\[
|\P_tf(x) - f(x)| \leq |f(x)| \cdot |e^{-\nu(x,D)t} - 1| \to 0,
\]
as $t\to 0^+$. This proves the lemma.
\end{proof}

\medskip
It follows from Kallenberg \cite[Chapter 19]{MR1876169} or Getoor and Blumenthal \cite[Theorem I.9.4]{MR0264757} that  there exists a Hunt process (hence, a right process \cite[(47.3)]{MR958914}) $Z$ on $\R_*$, with transition function given by $\P_t$. The lifetime of this process will be denoted by 
\[
\zeta = 
\begin{cases}
    \zeta_1^Z, &\textrm{if } Z_0>0, \\
    \zeta_2^Z, &\textrm{if } Z_0<0.
\end{cases}
\]
Of course, $\zeta<\infty$ a.s. The process $Z$ starting from $x>0$ is the isotropic $\alpha$-stable L\'evy process $Z^1=Y$ killed when leaving $D$, while starting from $x<0$ it is the compound--Poisson type process $Z^2$. We will extend $Z$ beyond its lifetime by \emph{concatenation}.

\medskip
A concatenation of functions: $x_1$, defined on $[0,t_1)$, $x_2$, defined on $[0,t_2)$, etc., is a function $x$ defined on $[0,t_1+t_2+\ldots)$ by letting $x(t) = x_n\big( t - (t_1+\ldots + t_n)\big)$ for $t_1+\ldots+t_n\leq t<t_1+\ldots+t_n+t_{n+1}$. Here $t_1, t_2,\ldots \in [0,\infty)$ and if $t_i=0$ then the function $x_i$ has no affect on $x$. 

\medskip
We will next concatenate a countable sequence of stochastic processes. For $n\in \mathbb{N}$, suppose that the processes $X^n = (\Omega^{(n)}, \mathcal{F}^{(n)}, (\mathcal{F}_t^{(n)})_{t\geq 0}, (X_t^{(n)})_{t\geq 0}, (\theta_t^{(n)})_{t\geq 0}, (\mP_x^{(n)})_{x\in \R_*})$ on $\R_*$ are independent copies of the right process $Z$. By $\zeta^{(n)}$ we denote the lifetime  of each process $X^n$, $n\in\mathbb{N}$. For each process $X^n$, we consider the dead path $[\Delta^n]\in \Omega^{(n)}$ with the property $\zeta^{(n)}([\Delta^n]) = 0$, and let $\theta_\infty^{(n)}(\omega):= [\Delta^n]$ for all $\omega\in \Omega^{(n)}$.\footnote{We distinguish dead paths for each process but the cemetery point attached to $\R_*$ may be the same, denoted by $\Delta$.} 

\medskip
We define the transfer kernel $k$ on $\R_*$ (also called \emph{instantaneous kernel}) to specify jumps between $D$ and $\overline{D}^c$ in the following way: for $A\subset\R$,
\[
k(x,A) = \begin{cases}
\displaystyle \frac{\nu(x,A\cap D^c)}{\nu(x,D^c)}, &x>0, \\[7pt]
\displaystyle \frac{\nu(x,A\cap D)}{\nu(x,D)}, &x<0.
\end{cases}
\]
If $x>0$, then $k(x,\cdot)$ gives the starting distribution of the process $Z^2$ initiated after $Z^1$ died at $x$. Similarly, for $x<0$, $k(x,\cdot)$ is the initial distribution of $Z^1$ initiated after $Z^2$ died at $x$. To be more precise, for $k = 1,2,\ldots$, we define a \emph{transfer kernel} $K^k:\Omega^{(k)}\times \mathcal{B} \to [0,1]$ by
\begin{align}\label{eq:transition_semi}
    K^k(\omega_k, A) := k(X^k_{\zeta^{(k)}-}(\omega_k), A), \quad \omega_k\in \Omega^{(k)}, ~A\subset \R_*,
\end{align}
where $\mathcal{B}$ denotes the $\sigma$-algebra of Borel measurable subsets of $\R_*$.

\medskip
Following Werner \cite{MR4247975} (see also Sharpe \cite[Chapter 14]{MR958914}), we define the process $X$ on $\R_*$ which is called a \emph{concatenation} of the processes $(X^n)_{n\in\mathbb{N}}$, as follows.

We set $\Omega = \prod_{n\in\mathbb{N}}\Omega^{(n)}$ and $\widetilde{\mathcal{F}} = \bigotimes_{n\in\mathbb{N}} \mathcal{F}^{(n)}$. For $\omega = (\omega_n)_{n\in\mathbb{N}}\in\Omega$, $t\geq 0$ and $n\geq 1$, we define $\widetilde{R}_n(\omega) := \sum_{k=1}^n \zeta^{(k)}(\omega_k)$ and $\widetilde{R}_0(\omega) = 0$, and $\widetilde{R}_\infty(\omega) = \lim\limits_{n\to\infty} \widetilde{R}_n(\omega) = \sum_{k=1}^\infty \zeta^{(k)}(\omega_k)$. To simplify notation, we make the convention $\zeta^{(k)}(\omega) = \zeta(\omega_k) = \zeta^{(k)}(\omega_k)$. 

Following the construction in \cite{MR4247975} for all $t\geq 0$, $\omega = (\omega_1,\omega_2,\ldots)\in\Omega$, we let
\begin{align*}
    X_t(\omega) =
    \begin{cases}
    X_t^1(\omega_1), &\widetilde{R}_0(\omega)\leq t<\widetilde{R}_1(\omega),\\
    X_{t-\widetilde{R}_1(\omega)}^2(\omega_2), &\widetilde{R}_1(\omega)\leq t<\widetilde{R}_2(\omega), \\
    X_{t-\widetilde{R}_2(\omega)}^3(\omega_3), &\widetilde{R}_2(\omega)\leq t<\widetilde{R}_3(\omega), \\
    \vdots &\vdots \\
    \Delta, & t\geq \widetilde{R}_\infty(\omega).
    \end{cases}
\end{align*}
Here $\Delta$ is the isolated, absorbing cemetery state attached to $\R_*$. We set $\R_\Delta := \R_*\cup \{\Delta\}$. Of course,
\[
\widetilde{R}_\infty = \inf\{t\geq 0:~ X_t = \Delta\},
\]
and $X_t = \Delta$ for all $t\geq \widetilde{R}_\infty$.

For $t\in [0,\infty)$ and $\omega = (\omega_1, \omega_2,\ldots) \in\Omega$, we define the \emph{shift operator} as follows:

\begin{align*}
    \theta_t(\omega) =
    \begin{cases}
    (\theta^{(1)}_t(\omega_1), \omega_2, \omega_3,\omega_4,\ldots), &\widetilde{R}_0(\omega)\leq t<\widetilde{R}_1(\omega),\\
    ([\Delta^1], \theta^{(2)}_{t-\widetilde{R}_1(\omega)}(\omega_2), \omega_3, \omega_4,\ldots), &\widetilde{R}_1(\omega)\leq t<\widetilde{R}_2(\omega), \\
    ([\Delta^1],[\Delta^2],\theta^{(3)}_{t-\widetilde{R}_2(\omega)}(\omega_3), \omega_4, \ldots), &\widetilde{R}_2(\omega)\leq t<\widetilde{R}_3(\omega), \\
    \vdots &\vdots 
    \end{cases}
\end{align*}

We also let $X_\infty = \Delta$ and $\theta_\infty \equiv [\Delta]$. 

Further details of this construction, especially the construction of the filtration $(\mathcal{F}_t)_{t\geq 0}$, and the family of measures $(\mathbb{P}_x, x\in\R_*)$ can be found in Werner \cite{MR4247975} (see also Ikeda et al. \cite{MR0202197}, Meyer \cite{MR0415784} and Sharpe \cite[Chapter II.14]{MR958914}). We only note that the \emph{law} of the concatenated process, i.e. a suitable family of measures $(\mathbb{P}_x, x\in \R_*)$ on $(\Omega,\widetilde{\mathcal{F}})$, is defined in terms of the distributions $(\mathbb{P}_x^{(n)}, x\in\R_*)$, $n\in\mathbb{N}$, and the transfer kernel $K$. 

From \cite{MR4247975} it follows that $X = (\Omega, \widetilde{\mathcal{F}}, (\mathcal{F}_t)_{t\geq 0}, (X_t)_{t\geq 0}, (\theta_t)_{t\geq 0}, (\mP_x)_{x\in \R_*})$, constructed above, is a right process on $\R_*$ with the lifetime $\widetilde{R}_\infty$. In particular, $X$ has the strong Markov property and for $n\in\mathbb{N}$, $x\in\R_*$ and $f\in \mathcal{B}_b(\R_*)$,
\begin{align}\label{eq:Werner_eq}
    \mE_x\big[ f(X_{\widetilde{R}_n}) ~\big|~\mathcal{F}_{{\widetilde{R}_n}^-}\big] = K^n f\circ \pi^n,
\end{align}
where $\pi^n$ denotes the projection of $\omega = (\omega_1, \omega_2,\ldots)$ onto the $n$-th coordinate. Moreover, from \eqref{eq:transition_semi}, for $\omega\in\Omega$, $A\subset \R_*$ and $f(x) = \ind_A(x)$, we have\footnote{Recall that $\mathcal{F}_{\tau^-}$ is generated by $\mathcal{F}_{0^+}$ and the class of sets $A\cap \{t<\tau\}$, where $A\in \mathcal{F}_t$, $t\geq 0$ (see \cite[p. 16]{chung2005markov}).}
\begin{align}\label{eq:Werner_eq1}
    \mP_x\big[ X_{\widetilde{R}_n}\in A~\big|~\mathcal{F}_{{\widetilde{R}_n}^-}\big](\omega) = K^n(\omega_n,A) = k(X^n_{\zeta^{(n)}-}(\omega_n),A).
\end{align}
We call $X$ the Servadei--Valdinoci process.

For the process $X$, we define a sequence of stopping times
\begin{align*}
  R_1 & := \inf\{t\geq 0:\: X_0 X_t < 0 \} = \tau_D + \tau_{\bar{D}^c}, \\
  R_{n+1} & := R_n + R_1\circ \theta_{R_n},\quad n=1,2,....   
\end{align*}
Let $m(\omega)=\min\{k\in\mathbb{N}:\: \omega_k\neq [\Delta^k]\}$. From the construction, $R_1=\widetilde{R}_m$.
Hence, for all sets $A\subset  \R_*, T \subset [0,\infty),$ by \eqref{eq:Werner_eq1} we get
\begin{align}\label{distributionR_1}
\mP_x\left[R_1 \in T, X_{R_1}\in A\right] & = \sum_{k=1}^\infty \mP_x\left[R_1\in T, X_{R_1}\in A, R_1=\widetilde{R}_k \right]
\\ \nonumber
& = \sum_{k=1}^\infty \mP_x\left[\widetilde{R}_k\in T, X_{\widetilde{R}_k}\in A, R_1=\widetilde{R}_k \right]
\\ \nonumber
& = \sum_{k=1}^\infty \mE_x\left[ \mE_x\left[ \ind_A(X_{\widetilde{R}_k})  \big|~\mathcal{F}_{\widetilde{R}_k-}\right]; \widetilde{R}_k\in T, R_1=\widetilde{R}_k \right] \\ \nonumber
& = \sum_{k=1}^\infty \mE_x\left[ k(X_{\widetilde{R}_k-},A),\widetilde{R}_k\in T, R_1=\widetilde{R}_k\right] \\ \nonumber
& =  \mE_x\left[ k(X_{R_1-},A),R_1\in T \right].
\end{align}
Of course we have $\lim\limits_{n\to\infty} R_n = \widetilde{R}_\infty.$ In what follows, we will use the times $R_n$ and not $\widetilde{R}_n$.

\subsection{Transition semigroup of the process}\label{sec:semigroup}

Let
\begin{align*}
    \widehat{\nu}(x,y) = \begin{cases}
    \nu(x,y), &\textrm{if } x\in D, ~y\in D^c \textrm{ or } x\in D^c, ~y\in D,\\
    0, &\textrm{otherwise.}
    \end{cases}
\end{align*}
By the usual abuse of notation, we also use $\widehat{\nu}$ to denote the corresponding kernel on $\R$:
\begin{align*}
    \widehat{\nu}(x,A) = \int_A \widehat{\nu}(x,y)\,\dy, \quad x\in\R, ~ A\subset\R.
\end{align*}
Of course, $\widehat{\nu}(x,A) = 0$ if $x\in D$, $A\subset D$. Similarly, $\widehat{\nu}(x,A) = 0$ if $x\in D^c$, $A\subset D^c$. 

\medskip
With the kernel meaning of $\widehat{\nu}$, we consider the following perturbation series: $K_0 := I$ and for $t>0$,
\begin{align}\label{eq:K_t_def}
    K_t &:= \P_t + \int_0^t \P_{t_1} \widehat{\nu} \P_{t-t_1}\,\mathrm{d}t_1 + \int_0^t \int_{t_1}^{t} \P_{t_1} \widehat{\nu} \P_{t_2-t_1} \widehat{\nu} \P_{t-t_2}\,\mathrm{d}t_2\,\mathrm{d}t_1 + \ldots := \sum_{n=0}^\infty K_{t,n}.
\end{align}
Thus $K_{t,0} = \P_t$ and, for $n\geq 1$,
\begin{align}\label{eq:K_t_n_def}
K_{t,n} := \idotsint_{0<t_1<t_2<\ldots<t_n<t} \P_{t_1} \widehat{\nu} \P_{t_2-t_1} \widehat{\nu} \ldots \widehat{\nu} \P_{t-t_n}\,\mathrm{d}t_1\ldots\mathrm{d}t_n.
\end{align}

\medskip
From Bogdan and Sydor \cite{MR3295773}, $K_t$ is a transition kernel. In what follows, we need a connection of $(K_t)_{t\geq 0}$ to right processes, so we will not use \cite{MR3295773}, but the reader interested in a purely analytic approach to $(K_t)_{t\geq 0}$ may find \cite{MR3295773} instructive. For the probabilistic proof of the fact that $K_t$, $t\geq 0$, is a transition kernel, see Lemma \ref{K_subprobability}. The deficiency of \cite{MR3295773} for our goals is that the strong Markov property of the Markov process given by $(K_t)_{t\geq 0}$ is not addressed in \cite{MR3295773}. 

\medskip
In what follows, in case of non-negative integrands, we often use Tonelli's theorem without mention.

\begin{lemma}\label{proposition_recurrence}
For $t>0$, $n=0,1,\ldots$,
\begin{equation}
    \label{eq:K_recurence}
    K_{t,n+1} = \int_0^t  \P_{r} \widehat{\nu} K_{t-r,n}\,\dr = \int_0^t K_{r,n}\widehat{\nu}\P_{t-r}\,\dr.
\end{equation}
\end{lemma}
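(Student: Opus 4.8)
The plan is to prove the two identities in \eqref{eq:K_recurence} by induction on $n$, using the recursive structure of the $n$-fold time-ordered integrals in \eqref{eq:K_t_n_def} together with the semigroup property $\widehat{P}_{t+s}=\widehat{P}_t\widehat{P}_s$ from Lemma~\ref{F1}. The base case $n=0$ reads $K_{t,1}=\int_0^t \widehat{P}_r \widehat{\nu}\, \widehat{P}_{t-r}\,\dr = \int_0^t \widehat{P}_r\widehat{\nu}\, K_{t-r,0}\,\dr$, which is exactly the definition of $K_{t,1}$ in \eqref{eq:K_t_def} (and symmetrically for the second form), so there is nothing to do.

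For the inductive step, I would fix $n\ge 1$ and work directly with the definition \eqref{eq:K_t_n_def} of $K_{t,n+1}$, an integral over the simplex $0<t_1<\cdots<t_{n+1}<t$. First I would peel off the outermost variable: substitute $r:=t_1$ and observe that, conditionally on $t_1=r$, the remaining variables $t_2,\dots,t_{n+1}$ range over $r<t_2<\cdots<t_{n+1}<t$, while the integrand factors as $\widehat{P}_r$ (acting first) composed with $\widehat{\nu}\,\widehat{P}_{t_2-t_1}\widehat{\nu}\cdots\widehat{\nu}\,\widehat{P}_{t-t_{n+1}}$. Shifting the inner variables by $r$, i.e. letting $s_j := t_{j+1}-r$ for $j=1,\dots,n$, turns the inner integral into an integral over $0<s_1<\cdots<s_n<t-r$ of $\widehat{\nu}\,\widehat{P}_{s_1}\widehat{\nu}\cdots\widehat{\nu}\,\widehat{P}_{(t-r)-s_n}$, which is precisely $K_{t-r,n}$ by \eqref{eq:K_t_n_def}. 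Hence $K_{t,n+1}=\int_0^t \widehat{P}_r\widehat{\nu}\, K_{t-r,n}\,\dr$. The second identity $K_{t,n+1}=\int_0^t K_{r,n}\widehat{\nu}\,\widehat{P}_{t-r}\,\dr$ is obtained the same way by peeling off the \emph{innermost} variable $t_{n+1}$ instead, setting $r:=t_{n+1}$ and noting that $t_1<\cdots<t_n<r$ with integrand $\widehat{P}_{t_1}\widehat{\nu}\cdots\widehat{\nu}\,\widehat{P}_{r-t_n}$ composed with $\widehat{\nu}\,\widehat{P}_{t-r}$; the first factor is $K_{r,n}$. Alternatively, once the first identity is known for all $n$, the second follows by a short separate induction, or by a change of variables $t_j\mapsto t-t_{n+2-j}$ that reverses the simplex.

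All manipulations are interchanges of nonnegative (kernel) integrals and changes of variables on simplices, so Tonelli's theorem applies throughout and justifies every step; as the excerpt notes, we invoke it without further comment. The only point requiring a word of care is that $\widehat{P}$, $\widehat{\nu}$ act here as integral operators on $\mathcal{B}^+(\R_*)$, so the compositions and the Fubini-type rearrangements should be read pointwise against a fixed test function $f\in\mathcal{B}^+(\R_*)$ (or on fixed Borel sets), which is harmless. I do not expect a genuine obstacle: the statement is essentially a bookkeeping identity encoding the convolution structure of the Duhamel/perturbation series \eqref{eq:K_t_def}, and the main thing to get right is the index shift $s_j=t_{j+1}-r$ on the inner simplex.
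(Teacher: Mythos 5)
Your argument is correct and is essentially the paper's proof: the paper also establishes each identity by a direct change of variables on the simplex (the substitution $u_j=t_j+r$, which is your shift $s_j=t_{j+1}-r$ run in reverse), starting from $\int_0^t\P_r\widehat{\nu}K_{t-r,n}\,\dr$ and arriving at $K_{t,n+1}$. Note that your ``induction'' framing is superfluous, since your inductive step never actually invokes the inductive hypothesis---it is a direct computation valid for each $n$ separately, exactly as in the paper.
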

\begin{proof}
By the substitution $u_1=t_1+r$, $u_2=t_2+r, \ldots, u_n=t_n+r$,
\begin{align*}
\int_0^t  \P_{r} \widehat{\nu} K_{t-r,n}\,\dr &=
\int_0^t \dr \int_0^{t-r}\dt_1 \int_{t_1}^{t-r}\dt_2 \ldots \int_{t_{n-1}}^{t-r}\dt_n \,
\P_{r} \widehat{\nu} \P_{t_1} \widehat{\nu} \P_{t_2-t_1} \widehat{\nu} \ldots \widehat{\nu} \P_{t-r-t_n} \\
&= \int_0^t \dr \int_r^t \du_1 \int_{u_1}^t \du_2 \ldots \int_{u_{n-1}}^t\du_n \, \P_r\widehat{\nu}\P_{u_1-r}\widehat{\nu}\P_{u_2-u_1}\widehat{\nu}\ldots\widehat{\nu}\P_{t-u_n} \\
&=\idotsint_{0<r<u_1<u_2<\ldots<u_{n-1}<u_n<t} \P_r\widehat{\nu}\P_{u_1-r}\widehat{\nu}\P_{u_2-u_1}\widehat{\nu}\ldots\widehat{\nu}\P_{t-u_n} \\
&= K_{t,n+1}.
\end{align*}
The second equality in \eqref{eq:K_recurence} arises similarly.
\end{proof}

By \eqref{eq:K_t_def} and \eqref{eq:K_recurence}, we get the following perturbation formula for $K_t$.

\begin{corollary}\label{perturbation_formula}
For $t>0$,
\begin{equation}\label{eq:perturbation_formula}
    K_t = \P_t + \int_0^t \P_r \widehat{\nu} K_{t-r}\,\dr = \P_t + \int_0^t K_r \widehat{\nu} \P_{t-r}\,\dr.
\end{equation}
\end{corollary}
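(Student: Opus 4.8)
The plan is to obtain \eqref{eq:perturbation_formula} by simply summing the recurrence of Lemma~\ref{proposition_recurrence} over $n$. Starting from the definition $K_t=\sum_{n=0}^\infty K_{t,n}$ in \eqref{eq:K_t_def}, I would separate the term $K_{t,0}=\P_t$ and apply the first identity in \eqref{eq:K_recurence} to each of the remaining terms, getting
\[
K_t = K_{t,0} + \sum_{n=0}^\infty K_{t,n+1} = \P_t + \sum_{n=0}^\infty \int_0^t \P_r\,\widehat{\nu}\,K_{t-r,n}\,\dr .
\]
Since $\P_r$, $\widehat{\nu}$ and $K_{t-r,n}$ all act as non-negative integral operators (they map $\mathcal{B}^+$ into $\mathcal{B}^+$), Tonelli's theorem permits interchanging the summation with the $\dr$-integral once both sides are evaluated on a non-negative function; this yields
\[
\sum_{n=0}^\infty \int_0^t \P_r\,\widehat{\nu}\,K_{t-r,n}\,\dr = \int_0^t \P_r\,\widehat{\nu}\Big(\sum_{n=0}^\infty K_{t-r,n}\Big)\dr = \int_0^t \P_r\,\widehat{\nu}\,K_{t-r}\,\dr ,
\]
which is the first equality of \eqref{eq:perturbation_formula}. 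The second equality is proved identically, using instead the second form of the recurrence in \eqref{eq:K_recurence}.

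The only point requiring a word of care is the legitimacy of rearranging the infinite series of kernels. I would deal with this by working throughout with the integral operators applied to a fixed $f\in\mathcal{B}^+(\R_*)$, so that every quantity involved lies in $[0,\infty]$ and the reordering of non-negative terms, combined with monotone convergence and Tonelli, is automatic; in particular, finiteness or the subprobability property of $K_t$ is not needed for this formal identity and will be established separately (see Lemma~\ref{K_subprobability}).

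I do not anticipate any genuine obstacle here: the corollary is essentially bookkeeping on top of Lemma~\ref{proposition_recurrence}, whose proof already contains the one substantive computation (the change of variables $u_i=t_i+r$). The write-up will therefore be short, the only thing to state explicitly being the non-negativity justification for swapping sum and integral.
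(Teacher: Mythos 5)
Your proof is correct and follows exactly the route the paper takes: the paper deduces Corollary~\ref{perturbation_formula} directly from the definition~\eqref{eq:K_t_def} together with the recurrence~\eqref{eq:K_recurence}, and your write-up simply makes explicit the Tonelli/monotone-convergence justification for swapping the sum and the $\dr$-integral, which is valid since all kernels involved are non-negative.
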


From the definition, for $x>0$, $t>0$, $K_{t,0}(x,\dy)$ is absolutely continuous with respect to the Lebesgue measure on $\R$ with density $K_{t,0}(x,y) = p_t^D(x,y)$. In particular, $K_{t,0}(x,y) = K_{t,0}(y,x)$, $x,y>0$. 

\begin{lemma}\label{symmetry_of_K}
    For $t>0$, $f,g\in \mathcal{B}^+(\R)$,
    \begin{align}\label{eq:symmetricity}
    \int_\R (K_tf)(x)g(x)\,\dx = \int_\R f(x) (K_tg)(x)\,\dx.
    \end{align}
\end{lemma}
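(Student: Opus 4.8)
The plan is to prove the symmetry identity \eqref{eq:symmetricity} termwise, using the series representation \eqref{eq:K_t_def}: it suffices to show that each $K_{t,n}$ is symmetric in the sense $\int_\R (K_{t,n}f)g\,\dx = \int_\R f(K_{t,n}g)\,\dx$ for $f,g\in\mathcal{B}^+(\R)$, and then sum over $n$ by Tonelli (all integrands being non-negative). The base case $n=0$ is already noted in the text: $K_{t,0}(x,y) = p_t^D(x,y)$ is a symmetric kernel density, and $P^2_t$ contributes nothing to the bilinear form on $D^c$ beyond the diagonal $\delta_x$-part, which is trivially symmetric; more precisely $K_{t,0}=\P_t$ acts as $p_t^D$ on $D$ and as multiplication by $e^{-\nu(x,D)t}$ on $D^c$, and both pieces are self-adjoint on $L^2(\R,\dx)$.

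For the inductive step, the key structural fact is that the perturbing kernel $\widehat\nu$ is itself symmetric as a kernel density: $\widehat\nu(x,y)=\widehat\nu(y,x)$, since $\nu(x,y)=\nu(y-x)=\nu(x-y)=\nu(y,x)$ by \eqref{eq:nu_def}, and the indicator condition defining $\widehat\nu$ (one variable in $D$, the other in $D^c$) is symmetric in $x,y$. Thus each factor $\P_{t_j-t_{j-1}}$ and each factor $\widehat\nu$ in \eqref{eq:K_t_n_def} is a self-adjoint integral operator on $L^2(\R,\dx)$. First I would record the elementary fact that if $S$ and $T$ are integral operators with symmetric kernel densities (with respect to $\dx$), or more generally self-adjoint on $L^2(\R,\dx)$ in the pairing $\langle\cdot,\cdot\rangle$ against $\dx$, then so is their composition $ST$ — this is just Tonelli plus Fubini on non-negative integrands. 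Applying this to the product $\P_{t_1}\widehat\nu\P_{t_2-t_1}\widehat\nu\cdots\widehat\nu\P_{t-t_n}$, which is a palindrome of self-adjoint factors, gives that the integrand of \eqref{eq:K_t_n_def} is self-adjoint for each fixed $(t_1,\ldots,t_n)$; integrating in the time variables (again Tonelli, non-negative integrands) preserves symmetry, so $K_{t,n}$ is symmetric. Summing the series \eqref{eq:K_t_def} then yields \eqref{eq:symmetricity}.

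There is one point requiring a little care rather than a real obstacle: one must be sure the various interchanges of integration are legitimate, i.e., that we are genuinely in the setting of Tonelli's theorem. This is fine because $f,g\ge 0$ and all kernels ($p_t^D$, $e^{-\nu(\cdot,D)t}\delta$, $\widehat\nu$, $p_s$) are non-negative, so every multiple integral appearing — over $\R^{n+1}$ in space and over the simplex $\{0<t_1<\cdots<t_n<t\}$ in time — has a non-negative integrand and may be rearranged freely; finiteness is not needed for the identity, only for its later use. The one genuinely substantive ingredient is the self-adjointness of the single factor $\P_s$ on $D^c$, namely that multiplication by $e^{-\nu(x,D)t}$ is symmetric — trivial — together with the symmetry of $p_t^D$, which was established earlier in the Potential theory subsection. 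I expect the main (very mild) obstacle to be purely bookkeeping: writing the $n$-fold composition cleanly as a palindrome and invoking the composition lemma the right number of times, e.g. by an auxiliary induction on the number of $\widehat\nu$ factors.
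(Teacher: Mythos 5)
Your reduction to termwise symmetry of each $K_{t,n}$, the base case $n=0$, and the observation that $\widehat\nu$ and each $\widehat P_s$ are symmetric are all correct and match the paper's strategy. But the multiplicative step contains a genuine error: you assert that the composition of two self-adjoint integral operators is again self-adjoint, and from this you deduce that the integrand of \eqref{eq:K_t_n_def} is self-adjoint for each fixed $(t_1,\ldots,t_n)$. Neither claim holds. The adjoint of $ST$ is $T^*S^*=TS$, which equals $ST$ only when $S$ and $T$ commute (in kernel terms: $(ST)(y,x)=(TS)(x,y)\neq (ST)(x,y)$ in general). Moreover, the product $\widehat P_{t_1}\widehat\nu\,\widehat P_{t_2-t_1}\widehat\nu\cdots\widehat\nu\,\widehat P_{t-t_n}$ is a palindrome only in the \emph{type} of its factors, not in their time parameters; its adjoint is the genuinely different operator $\widehat P_{t-t_n}\widehat\nu\cdots\widehat\nu\,\widehat P_{t_1}$. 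So the integrand is not symmetric pointwise on the simplex.

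The conclusion is nonetheless salvageable, and the repair is close to what you intended: the adjoint of the $n$-fold product is the order-reversed product, and the change of variables $s_j = t - t_{n+1-j}$ maps the simplex $\{0<t_1<\cdots<t_n<t\}$ onto itself, carrying the reversed integrand back to the original one. Thus the \emph{integral} $K_{t,n}$ is symmetric even though no individual integrand is; this is a legitimate alternative to what the paper does. The paper instead inducts on $n$ via the one-step recurrence $K_{t,n+1}=\int_0^t K_{r,n}\widehat\nu\,\widehat P_{t-r}\,\dr$ of Lemma~\ref{proposition_recurrence}: one uses the inductive symmetry of $K_{r,n}$, then the symmetry identities \eqref{eq:nu} and \eqref{eq:n0_Pt} for $\widehat\nu$ and $\widehat P$, and finally the substitution $r\mapsto t-r$ together with the other form of the recurrence. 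That substitution is precisely your time reversal, absorbed one step at a time, so the two arguments are morally the same once yours is corrected — but as written your ``palindrome'' step does not go through.
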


\begin{proof}
Note that
\begin{align*}
\int_\R (K_tf)(x) g(x)\,\dx &= \int_\R \dx \int_\R  \, f(y)g(x)K_t(x,\dy) \\
&= \sum_{n=0}^\infty \int_\R \dx \int_\R  \, f(y)g(x)K_{t,n}(x,\dy).
\end{align*}
It suffices to show that for every $n\in\mathbb{N}$,
\begin{align}
    \label{eq:symmetricity_n}
    \int_\R \dx \int_\R  \, f(y)g(x)K_{t,n}(x,\dy) = \int_\R \dx \int_\R  \, f(x)g(y)K_{t,n}(x,\dy).
\end{align}
We proceed by induction. 
By the definition and symmetry of $p^D$, we have
\begin{align*}
    &\int_\R \dx \int_\R  \, f(y)g(x)K_{t,0}(x,\dy) \\
    &= \int_D \dx \int_D \dy \, f(y)g(x)p_t^D(x,y) + \int_{D^c} \dx \int_{D^c}  \, \delta_x(\dy) e^{-\nu(x,D)t} f(y)g(x) \\
    &= \int_D \dx \int_D \dy \, f(y)g(x)p_t^D(y,x) + \int_{D^c} f(x)g(x) e^{-\nu(x,D)t} \,\dx \\
    &= \int_D \dy \int_\R \dx \, f(y)g(x)p_t^D(y,x) + \int_{D^c} \dy \int_{\R} \delta_y(\dx) e^{-\nu(y,D)t} f(y)g(x) \\
    &= \int_\R \dy \int_\R f(y)g(x)K_{t,0}(y,\dx) = \int_\R \dx \int_\R f(x)g(y)K_{t,0}(x,\dy).
\end{align*}
Put differently,
\begin{align}
    \label{eq:n0_Pt}
    \int_\R \dx \int_\R f(y)g(x)\widehat{P}_t(x,\dy) = \int_\R \dx \int_\R f(x)g(y)\widehat{P}_t(x,\dy).
\end{align}
Similarly, we prove that
\begin{align}
    \label{eq:nu}
    \int_\R \dx \int_\R f(y)g(x)\widehat{\nu}(x,\dy) = \int_\R \dx \int_\R f(x)g(y)\widehat{\nu}(x,\dy).
\end{align}

Assume that \eqref{eq:symmetricity_n} holds for some $n\geq 0$. From Lemma \ref{proposition_recurrence}, \eqref{eq:n0_Pt} and \eqref{eq:nu},
\begin{align*}
    &\int_\R \dx \int_\R f(y)g(x) K_{t,n+1}(x,\dy) \\
    &= \int_0^t \dr \int_\R \dx \int_\R f(y)g(x) K_{r,n}\widehat{\nu}\P_{t-r}(x,\dy) = \int_0^t \dr \int_\R \dx \int_\R \, g(x) \big(\widehat{\nu}\P_{t-r}f\big)(y) K_{r,n}(x,\dy) \\
    &= \int_0^t \dr \int_\R \dx \int_\R \, g(y) \big(\widehat{\nu}\P_{t-r}f\big)(x) K_{r,n}(x,\dy) = \int_0^t \dr \int_\R \dx \, \big(\widehat{\nu}\P_{t-r}f\big)(x) \big(K_{r,n}g\big)(x) \\
    &= \int_0^t \dr \int_\R \dx \int_\R \dy \, \widehat{\nu}(x,\dy) \big(\P_{t-r}f\big)(y) \big(K_{r,n}g\big)(x) \\
    &= \int_0^t \dr \int_\R \dx \int_\R \dy \, \widehat{\nu}(x,\dy) \big(\P_{t-r}f\big)(x) \big(K_{r,n}g\big)(y) = \int_0^t \dr \int_\R \dx \, \big(\P_{t-r}f\big)(x) \big(\widehat{\nu}K_{r,n}g\big)(x) \\
    &= \int_0^t \dr \int_\R \dx \, f(x) \big(\widehat{P}_{t-r}\widehat{\nu}K_{r,n}g\big)(x) = \int_0^t \dr \int_\R \dx \, f(x) \big(\widehat{P}_{r}\widehat{\nu}K_{t-r,n}g\big)(x).
\end{align*}
By Lemma \ref{proposition_recurrence},
\begin{align*}
    \int_\R \dx \int_\R f(y)g(x) K_{t,n+1}(x,\dy) = \int_\R \dx \int_\R\, f(x)g(y) K_{t,n+1}(x,\dy),
\end{align*}
which completes the proof.
\end{proof}

\begin{proposition}\label{Th:semigroup}
For $x\neq 0$, $t>0$ and $f\in \mathcal{B}^+(\R_*)$, we have
\begin{align}\label{eq:semigroup_of_the_process}
\mE_x f(X_t) = K_tf(x).
\end{align}
\end{proposition}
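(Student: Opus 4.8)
The plan is to condition on the number of \emph{switches} of $X$ between $D$ and $\overline{D}^c$ before time $t$, and to identify the $n$-th term $K_{t,n}$ of the perturbation series \eqref{eq:K_t_def} as the contribution of the trajectories that switch exactly $n$ times. Concretely, I would set $R_0:=0$ and use the switch times $R_0\le R_1\le R_2\le\cdots$; they are non-decreasing with $R_n\uparrow\widetilde{R}_\infty$, and because $X_t=\Delta$ (hence $f(X_t)=0$) for $t\ge\widetilde{R}_\infty$, one has $f(X_t)=\sum_{n\ge 0}f(X_t)\,\ind_{\{R_n\le t<R_{n+1}\}}$ pointwise on $\Omega$. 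By Tonelli's theorem and $K_t=\sum_{n\ge 0}K_{t,n}$, it is then enough to prove, for every $x\neq 0$, $t>0$ and $f\in\mathcal{B}^+(\R_*)$,
\begin{equation}\label{eq:plan_claim_n}
\mE_x\!\left[f(X_t);\,R_n\le t<R_{n+1}\right]=K_{t,n}f(x),\qquad n=0,1,2,\dots,
\end{equation}
and to sum over $n$.

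I would prove \eqref{eq:plan_claim_n} by induction on $n$. The base case $n=0$ is immediate from the description of $X$ before the first switch: started at $x>0$ it is $Y^D$, started at $x<0$ it rests at $x$, so $\mE_x[f(X_t);\,t<R_1]$ equals $P_t^Df(x)$ for $x>0$ and $e^{-\nu(x,D)t}f(x)$ for $x<0$, that is $\P_tf(x)=K_{t,0}f(x)$. For the inductive step, assuming \eqref{eq:plan_claim_n} for $n$, I would use the cocycle identities $R_{n+1}=R_1+R_n\circ\theta_{R_1}$ and $R_{n+2}=R_1+R_{n+1}\circ\theta_{R_1}$ to write, on $\{R_1\le t\}$,
\[
f(X_t)\,\ind_{\{R_{n+1}\le t<R_{n+2}\}}=\Bigl(f(X_{t-R_1})\,\ind_{\{R_n\le t-R_1<R_{n+1}\}}\Bigr)\circ\theta_{R_1},
\]
with the elapsed time $t-R_1$ being $\mathcal{F}_{R_1}$-measurable (off $\{R_1\le t\}$ both indicators vanish). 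Applying the strong Markov property of the right process $X$ at the stopping time $R_1$ — with the standard substitution of the $\mathcal{F}_{R_1}$-measurable time $t-R_1$ — together with the induction hypothesis in the form $\mE_y[f(X_s);\,R_n\le s<R_{n+1}]=K_{s,n}f(y)$, I get
\[
\mE_x\!\left[f(X_t);\,R_{n+1}\le t<R_{n+2}\right]=\mE_x\!\left[(K_{t-R_1,n}f)(X_{R_1});\,R_1\le t\right]=\int_0^t\!\int_{\R_*}(K_{t-r,n}f)(y)\;\mP_x\!\left(R_1\in\dr,\,X_{R_1}\in\dy\right).
\]

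The key step, and the one I expect to be the main obstacle, is to identify the joint law $\mP_x(R_1\in\dr,\,X_{R_1}\in\dy)$ with the kernel $(\P_r\widehat{\nu})(x,\dy)\,\dr$. Starting from \eqref{distributionR_1}, i.e. $\mP_x(R_1\in T,\,X_{R_1}\in A)=\mE_x[k(X_{R_1-},A);\,R_1\in T]$, I would compute the law of $(R_1,X_{R_1-})$ in the two regimes. For $x>0$ this pair is $(\tau_D,Y_{\tau_D-})$, whose law under $\mP_x^Y$ is $p_r^D(x,a)\,\nu(a,D^c)\,\dr\,\da$ on $(0,\infty)\times D$ by taking $B=D^c$ in the Ikeda--Watanabe formula \eqref{eq:Ikeda_Watanabe}; then $\nu(a,D^c)\,k(a,A)=\nu(a,A\cap D^c)=\widehat{\nu}(a,A)$ for $a\in D$, and $\int_D p_r^D(x,a)\widehat{\nu}(a,A)\,\da=(\P_r\widehat{\nu})(x,A)$ because $p_r^D(x,\cdot)$ vanishes off $D$, so $\mP_x(R_1\in T,\,X_{R_1}\in A)=\int_T(\P_r\widehat{\nu})(x,A)\,\dr$. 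For $x<0$, $R_1$ is the $\mathrm{Exp}(\nu(x,D))$ holding time of $Z^2$ at $x$ and $X_{R_1-}=x$, so, using $\widehat{\nu}(x,A)=\nu(x,A\cap D)$ and $\P_r(x,\cdot)=e^{-\nu(x,D)r}\delta_x$,
\[
\mE_x[k(x,A);\,R_1\in T]=\int_T\nu(x,A\cap D)\,e^{-\nu(x,D)r}\,\dr=\int_T(\P_r\widehat{\nu})(x,A)\,\dr,
\]
as wanted. Substituting this into the displayed integral and recognizing $\int_0^t\P_r\widehat{\nu}K_{t-r,n}\,\dr=K_{t,n+1}$ via Lemma~\ref{proposition_recurrence} closes the induction; summing \eqref{eq:plan_claim_n} over $n$ then gives \eqref{eq:semigroup_of_the_process}. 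Apart from this identification, everything is routine — the only care needed is in bookkeeping the shift operators and the $\mathcal{F}_{R_1}$-measurable elapsed time in the strong Markov step.
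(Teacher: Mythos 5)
Your proposal is correct and follows the same high-level strategy as the paper: decompose $\mE_x f(X_t)$ according to the number of switches before time $t$, show that the $n$-th piece equals $K_{t,n}f(x)$, and sum. The organizational difference is in the inductive step: the paper first derives the explicit joint law of $(R_n, X_{R_n})$ (Lemmas \ref{lem:L1}--\ref{lem:L2}, by applying the strong Markov property at $R_n$) and then substitutes into $\mE_x[f(X_t),\,R_n\le t<R_{n+1}]$, whereas you induct on $n$ directly using the strong Markov property at $R_1$, the cocycle identity $R_{n+1}=R_1+R_n\circ\theta_{R_1}$, and only the $n=1$ joint law $\mP_x(R_1\in\dr, X_{R_1}\in\dy)=(\P_r\widehat{\nu})(x,\dy)\,\dr$. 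Both versions work and require the same ingredients (Ikeda--Watanabe, the transfer kernel computation from \eqref{distributionR_1}, and the recursion of Lemma \ref{proposition_recurrence}); yours is slightly more economical since it avoids the multi-integral formula for $P_n$, at the cost of needing to verify the cocycle identity (the paper's definition is $R_{n+1}=R_n+R_1\circ\theta_{R_n}$, so your form follows by a short induction using $\theta_{s+t}=\theta_s\circ\theta_t$) and to be careful with the Galmarino-type substitution of the $\mathcal{F}_{R_1}$-measurable time $t-R_1$, which you flag correctly. You also correctly identified the computation of the law of $(R_1,X_{R_1})$ as the genuine content of the argument, and your derivation of it in the two regimes matches the paper's Lemma \ref{lem:L1} ($n=1$).
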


In order to prove Proposition \ref{Th:semigroup}, we need the following auxiliary results. 

For $n=1,2,\ldots$, $x\in\R$ and Borel sets $T\subset [0,\infty)$ and $A\subset\R\setminus \{0\}$, we define 
\[
P_n(x, T, A) := \mP_x(R_n\in T, X_{R_n}\in A),
\]
the (joint) distribution of the pair $(R_n, X_{R_n})$ under $\mP_x$.

\begin{lemma}\label{lem:L1}
For $x\neq 0$, $T\subset [0,\infty)$ and $A\subset \R_*$,
\begin{align*}
    P_1(x,T,A) &= \int_T \ds \int_A (\P_s \widehat{\nu})(x,\da), \\
    P_{n+1}(x,T,A) &= \iint P_n(x,\ds,\dv) \int_T \dr \int_A (\P_{r-s} \widehat{\nu})(v,\da)\ind_{\{s<r\}},\quad n\geq 1.
\end{align*}
\end{lemma}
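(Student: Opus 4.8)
The plan is to compute the joint distribution of $(R_n, X_{R_n})$ by conditioning on the pre-$R_n$ information and exploiting the way the concatenated process $X$ is built from the building block $Z$. The key observation is that between the $(n{-}1)$-st and $n$-th "sign-change" times $R_{n-1}$ and $R_n$, the process $X$ evolves exactly as a fresh copy of $Z$ started from $X_{R_{n-1}}$ (run until its lifetime, then the transfer kernel $k$ fires once to move it across $\{0\}$); and the distribution of the pair (lifetime, position just before death) of $Z$ started at $x$ is governed by $\widehat P_t$ together with the jump kernel $\widehat\nu$. Concretely, for $x\neq 0$, the process $Z$ started from $x$ lives for an exponential/first-exit time, and by the Ikeda--Watanabe-type description (formula \eqref{eq:Ikeda_Watanabe} when $x>0$, and the elementary compound-Poisson description when $x<0$), the chance that $Z$ dies in the time-interval $\ds$ having last been at $v$ and then jumps into $A$ is $(\widehat P_s\widehat\nu)(x,\da)\,\ds$ restricted appropriately. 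This is precisely what \eqref{distributionR_1} encodes, together with \eqref{eq:Werner_eq1}.

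First I would establish the base case $n=1$. Recall from the construction that $R_1 = \widetilde R_m$ with $m=\min\{k:\omega_k\neq[\Delta^k]\}$, so the first sign change coincides with the death of the first non-trivial building block. Using \eqref{distributionR_1}, we have
\[
P_1(x,T,A) = \mP_x\!\left[R_1\in T,\,X_{R_1}\in A\right] = \mE_x\!\left[k(X_{R_1-},A);\,R_1\in T\right].
\]
It remains to identify the law of $(R_1, X_{R_1-})$ under $\mP_x$ with the measure $\int_T\ds\,(\widehat P_s\,\cdot\,)(x,\cdot)$ against which $k$ is integrated. For $x>0$ the pair $(R_1, X_{R_1-}) = (\tau_D, Y_{\tau_D-})$ (before the instantaneous transfer), and by the Ikeda--Watanabe formula \eqref{eq:Ikeda_Watanabe}, $\mP_x(\tau_D\in\ds,\,Y_{\tau_D-}\in\dv) = \ds\,p_s^D(x,v)\,\nu(v,D^c)\,\dv$; since on $D$ we have $\widehat P_s(x,\dv) = p_s^D(x,v)\dv$ and $\widehat\nu(v,D^c) = \nu(v,D^c)$ for $v>0$, and since $k(v,A) = \widehat\nu(v,A)/\widehat\nu(v,\R)$, combining these gives $\mE_x[k(X_{R_1-},A);R_1\in T] = \int_T\ds\int_A(\widehat P_s\widehat\nu)(x,\da)$, as claimed. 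For $x<0$ the analogous computation is immediate: $R_1$ is exponential with rate $\nu(x,D)$, so $\mP_x(R_1\in\ds) = \nu(x,D)e^{-\nu(x,D)s}\ds$, while $X_{R_1-}=x$ and $\widehat P_s(x,\cdot) = e^{-\nu(x,D)s}\delta_x$, $\widehat\nu(x,\da) = \nu(x,\da\cap D)$, and again the product collapses correctly.

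Next I would do the inductive step. Assume the formula for $P_n$ and compute $P_{n+1}$ by conditioning on $\mathcal F_{R_n}$. By the strong Markov property of the right process $X$ applied at the stopping time $R_n$,
\[
P_{n+1}(x,T,A) = \mE_x\!\left[\,\mP_{X_{R_n}}\!\left(R_1\in T - R_n,\,X_{R_1}\in A\right);\text{ suitable event}\,\right],
\]
more precisely, writing $R_{n+1} = R_n + R_1\circ\theta_{R_n}$, for any test sets we get
\[
\mP_x(R_{n+1}\in T,\,X_{R_{n+1}}\in A) = \iint P_n(x,\ds,\dv)\;\mP_v\!\left(R_1\in T-s,\,X_{R_1}\in A\right).
\]
Now substitute the base-case identity $\mP_v(R_1\in T-s, X_{R_1}\in A) = \int_{T-s}\du\int_A(\widehat P_u\widehat\nu)(v,\da) = \int_T\dr\int_A(\widehat P_{r-s}\widehat\nu)(v,\da)\ind_{\{s<r\}}$, which yields exactly the asserted recursion. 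Here one uses $X_{R_n}\in\R_*$ a.s. (the process never reaches $0$, by $\mP_v(Y_{\tau_D}=0)=0$ for $v>0$ and by construction for $v<0$), so that $\mP_{X_{R_n}}$ makes sense and the base case applies pointwise in $v$.

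The main obstacle is the rigorous handling of the strong Markov property across the stopping time $R_n$ together with the measurability bookkeeping of the concatenation: one must justify that $R_1\circ\theta_{R_n}$ and $X_{R_1}\circ\theta_{R_n}$ have the claimed conditional law given $\mathcal F_{R_n}$, which requires that $X$ genuinely is a right process (so the strong Markov property holds) — this is guaranteed by Werner's concatenation theorem \cite{MR4247975}, but the shift identities $R_{n+1} = R_n + R_1\circ\theta_{R_n}$ and the compatibility of $\theta_{R_n}$ with the $\sigma$-algebra $\mathcal F_{R_n^-}$ versus $\mathcal F_{R_n}$ need care. A secondary technical point is Tonelli/Fubini interchanges to pass from the conditional-expectation identity \eqref{eq:Werner_eq1} to the explicit kernel expressions, and confirming the identification of $\mP_x(\tau_D\in\ds,Y_{\tau_D-}\in\dv)$ with $(\widehat P_s)(x,\dv)\,\widehat\nu(v,\R)\,\ds$ uniformly in the relevant regime; both are routine given \eqref{eq:Ikeda_Watanabe} and the definitions of $\widehat P$ and $\widehat\nu$.
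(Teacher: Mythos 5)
Your proof is correct and follows essentially the same route as the paper: for the base case you invoke \eqref{distributionR_1} and then identify the law of $(R_1, X_{R_1-})$ with the Ikeda--Watanabe description for $x>0$ and the exponential clock for $x<0$, and for the recursion you condition on $\mathcal{F}_{R_n}$ via the strong Markov property (the paper makes this precise by introducing $G(\omega,\omega')$ and citing \cite[Exercise~8.16]{MR0264757}) and then plug in the base case at the random starting point $X_{R_n}$. One minor point of framing: you phrase the second step as ``assume the formula for $P_n$,'' but no inductive hypothesis is actually used or needed --- the displayed recursion merely expresses $P_{n+1}$ in terms of the kernel $P_n$ as given, and your own computation respects this; it applies strong Markov at $R_n$ and substitutes the $n=1$ identity for the inner probability, exactly as the paper does.
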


\begin{proof}
From 
\eqref{distributionR_1} we have
\[
 P_1(x,T,A) = \mP_x(R_1\in T, X_{R_1}\in A)
 = \mE_x\big[\ind_{T}(R_1(\omega)) k(X_{R_1-}(\omega),A)].
\]
For $x>0$, from \eqref{eq:transition_semi} and the Ikeda--Watanabe formula \eqref{eq:Ikeda_Watanabe},
\begin{align*}
    P_1(x,T,A) &=\mE_x\big[\ind_{T}(R_1) k(X_{R_1-},A) \big] 
    = \mE_x^1\big[\ind_{T}(\tau_D) k(Z^1_{\tau_D-},A) \big] \\
    &= \int_T \ds \int_D \dy \,p_s^D(x,y)\nu(y,D^c)k(y,A)
    = \int_T \ds \int_D \dy \,p_s^D(x,y)\nu(y,A\cap D^c) \\
    &= \int_T \ds \int_D \dy \int_A \da  \, p_s^D(x,y)\widehat{\nu}(y,a) 
    = \int_T \ds \int_A  (\P_s\widehat{\nu})(x,\da).
\end{align*}
Similarly, for $x<0$,
\begin{align*}
    P_1(x,T,A) &= \mE_x\big[\ind_{T}(R_1) k(X_{R_1-},A) \big] 
    = \mE_x^2\big[\ind_{T}(\zeta_2^Z) k(Z^2_{\zeta_2^Z-},A) \big] \\
    &=  k(x,A) \mP_x^2\big[\zeta_2^Z\in T \big] 
    = \int_T \ds \int_A \da \, e^{-\nu(x,D)s} \widehat{\nu}(x,a) \\
    &= \int_T \ds \int_A \da \, (\P_s\widehat{\nu})(x,\da).
\end{align*}

Furthermore, for $n\geq 1$, 
\begin{align*}
    P_{n+1}(x,T,A) &= \mP_x(R_{n+1}\in T, X_{R_{n+1}}\in A) \\
    &= \int_\Omega \mE_x\big[ \ind_{T}(R_n + R_1\circ\theta_{R_n}) \ind_{A}(X_{R_{n+1}})  \big| \mathcal{F}_{R_n}\big](\omega) \mP_x(\mathrm{d}\,\omega).
\end{align*}
Let $G(\omega,\omega') := \ind_T(R_n(\omega) + R_1(\omega'))\ind_A(X_{R_1(\omega')}(\omega'))$, $\omega, \omega' \in\Omega$. 
For $\omega' := \theta_{R_n}\omega$, we have
\[
X_{{R_1}(\omega')}(\omega') = X_{R_1\circ\theta_{R_n}(\omega)}(\theta_{R_n}\omega)  = X_{R_{n+1}(\omega)}(\omega).
\]
Moreover, for $H(\omega) := G(\omega, \theta_{R_n}\omega)$ we have,
\begin{align*}
\mE_x[H | \mathcal{F}_{R_n}](\omega) = 
\mE_x\big[ \ind_{T}(R_n + R_1\circ\theta_{R_n}) \ind_{A}(X_{R_{n+1}})  \big| \mathcal{F}_{R_n}\big](\omega).
\end{align*}
Hence,
\[
P_{n+1}(x,T,A) = \int_\Omega \mE_x[H | \mathcal{F}_{R_n}](\omega) \mP_x(\mathrm{d}\,\omega).
\]
From the strong Markov property  \cite[Exercise 8.16]{MR0264757}, we obtain that 
\begin{align*}
    P_{n+1}(x,T,A) &= \int_\Omega \int_\Omega G(\omega, \omega') \mP_{X_{R_n}(\omega)}(\mathrm{d}\,\omega') \mP_x(\mathrm{d}\,\omega) \\
    &= \int_\Omega \int_\Omega \ind_T(R_n(\omega) + R_1(\omega'))\ind_A(X_{R_1(\omega')}(\omega')) \mP_{X_{R_n}(\omega)}(\mathrm{d}\,\omega') \mP_x(\mathrm{d}\,\omega) \\
    &= \int_\Omega \mE_{X_{R_n}(\omega)}\big[\ind_T(R_n(\omega) + R_1)\ind_A(X_{R_1}) \big] \mP_x(\mathrm{d}\,\omega)  \\
    &=\mE_x\big[ \mE_{X_{R_n}}\big[\ind_T(s + R_1)\ind_A(X_{R_1}) \big]\big|_{s=R_n} \big] \\
    &= \mE_x\big[ \mP_{X_{R_n}} \big( R_1 + s\in T, X_{R_1}\in A\big) \big|_{s=R_n}\big] \\
    &=\iint P_n(x,\ds, \dv) \mP_v(R_1\in (T-s)\cap (0,\infty), X_{R_1}\in A).
\end{align*}
Moreover, we have
\begin{align*}
    \mP_v(R_1 \in (T-s)\cap (0,\infty), X_{R_1}\in A) &= P_1(v, (T-s)\cap (0,\infty), A)\\
    &= \int_{(T-s)\cap (0,\infty)} \dr \int_A (\widehat{P}_r\widehat{\nu})(v,\da) \\
    &= \int_T \dr \int_A (\P_{r-s}\widehat{\nu})(v,\da)\ind_{\{s<r\}}.
\end{align*}
Therefore, 
\begin{align*}
    P_{n+1}(x,T,A) &= \iint P_n(x,\ds, \dv) \int_T \dr \int_A  (\P_{r-s} \widehat{\nu})(v,\da)   \ind_{\{s<r\}},
\end{align*}
which is the desired conclusion.
\end{proof}

\begin{remark}\label{rem:P_n_prob1}
By induction, $P_n(x, (0,\infty), \R_*) = 1$ for every $x\in\R_*$. In particular, $R_n>0$ a.s. 
\end{remark}

\begin{lemma}\label{lem:L2}
For $x\neq 0$ and $n\geq 2$,
\begin{align}\label{eq:lem6}
    P_n(x,\dr,\da) = \idotsint\limits_{0<t_1<\ldots<t_{n-1}<r} (\P_{t_1}\widehat{\nu} \P_{t_2-t_1}\widehat{\nu} \ldots \widehat{\nu} \P_{r-t_{n-1}}\widehat{\nu})(x,\da) \mathrm{d}t_1\ldots\mathrm{d}t_{n-1}\,\dr.
\end{align}
\end{lemma}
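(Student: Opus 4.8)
The plan is a direct induction on $n\geq 2$, feeding the expression for $P_1$ and the recursion for $P_{n+1}$ from Lemma~\ref{lem:L1} into each other. Throughout, every integrand is non-negative and every kernel $\P_t$, $\widehat{\nu}$ is a subprobability kernel (Lemma~\ref{F1}), so Tonelli's theorem applies without further comment and we may reorder integrations and dissect the time simplex freely; the inner integrals in the space variable will always be instances of the kernel composition defined in the Notation section.

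For the base case $n=2$, write $P_1(x,\ds,\dv)=(\P_s\widehat{\nu})(x,\dv)\,\ds$ by Lemma~\ref{lem:L1} and substitute into the formula for $P_2$:
\[
P_2(x,T,A)=\int_0^\infty \ds\int_{\R_*}(\P_s\widehat{\nu})(x,\dv)\int_T\dr\int_A(\P_{r-s}\widehat{\nu})(v,\da)\,\ind_{\{s<r\}}.
\]
Bringing $\int_T\dr$ to the front (so that $\ind_{\{s<r\}}$ confines $s$ to $(0,r)$) and recognizing $\int_{\R_*}(\P_s\widehat{\nu})(x,\dv)(\P_{r-s}\widehat{\nu})(v,\da)=(\P_s\widehat{\nu}\P_{r-s}\widehat{\nu})(x,\da)$, one gets $P_2(x,\dr,\da)=\bigl(\int_0^r(\P_{t_1}\widehat{\nu}\P_{r-t_1}\widehat{\nu})(x,\da)\,\dt_1\bigr)\dr$, which is \eqref{eq:lem6} for $n=2$.

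For the inductive step, assume \eqref{eq:lem6} holds for some $n\geq 2$ and plug it in for $P_n(x,\ds,\dv)$ in the recursion
\[
P_{n+1}(x,T,A)=\iint P_n(x,\ds,\dv)\int_T\dr\int_A(\P_{r-s}\widehat{\nu})(v,\da)\,\ind_{\{s<r\}}.
\]
Relabel the variable $s$ as $t_n$; the integration in $v$ then collapses via
\[
\int_{\R_*}(\P_{t_1}\widehat{\nu}\cdots\widehat{\nu}\P_{t_n-t_{n-1}}\widehat{\nu})(x,\dv)\,(\P_{r-t_n}\widehat{\nu})(v,\da)=(\P_{t_1}\widehat{\nu}\cdots\widehat{\nu}\P_{r-t_n}\widehat{\nu})(x,\da),
\]
and after moving $\int_T\dr$ outside, the condition $\ind_{\{t_n<r\}}$ together with $0<t_1<\cdots<t_{n-1}<t_n$ turns the domain of integration into the simplex $0<t_1<\cdots<t_n<r$. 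This is precisely \eqref{eq:lem6} at level $n+1$, which completes the induction.

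The only real point of care is the combinatorial bookkeeping: keeping track of which time variable is the upper endpoint coming from $P_n$ (namely $s$, which we relabel as $t_n$) versus the split time $r$ coming from the recursion, and matching the indicator $\ind_{\{s<r\}}$ against the ordered simplex. There is no genuine obstacle here: non-negativity of the integrands and the subprobability property of the kernels remove all integrability concerns and legitimize every exchange of integrals and every collapse of a space integral into a kernel composition.
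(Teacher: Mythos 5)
Your proof is correct and follows essentially the same route as the paper: the base case $n=2$ substitutes Lemma~\ref{lem:L1} into the recursion, and the inductive step plugs the induction hypothesis into the same recursion, collapses the space integral into a kernel composition, and merges the $s$-integral (relabelled $t_n$) with the inner simplex, exactly as in the paper's argument.
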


\begin{proof}
From Lemma \ref{lem:L1} we have
\begin{align*}
    P_2(x,\dr,\da) &= \iint P_1(x,\ds,\dv) (\P_{r-s}\widehat{\nu})(v,\da) \ind_{\{s<r\}} \dr \\
    &=\iint (\P_s\widehat{\nu})(x,\dv) \,\ds (\P_{r-s}\widehat{\nu})(v,\da) \ind_{\{s<r\}}\dr \\
    &= \int_0^r (\P_s\widehat{\nu} \P_{r-s}\widehat{\nu})(x,\da)\,\ds \,  \dr.
\end{align*}

For $n>2$, we use induction. Assume that the equality \eqref{eq:lem6} holds for some $n\geq 2$. From Lemma \ref{lem:L1} we then have
\begin{align*}
    &P_{n+1}(x,\dr,\da) \\ &= \iint P_n(x,\ds,\dv) (\P_{r-s} \widehat{\nu})(v,\da) \ind_{\{s<r\}}\dr \\
    &= \int_0^r \ds \idotsint\limits_{0<t_1<\ldots<t_{n-1}<s} \int  (\P_{t_1}\widehat{\nu} \P_{t_2-t_1}\widehat{\nu} \ldots \widehat{\nu} \P_{s-t_{n-1}}\widehat{\nu})(x,\dv) \mathrm{d}t_1\ldots\mathrm{d}t_{n-1} (\P_{r-s} \widehat{\nu})(v,\da) \dr \\
    &=\int_0^r \ds \idotsint\limits_{0<t_1<\ldots<t_{n-1}<s}   (\P_{t_1}\widehat{\nu} \P_{t_2-t_1}\widehat{\nu} \ldots \widehat{\nu} \P_{s-t_{n-1}}\widehat{\nu}\P_{r-s} \widehat{\nu})(x,\da)
    \mathrm{d}t_1\ldots\mathrm{d}t_{n-1}\dr \\
    &=  \idotsint\limits_{0<t_1<\ldots<t_n< r}   (\P_{t_1}\widehat{\nu} \P_{t_2-t_1}\widehat{\nu} \ldots \widehat{\nu} \P_{t_n-t_{n-1}}\widehat{\nu}\P_{r-t_n} \widehat{\nu})(x,\da)
    \mathrm{d}t_1\ldots\mathrm{d}t_{n}\dr,
\end{align*}
which ends the proof.
\end{proof}

\begin{proof}[Proof of Proposition \ref{Th:semigroup}]
Let $x\neq 0$, $t>0$ and $f\in\mathcal{B}_b^+(\R)$. Then we have
\begin{align*}
    \mE_xf(X_t) &= \mE_x[f(X_t), 0\leq t<R_1] + \sum_{n=1}^\infty \mE_x[f(X_t), R_n\leq t<R_{n+1}].
\end{align*}
Obviously, for $x>0$,
\begin{align*}
    \mE_x[f(X_t), 0\leq t<R_1] = \mE_x^1[f(Z_t^1), 0\leq t<\tau_D] = \int_D f(y)p_t^D(x,y)\,\dy = \P_tf(x).
\end{align*}
Similarly, for $x<0$,
\begin{align*}
    \mE_x[f(X_t), 0\leq t<R_1] &= \mE_x^2[f(Z_t^2), 0\leq t<\zeta_2^Z] = \int_{D^c} \delta_x(\dy)e^{-\nu(x,D)t} f(y) \\
    &= f(x)e^{-\nu(x,D)t} = \P_tf(x). 
\end{align*}

Now, let $I_n(t,x) := \mE_x[f(X_t), R_n\leq t<R_{n+1}]$, $n\geq 1$. We will show that $I_n(t,x) = K_{t,n}f(x)$. 
Indeed, using the same method as in the proof of Lemma \ref{lem:L1}, in particular using \cite[Exercise 8.16]{MR0264757}, we get
\begin{align}\label{eq:I_n}
    I_n(t,x) &=\mE_x\big[ \mE_x[f(X_t), R_n\leq t<R_{n+1} \big| \mathcal{F}_{R_n}]\big] \nonumber \\
    &=\mE_x\big[ \mE_x[ f(X_{t-R_n}\circ \theta_{R_n}), t<R_n + R_1\circ\theta_{R_n} \big| \mathcal{F}_{R_n}], R_n\leq t\big] \nonumber\\
    &= \mE_x\big[ \mE_{X_{R_n}}[ f(X_{t-s}, t-s<R_1] \big|_{s=R_n}, R_n\leq t\big] \nonumber\\
    &= \iint P_n(x,\ds,\dv) \mE_v[f(X_{t-s}), t-s<R_1]\ind_{\{s\leq t\}} \nonumber\\
    &= \iint P_n(x,\ds,\dv) \P_{t-s}f(v)\ind_{\{s\leq t\}}.
\end{align}
From Lemma \ref{lem:L1} and \eqref{eq:K_t_n_def},
\begin{align*}
    I_1(t,x) &= \int_0^t\ds \int (\P_s\widehat{\nu})(x,\dv) \P_{t-s}f(v) = \int_0^t (\P_s\widehat{\nu} \P_{t-s}f)(x)\,\ds = K_{t,1}f(x).
\end{align*}
It remains to show our claim for $n\geq 2$. From \eqref{eq:I_n}, Lemma \ref{lem:L2} and \eqref{eq:K_t_n_def} it is easy to verify that
\begin{align*}
    I_n(t,x) &= \iint P_n(x,\ds,\dv) \P_{t-s}f(v)\ind_{\{s\leq t\}} \\
    &= \int_0^t \ds \idotsint\limits_{0<t_1<\ldots<t_{n-1}<s} \mathrm{d}t_1\ldots\mathrm{d}t_{n-1} \int (\P_{t_1}\widehat{\nu} \P_{t_2-t_1}\widehat{\nu} \ldots \widehat{\nu} \P_{s-t_{n-1}}\widehat{\nu})(x,\dv) \P_{t-s}f(v) \\
    &= \int_0^t \ds \idotsint\limits_{0<t_1<\ldots<t_{n-1}<s} \mathrm{d}t_1\ldots\mathrm{d}t_{n-1} (\P_{t_1}\widehat{\nu} \P_{t_2-t_1}\widehat{\nu} \ldots \widehat{\nu} \P_{s-t_{n-1}}\widehat{\nu} \P_{t-s}f)(x) \\
    &= \idotsint\limits_{0<t_1<t_2<\ldots<t_n<t} (\P_{t_1} \widehat{\nu} \P_{t_2-t_1} \widehat{\nu} \ldots \widehat{\nu} \P_{t-t_n}f)(x) \,\mathrm{d}t_1\ldots\mathrm{d}t_n = K_{t,n} f(x),
\end{align*}
which completes the proof in case of $f\in \mathcal{B}_b^+(\R)$.
For $f\in \mathcal{B}^+(\R)$, we consider $f_n := f\wedge n$ and 
using the first part of the proof we obtain $\mE_x f_n(X_t) = K_tf_n(x).$
Then the desired equality follows from the monotone convergence.
\end{proof}

\begin{lemma}\label{K_subprobability}
The family $(K_t)_{t\geq 0}$ is a semigroup of subprobability transition kernels on $\R_*$.
\end{lemma}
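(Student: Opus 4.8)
The plan is to obtain everything directly from Proposition~\ref{Th:semigroup}, which identifies $K_t$ with the transition function of the concatenated right process $X$. First I would take $f=\ind_A$ in \eqref{eq:semigroup_of_the_process} for an arbitrary Borel set $A\subset\R_*$, obtaining
\[
K_t(x,A)=K_t\ind_A(x)=\mE_x\ind_A(X_t)=\mP_x(X_t\in A),\qquad x\in\R_*,\ t\ge 0 .
\]
Since $X$ is a Markov process on $\R_*$ (constructed by the concatenation of Werner~\cite{MR4247975}), the right-hand side is, for each fixed $A$, a measurable function of $x$, and, for each fixed $x$, a measure in $A$; its total mass is $\mP_x(X_t\in\R_*)=\mP_x(t<\widetilde R_\infty)\le 1$. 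Together with $K_0=I$ this already shows that each $K_t$ is a subprobability kernel on $\R_*$.

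For the semigroup identity I would use the Markov property of $X$. Fix $f\in\mathcal B_b^+(\R_*)$ and $s,t\ge 0$; then $K_tf\in\mathcal B_b^+(\R_*)$ because $K_t$ is a subprobability kernel. Applying Proposition~\ref{Th:semigroup} together with the Markov property,
\[
K_{s+t}f(x)=\mE_xf(X_{s+t})=\mE_x\big[\mE_x[f(X_{s+t})\mid\mathcal F_s]\big]=\mE_x\big[\mE_{X_s}[f(X_t)]\big]=\mE_x\big[(K_tf)(X_s)\big]=K_s(K_tf)(x).
\]
Hence $K_{s+t}=K_sK_t$ as kernels; interchanging the roles of $s$ and $t$ gives $K_{s+t}=K_tK_s$ as well, so $(K_t)_{t\ge 0}$ satisfies the semigroup identity $K_{t+s}=K_tK_s$. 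Extending from $\mathcal B_b^+(\R_*)$ to $\mathcal B^+(\R_*)$ is routine: apply the above to $f_n:=f\wedge n$ and pass to the limit by monotone convergence, exactly as at the end of the proof of Proposition~\ref{Th:semigroup}.

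I do not expect a real obstacle in this argument. The one point worth noting is that the cemetery state $\Delta$ is excluded from the state space $\R_*$, which is precisely why the $K_t$ are \emph{sub}probability rather than probability kernels; indeed, by Theorem~\ref{lifetime_of_the_process}, for $\alpha\in(1,2)$ one has $\mP_x(X_t=\Delta)>0$ for all sufficiently large $t$. As an alternative, one could bypass \cite{MR4247975} and argue purely analytically from the perturbation series \eqref{eq:K_t_def}: each $K_{t,n}$ is a kernel, being a finite composition of the subprobability kernels $\widehat P_r$ with the kernel $\widehat\nu$, hence so is $K_t=\sum_{n\ge 0}K_{t,n}$; and the identity $\widehat P_t\ind_{\R_*}(x)+\int_0^t\widehat P_r\widehat\nu\ind_{\R_*}(x)\,\dr=1$ (immediate from \eqref{eq:p_survival} for $x>0$, and a direct computation for $x<0$), together with the recursion of Corollary~\ref{perturbation_formula} and induction on $n$, yields $K_t\ind_{\R_*}\le\ind_{\R_*}$. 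The probabilistic route via $X$ is, however, shorter and is the one used elsewhere in the paper.
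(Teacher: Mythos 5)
Your proof is correct, and the Chapman--Kolmogorov part coincides with the paper's argument: apply Proposition~\ref{Th:semigroup} twice and the Markov property of $X$. The only substantive difference is in the subprobability part. You obtain $K_t(x,\R_*)=\mP_x(X_t\in\R_*)=\mP_x(t<\widetilde R_\infty)\le 1$ directly from Proposition~\ref{Th:semigroup} (taking $f=\ind_{\R_*}$), which is perfectly valid since that proposition is established before the lemma and does not depend on it. The paper instead proves $\sum_{n=0}^N K_{t,n}\mathbf 1\le 1$ by induction on $N$ using the recursion of Lemma~\ref{proposition_recurrence}, the Ikeda--Watanabe formula \eqref{eq:Ikeda_Watanabe} for $x>0$ and the exponential holding time for $x<0$, i.e.\ exactly the analytic alternative you sketch at the end. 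That inductive route is self-contained at the level of the perturbation series (it is noted to parallel \cite{KB_MK_2023} and \cite{MR3295773}), whereas your primary route is shorter and leans entirely on the probabilistic identification of $K_t$ with the transition function of $X$. Both are fine; your version buys brevity, the paper's buys an argument that would survive even if one did not yet have Proposition~\ref{Th:semigroup}, and also records the bound on the partial sums $S_N$ which is reused in the proof of Proposition~\ref{theorem_excessive_function}.
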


\begin{proof}
At first, we will prove the subprobability property. It suffices to show (by induction) that for any $N=0,1,2,\ldots$ and $t>0$, $S_N(x,t) := \sum_{n=0}^N K_{t,n}\mathbf{1}(x)\leq 1$, $x\in\R_*$. Here $\mathbf{1}(x) = 1$, $x\in\R_*$.
For $x>0$, from \eqref{eq:p_D_distribution} it follows that for any $t>0$, $S_0(x,t) = K_{t,0}\mathbf{1}(x) = p_t^D(x,D) = \mP^Y_x(\tau_D>t)\leq 1$. For $x<0$, it is obvious that for any $t>0$ we have $S_0(x,t) = K_{t,0}\mathbf{1}(x) = e^{-\nu(x,D)t} \leq 1$. Hence, $S_0(x,t)\leq 1$ for $x\neq 0$, $t>0$.

Assume that for some $N\in\mathbb{N}$ and all $t>0$, $x\neq 0$ we have $S_N(x,t)\leq 1$. Then, from Lemma \ref{proposition_recurrence}, 
\begin{align*}
S_{N+1}(x,t) = \P_t\mathbf{1}(x) + \int_0^t  \widehat{P}_r \widehat{\nu} S_{N}(x,t-r)\,\dr \leq \P_t\mathbf{1}(x)  + \int_0^t \P_r\widehat{\nu}\mathbf{1}(x)\,\dr.
\end{align*}
For $x>0$, from \eqref{eq:p_survival} and \eqref{eq:Ikeda_Watanabe}, we have
\begin{align*}
    S_{N+1}(x,t) \leq \mP_x^Y(\tau_D>t) + \mP_x^Y(\tau_D\leq t) = 1.
\end{align*}
In case $x<0$, we have
\begin{align*}
S_{N+1}(x,t) &\leq e^{-\nu(x,D)t} + \int_0^t \ds \int_D \dy \, e^{-\nu(x,D)s}\nu(x,y) \\
&= \int_t^\infty \nu(x,D)e^{-\nu(x,D)s}\ds + \int_0^t \nu(x,D)e^{-\nu(x,D)s}\ds = 1,
\end{align*}
which ends the induction.

\medskip
The fact that the transition kernels $(K_t)_{t\geq 0}$ satisfy the Chapman--Kolmogorov equation follows directly from Proposition \ref{Th:semigroup}. Indeed, from the Markov property and from mentioned theorem we have the following Chapman--Kolmogorov equation for $K_t$: for $f\in \mathcal{B}_b^+(\R_*)$, $s,t>0$ and $x\neq 0$, we have
\begin{align*}
    K_{t+s}f(x) &= \mE_xf(X_{t+s}) = \mE_x\big[ \mE_x \big(f(X_s) \circ \theta_t~|~\mathcal{F}_t\big)\big] \\
    &= \mE_x\big[ \mE_{X_t}f(X_s)\big] = \mE_x\big[ K_sf(X_t)\big] = K_tK_sf(x).
\end{align*}
Further, for $f=\ind_A$, $A\subset\R_*$, we get the equality
\begin{align}\label{eq:K_t_ChK_eq}
K_{t+s}(x,A) = K_{t+s}\ind_A(x) = K_tK_s\ind_A(x) = \int_\R K_t(x,\dy)K_s(y,A),
\end{align}
which proves the lemma.
\end{proof}

We note that the first part of the above proof is similar to proof of \cite[Lemma 3.3]{KB_MK_2023}. For a direct proof of \eqref{eq:K_t_ChK_eq}, see 
\cite{MR3295773}.

The following result is an extension of Proposition \ref{Th:semigroup}.

\begin{corollary}
For $x\neq 0$, $t>0$, and $f\in \mathcal{B}_b(\R_*)$, we have $\mE_x f(X_t) = K_tf(x)$.
\end{corollary}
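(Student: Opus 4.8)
The plan is to reduce this to Proposition~\ref{Th:semigroup} by splitting $f$ into its positive and negative parts. First I would write $f = f_+ - f_-$ with $f_+ = f\vee 0$ and $f_- = (-f)\vee 0$, so that $f_+, f_- \in \mathcal{B}_b^+(\R_*)$ with $\norm{f_\pm}_\infty \le \norm{f}_\infty < \infty$.

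Next I would apply Proposition~\ref{Th:semigroup} separately to $f_+$ and $f_-$, obtaining $\mE_x f_+(X_t) = K_t f_+(x)$ and $\mE_x f_-(X_t) = K_t f_-(x)$ for every $x\neq 0$ and $t>0$. The point of working with the decomposition is that both quantities are genuinely finite: by Lemma~\ref{K_subprobability}, $K_t$ is a subprobability kernel, so $K_t f_\pm(x) \le \norm{f}_\infty$, and likewise $\mE_x f_\pm(X_t) \le \norm{f}_\infty$; hence the difference of the two identities is well defined and involves no $\infty - \infty$ ambiguity.

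Finally I would invoke linearity on both sides. Since the expectation is linear whenever the integrand is integrable (here $|f(X_t)| \le \norm{f}_\infty$ is $\mP_x$-integrable), $\mE_x f(X_t) = \mE_x f_+(X_t) - \mE_x f_-(X_t)$; and since $K_t$, being an integral operator against the measure $K_t(x,\cdot)$, is additive and positively homogeneous on $\mathcal{B}_b^+(\R_*)$ and the pieces are finite, $K_t f(x) = K_t f_+(x) - K_t f_-(x)$. Combining the three displays gives $\mE_x f(X_t) = K_t f(x)$, as claimed.

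There is no real obstacle here; the only point requiring a word of care is the finiteness needed to justify splitting into positive and negative parts, which is immediate from boundedness of $f$ together with the subprobability property established in Lemma~\ref{K_subprobability}. Optionally, one can note that the identity extends verbatim to complex-valued bounded measurable $f$ by treating real and imaginary parts separately, though this is not needed in the sequel.
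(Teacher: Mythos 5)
Your argument is correct and is essentially the paper's own proof: both decompose $f = f_+ - f_-$, apply Proposition~\ref{Th:semigroup} to each part, and use the subprobability property from Lemma~\ref{K_subprobability} to ensure finiteness before subtracting. No differences worth noting.
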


\begin{proof}
By Proposition \ref{Th:semigroup} and Lemma \ref{K_subprobability}, both sides are equal and finite for $f\in\mathcal{B}_b^+(\R_*)$. The result follows by considering $f = f_+ - f_-$.
\end{proof}

\begin{lemma}\label{lem4}
For $u\in \mathcal{B}^+(\R)$,
\[
(K_tu)^2(x) \leq (K_tu^2)(x), \qquad x\in \R_*, ~t>0,
\]
and 
\[
\int_\R (K_tu)^2(x)\,\dx \leq \int_\R u^2(x)\,\dx.
\]
\end{lemma}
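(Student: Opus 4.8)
The plan is to derive both inequalities from two facts already established: the subprobability property of $K_t$ (Lemma~\ref{K_subprobability}) and the symmetry identity \eqref{eq:symmetricity} (Lemma~\ref{symmetry_of_K}). For the pointwise inequality, fix $x\in\R_*$ and $t>0$ and regard $K_t(x,\cdot)$ as a measure of total mass $K_t\mathbf{1}(x)\le 1$ on $\R_*$. If $K_tu^2(x)=\infty$ there is nothing to prove, so assume $K_tu^2(x)<\infty$. First I would apply the Cauchy--Schwarz inequality with respect to $K_t(x,\cdot)$ to the pair of functions $u$ and $\mathbf 1$:
\[
(K_tu)^2(x)=\Big(\int_{\R_*} u(y)\,K_t(x,\dy)\Big)^2\le \Big(\int_{\R_*} u^2(y)\,K_t(x,\dy)\Big)\Big(\int_{\R_*} K_t(x,\dy)\Big)=K_tu^2(x)\cdot K_t\mathbf 1(x),
\]
and then bound $K_t\mathbf 1(x)\le 1$ by Lemma~\ref{K_subprobability}. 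This gives $(K_tu)^2(x)\le K_tu^2(x)$.

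For the $L^2$ bound, I would integrate the pointwise inequality over $\R$ and then use the symmetry of $K_t$. Concretely,
\[
\int_\R (K_tu)^2(x)\,\dx \le \int_\R (K_tu^2)(x)\,\dx = \int_\R (K_tu^2)(x)\cdot\mathbf 1(x)\,\dx = \int_\R u^2(x)\cdot(K_t\mathbf 1)(x)\,\dx \le \int_\R u^2(x)\,\dx,
\]
where the middle equality is \eqref{eq:symmetricity} applied with $f=u^2$ and $g=\mathbf 1$ (both in $\mathcal B^+(\R)$), and the last inequality again uses $K_t\mathbf 1\le 1$. All integrands are nonnegative, so no integrability hypothesis is needed to justify these manipulations.

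There is no real obstacle here; the only point requiring a word of care is the degenerate case where $K_tu^2(x)$ (or $\int_\R u^2$) is infinite, which is handled by the convention that both sides of the claimed inequalities are then $+\infty$ or the inequality is vacuous. In particular the $L^2$ bound is meaningful and useful precisely when $u\in L^2(\R)$, in which case it shows $K_t$ is a contraction on $L^2(\R)$.
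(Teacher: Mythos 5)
Your proposal is correct and follows essentially the same two-step argument as the paper: Cauchy--Schwarz with respect to the subprobability measure $K_t(x,\cdot)$ for the pointwise inequality, then integration combined with the symmetry identity \eqref{eq:symmetricity} and $K_t\mathbf 1\le 1$ for the $L^2$ bound. The extra remarks about the degenerate infinite cases are harmless but not needed, since all quantities are nonnegative and the inequalities are interpreted in $[0,\infty]$.
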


\begin{proof}
From the Cauchy--Schwarz inequality and Lemma \ref{K_subprobability}, we get
\begin{align*}
    (K_tu)^2(x) &= \Big[ \int_\R u(y)K_t(x,\dy)\Big]^2 \leq \Big[ \int_\R u^2(y)K_t(x,\dy)\Big] \Big[ \int_\R K_t(x,\dy)\Big] \\
        &\leq \int_\R u^2(y) K_t(x,\dy) = (K_tu^2)(x).
\end{align*}
Moreover, from Lemma \ref{symmetry_of_K} and Lemma \ref{K_subprobability} we get
\begin{align*}
\int_\R (K_tu)^2(x)\,\dx \leq \int_\R (K_tu^2)(x)\,\dx = \int_\R u^2(x) K_t\mathbf{1}(x)\,\dx \leq \int_\R u^2(x)\,\dx,
\end{align*}
which completes the proof.
\end{proof}

From the above lemma it follows that $K_tu(x)$ is finite for a.e. $x$ if $u\in L^2(\R)$, and
\begin{align}\label{lem5}
\norm{K_tu}_{L^2(\R)} \leq \norm{K_t|u|}_{L^2(\R)} \leq \norm{u}_{L^2(\R)}.
\end{align}

The next corollary follows from Lemma \ref{symmetry_of_K}.

\begin{corollary}
For $t>0$ and
$f,g \in L^2(\R)$,
\begin{align*}
    \int_\R (K_tf)(x)g(x)\,\dx = \int_\R f(x) (K_tg)(x)\,\dx.
\end{align*}
\end{corollary}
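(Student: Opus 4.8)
The plan is to derive the identity from Lemma~\ref{symmetry_of_K} by polarization. First I would write $f = f_+ - f_-$ and $g = g_+ - g_-$, so that $f_\pm, g_\pm \in \mathcal{B}^+(\R)$; since $f,g\in L^2(\R)$, these four functions also belong to $L^2(\R)$. By Lemma~\ref{lem4} together with \eqref{lem5}, $K_tf_\pm$ and $K_tg_\pm$ lie in $L^2(\R)$, whence the Cauchy--Schwarz inequality shows that each of the integrals $\int_\R (K_tf_i)(x)g_j(x)\,\dx$ and $\int_\R f_i(x)(K_tg_j)(x)\,\dx$, for $i,j\in\{+,-\}$, is finite. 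Ensuring this finiteness is the only delicate point, since it excludes an indeterminate $\infty-\infty$ when the bilinear expansion is carried out.

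Next I would apply Lemma~\ref{symmetry_of_K} to each of the four nonnegative pairs $(f_i,g_j)$, obtaining
\[
\int_\R (K_tf_i)(x)\,g_j(x)\,\dx = \int_\R f_i(x)\,(K_tg_j)(x)\,\dx, \qquad i,j\in\{+,-\}.
\]
By Lemma~\ref{lem4} we have $K_tf = K_tf_+ - K_tf_-$ and $K_tg = K_tg_+ - K_tg_-$ pointwise a.e., so expanding $\int_\R (K_tf)(x)g(x)\,\dx$ into its four signed contributions, substituting the identities above, and recombining yields $\int_\R f(x)(K_tg)(x)\,\dx$, which is the claim.

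I do not expect any genuine obstacle here: the corollary is a routine passage from nonnegative to signed functions, and the only input beyond Lemma~\ref{symmetry_of_K} is the $L^2$-contractivity \eqref{lem5}, which provides the integrability needed to legitimize the arithmetic.
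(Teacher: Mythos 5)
Your argument is correct and is exactly the elaboration the paper intends: the paper only states that the corollary "follows from Lemma~\ref{symmetry_of_K}," and the standard way to pass from the nonnegative case to $L^2$ is precisely the decomposition $f=f_+-f_-$, $g=g_+-g_-$, with the $L^2$-contractivity from Lemma~\ref{lem4} and \eqref{lem5} supplying the integrability that rules out $\infty-\infty$. Nothing is missing and the approach is the same.
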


\begin{proposition}
 $(K_t)_{t\geq 0}$ is a strongly continuous semigroup on $L^2(\R)$, i.e. for $t,s\geq 0$, $K_{t+s}u = K_tK_su$ in $L^2(\R)$ and 
\[
\lim_{t\to 0^+} \norm{K_tu - u}_{L^2(\R)} = 0, \qquad u\in L^2(\R).
\]
\end{proposition}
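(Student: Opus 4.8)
The plan is to deduce the algebraic semigroup identity in $L^2(\R)$ from the kernel-level Chapman--Kolmogorov equations, and then obtain strong continuity by testing the operators $K_t$ against compactly supported continuous functions and invoking the right continuity of $X$. For the identity $K_{t+s}u=K_tK_su$ in $L^2(\R)$: by \eqref{lem5} each $K_t$ is a contraction of $L^2(\R)$, and by Lemma \ref{lem4} the function $K_t|u|$ is finite a.e.\ whenever $u\in L^2(\R)$. For $u\ge0$, Tonelli's theorem applied to the composition of the kernels $K_t$ and $K_s$ (the identity underlying \eqref{eq:K_t_ChK_eq} in Lemma \ref{K_subprobability}) gives $K_{t+s}u=K_t(K_su)$ pointwise; writing $u=u_+-u_-$ extends this to all $u\in L^2(\R)$, all functions involved being finite a.e.

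Next, I reduce strong continuity to a weak statement. Since $\{0\}$ is Lebesgue-null, $L^2(\R)=L^2(\R_*)$ and $C_c(\R_*)$ is a dense subspace. For any $u\in L^2(\R)$, contractivity gives
\[
\norm{K_tu-u}_{L^2(\R)}^2=\norm{K_tu}_{L^2(\R)}^2-2\langle K_tu,u\rangle_{L^2(\R)}+\norm{u}_{L^2(\R)}^2\le 2\norm{u}_{L^2(\R)}^2-2\langle K_tu,u\rangle_{L^2(\R)}.
\]
Hence it suffices to prove that $\langle K_tu,u\rangle_{L^2(\R)}\to\norm{u}_{L^2(\R)}^2$ as $t\to0^+$ for every $u$ in the dense set $C_c(\R_*)$: this yields $\norm{K_tu-u}_{L^2(\R)}\to0$ for such $u$, and the uniform bound $\norm{K_t}\le1$ propagates the convergence to all $u\in L^2(\R)$ by a standard $3\varepsilon$-argument.

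Finally, fix $u\in C_c(\R_*)$ and $x\neq0$. By Proposition \ref{Th:semigroup} together with contractivity (to pass from nonnegative to signed bounded functions), $K_tu(x)=\mE_x u(X_t)$. The process $X$ is right continuous with $\mP_x(X_0=x)=1$, and its lifetime satisfies $\widetilde{R}_\infty\ge R_1>0$ $\mP_x$-a.s.\ by Remark \ref{rem:P_n_prob1}; since the cemetery $\Delta$ is isolated in $\R_\Delta$, it follows that $X_t\to x$ in $\R_*$ as $t\to0^+$, $\mP_x$-a.s. As $u\in C(\R_*)$ and $u$ vanishes at $\Delta$, we get $u(X_t)\to u(x)$ $\mP_x$-a.s.\ with $|u(X_t)|\le\norm{u}_\infty$, so dominated convergence gives $K_tu(x)\to u(x)$ for every $x\neq0$. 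Then $K_tu(x)u(x)\to u(x)^2$ pointwise, and $|K_tu(x)u(x)|\le\norm{u}_\infty^2\ind_{\supp u}(x)$ is an integrable majorant because $\supp u$ is compact; a further application of dominated convergence yields $\langle K_tu,u\rangle_{L^2(\R)}=\int_\R K_tu(x)u(x)\,\dx\to\int_\R u(x)^2\,\dx=\norm{u}_{L^2(\R)}^2$.

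The one delicate choice is the use of the weak criterion: testing against $u\in C_c(\R_*)$ confines the $x$-integral to the compact set $\supp u$, which is precisely what makes the majorant integrable and spares us from controlling the behaviour of $K_tu(x)$ as $|x|\to\infty$ or as $x\to0$---a point that would be awkward to handle directly, given that $X$ may undergo infinitely many reflections for small $\alpha$. The $L^2$ semigroup identity and the Hilbert-space inequality above are routine.
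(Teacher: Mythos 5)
Your proof is correct, and its overall skeleton coincides with the paper's: reduce by contractivity and the inequality $\norm{K_tu-u}_{L^2}^2\le 2\big(\norm{u}_{L^2}^2-\langle K_tu,u\rangle\big)$ to pointwise convergence $K_tu(x)\to u(x)$ on a dense set of compactly supported test functions, then conclude with dominated convergence over $\supp u$. Where you genuinely diverge is in how you obtain that pointwise convergence. The paper argues analytically: it splits $K_t=\widehat P_t+\sum_{n\ge1}K_{t,n}$, handles the leading term via the Feller property of $P^D$ (Lemma~\ref{theorem_p^D_feller}), and bounds the tail $\sum_{n\ge1}K_{t,n}u(x)$ by $\norm{u}_\infty\int_0^t\widehat P_r\widehat\nu\mathbf1(x)\,\dr$, which tends to $0$. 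You instead use the probabilistic representation $K_tu(x)=\mE_x u(X_t)$ from Proposition~\ref{Th:semigroup}, right continuity of the paths of the concatenated right process at $t=0$, the fact that $\Delta$ is isolated (so $X_t$ stays in $\R_*$ near $x$ for small $t$), and bounded convergence. Your route is shorter and more conceptual, but it leans on Werner's pathwise construction being fully in force; the paper's route is self-contained at the level of the perturbation series and essentially reuses the same estimates needed later for Lemma~\ref{K_t_pointwise_convergence}. A minor stylistic point: citing Remark~\ref{rem:P_n_prob1} for positivity of the lifetime is sound but redundant, since right continuity at $0$ together with $X_0=x\in\R_*$ already forces $X_t\in\R_*$ for small $t$. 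Also, your appeal to ``contractivity'' when extending Proposition~\ref{Th:semigroup} to signed $u$ is better phrased as the sub-Markov property plus $u=u_+-u_-$, which is the corollary the paper states right after Lemma~\ref{K_subprobability}.
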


\begin{proof}
Let $u\in L^2(\R)$ and $u_n := u\wedge n \vee (-n)$. 
Of course, $\norm{u-u_n}_{L^2(\R)}\to 0$ as $n\to\infty$. Then from Lemma \ref{K_subprobability}, for $x\neq 0$, we have
\begin{align*}
    K_{t+s}u_n(x) = \int_\R K_{t+s}(x,\dy) u_n(y) = \int_\R (K_tK_s)(x,\dy)u_n(y) = K_tK_su_n(x).
\end{align*}
From \eqref{lem5}, we know that $K_{t+s}$, $K_t$ and $K_s$ are bounded, hence continuous, operators on $L^2(\R)$. Therefore, $\norm{K_{t+s}u_n - K_{t+s}u}_{L^2(\R)} \to 0$ and $\norm{K_tK_su_n - K_tK_su}_{L^2(\R)}\to 0$ as $n\to\infty$. As a result $K_{t+s}u = K_tK_su$ in $L^2(\R)$.

\medskip
Now we will prove the strong continuity. 
Since the family $(K_t)_{t\geq 0}$ forms a contraction semigroup of operators, then from Proposition 1.3 in Engel and Nagel \cite[Section 1]{engel2006short} it suffices to show a desired convergence for $u\in C_c^\infty(\R_*)$, because $C_c^\infty(\R_*)$ is a dense subspace of $L^2(\R)$. 

\medskip
We first show that for $u\in \mathcal{B}^+(\R) \cap C_c^\infty(\R_*)$ and $x\neq 0$,
\begin{equation}
    \lim_{t\to 0^+} K_tu(x) = u(x).
\end{equation}
Let $t\to 0^+$. From Lemma \ref{theorem_p^D_feller},
\begin{align*}
    K_{t,0}u(x) = \mathbf{1}_D(x) \int_{D} p_t^D(x,\dy)u(y)\,\dy + \mathbf{1}_{D^c}(x) u(x)e^{-\nu(x,D)t} \to u(x).
\end{align*}
Furthermore, from Corollary \ref{perturbation_formula} and Lemma \ref{K_subprobability},
\begin{align*}
    \sum_{n=1}^\infty K_{t,n}u(x) \leq \norm{u}_\infty \int_0^t   \P_r \widehat{\nu}\mathbf{1}(x) \,\dr.
\end{align*}
Then for $x>0$,
\begin{align*}
    \int_0^t   \P_r \widehat{\nu}\mathbf{1}(x) \,\dr = \int_0^t \dr \int_D \dy \int_{D^c} \dz \, p_r^D(x,y)\nu(y,z) = \mP_x^Y(\tau_D\leq t) \to 0.
\end{align*}
Similarly, for $x<0$,
\begin{align*}
    \int_0^t   \P_r \widehat{\nu}\mathbf{1}(x) \,\dr = \int_0^t  \nu(x,D) e^{-\nu(x,D)r}\,\dr \to 0.
\end{align*}

Let $u\in C_c^\infty(\R_*)$ be arbitrary. We get
\[
K_tu(x) = K_tu_+(x) - K_tu_-(x) \to u_+(x) - u_-(x) = u(x),
\]
as $t\to 0^+$. Then, by \eqref{lem5},
\begin{align*}
\norm{K_tu-u}_{L^2(\R)}^2 &= \norm{K_tu}_{L^2(\R)}^2 + \norm{u}_{L^2(\R)}^2 - 2\langle K_tu, u\rangle \\
&\leq 2\big[ \norm{u}_{L^2(\R)}^2 - \langle K_tu,u\rangle \big] 
= 2\langle u - K_tu, u\rangle \\
&=2\int_{\R\, \cap \, \supp{u}} \big[u(x)-K_tu(x)\big]u(x)\,\dx \to 0,
\end{align*}
by the dominated convergence theorem.
\end{proof}

\begin{lemma}\label{K_t_scaling}
For $t>0$, $x\neq 0$, $k>0$, $A\subset\R$,
\begin{enumerate}
\item[(a)] $K_{t,n}(kx,kA) = K_{tk^{-\alpha},n}(x,A)$, $n=0,1,2,\ldots$,
\item[(b)] $K_t(kx,kA) = K_{tk^{-\alpha}}(x,A)$.
\end{enumerate}
\end{lemma}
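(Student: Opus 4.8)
The plan is to prove (a) by induction on $n$ using the recurrence in Lemma~\ref{proposition_recurrence}, and then to deduce (b) by summing the perturbation series \eqref{eq:K_t_def}. For $k>0$ the dilation $x\mapsto kx$ preserves the sign of $x$, so all the case distinctions ($x>0$ vs.\ $x<0$) in the definitions of $\P_t$ and $\widehat{\nu}$ are respected. The two scaling identities for the ``building block'' kernels that I will need are
$\P_t(kx,kA)=\P_{tk^{-\alpha}}(x,A)$ and $\widehat{\nu}(kx,kA)=k^{-\alpha}\,\widehat{\nu}(x,A)$.
The first splits into two cases: for $x>0$ it follows from the substitution $y=kz$ in $\int_{kA}p_t^D(kx,y)\,\dy$ together with the Dirichlet heat kernel scaling \eqref{eq:scaling2}; for $x<0$ it follows from $\delta_{kx}(kA)=\delta_x(A)$ together with $\nu(kx,D)=k^{-\alpha}\nu(x,D)$, see \eqref{nu_scaling2}. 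The second follows from $\nu(kx,ky)=k^{-\alpha-1}\nu(x,y)$, see \eqref{nu_scaling3}, after the same change of variables $y=kz$ (which contributes one factor $k$), and the ``otherwise'' case in the definition of $\widehat{\nu}$ is preserved because $k>0$.

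Next I will record the elementary fact that, writing $M^{(k)}(x,A):=M(kx,kA)$ for a kernel $M$ on $\R_*$, one has $(MN)^{(k)}=M^{(k)}N^{(k)}$ and $\bigl(\int_0^t M_r\,\dr\bigr)^{(k)}=\int_0^t M_r^{(k)}\,\dr$; both are immediate from Tonelli's theorem and the change of variables $y=kz$ in the inner integral. The base case $n=0$ of (a) is precisely $\P_t^{(k)}=\P_{tk^{-\alpha}}$ above. For the inductive step, assume $K_{t,n}^{(k)}=K_{tk^{-\alpha},n}$ for every $t>0$. Applying $(\cdot)^{(k)}$ to the identity $K_{t,n+1}=\int_0^t \P_r\widehat{\nu}K_{t-r,n}\,\dr$ of Lemma~\ref{proposition_recurrence} and using the three displayed relations yields $K_{t,n+1}^{(k)}=\int_0^t \P_{rk^{-\alpha}}\,k^{-\alpha}\widehat{\nu}\,K_{(t-r)k^{-\alpha},n}\,\dr$. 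The substitution $s=rk^{-\alpha}$ (so $\dr=k^{\alpha}\,\ds$) absorbs the factor $k^{-\alpha}$ and transforms the right-hand side into $\int_0^{tk^{-\alpha}}\P_s\widehat{\nu}K_{tk^{-\alpha}-s,n}\,\ds=K_{tk^{-\alpha},n+1}(x,A)$ by Lemma~\ref{proposition_recurrence} applied with $t$ replaced by $tk^{-\alpha}$. This establishes (a).

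Finally, (b) follows by summing: by \eqref{eq:K_t_def} and part (a),
$K_t(kx,kA)=\sum_{n=0}^\infty K_{t,n}(kx,kA)=\sum_{n=0}^\infty K_{tk^{-\alpha},n}(x,A)=K_{tk^{-\alpha}}(x,A)$,
the interchange of summation with the (pushforward) measure being legitimate since all terms are nonnegative. The only point that requires care is the bookkeeping: the \emph{time} rescaling hidden in $\P_t^{(k)}=\P_{tk^{-\alpha}}$ and the \emph{multiplicative} factor $k^{-\alpha}$ coming from $\widehat{\nu}^{(k)}$ must cancel against the Jacobian $k^{\alpha}$ of the substitution $s=rk^{-\alpha}$, leaving no residual power of $k$; verifying this cancellation is the heart of the argument, and everything else is a routine change of variables.
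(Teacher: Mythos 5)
Your proof is correct and follows essentially the same route as the paper: induction on $n$ via Lemma~\ref{proposition_recurrence}, the scaling identities \eqref{eq:scaling2}, \eqref{nu_scaling2}, \eqref{nu_scaling3} for $p^D$ and $\nu$, and a time substitution $s=rk^{-\alpha}$. The only stylistic difference is that you package the computation as an operator identity $(MN)^{(k)}=M^{(k)}N^{(k)}$ for the dilation $M^{(k)}(x,A):=M(kx,kA)$, whereas the paper unfolds the same substitutions inside the explicit triple integral; your formulation is a bit tidier but carries no new content.
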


\begin{proof}
    It suffices to show the equality (a). For $n=0$, from \eqref{eq:scaling2}, it follows that for $x>0$,
    \begin{align*}
    K_{t,0}(kx,kA) &= \int_{kA\cap D} p_t^D(kx,y)\,\dy = \int_{kA\cap D} k^{-1} p_{tk^{-\alpha}}^D(x,y/k)\,\dy \\
    &= \int_{A\cap D} p_{tk^{-\alpha}}^D(x,z)\,\dz = K_{tk^{-\alpha}, 0}(x,A).
    \end{align*}
    Similarly, for $x<0$, from \eqref{nu_scaling2},
    \[
    K_{t,0}(kx,kA) = \delta_{kx}(kA) e^{-\nu(kx,D)t} = \delta_x(A) e^{-\nu(x,D)tk^{-\alpha}} = K_{tk^{-\alpha},0}(x,A).
    \]

    \medskip
    Assume that for some $n\in \{ 0,1,\ldots\}$ we have $K_{t,n}(kx,kA) = K_{tk^{-\alpha},n}(x,A)$, $x\neq 0$, $t>0$, $k>0$.
    
    Let $x>0$. From Lemma \ref{proposition_recurrence}, \eqref{eq:scaling2} and \eqref{nu_scaling3}, we have the following equality
    \begin{align*}
    K_{t,n+1}(kx,kA) &=  \int_0^t \dr \int_D \dy \int_{D^c} \dz \, p^D_{r}(kx,y) \nu(y,z) K_{t-r,n}(z,kA) \\
    &=  \int_0^t \dr \int_D \dy \int_{D^c} \dz \, k^{-1} p^D_{rk^{-\alpha}}(x,y/k) k^{-\alpha-1} \nu(y/k, z/k) K_{t-r,n}(z,kA).
    \end{align*}
    Using the substitution $s = rk^{-\alpha}$, $w = y/k$ and $v = z/k$ we get
    \begin{align*}
    K_{t,n+1}(kx,kA) &= \int_0^{tk^{-\alpha}} \ds \int_D \dw \int_{D^c} \dv \, p^D_{s}(x,w) \nu(w, v) K_{t-sk^\alpha,n}(kv, kA),
    \end{align*}
    so
    \begin{align*}
    K_{t,n+1}(kx,kA) &= \int_0^{tk^{-\alpha}} \ds \int_D \dw \int_{D^c} \dv \, p^D_{s}(x,w) \nu(w, v) K_{tk^{-\alpha}-s,n}(v, A) \\
    &= K_{tk^{-\alpha}, n+1}(x,A).
    \end{align*}

    Now, consider the case $x<0$. Again, from Lemma \ref{proposition_recurrence}, \eqref{nu_scaling3} and \eqref{nu_scaling2}, we get
    \begin{align*}
    K_{t,n+1}(kx,kA) &= \int_0^t\dr \int_D \dz \,e^{-\nu(kx,D)r} \nu(kx,z) K_{t-r,n}(z,kA) \\
    &= \int_0^t\dr \int_D \dz \,e^{-\nu(x,D)rk^{-\alpha}} k^{-\alpha-1}\nu(x,z/k) K_{t-r,n}(z,kA).
    \end{align*}
    Using the substitution $s=rk^{-\alpha}$ and $w=z/k$ we get
    \begin{align*}
    K_{t,n+1}(kx,kA) &= \int_0^{tk^{-\alpha}} \ds \int_D \dw \,e^{-\nu(x,D)s} \nu(x,w) K_{t-sk^\alpha,n}(kw,kA),
    \end{align*}
    so
    \begin{align*}
    K_{t,n+1}(kx,kA) &= \int_0^{tk^{-\alpha}} \ds \int_D \dw \,e^{-\nu(x,D)s} \nu(x,w) K_{tk^{-\alpha}-s,n}(w,A) = K_{tk^{-\alpha},n+1}(x,A),
    \end{align*}    
    which yields our claim.
\end{proof}

\begin{proposition}\label{K_bounded_continuity}
    For every $t>0$ and $f\in C_b(\R_*)$, we have $K_tf\in C_b(\R_*)$.
\end{proposition}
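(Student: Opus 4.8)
The plan is to exploit the perturbation formula of Corollary~\ref{perturbation_formula},
\[
K_tf=\P_tf+\int_0^t\P_r\widehat{\nu}K_{t-r}f\,\dr,
\]
together with the strong Feller property of the killed semigroup $P^D$ (Lemma~\ref{theorem_p^D_feller_1}) and the a priori bound $|K_sg|\le\norm{g}_\infty$ coming from Lemma~\ref{K_subprobability}. Writing $f=f_+-f_-$ I may assume $0\le f\le 1$; then $|K_tf|\le K_t\mathbf{1}\le 1$ gives boundedness immediately, and since $\R_*=D\cup\overline{D}^c$ is a disjoint union of open sets it suffices to prove that $K_tf$ is continuous on $D$ and on $\overline{D}^c$ separately. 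On each piece the perturbation formula simplifies because $\P_r(x,\cdot)$ is $P^D_r(x,\cdot)$ for $x>0$ and $\delta_x e^{-\nu(x,D)r}$ for $x<0$, and $\widehat{\nu}$ sends $D$ into $D^c$ and $D^c$ into $D$.

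On $D$ the formula reads $K_tf(x)=P^D_tf(x)+\int_0^t P^D_r\psi_{t-r}(x)\,\dr$ for $x>0$, where $\psi_s(w):=\int_{D^c}\nu(w,z)K_sf(z)\,\dz$. The first term lies in $C_b(D)$ by Lemma~\ref{theorem_p^D_feller_1}. For the integral term I would first record $0\le\psi_s(w)\le\norm{f}_\infty\nu(w,D^c)=\norm{f}_\infty\mathcal{A}_{1,\alpha}\alpha^{-1}w^{-\alpha}$ from \eqref{nuDc_scaling}, so that $\psi_s$ is bounded away from $0$ but blows up at the boundary. To see that $x\mapsto P^D_r\psi_s(x)$ is continuous on $D$, I would fix a compact $[a,b]\subset D$ and $\varepsilon\in(0,a/2)$, split $\psi_s=\psi_s\mathbf{1}_{(0,\varepsilon)}+\psi_s\mathbf{1}_{[\varepsilon,\infty)}$, apply Lemma~\ref{theorem_p^D_feller_1} to the bounded piece $\psi_s\mathbf{1}_{[\varepsilon,\infty)}$, and use the boundary decay of the Dirichlet heat kernel from \eqref{eq:approx_p_D}, namely $p^D_r(x,y)\le C_a\sqrt{r}\,y^{\alpha/2}$ for $x\in[a,b]$, $0<y<\varepsilon$, to obtain $\sup_{x\in[a,b]}|P^D_r(\psi_s\mathbf{1}_{(0,\varepsilon)})(x)|\le C_{a,t}\norm{f}_\infty\varepsilon^{1-\alpha/2}\to0$ (finite because $\alpha/2<1$). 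Finally, splitting $\int_D p^D_r(x,y)y^{-\alpha}\,\dy$ over $(0,a/2)$, $[a/2,2b]$ and $(2b,\infty)$ and using \eqref{eq:approx_p_D}, \eqref{eq:p_t_approx} and \eqref{eq:killed_leq_stable}, I expect the uniform bound $|P^D_r\psi_{t-r}(x)|\le C_{a,b,t}\norm{f}_\infty$ for $r\in(0,t)$, $x\in[a,b]$; dominated convergence in $r$ then gives continuity of $x\mapsto\int_0^t P^D_r\psi_{t-r}(x)\,\dr$ on $D$.

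On $\overline{D}^c$ the formula reads $K_tf(x)=e^{-\nu(x,D)t}f(x)+\int_0^t e^{-\nu(x,D)r}\phi_{t-r}(x)\,\dr$ for $x<0$, where $\phi_s(x):=\int_D\nu(x,z)K_sf(z)\,\dz$. Since $x\mapsto\nu(x,D)$ is continuous on $\overline{D}^c$ by \eqref{nu_scaling} and $f$ is continuous there, the first term is continuous. For the second term I would fix a compact $[-b,-a]\subset\overline{D}^c$ and note that $z\mapsto\nu(x,z)=\mathcal{A}_{1,\alpha}(z-x)^{-1-\alpha}$ is continuous in $x$ and dominated by $\mathcal{A}_{1,\alpha}(z+a)^{-1-\alpha}\in L^1(D)$ uniformly in $x\in[-b,-a]$ and $s$; dominated convergence then gives $\phi_s\in C(\overline{D}^c)$ with $|\phi_s(x)|\le\norm{f}_\infty\mathcal{A}_{1,\alpha}\alpha^{-1}a^{-\alpha}$, and, since $e^{-\nu(x,D)r}\le1$, a further dominated convergence argument in $r$ yields continuity of the integral term. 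Combining the two cases with the boundedness already observed gives $K_tf\in C_b(\R_*)$.

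The main obstacle is the continuity on $D$: the ``reflection source'' $\psi_s$, obtained by transporting the bounded function $K_sf$ from $\overline{D}^c$ back into $D$ through $\widehat{\nu}$, is unbounded like $w^{-\alpha}$ as $w\to0^+$, so neither Lemma~\ref{P_t_feller} nor Lemma~\ref{theorem_p^D_feller_1} applies directly. It is precisely the boundary decay $p^D_r(x,y)\lesssim y^{\alpha/2}$ of the Dirichlet heat kernel --- integrable against $y^{-\alpha}$ because $\alpha/2<1$ --- that makes $P^D_r\psi_s$ finite, locally bounded and continuous, and that supplies the uniform-in-$r$ domination needed to pass the limit inside the time integral.
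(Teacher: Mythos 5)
Your argument is correct, and it follows the same overall scheme as the paper --- the perturbation formula (Corollary~\ref{perturbation_formula}), boundedness from Lemma~\ref{K_subprobability}, and the strong Feller property of $P^D$ (Lemma~\ref{theorem_p^D_feller_1}) for the leading term --- but you treat the integral term with a genuinely different technique. The paper dominates the integrand $p_s^D(x,y)\nu(y,z)K_{t-s}f(z)$ by $\norm{f}_\infty\,p_s^D(x,y)\nu(y,z)$, observes that the total integral of this dominating kernel over $(s,y,z)$ equals $G_t(x)=1-P_t^D\mathbf{1}(x)$, a continuous function of $x$ again by Lemma~\ref{theorem_p^D_feller_1}, and concludes local uniform integrability (hence continuity) via Vitali's theorem, with no explicit heat-kernel estimates. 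You instead work with $\psi_s(w)=\int_{D^c}\nu(w,z)K_sf(z)\,\dz\lesssim\norm{f}_\infty w^{-\alpha}$, cut off the boundary singularity at level $\varepsilon$, apply Lemma~\ref{theorem_p^D_feller_1} to the bounded part, and use the boundary decay $p_r^D(x,y)\lesssim\sqrt{r}\,y^{\alpha/2}$ from \eqref{eq:approx_p_D} on $[a,b]\times(0,\varepsilon)$ to bound the cutoff error by a multiple of $\varepsilon^{1-\alpha/2}$, uniformly on compacts; this proves continuity of $x\mapsto P_r^D\psi_{t-r}(x)$ by uniform approximation, and your near/middle/far split of $\int_D p_r^D(x,y)y^{-\alpha}\,\dy$ delivers the uniform-in-$r$ domination that legitimizes the final passage to the limit inside the time integral. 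Both routes are valid: Vitali is shorter and immediately gives the paper's follow-up remark that $K_tf\in C_b(D)$ for any $f\in\mathcal{B}_b(\R_*)$, whereas your estimates are more explicit and isolate the heat-kernel boundary decay (integrable against $w^{-\alpha}$ because $\alpha/2<1$) as the precise mechanism rendering $\widehat{\nu}$ harmless. One small imprecision: reducing to $0\le f\le 1$ requires a rescaling by $\norm{f}_\infty$ in addition to the split $f=f_+-f_-$, but this does not affect the argument.
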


\begin{proof}
    Let $f\in C_b(\R_*)$ and $t>0$. The boundedness of $K_tf$ is asserted in Lemma \ref{K_subprobability}. Therefore, it suffices to show that $K_tf$ is continuous on $\R_*$. 

    \medskip
    Assume that $x>0$. Then $x\mapsto \P_tf(x) = P_t^Df(x)$ is a continuous function on $D$, which follows directly from Lemma \ref{theorem_p^D_feller_1}. From Corollary \ref{perturbation_formula} it suffices to show a continuity of the function     
    \begin{align*}
        D\ni x\mapsto F_t(x) := \int_0^t \P_s \widehat{\nu} K_{t-s}f(x)\,\ds = \int_0^t \ds \int_D \dy \int_{D^c}\,\dz \, p_s^D(x,y)\nu(y,z)K_{t-s}f(z).
    \end{align*}
    From Lemma \ref{K_subprobability}, for $0\leq s\leq t$, $y\in D$ and $z\in D^c$, 
    \begin{align}\label{eq:K_feller_est_1}
        p_s^D(x,y)\nu(y,z)K_{t-s}f(z) \leq \norm{f}_\infty p_s^D(x,y)\nu(y,z).
    \end{align}
    Moreover, from Ikeda--Watanabe formula \eqref{eq:Ikeda_Watanabe} and \eqref{eq:p_survival},
    \begin{align*}
        G_t(x) := \int_0^t \ds \int_D \dy \int_{D^c}\,\dz \,p_s^D(x,y)\nu(y,z) = 1- p_t^D(x,D) = 1 - P_t^D\mathbf{1}(x).
    \end{align*}
    By Lemma \ref{theorem_p^D_feller_1}, the function $D\ni x\mapsto G_t(x)$ is continuous. From the Vitali's theorem (see \cite[Theorem 16.6]{Schilling2}) it follows that the function $p_t^D(x,y)\nu(y,z)$ is uniformly integrable in $x$ with respect to the measure $\ind_{[0,t]}(s)\ind_D(y)\ind_{D^c}(z)\,\ds\,\dy\,\dz$ (see \cite[Definition 16.1]{Schilling2}). By \eqref{eq:K_feller_est_1}, the functions $p_s^D(x,y)\nu(y,z)K_{t-s}f(z)$ are also locally in $x$ uniformly integrable. Since the function $D\ni x\mapsto p_s^D(x,y)$ is continuous, from the (direct proof of) Vitali's theorem, the function $D\ni x \mapsto F_t(x)$ is continuous. 

    \medskip
    Let $x<0$. Of course, $\overline{D}^c\ni x \mapsto \widehat{P}_t f(x) = f(x)e^{-\nu(x,D)t}$ is a continuous function. We will verify the continuity of the function 
    \[
    \overline{D}^c\ni x\mapsto F_t(x) := \int_0^t \P_s \widehat{\nu} K_{t-s}f(x)\,\ds = \int_0^t \ds \int_D \dy \, e^{-\nu(x,D)s} \nu(x,y) K_{t-s}f(y).
    \]
    Again from Lemma \ref{K_subprobability}, $K_{t-s}f(y) \leq \norm{f}_\infty$ and
    the function
    \[
    \overline{D}^c \ni x\mapsto \norm{f}_\infty \int_0^t \nu(x,D)e^{-\nu(x,D)s}\,\ds = \norm{f}_\infty \big[ 1-e^{-\nu(x,D)t}\big]
    \]
    is continuous. Hence, from Vitali's theorem, the functions $\norm{f}_\infty e^{-\nu(x,D)s}\nu(x,y)$ are locally uniformly in $x$ integrable with respect to the measure $\ind_{[0,t]}(s)\ind_D(y)\,\ds\,\dy$ and so are the functions $e^{-\nu(x,D)s}\nu(x,y)K_{t-s}f(y)$. By the (direct proof of) Vitali's theorem, we get the continuity of the function $\overline{D}^c\ni x \mapsto F_t(x)$.
\end{proof}

By the above proof, $K_tf\in C_b(D)$ for all $f\in\mathcal{B}_b(\R_*)$.

In passing, let us point out an alternative approach to concatenation by Bogdan and Kunze \cite{bogdan2024stableprocessesreflection}, where the process is constructed by means of the semigroup, see also Bogdan and Kunze \cite{KB_MK_2023} and Kim et al. \cite{MR4520527, MR4693939}.

\subsection{Excessive functions}

For $x\in\R$ and $\beta\in\R$, we define function $h_{\beta}(x) = |x|^\beta$. We first observe that $h_{\alpha-1}$ is \emph{supermedian} for $\widehat{P}_t$.

\begin{lemma}\label{Wniosek_ptD}
For $x\neq 0$ and $t>0$, $\widehat{P}_th_{\alpha-1}(x) \leq h_{\alpha-1}(x)$.
\end{lemma}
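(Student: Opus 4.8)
The plan is to prove the inequality separately for $x<0$ and for $x>0$. For $x<0$ it is immediate from the definition of $\P_t$: since $\nu(x,D)=\mathcal{A}_{1,\alpha}\alpha^{-1}|x|^{-\alpha}>0$ by \eqref{nu_scaling}, we get
\[
\P_t h_{\alpha-1}(x)=|x|^{\alpha-1}e^{-\nu(x,D)t}\le|x|^{\alpha-1}=h_{\alpha-1}(x),\qquad t>0.
\]
All the work is for $x>0$, where by \eqref{eq:p_D_distribution} we have $\P_t h_{\alpha-1}(x)=P_t^D h_{\alpha-1}(x)=\mE_x^Y[\,|Y_t|^{\alpha-1};\,\tau_D>t\,]$, and on $\{\tau_D>t\}$ the path stays in $D$, so $Y_t>0$.

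The first step is to show that $h_{\alpha-1}$ is $\alpha$-harmonic in $D$ in the probabilistic sense, i.e.
\[
\mE_x^Y\big[h_{\alpha-1}(Y_{\tau_D})\big]=h_{\alpha-1}(x),\qquad x>0.
\]
Since $\mP_x^Y(Y_{\tau_D}=0)=0$ and $Y$ exits $D$ in finite time $\mP_x^Y$-a.s. (which also follows by integrating \eqref{eq:poisson_kernel_halfline}), the left-hand side equals $\int_{-\infty}^0|y|^{\alpha-1}P_D(x,y)\,\dy$. Substituting $y=-xw$ with $w>0$ and using $|x-y|=x(1+w)$ in the explicit formula \eqref{eq:poisson_kernel_halfline} gives
\[
\int_{-\infty}^0|y|^{\alpha-1}P_D(x,y)\,\dy=\frac{\sin(\pi\alpha/2)}{\pi}\,x^{\alpha-1}\int_0^\infty\frac{w^{\alpha/2-1}}{1+w}\,\dw=\frac{\sin(\pi\alpha/2)}{\pi}\,x^{\alpha-1}\,\mathfrak{B}\!\left(\tfrac{\alpha}{2},1-\tfrac{\alpha}{2}\right)=x^{\alpha-1},
\]
the last equality by $\mathfrak{B}(\tfrac\alpha2,1-\tfrac\alpha2)=\Gamma(\tfrac\alpha2)\Gamma(1-\tfrac\alpha2)=\pi/\sin(\pi\alpha/2)$.

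The second step applies the (simple) Markov property of $Y$ at the deterministic time $t$. On $\{\tau_D>t\}$ one has $Y_t\in D$ and $Y_{\tau_D}=Y_{\tau_D}\circ\theta_t$, hence, using the harmonicity identity of the first step at the point $Y_t$,
\[
\mE_x^Y\big[h_{\alpha-1}(Y_{\tau_D});\,\tau_D>t\big]=\mE_x^Y\Big[\mE_{Y_t}^Y\big[h_{\alpha-1}(Y_{\tau_D})\big];\,\tau_D>t\Big]=\mE_x^Y\big[h_{\alpha-1}(Y_t);\,\tau_D>t\big]=P_t^D h_{\alpha-1}(x).
\]
Splitting the harmonicity identity over $\{\tau_D\le t\}$ and $\{\tau_D>t\}$,
\[
h_{\alpha-1}(x)=\mE_x^Y\big[h_{\alpha-1}(Y_{\tau_D});\,\tau_D\le t\big]+P_t^D h_{\alpha-1}(x)\ge P_t^D h_{\alpha-1}(x)=\P_t h_{\alpha-1}(x),
\]
since $h_{\alpha-1}\ge 0$; this is the claim for $x>0$.

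The only mildly delicate points are: checking convergence of the Poisson-kernel integral (near $y=0$ the integrand is $\approx|y|^{\alpha/2-1}$ and near $-\infty$ it is $\approx|y|^{\alpha/2-2}$, both integrable for $\alpha\in(0,2)$), so that the harmonicity identity is a genuine equality of finite quantities; and justifying the Markov property for the nonnegative unbounded function $h_{\alpha-1}$, which is routine by monotone approximation $h_{\alpha-1}\wedge n\uparrow h_{\alpha-1}$. I do not expect a real obstacle: the heart of the argument is the explicit half-line Poisson kernel, which reduces the $\alpha$-harmonicity of $|x|^{\alpha-1}$ in $D$ to a single Euler-beta evaluation.
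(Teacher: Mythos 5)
Your proof is correct, and it takes a genuinely different route from the paper's. The paper's proof of Lemma~\ref{Wniosek_ptD} for $x>0$ imports a ground-state-type representation
\[
h_{\alpha-1}(x)=\int_0^\infty f(t)\int_D p_t^D(x,y)\,y^{-\delta}\,\dy\,\dt
\]
with an explicit \emph{increasing} weight $f$ from \cite{MR4660833}, and then uses the semigroup property of $p^D$ together with the monotonicity of $f$ to get the supermedian bound. You instead prove that $h_{\alpha-1}$ is regular $\alpha$-harmonic on $D$, i.e. $\mE_x^Y[h_{\alpha-1}(Y_{\tau_D})]=h_{\alpha-1}(x)$, via the closed-form half-line Poisson kernel \eqref{eq:poisson_kernel_halfline} and a Beta-function evaluation, and then apply the Markov property at the fixed time $t$ and drop the nonnegative term over $\{\tau_D\le t\}$. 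Both are sound; yours has the advantage of avoiding the external reference \cite{MR4660833} and of making the probabilistic content transparent. It is worth pointing out that your key identity is precisely the content (for $x>0$) of the paper's next result, Lemma~\ref{harm_1}, which the authors prove by the very same Poisson-kernel computation; they then derive \eqref{eq2}, which in your notation reads $P_t^D h_{\alpha-1}(x)=\mE_x^Y[h_{\alpha-1}(Y_{\tau_D});\tau_D>t]$. So in effect you have observed that Lemma~\ref{Wniosek_ptD} is an immediate corollary of Lemma~\ref{harm_1} plus the Markov property, which streamlines the paper's logic. The minor technical caveats you flag (integrability near $0$ and $\infty$, Markov property for nonnegative unbounded integrands via monotone truncation, and $\tau_D\circ\theta_t=\tau_D-t$ on $\{\tau_D>t\}$) are all routine and correctly identified.
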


\begin{proof}
Recall that $\alpha\in (0,2)$. Let $\gamma\in (0,1)$ and $\delta\in (\gamma+\alpha/2, 1+\alpha/2)$. Of course, the interval is nonempty. Let 
\[
g(x) := \int_0^\infty \dt \int_D \dy \, p_t^D(x,y)f(t)y^{-\delta}, \quad x>0,
\]
where $f(t) :=
\mathcal{C}^{-1}t^{(-\alpha/2-\gamma+\delta)/\alpha}$ and
\[
\mathcal{C} = \int_0^\infty \dt \int_D \dy \, p_t^D(1,y)t^{(-\alpha/2-\gamma+\delta)/\alpha}y^{-\delta}.
\]
By the proof of \cite[Lemma 3.1]{MR4660833}, $\mathcal{C}<\infty$. From \cite[Lemma 3.4]{MR4660833}, we have $g(x) = x^{\alpha/2-\gamma}$, so by taking $\gamma = 1-\alpha/2 \in (0,1)$, we get $g(x) = h_{\alpha-1}(x)$, $x>0$.
Moreover, for $x>0$, we have
\begin{equation*}
    \begin{split}
        \widehat{P}_t h_{\alpha-1}(x) &= \int_0^\infty p_t^D(x,\dy) \int_0^\infty \ds \int_0^\infty \dz \, f(s) p_s^D(y,z) z^{-\delta} \\
        &= \int_0^\infty \ds \int_0^\infty \dz \, f(s) p_{t+s}^D(x,z) z^{-\delta} \\
        &= \int_t^\infty \du \int_0^\infty \dz \,  f(u-t) p_u^D(x,z) z^{-\delta} \\
        &\leq h_{\alpha-1}(x),
    \end{split}
\end{equation*}
because $f$ is increasing. 

For $x<0$, 
\[
\P_t h_{\alpha-1}(x) = e^{-\nu(x,D)t} h_{\alpha-1}(x) \leq h_{\alpha-1}(x). \qedhere
\]
\end{proof}

\begin{lemma}\label{harm_1} For $x\neq 0$,
\[
\int_0^\infty  \widehat{P}_r\widehat{\nu}h_{\alpha-1}(x)\, \dr = h_{\alpha-1}(x).
\]
\end{lemma}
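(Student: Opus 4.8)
The plan is to split the identity according to the sign of $x$ and reduce each case to an elementary beta–function evaluation; in the case $x>0$ the one explicit input is the Poisson kernel \eqref{eq:poisson_kernel_halfline} of the half-line.

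For $x<0$ the semigroup acts trivially: $\P_r f(x)=e^{-\nu(x,D)r}f(x)$, so the left-hand side equals $\big(\widehat{\nu}h_{\alpha-1}\big)(x)\int_0^\infty e^{-\nu(x,D)r}\,\dr=\big(\widehat{\nu}h_{\alpha-1}\big)(x)/\nu(x,D)$. I would then compute $\big(\widehat{\nu}h_{\alpha-1}\big)(x)=\int_D\nu(x,y)|y|^{\alpha-1}\,\dy$ by the substitution $y=|x|t$, recognizing the resulting integral as $\mathcal{A}_{1,\alpha}|x|^{-1}\mathfrak{B}(\alpha,1)=\mathcal{A}_{1,\alpha}\alpha^{-1}|x|^{-1}$. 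Dividing by $\nu(x,D)=\mathcal{A}_{1,\alpha}\alpha^{-1}|x|^{-\alpha}$, which is \eqref{nu_scaling}, leaves exactly $|x|^{\alpha-1}=h_{\alpha-1}(x)$.

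For $x>0$ I would first check that $\big(\widehat{\nu}h_{\alpha-1}\big)(y)=\int_{D^c}\nu(y,z)|z|^{\alpha-1}\,\dz$ is finite for $y\in D$ (integrable near $0$ since $\alpha-1>-1$, and $\nu(y,z)|z|^{\alpha-1}\approx|z|^{-2}$ as $z\to-\infty$), so that $\P_r\widehat{\nu}h_{\alpha-1}(x)=P_r^D(\widehat{\nu}h_{\alpha-1})(x)=\int_D\dy\int_{D^c}\dz\,p_r^D(x,y)\nu(y,z)|z|^{\alpha-1}$. Integrating in $r$ (all integrands are nonnegative, so Tonelli applies as per the paper's convention) and using $\int_0^\infty p_r^D(x,y)\,\dr=G_D(x,y)$ from \eqref{eq:green_fun} together with the definition \eqref{eq:poisson_kernel} of the Poisson kernel, I get $\int_0^\infty\P_r\widehat{\nu}h_{\alpha-1}(x)\,\dr=\int_{D^c}P_D(x,z)|z|^{\alpha-1}\,\dz$. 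Finally I would insert the explicit formula \eqref{eq:poisson_kernel_halfline}, substitute $z=-xt$, and identify the integral as $\mathfrak{B}(\alpha/2,1-\alpha/2)=\Gamma(\alpha/2)\Gamma(1-\alpha/2)=\pi/\sin(\pi\alpha/2)$ by the reflection formula; the normalizing factor $\tfrac1\pi\sin(\pi\alpha/2)$ cancels it, the powers of $x$ collect to $x^{\alpha-1}$, and one concludes $\int_0^\infty\P_r\widehat{\nu}h_{\alpha-1}(x)\,\dr=x^{\alpha-1}=h_{\alpha-1}(x)$.

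No step is genuinely hard: the computation is routine once the Green-function/Poisson-kernel relation $\int_D G_D(x,y)\nu(y,z)\,\dy=P_D(x,z)$ and the explicit half-line kernel are invoked, and the beta integrals are standard. If anything is delicate it is only bookkeeping of the exponents in the two substitutions, but no real obstacle is expected.
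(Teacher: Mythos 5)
Your proposal is correct and follows essentially the same route as the paper's proof: you split on the sign of $x$, for $x>0$ reduce the $r$-integral to $\int_{D^c}P_D(x,z)|z|^{\alpha-1}\,\dz$ via \eqref{eq:green_fun} and \eqref{eq:poisson_kernel}, then evaluate with \eqref{eq:poisson_kernel_halfline} and the beta/reflection formula, and for $x<0$ compute the elementary exponential and beta integrals. The only cosmetic difference is that you organize the $x<0$ case as "compute $\widehat\nu h_{\alpha-1}(x)$, then divide by $\nu(x,D)$," whereas the paper writes the quotient directly, but the calculation is the same.
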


\begin{proof}
Let $x>0$. By \eqref{eq:poisson_kernel},
\begin{equation*}
    \begin{split}
        &\int_0^\infty  \widehat{P}_r\widehat{\nu}h_{\alpha-1}(x)\, \dr = \int_0^\infty \dr \int_0^\infty \da \int_{-\infty}^0 \dy \, p_r^D(x,a)\nu(a,y) h_{\alpha-1}(y) \\
        &= \int_{-\infty}^0 P_{D}(x,y)h_{\alpha-1}(y)\,\dy.
    \end{split}
\end{equation*}
From \eqref{eq:poisson_kernel_halfline}, we have
\begin{align*}
\int_0^\infty  \widehat{P}_r\widehat{\nu}h_{\alpha-1}(x)\, \dr &= \frac{\sin(\pi\alpha/2)}{\pi} x^{\alpha/2} \int_{-\infty}^0  \frac{|y|^{\alpha/2-1}}{|x-y|}\,\dy = \frac{\sin(\pi\alpha/2)}{\pi} x^{\alpha/2} \int_0^\infty  \frac{y^{\alpha/2-1}}{x+y}\,\dy.
\end{align*}
By changing variables $y=xz$, we obtain that
\begin{align*}
 \int_0^\infty  \widehat{P}_r\widehat{\nu}h_{\alpha-1}(x)\, \dr &=\frac{\sin(\pi\alpha/2)}{\pi} x^{\alpha-1} \int_0^\infty \frac{z^{\alpha/2-1}}{1+z}\dz = \frac{\sin(\pi\alpha/2)}{\pi} x^{\alpha-1} \mathfrak{B}(\alpha/2, 1-\alpha/2) \\[3pt]
 &= \frac{\sin(\pi\alpha/2)}{\pi} x^{\alpha-1} \Gamma(\alpha/2)\Gamma(1-\alpha/2) = h_{\alpha-1}(x).
\end{align*}

Let $x<0$. Then from \eqref{nu_scaling},
\begin{align*}
    \int_0^\infty  \widehat{P}_r\widehat{\nu}h_{\alpha-1}(x)\, \dr &= \int_0^\infty e^{-\nu(x,D)r} \,\dr \int_0^\infty   \nu(x,y) y^{\alpha-1} \,\dy = \frac{1}{\nu(x,D)} \int_0^\infty   \nu(x,y) y^{\alpha-1} \,\dy \\
    &= \alpha |x|^\alpha \int_0^\infty \frac{y^{\alpha-1}}{|x-y|^{\alpha+1}}\,\dy.
\end{align*}
Using the substitution $y=|x|z$, we get
\begin{align*}
    \int_0^\infty  \widehat{P}_r\widehat{\nu}h_{\alpha-1}(x)\, \dr &= \alpha |x|^{\alpha-1} \int_0^\infty \frac{z^{\alpha-1}}{(z+1)^{\alpha+1}}\,\dz = \alpha |x|^{\alpha-1} \mathfrak{B}(\alpha,1) = h_{\alpha-1}(x),
\end{align*}
which is our claim.
\end{proof}

From Lemma \ref{lem:L1} and Lemma \ref{harm_1}, it follows that $\mE_x \big[h_{\alpha-1}(X_{R_1})\big] = h_{\alpha-1}(x)$.

The next result is an analogue of Lemma \ref{K_subprobability}, which asserts that $K_t\mathbf{1} \leq 1$ on $\R_*$.

\begin{proposition}\label{theorem_excessive_function}
For $\alpha\in (0,2)$, the function $h_{\alpha-1}(x) = |x|^{\alpha-1}$ is supermedian for $K_t$, i.e. 
\[
K_th_{\alpha-1}(x) \leq h_{\alpha-1}(x), \qquad t>0, ~x\neq 0.
\]
\end{proposition}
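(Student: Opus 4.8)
\medskip
\noindent\textbf{Proof plan.}
The plan is to mimic the proof of Lemma~\ref{K_subprobability}, replacing the constant function $\mathbf 1$ by $h_{\alpha-1}$. Since $K_t=\sum_{n\ge 0}K_{t,n}$ as kernels and all $K_{t,n}$ are positive kernels, Tonelli's theorem gives $K_th_{\alpha-1}=\sum_{n\ge 0}K_{t,n}h_{\alpha-1}$, the monotone limit of the partial sums $S_N(x,t):=\sum_{n=0}^N K_{t,n}h_{\alpha-1}(x)$, so it is enough to prove by induction on $N$ that $S_N(x,t)\le h_{\alpha-1}(x)$ for all $N\ge 0$, $x\neq 0$, $t>0$. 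The base case $S_0(x,t)=\widehat{P}_th_{\alpha-1}(x)\le h_{\alpha-1}(x)$ is Lemma~\ref{Wniosek_ptD}. For the inductive step I would use the recurrence $K_{t,n+1}=\int_0^t\widehat{P}_r\widehat\nu K_{t-r,n}\,\dr$ of Lemma~\ref{proposition_recurrence}, sum over $0\le n\le N$, and pull the (finite) sum through the positive operators $\widehat{P}_r$ and $\widehat\nu$ to obtain
\[
S_{N+1}(x,t)=\widehat{P}_th_{\alpha-1}(x)+\int_0^t\widehat{P}_r\widehat\nu S_N(x,t-r)\,\dr .
\]
By the inductive hypothesis and monotonicity of $\widehat{P}_r$ and $\widehat\nu$, the integrand is bounded by $\widehat{P}_r\widehat\nu h_{\alpha-1}(x)$, so the whole induction reduces to the identity
\[
\widehat{P}_th_{\alpha-1}(x)+\int_0^t\widehat{P}_r\widehat\nu h_{\alpha-1}(x)\,\dr=h_{\alpha-1}(x),\qquad x\neq 0,\ t>0 .
\]

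To prove this last identity I would apply the operator $\widehat{P}_t$ to both sides of Lemma~\ref{harm_1}, that is, to $\int_0^\infty\widehat{P}_r\widehat\nu h_{\alpha-1}(x)\,\dr=h_{\alpha-1}(x)$. Commuting $\widehat{P}_t$ with $\int_0^\infty(\cdot)\,\dr$ (Tonelli, nonnegative integrand) and using the semigroup property $\widehat{P}_t\widehat{P}_r=\widehat{P}_{t+r}$ of the kernel $\widehat{P}$ from Lemma~\ref{F1} on the nonnegative function $\widehat\nu h_{\alpha-1}$, this gives $\widehat{P}_th_{\alpha-1}(x)=\int_0^\infty\widehat{P}_{t+r}\widehat\nu h_{\alpha-1}(x)\,\dr=\int_t^\infty\widehat{P}_u\widehat\nu h_{\alpha-1}(x)\,\du$. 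Since the total integral $\int_0^\infty\widehat{P}_r\widehat\nu h_{\alpha-1}(x)\,\dr=h_{\alpha-1}(x)$ is finite (Lemma~\ref{harm_1}) and $\widehat{P}_th_{\alpha-1}(x)$ is finite (Lemma~\ref{Wniosek_ptD}), I may subtract to get $\int_0^t\widehat{P}_r\widehat\nu h_{\alpha-1}(x)\,\dr=h_{\alpha-1}(x)-\widehat{P}_th_{\alpha-1}(x)$, i.e.\ the displayed identity. (For $x<0$ one can instead compute directly, since there $\widehat{P}_r\widehat\nu h_{\alpha-1}(x)=\nu(x,D)e^{-\nu(x,D)r}h_{\alpha-1}(x)$, as is shown inside the proof of Lemma~\ref{harm_1}, and $\widehat{P}_th_{\alpha-1}(x)=e^{-\nu(x,D)t}h_{\alpha-1}(x)$.) Letting $N\to\infty$ in $S_N(x,t)\le h_{\alpha-1}(x)$ then yields $K_th_{\alpha-1}(x)\le h_{\alpha-1}(x)$.

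I do not expect a genuine obstacle here: the argument is structurally identical to that of Lemma~\ref{K_subprobability}, and every interchange of sum and integral, as well as the use of the Chapman--Kolmogorov relation \eqref{eq:p^D_Ch-K} on $\widehat\nu h_{\alpha-1}$, is justified by Tonelli's theorem because all integrands are nonnegative; the only step worth a line of verification is the finiteness of $\widehat{P}_r\widehat\nu h_{\alpha-1}(x)$, which follows at once from Lemma~\ref{harm_1}. As a consistency check, the displayed identity above is precisely the $(K_t)$-counterpart, decomposed according to whether the first reflection time $R_1$ has already occurred by time $t$, of the identity $\mEx[h_{\alpha-1}(X_{R_1})]=h_{\alpha-1}(x)$ recorded just above the proposition.
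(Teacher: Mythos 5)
Your proposal is correct and follows the same route as the paper: induction on the partial sums $S_N$, the base case from Lemma~\ref{Wniosek_ptD}, the recursion from Lemma~\ref{proposition_recurrence}, and the key identity $\widehat{P}_th_{\alpha-1}(x)=\int_t^\infty\widehat{P}_r\widehat\nu h_{\alpha-1}(x)\,\dr$ derived by applying $\widehat{P}_t$ to Lemma~\ref{harm_1} via Tonelli and the semigroup property, which is exactly the paper's equation~\eqref{eq2}. The only difference is presentational (you subtract the tail integral, the paper splits $\int_0^\infty=\int_0^t+\int_t^\infty$), so the two arguments are the same.
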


\begin{proof}
We prove by induction that for $N=0,1,2,\ldots$ and $t>0$,
\begin{equation}\label{S_N}
S_N(x,t) := \sum_{n=0}^N K_{t,n} h_{\alpha-1}(x) \leq h_{\alpha-1}(x), \quad x\neq 0.
\end{equation}
Let $N=0$. From Lemma \ref{Wniosek_ptD}, it follows that
\begin{equation*}
    \begin{split}
        S_0(x,t)  = \widehat{P}_th_{\alpha-1}(x) \leq h_{\alpha-1}(x), \quad x\neq 0.
    \end{split}
\end{equation*}
Furthermore, using Lemma \ref{harm_1}, we obtain for $x\neq 0$,
\begin{align}\label{eq2}
    \P_th_{\alpha-1}(x) = \int_0^\infty \P_t\P_r\widehat{\nu} h_{\alpha-1}(x)\,\dr = \int_0^\infty \P_{t+r}\widehat{\nu}h_{\alpha-1}(x)\,\dr = \int_t^\infty \P_r\widehat{\nu}h_{\alpha-1}(x)\,\dr.
\end{align}

Assume that for some $N\in \{ 0,1,2,\ldots\}$ and all $t>0$ we have $S_N(x,t) \leq h_{\alpha-1}(x)$. We will show that $S_{N+1}(x,t) \leq h_{\alpha-1}(x)$. Using \eqref{eq:K_recurence}, \eqref{eq2} and Lemma \ref{harm_1}, we obtain
\begin{align*}
    S_{N+1}(x,t) &= \P_th_{\alpha-1}(x)+ \sum_{n=0}^N \int_0^t \P_r\widehat{\nu} 
     K_{t-r,n}h_{\alpha-1}(x)\,\dr \\
    &=\int_t^\infty \P_r\widehat{\nu}h_{\alpha-1}(x)\,\dr + \int_0^t  \P_r\widehat{\nu} S_N(x,t-r)\,\dr \\
    &\leq \int_t^\infty \P_r\widehat{\nu}h_{\alpha-1}(x)\,\dr + \int_0^t \P_r\widehat{\nu}h_{\alpha-1}(x)\,\dr = h_{\alpha-1}(x).
\end{align*}
Then from the fact that the sequence $(S_N(x,t))_{N\geq 0}$ is non-decreasing and bounded, it follows that  
\[
K_th_{\alpha-1}(x) \leq h_{\alpha-1}(x), \quad x>0. \qedhere
\]
\end{proof}

\begin{corollary}\label{excesive_function_beta}
If $\alpha\in (0,2)$, $\beta(\alpha-\beta-1)\geq 0$ and $\lambda\in [0,\infty)$, then $h_\beta$ is $\lambda$-excessive for $K_t$.
\end{corollary}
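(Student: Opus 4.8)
The plan is to deduce the general case from the two \emph{endpoints} $\beta=0$ and $\beta=\alpha-1$ by interpolation. The hypothesis $\beta(\alpha-\beta-1)\ge 0$ says precisely that $\beta$ lies in the closed interval between $0$ and $\alpha-1$ (a short case check on the sign of $\alpha-1$); when $\alpha\ne 1$ put $\theta:=\beta/(\alpha-1)\in[0,1]$, so that $\beta=\theta(\alpha-1)$ and $h_\beta=h_{\alpha-1}^{\theta}\cdot\mathbf{1}^{1-\theta}$ pointwise on $\R_*$, while for $\alpha=1$ the only admissible value is $\beta=0$ and $h_0=\mathbf{1}$. First I would check the supermedian property $K_th_\beta\le h_\beta$: since $K_t(x,\cdot)$ is a (sub-probability) measure, Hölder's inequality with exponents $1/\theta$ and $1/(1-\theta)$ gives
\[
K_th_\beta(x)=\int_{\R_*}\big(|y|^{\alpha-1}\big)^{\theta}\,1^{1-\theta}\,K_t(x,\dy)\le\big(K_th_{\alpha-1}(x)\big)^{\theta}\big(K_t\mathbf{1}(x)\big)^{1-\theta}\le\big(|x|^{\alpha-1}\big)^{\theta}\cdot 1^{1-\theta}=h_\beta(x),
\]
using $K_th_{\alpha-1}(x)\le h_{\alpha-1}(x)$ (Proposition~\ref{theorem_excessive_function}, which also gives finiteness) and $K_t\mathbf{1}(x)\le 1$ (Lemma~\ref{K_subprobability}); the endpoint values $\theta\in\{0,1\}$ need no argument. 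In particular $e^{-\lambda t}K_th_\beta\le K_th_\beta\le h_\beta$ for all $\lambda\ge0$ and $t>0$.

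It then remains to verify the regularization $e^{-\lambda t}K_th_\beta(x)\to h_\beta(x)$ as $t\to 0^+$ for $x\ne 0$, and since $e^{-\lambda t}\to1$ it suffices to show $K_th_\beta(x)\to h_\beta(x)$. I would squeeze: as $K_th_\beta=\sum_{n\ge0}K_{t,n}h_\beta$ has nonnegative terms, $K_{t,0}h_\beta(x)\le K_th_\beta(x)\le h_\beta(x)$, so one only needs $\liminf_{t\to0^+}K_{t,0}h_\beta(x)\ge h_\beta(x)$. For $x<0$, $K_{t,0}h_\beta(x)=\widehat{P}_th_\beta(x)=e^{-\nu(x,D)t}|x|^\beta\to|x|^\beta$. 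For $x>0$, $K_{t,0}h_\beta(x)=P_t^Dh_\beta(x)=\mE_x^Y\big[|Y_t|^\beta;\,\tau_D>t\big]$; because $Y$ has right-continuous paths with $Y_0=x$ and $x$ lies in the open set $D$ (so $\tau_D>0$ a.s.), we have $|Y_t|^\beta\ind_{\{\tau_D>t\}}\to|x|^\beta$ a.s.\ as $t\to0^+$, and Fatou's lemma yields $\liminf_{t\to0^+}P_t^Dh_\beta(x)\ge|x|^\beta=h_\beta(x)$. Combining, $\lim_{t\to0^+}K_th_\beta(x)=h_\beta(x)$, so $h_\beta\ge0$, $e^{-\lambda t}K_th_\beta\le h_\beta$, and $e^{-\lambda t}K_th_\beta\to h_\beta$, i.e.\ $h_\beta$ is $\lambda$-excessive for $K_t$.

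I do not anticipate a real obstacle; the only mildly delicate point is that $h_\beta$ is unbounded, hence not in $C_0(\R_*)$, so the Feller property cannot be invoked directly for the regularization step — this is exactly why the plan uses Fatou's lemma together with the pathwise description of $P_t^D$ rather than semigroup continuity. The routine items still to be spelled out are the sign check giving $\theta\in[0,1]$, the legitimacy of Hölder's inequality for the sub-probability kernel $K_t(x,\cdot)$ (which does not require unit total mass), and the classical facts that $\tau_D>0$ $\mathbb{P}_x^Y$-a.s.\ for $x\in D$ and that $z\mapsto|z|^\beta$ is continuous at $x\ne0$.
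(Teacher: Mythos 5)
Your argument is correct and follows essentially the paper's route: you interpolate the supermedian bound between the endpoints $\beta=0$ and $\beta=\alpha-1$ using the sub-probability of $K_t(x,\cdot)$ (your H\"older step is the dual of the paper's Jensen-inequality step with $\gamma=(\alpha-1)/\beta$), and you reduce the regularization to the first term $\widehat{P}_t=K_{t,0}$ of the perturbation series via the sandwich $\widehat{P}_th_\beta\le K_th_\beta\le h_\beta$, exactly as the paper does. The only (minor) deviation is in verifying $\liminf_{t\to 0^+}\widehat{P}_th_\beta(x)\ge h_\beta(x)$: the paper picks a $C_0(\R_*)$ minorant $\varphi\le h_\beta$ with $\varphi(x)=h_\beta(x)$ and invokes the Feller property of $\widehat{P}_t$ (Lemma~\ref{P_t_feller}), whereas you use right-continuity of paths together with Fatou's lemma; both are valid, of comparable length, and your version has the small advantage of not requiring the Feller lemma.
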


\begin{proof}
The supermedian property in the case $\beta=0$ and $\beta=\alpha-1$ follows directly from Lemma \ref{K_subprobability} and Proposition \ref{theorem_excessive_function}. Hence, it suffices to prove it for $\beta\neq 0$ and $\beta\neq\alpha-1$.

    Let $\gamma := (\alpha-1)/\beta$. Of course $\gamma >1$ and from Jensen's inequality, we have
    \begin{align}\label{eq:excesywnosc_beta}
        K_th_\beta^\gamma(x) &= K_t(x,\R) \int_\R h_\beta^\gamma(y) \frac{K_t(x,\dy)}{K_t(x,\R)} \geq K_t(x,\R) \Big[ \int_\R h_\beta(y) \frac{K_t(x,\dy)}{K_t(x,\R)} \Big]^\gamma \nonumber \\[2pt]
        &= \big[K_t(x,\R)\big]^{1-\gamma} \big[K_th_\beta(x)\big]^\gamma \geq \big[K_th_\beta(x)\big]^\gamma.
    \end{align}
Hence, from \eqref{eq:excesywnosc_beta} and from Proposition \ref{theorem_excessive_function},
    \[
        K_th_\beta(x) \leq \big[ K_th_{\beta}^\gamma(x) \big]^{1/\gamma} = \big[ K_t h_{\alpha-1}(x)\big]^{1/\gamma} \leq h_{\alpha-1}^{1/\gamma}(x) = h_\beta(x). 
    \]
    
\medskip
Let $x\neq 0$ and $\varphi\in C_0(\R_*)$ be such that $0\leq \varphi\leq h_\beta$ and $\varphi(x) = h_\beta(x)$. By Lemma \ref{P_t_feller} and the first part of the proof, we have
\[
h_\beta(x) = \varphi(x) = \liminf_{t\to 0^+} \widehat{P}_t\varphi(x) \leq  \liminf_{t\to 0^+} \widehat{P}_th_\beta(x) \leq \limsup_{t\to 0^+} \widehat{P}_th_\beta(x) \leq \limsup_{t\to 0^+} K_th_\beta(x)\leq h_\beta(x).
\]
Hence $\lim\limits_{t\to 0^+} \widehat{P}_th_\beta(x) = h_\beta(x)$. Furthermore, 
\[
\widehat{P}_th_\beta(x) \leq K_th_\beta(x) \leq h_\beta(x),
\]
so $\lim\limits_{t\to 0^+} K_th_\beta(x) = h_\beta(x)$. 
\end{proof}

Note that the assumption $\alpha\in (0,2)$ and $\beta(\alpha-\beta-1)\geq 0$ in the previous corollary is equivalent to the following: $\beta\in [\alpha-1, 0]$ for $\alpha \in (0,1]$, and $\beta\in [0,\alpha-1]$ for $\alpha\in [1,2)$.

\section{The lifetime of the process 
}\label{chap_Lifetime}

In this section we study the lifetime of the process $X = (X_t)_{t\geq 0}$. We consider three cases. In the case $\alpha\in (0,1)$, we prove that the lifetime of the process $X$ is infinite a.s. and $|X_t|\to \infty$ as $t\to \infty$. For $\alpha\in (1,2)$, we show that the process $X$ is absorbed by the origin in finite time. In the case $\alpha=1$, we prove that the lifetime of $X$ is infinite and the limit of $\lim\limits_{t\to\infty} X_t$ does not exist. The precise statement of this fact is given in Section \ref{sec:main_theorem}. At the end of this section, we use the absorption result to prove the Feller property of the semigroup $K$ for $\alpha>1$.

\subsection{The position of the first return \texorpdfstring{$D$}{TEXT}}

Let $W = X_{R_2}$. It is the first return position to $D = (0,\infty)$ for the process $X = (X_t)_{t\geq 0}$ starting from $x>0$. By Lemma \ref{lem:L2},
\begin{align*}
    R(x,w) := \int_0^\infty \dt \int_0^\infty\dy \int_{-\infty}^0\dz  \, p_t^D(x,y)\nu(y,z) \frac{\nu(z,w)}{\nu(z,D)}, \quad w\in D,
\end{align*}
is the density function of $W$. By \eqref{eq:green_fun} and \eqref{eq:poisson_kernel},
\begin{align*}
    R(x,w) = \int_{-\infty}^0 P_D(x,z)\frac{\nu(z,w)}{\nu(z,D)}\,\dz.
\end{align*}
By changing variables $z=xs$,  \eqref{poisson_kernel_scaling} and \eqref{nu_scaling2}, we get
\begin{align*}
    R(x,w) &= x\int_{-\infty}^0 P_{D}(x,xs)\frac{\nu(xs,w)}{\nu(xs,D)}\,\ds = x^{1-1-\alpha-1+\alpha} \int_{-\infty}^0  P_D(1,s) \frac{\nu(s,w/x)}{\nu(s,D)}\,\ds \\
    &= x^{-1} R(1,w/x).
\end{align*}
Therefore, $\mP_x(W\in \dw) = \mP_1(xW\in \dw)$.

\begin{lemma}\label{lem:L3}
For $\alpha \in (0,1)\cup (1,2)$,
\begin{align*}
    \rho := \mE_1 W^{(\alpha-1)/2} <1.
\end{align*}
\end{lemma}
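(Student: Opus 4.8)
The statement to prove is that $\rho := \mathbb{E}_1 W^{(\alpha-1)/2} < 1$, where $W = X_{R_2}$ is the first return position to $D$ for the process started at $x=1$. The natural strategy is to compute $\mathbb{E}_1 W^{(\alpha-1)/2}$ more or less explicitly using the scaling relations established above, and then compare with the known identity $\mathbb{E}_x[h_{\alpha-1}(X_{R_1})] = h_{\alpha-1}(x)$ recorded just before Lemma~\ref{lem:L2}, which says $\mathbb{E}_1 |X_{R_1}|^{\alpha-1} = 1$. Since $(\alpha-1)/2$ has the same sign as $\alpha-1$ but smaller absolute value, Jensen's inequality and the strong Markov property at time $R_1$ should upgrade ``$=1$'' to a strict ``$<1$'' for $W = X_{R_2}$, provided one checks that the conditional law of $W$ given $X_{R_1}$ is genuinely non-degenerate. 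Let me spell this out.

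**Key steps.** First I would condition on $\mathcal{F}_{R_1}$: by the strong Markov property (as in the proof of Lemma~\ref{lem:L1}, invoking \cite[Exercise 8.16]{MR0264757}), $W = X_{R_2} = X_{R_1 + R_1\circ\theta_{R_1}}$ has, given $X_{R_1} = z \in \overline{D}^c$, the law of $X_{R_1}$ started from $z$, whose distribution is $k(z,\cdot)$, i.e.\ the probability measure $\nu(z,w)\,\mathrm{d}w/\nu(z,D)$ on $D$. Hence
\begin{align*}
    \mathbb{E}_1 W^{(\alpha-1)/2} = \mathbb{E}_1\!\left[ \int_D w^{(\alpha-1)/2}\,\frac{\nu(X_{R_1},w)}{\nu(X_{R_1},D)}\,\mathrm{d}w \right].
\end{align*}
Next, compute the inner integral: for $z<0$, by the substitution $w = |z|u$ and \eqref{nu_scaling}, $\int_D w^{(\alpha-1)/2}\nu(z,w)\,\mathrm{d}w/\nu(z,D) = |z|^{(\alpha-1)/2}\,c_\alpha$, where
\begin{align*}
    c_\alpha = \alpha\int_0^\infty \frac{u^{(\alpha-1)/2}}{(1+u)^{\alpha+1}}\,\mathrm{d}u = \alpha\,\mathfrak{B}\!\left(\tfrac{\alpha+1}{2},\tfrac{\alpha+1}{2}\right) = \alpha\,\frac{\big(\Gamma(\tfrac{\alpha+1}{2})\big)^2}{\Gamma(\alpha+1)}.
\end{align*}
So $\rho = c_\alpha\,\mathbb{E}_1 |X_{R_1}|^{(\alpha-1)/2}$. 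Finally, since $X_{R_1} \in \overline{D}^c = (-\infty,0)$ has a non-degenerate law (it is the exit position $Y_{\tau_D}$ distributed by the Poisson kernel \eqref{eq:poisson_kernel_halfline}, which is genuinely spread out), and since $t\mapsto t^{1/2}$ is strictly concave on $(0,\infty)$, Jensen's inequality gives, using the identity $\mathbb{E}_1|X_{R_1}|^{\alpha-1}=1$ from Lemma~\ref{harm_1},
\begin{align*}
    \mathbb{E}_1 |X_{R_1}|^{(\alpha-1)/2} = \mathbb{E}_1\big[(|X_{R_1}|^{\alpha-1})^{1/2}\big] < \big(\mathbb{E}_1 |X_{R_1}|^{\alpha-1}\big)^{1/2} = 1,
\end{align*}
the inequality being strict because $|X_{R_1}|^{\alpha-1}$ is not a.s.\ constant. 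Therefore $\rho < c_\alpha$. It then remains to observe that $c_\alpha \le 1$: indeed $c_\alpha = \alpha\,\mathfrak{B}(\tfrac{\alpha+1}{2},\tfrac{\alpha+1}{2})$, and using $\mathfrak{B}(x,x) = 2^{1-2x}\mathfrak{B}(x,\tfrac12)$ one can check $c_\alpha$ is strictly below $1$ for $\alpha\in(0,1)\cup(1,2)$ (note $c_\alpha$ also appears via $\mathcal{C}_\alpha = \mathcal{A}_{1,\alpha}[\alpha^{-1} - c_\alpha/\alpha]>0$ in Theorem~\ref{nier_Hardyego}, so $c_\alpha<1$ is exactly the positivity of $\mathcal{C}_\alpha$ already asserted). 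Combining, $\rho < c_\alpha < 1$, as claimed; in fact the argument shows the sharper $\rho < c_\alpha$.

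**Main obstacle.** The computational steps are routine given the scaling lemmas. The one point requiring genuine care is the strict inequality in Jensen: I must verify that $|X_{R_1}|$ is not $\mathbb{P}_1$-a.s.\ a constant. This follows from the explicit Poisson kernel \eqref{eq:poisson_kernel_halfline}, which shows $X_{R_1} = Y_{\tau_D}$ has a density with full support on $(-\infty,0)$, so $|X_{R_1}|^{\alpha-1}$ is non-constant and Jensen is strict. A secondary point is making sure the conditioning argument is rigorous — that $W$ given $X_{R_1}=z$ really has law $k(z,\cdot)$ — but this is exactly the content of the $n=1$ case combined with the strong Markov identity already used for $P_2$ in Lemma~\ref{lem:L2}, so no new difficulty arises. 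One should also double-check the sign conventions: for $\alpha\in(0,1)$ the exponent $(\alpha-1)/2$ is negative so $t\mapsto t^{1/2}$ concavity is still the relevant fact (applied to the positive random variable $|X_{R_1}|^{\alpha-1}$), and the argument goes through uniformly in both regimes.
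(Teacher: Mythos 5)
Your proof is correct but takes a genuinely different route from the paper's. The paper computes $\mathbb{E}_1 W^{(\alpha-1)/2}$ in closed form straight from the Poisson kernel, obtaining $\sin(\pi\alpha/2)\,G(\alpha)$ with $G(\alpha) = \big(\Gamma(\tfrac{\alpha+1}{2})\big)^2/\Gamma(\alpha)$, and then shows $G(\alpha)<G(1)=1$ by a digamma monotonicity argument. You instead split $\rho$ at time $R_1$ via the strong Markov property and the scaling of $k(z,\cdot)$, arriving at $\rho = c_\alpha\,\mathbb{E}_1|X_{R_1}|^{(\alpha-1)/2}$ with the same constant $c_\alpha = G(\alpha)$; you then invoke the identity $\mathbb{E}_1|X_{R_1}|^{\alpha-1}=1$ (recorded just after Lemma \ref{harm_1}) and strict Jensen for $t\mapsto t^{1/2}$ to conclude $\mathbb{E}_1|X_{R_1}|^{(\alpha-1)/2}<1$. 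This is conceptually cleaner in that it avoids computing $\mathbb{E}_1|X_{R_1}|^{(\alpha-1)/2}$ exactly (which equals $\sin(\pi\alpha/2)$ and is implicitly what the paper's $\sin$ factor is), and gets the strictness of Jensen from the full support of the Poisson kernel. However, both arguments then meet at the \emph{same} remaining obstacle, namely $c_\alpha\le 1$: your claim that the Legendre duplication formula ``lets one check'' this is not substantiated—it merely rewrites $c_\alpha$ as $2^{1-\alpha}\sqrt{\pi}\,\Gamma(\tfrac{\alpha+1}{2})/\Gamma(\tfrac{\alpha}{2})$, which is not manifestly $\le 1$, and the paper's digamma argument (or the equivalent fact $\mathcal{M}(\alpha,\beta)>0$ in Proposition \ref{gen_inequality}) seems genuinely required. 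Your fallback of citing the positivity of $\mathcal{C}_\alpha$ from Theorem \ref{nier_Hardyego} is logically admissible since that positivity is proved in Proposition \ref{gen_inequality} independently of Lemma \ref{lem:L3}, but it is a forward reference, and its proof there is essentially the same digamma computation, so no real work is saved. Finally, the claimed ``sharper $\rho<c_\alpha$'' is not actually sharper than the paper's: for $\alpha\ne 1$ the factor $\sin(\pi\alpha/2)<1$, so the paper's computation also yields $\rho<c_\alpha$ strictly.
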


\begin{proof}
Let $\alpha\in (0,2)$. Using \eqref{eq:poisson_kernel_halfline} and \eqref{nu_scaling} we obtain
\begin{align*}
\mE_1 W^{(\alpha-1)/2} &= C \int_0^\infty \dw \int_{-\infty}^0 \dz \, |z|^{-\alpha/2} |1-z|^{-1} |z-w|^{-1-\alpha} |z|^\alpha w^{(\alpha-1)/2},
\end{align*}
where $C = \pi^{-1}\alpha \sin(\pi\alpha/2)$. By Tonelli's theorem and by changing variables $y = -z \in (0,\infty)$, $w = yv$, we get
\begin{align*}
    \mE_1 W^{(\alpha-1)/2} &= C \int_0^\infty \dy \int_0^\infty \dv \, y^{1-\alpha/2-1-\alpha+\alpha+\alpha/2-1/2} (1+y)^{-1} (v+1)^{-1-\alpha} v^{(\alpha-1)/2} \\
    &= C \int_0^\infty \frac{1}{\sqrt{y}(1+y)}\,\dy \, \int_0^\infty \frac{v^{(\alpha-1)/2}}{(v+1)^{1+\alpha}} \,\dv  \\
    &= \frac{\alpha}{\pi} \sin\frac{\pi\alpha}{2} \cdot \pi \cdot B\big(\alpha/2+1/2, \alpha/2+1/2)  \\
    &=\sin{\frac{\pi\alpha}{2}}\cdot  \frac{(\Gamma(\alpha/2+1/2))^2}{\Gamma(\alpha)}\\
    &\leq \frac{(\Gamma(\alpha/2+1/2))^2}{\Gamma(\alpha)} =: G(\alpha).
\end{align*}

We will show that $G(\alpha) < G(1) = 1$ for $\alpha\in (0,1)\cup (1,2)$. Let $\psi(x) := \Gamma'(x)/\Gamma(x)$ be the digamma function. It is well known that $x\mapsto \psi(x)$ is continuous and increasing for $x>0$ (see, e.g.,  Andrews et al. \cite[Theorem 1.2.5]{MR1688958}). 

In case $\alpha\in (0,1)$ we have $\alpha/2+1/2 > \alpha$ and then $\psi(\alpha/2+1/2) > \psi(\alpha)$. Then, of course, $\Gamma'(\alpha/2+1/2)\Gamma(\alpha) - \Gamma(\alpha/2+1/2)\Gamma'(\alpha) > 0$. Therefore,
\[
G'(\alpha) = \frac{\Gamma(\alpha/2+1/2)}{\Gamma^2(\alpha)} \big[\Gamma'(\alpha/2+1/2)\Gamma(\alpha) - \Gamma(\alpha/2+1/2) \Gamma'(\alpha)\big]>0,
\]
and so $G(\alpha)<G(1) = 1$.

In case $\alpha\in (1,2)$ we have $\alpha/2+1/2 < \alpha$ and then $\psi(\alpha/2+1/2)<\psi(\alpha)$, or $\Gamma'(\alpha/2+1/2)\Gamma(\alpha) - \Gamma(\alpha/2+1/2)\Gamma'(\alpha) <0$. Therefore, $G'(\alpha)<0$ and again $G(\alpha)<G(1) = 1$.
\end{proof}

\begin{proposition}\label{cor:wart_ocz}
For $\alpha\in (0,2)$, $\mE_1|\ln{W}| < \infty$ and
\begin{align}
\mE_1 \ln{W} &> 0, \quad \mathrm{ for } ~ \alpha\in (0,1), \label{eq:ln(W)_01} \\
\mE_1 \ln{W} &= 0, \quad \mathrm{ for } ~ \alpha=1, \label{eq:ln(W)_1} \\
\mE_1 \ln{W} &< 0, \quad \mathrm{ for } ~ \alpha\in (1,2). \label{eq:ln(W)_12}
\end{align}
\end{proposition}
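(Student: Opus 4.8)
The plan is to study the Mellin-type function $\Lambda(s):=\mE_1W^s$ of the first return position $W=X_{R_2}$ and to extract the sign of $\Lambda'(0)=\mE_1\ln W$ from its convexity together with Lemma~\ref{lem:L3}. Recall from the paragraph preceding Lemma~\ref{lem:L3} that, under $\mP_1$, $W$ has density $R(1,\cdot)$ on $D$, and that inserting the explicit Poisson kernel \eqref{eq:poisson_kernel_halfline} and \eqref{nu_scaling} and substituting $y=-z$ gives
\[
R(1,w)=\frac{\alpha\sin(\pi\alpha/2)}{\pi}\int_0^\infty\frac{y^{\alpha/2}}{(1+y)(w+y)^{\alpha+1}}\,\dy,\qquad w>0 .
\]
Since the return to $D$ is $\mP_1$-a.s.\ (Remark~\ref{rem:P_n_prob1}) and lands in $D$ by construction, $\mP_1(W\in\dw)=R(1,w)\,\dw$ is a probability measure on $D$; in particular $\Lambda(0)=1$.

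First I would compute $\Lambda(s)=\int_0^\infty w^sR(1,w)\,\dw$ in closed form. By Tonelli's theorem, the inner substitution $w=yu$ and the beta integral $\int_0^\infty u^s(1+u)^{-\alpha-1}\,\du=\mathfrak{B}(s+1,\alpha-s)$ (for $-1<s<\alpha$), followed by $\int_0^\infty y^{s-\alpha/2}(1+y)^{-1}\,\dy=\mathfrak{B}(s+1-\alpha/2,\,\alpha/2-s)$ (for $\alpha/2-1<s<\alpha/2$), should give
\[
\Lambda(s)=\frac{\alpha\sin(\pi\alpha/2)}{\pi}\,\mathfrak{B}(s+1,\alpha-s)\,\mathfrak{B}(s+1-\alpha/2,\,\alpha/2-s),\qquad \alpha/2-1<s<\alpha/2 .
\]
In particular $\Lambda$ is finite on the open interval $J:=(\alpha/2-1,\alpha/2)$, which contains $0$ (since $\alpha\in(0,2)$) and $(\alpha-1)/2$; the reflection formula gives $\Lambda(0)=1$ and $\mathfrak{B}(1/2,1/2)=\pi$ reproduces the value of $\Lambda((\alpha-1)/2)=\rho$ from Lemma~\ref{lem:L3}, as a consistency check. (If one prefers to avoid the closed form, it is enough to bound $R(1,w)\lesssim w^{-\alpha/2}$ near $0$ and $R(1,w)\lesssim w^{-\alpha/2-1}$ near $\infty$ directly from the integral, which already forces finiteness of $\Lambda$ on $J$.) Since $|\ln w|\le\varepsilon^{-1}(w^\varepsilon+w^{-\varepsilon})$ for $w>0$, choosing $\varepsilon\in(0,\min(\alpha/2,1-\alpha/2))$ yields $\mE_1|\ln W|\le\varepsilon^{-1}(\Lambda(\varepsilon)+\Lambda(-\varepsilon))<\infty$; moreover differentiation under the integral sign (the $s$-derivative $w^s\ln w\,R(1,w)$ being dominated, near each point of $J$, by a finite sum of integrable functions $w^cR(1,w)$ with $c\in J$) gives $\Lambda'(s)=\mE_1[W^s\ln W]$, so $\Lambda'(0)=\mE_1\ln W$.

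The final step uses that $\Lambda$ is convex on $J$, being the mixture $\int_0^\infty(\cdot)^{s}R(1,w)\,\dw$ of the convex functions $s\mapsto w^s$; hence the difference quotient $s\mapsto(\Lambda(s)-\Lambda(0))/s$ is nondecreasing on $J\setminus\{0\}$ and tends to $\Lambda'(0)$ as $s\to0$. For $\alpha\in(0,1)\cup(1,2)$ set $s^*:=(\alpha-1)/2\in J\setminus\{0\}$; by Lemma~\ref{lem:L3}, $\Lambda(s^*)=\rho<1=\Lambda(0)$, so the chord slope from $0$ to $s^*$ equals $(\rho-1)/s^*$, and therefore $\Lambda'(0)\le(\rho-1)/s^*<0$ when $\alpha\in(1,2)$ (then $s^*>0$), which is \eqref{eq:ln(W)_12}, while $\Lambda'(0)\ge(\rho-1)/s^*>0$ when $\alpha\in(0,1)$ (then $s^*<0$), which is \eqref{eq:ln(W)_01}. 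The case $\alpha=1$ is handled separately since there $s^*=0$: the substitution $y\mapsto1/y$ in the integral for $R(1,w)$ shows $R(1,1/w)=w^2R(1,w)$, so $w\mapsto1/w$ gives $\Lambda(-s)=\Lambda(s)$, i.e.\ $W$ and $1/W$ have the same law under $\mP_1$; thus $\ln W$ is symmetric and $\mE_1\ln W=0$, which is \eqref{eq:ln(W)_1}. (Equivalently, the closed form for $\Lambda$ is manifestly even in $s$ when $\alpha=1$.)

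The only part requiring genuine care is determining where $\Lambda$ is finite — equivalently the decay of $R(1,w)$ at $0$ and at $\infty$ — which is why I would go through the explicit Beta-function evaluation; everything else is soft, resting on convexity, the already-proved Lemma~\ref{lem:L3}, and the inversion symmetry at $\alpha=1$.
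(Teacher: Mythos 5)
Your proof is correct and takes a somewhat different route from the paper's. The paper establishes $\mE_1|\ln W|<\infty$ by directly estimating the double integral for $\mE_1|\ln W|$, then treats the sign of $\mE_1\ln W$ for $\alpha\neq 1$ by applying Jensen's inequality to the concave function $\ln$ evaluated at $W^{(\alpha-1)/2}$ together with Lemma~\ref{lem:L3}, and finally handles $\alpha=1$ by an explicit integration-by-parts computation reducing $\mE_1\ln W$ to two mutually cancelling integrals. Your version organizes everything around the Mellin function $\Lambda(s)=\mE_1W^s$: its closed Beta form identifies the finiteness window $J=(\alpha/2-1,\alpha/2)\ni 0$, the bound $|\ln w|\le\varepsilon^{-1}(w^\varepsilon+w^{-\varepsilon})$ gives $\mE_1|\ln W|<\infty$ for free, and the sign of $\Lambda'(0)=\mE_1\ln W$ follows from convexity of $\Lambda$ together with $\Lambda((\alpha-1)/2)=\rho<1$ from Lemma~\ref{lem:L3}. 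Analytically this last step is essentially the paper's Jensen argument (the chord inequality for the convex $\Lambda$ and Jensen for concave $\ln$ both rest on $\rho<1$), so there the two proofs coincide in substance. Where your proof genuinely improves on the paper is the case $\alpha=1$: the inversion symmetry $W\overset{d}{=}1/W$, read off either from $R(1,1/w)=w^2R(1,w)$ or from evenness of the closed form of $\Lambda$, immediately yields $\mE_1\ln W=0$ and is considerably cleaner than the paper's direct computation of the integrals $I$ and $II$. One small stylistic remark: since the statement is about $\mE_1\ln W$ rather than $\Lambda'(0)$, the justification of differentiation under the integral (which you do supply) is a necessary part of the argument, not a remark.
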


\begin{proof}
First, we prove that $\mE_1|\ln{W}|<\infty$. From  \eqref{eq:poisson_kernel_halfline} and \eqref{nu_scaling} we get
\begin{align*}
\mE_1 |\ln{W}| &= C \int_0^\infty \dw \int_{-\infty}^0 \dz \, |z|^{-\alpha/2} |1-z|^{-1} |z-w|^{-1-\alpha} |z|^\alpha |\ln{w}|,
\end{align*}
where $C = \pi^{-1}\alpha \sin(\pi\alpha/2)$. By Tonelli's theorem and by changing variables $y = -z \in (0,\infty)$, we get
\begin{align*}
    \mE_1 |\ln{W}| = C \int_0^\infty \dy \int_0^\infty \dw  \, \frac{y^{\alpha/2}|\ln{w}|}{(y+1)(w+y)^{\alpha+1}}.
\end{align*}
By changing variables $w = yv$,
\begin{align*}
    \mE_1|\ln{W}| &=C \int_0^\infty \dy \int_{0}^\infty \dv \, \frac{|\ln{(yv)|}}{y^{\alpha/2}(y+1)(v+1)^{\alpha+1}}  \\
    &\leq C \int_0^\infty \dy \int_0^\infty \dv \, \frac{|\ln{y}| + |\ln{v}|}{y^{\alpha/2}(y+1)(v+1)^{\alpha+1}} \\
    &= 
    C\int_0^\infty  \frac{1}{(v+1)^{\alpha+1}}\, \dv 
    \int_0^\infty \frac{|\ln{y}|}{y^{\alpha/2}(y+1)}\, \dy 
    \\
    &+ 
    C\int_0^\infty \frac{|\ln{v}|}{(v+1)^{\alpha+1}}\,\dv
    \int_0^\infty \frac{y^{-\alpha/2}}{y+1}\, \dy  < \infty.
\end{align*}
Thus, $\mE_1|\ln{W}|<\infty$. 

Let $\alpha\in (0,1)\cup (1,2)$. Then, from Jensen's inequality for concave functions and from Lemma \ref{lem:L3}, we get
\[
\tfrac{\alpha-1}{2} \mE_1\ln{W} = \mE_1\ln{W^{(\alpha-1)/2}} \leq \ln{\mE_1{W^{(\alpha-1)/2}}} < \ln{1} = 0.
\]
Hence, for $\alpha\in (0,1)$, $\mE_1 \ln{W} > 0$, and for $\alpha \in (1,2)$, $\mE_1\ln{W} < 0$.

Now assume that $\alpha=1$. Then 
\begin{align}\label{alfa0}
C^{-1}\mE_1(\ln{W}) &= \int_0^\infty \dy \int_0^1 \dw  \, \frac{y^{1/2}\ln{w}}{(y+1)(w+y)^{2}} + \int_0^\infty \dy \int_1^\infty \dw  \, \frac{y^{1/2}\ln{w}}{(y+1)(w+y)^{2}} \nonumber \\
&=: I+II. 
\end{align}
Using substitution $w = 1/v$ and integrating by parts we get
\begin{align*}
I &= \int_0^\infty \dy \int_0^1 \dw  \, \frac{\sqrt{y}\ln{w}}{(y+1)(w+y)^{2}} = - \int_0^\infty \frac{\sqrt{y}}{y+1} \int_1^\infty \frac{\ln{v}}{(1+yv)^2}\,\dv \,\dy \\
&=- \int_0^\infty \frac{\ln(1+1/y)}{\sqrt{y}(y+1)}\,\dy = -\int_0^\infty \frac{\ln(y+1)}{\sqrt{y}(y+1)}\,\dy + \int_0^\infty \frac{\ln(y)}{\sqrt{y}(y+1)}\,\dy.
\end{align*}
Moreover, by changing variables $y = 1/v$,
\[
\int_0^1 \frac{\ln(y)}{\sqrt{y}(y+1)}\,\dy = -\int_1^\infty \frac{\ln{v}}{\sqrt{v}(v+1)}\,\dv, 
\]
hence
\begin{align}\label{alfa0_I}
I = -\int_0^\infty \frac{\ln(y+1)}{\sqrt{y}(y+1)}\,\dy.
\end{align}
By changing variables $w = yz$ in the integral $II$, we obtain
\begin{align}\label{alfa0_II}
II &= \int_0^\infty \dy \int_{1/y}^\infty \dz \, \frac{\ln{y}+\ln{z}}{\sqrt{y}(y+1)(z+1)^2} \nonumber\\
&= \int_0^\infty  \frac{\ln{y}}{\sqrt{y}(y+1)}\int_{1/y}^\infty \frac{\dz}{(z+1)^2}\,\dy + \int_0^\infty \frac{1}{\sqrt{y}(y+1)}\int_{1/y}^\infty \frac{\ln{z}}{(z+1)^2}\,\dz \,\dy \nonumber \\
&= \int_0^\infty  \frac{\sqrt{y}\ln{y}}{(y+1)^2}\,\dy + \int_0^\infty \frac{1}{\sqrt{y}(y+1)}\Big(\ln(y+1) - \frac{y\ln{y}}{y+1}\Big) \,\dy \nonumber \\
&= \int_0^\infty \frac{\ln(y+1)}{\sqrt{y}(y+1)}\,\dy.
\end{align}
From \eqref{alfa0}, \eqref{alfa0_I} and \eqref{alfa0_II} we get $\mE_1(\ln{W}) = 0$.
\end{proof}

\begin{lemma}\label{lem:E_log_square}
    For $\alpha = 1$, we have $\sigma^2 := \mE_1 \big[\ln^2{W}\big] = 4\pi^2/3$.
\end{lemma}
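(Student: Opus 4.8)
The plan is to compute $\sigma^2$ directly from the explicit density of $W$ under $\mP_1$, recycling the reduction already performed in the proof of Proposition~\ref{cor:wart_ocz}. For $\alpha=1$, combining the scaling identity $\mP_x(W\in\dw)=\mP_1(xW\in\dw)$ with the formula $R(1,w)=\int_{-\infty}^0 P_D(1,z)\tfrac{\nu(z,w)}{\nu(z,D)}\,\dz$ and the explicit expressions \eqref{eq:poisson_kernel_halfline} and \eqref{nu_scaling} (exactly as in that proof, after the substitution $z=-y$), the density of $W$ under $\mP_1$ is
\[
R(1,w)=\frac{1}{\pi}\int_0^\infty \frac{\sqrt{y}}{(1+y)(y+w)^2}\,\dy,\qquad w>0 ,
\]
so that, by Tonelli's theorem,
\[
\sigma^2=\mE_1\big[\ln^2 W\big]=\frac{1}{\pi}\int_0^\infty\!\int_0^\infty \frac{\sqrt{y}\,\ln^2 w}{(1+y)(y+w)^2}\,\dy\,\dw .
\]
Absolute integrability is checked by the same estimate as in the proof of Proposition~\ref{cor:wart_ocz}, now with $\ln^2$ in place of $|\ln|$, using $\ln^2(yv)\le 2\ln^2 y+2\ln^2 v$ and the finiteness of the resulting one-dimensional integrals.

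First I would substitute $w=yv$, as in \eqref{alfa0_II}, which brings the kernel into product form:
\[
\sigma^2=\frac{1}{\pi}\int_0^\infty\!\int_0^\infty \frac{(\ln y+\ln v)^2}{\sqrt{y}\,(1+y)(1+v)^2}\,\dy\,\dv .
\]
Expanding $(\ln y+\ln v)^2=\ln^2 y+2\ln y\ln v+\ln^2 v$ splits the double integral into three products of one-variable integrals. The cross term vanishes because $\int_0^\infty \tfrac{\ln y}{\sqrt{y}(1+y)}\,\dy=0$, which follows from the substitution $y\mapsto 1/y$, under which the integrand changes sign. Hence, using $\int_0^\infty (1+v)^{-2}\,\dv=1$ and $\int_0^\infty y^{-1/2}(1+y)^{-1}\,\dy=\mathfrak{B}(1/2,1/2)=\pi$,
\[
\sigma^2=\frac{1}{\pi}\big(A+\pi\, C\big),\qquad
A:=\int_0^\infty \frac{\ln^2 y}{\sqrt{y}(1+y)}\,\dy,\qquad
C:=\int_0^\infty \frac{\ln^2 v}{(1+v)^2}\,\dv .
\]

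Next I would evaluate $A$ and $C$ by differentiating standard Mellin transforms under the integral sign (legitimate by local uniform integrability of the integrands). From $\int_0^\infty \tfrac{y^{s-1}}{1+y}\,\dy=\pi\csc(\pi s)$, two derivatives in $s$ at $s=1/2$ give $A=\tfrac{d^2}{ds^2}\big[\pi\csc(\pi s)\big]\big|_{s=1/2}=\pi^3$, using $\csc(\pi/2)=1$ and $\cot(\pi/2)=0$. From $\int_0^\infty \tfrac{v^{s-1}}{(1+v)^2}\,\dv=\Gamma(s)\Gamma(2-s)$, two derivatives at $s=1$ give $C=\tfrac{d^2}{ds^2}\big[\Gamma(s)\Gamma(2-s)\big]\big|_{s=1}=2\psi'(1)=\pi^2/3$, where $\psi$ is the digamma function and $\psi'(1)=\pi^2/6$. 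Substituting, $\sigma^2=\tfrac{1}{\pi}\big(\pi^3+\pi\cdot\tfrac{\pi^2}{3}\big)=\tfrac{4\pi^2}{3}$.

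The argument is entirely routine; the only mild care needed is in justifying Tonelli's theorem and differentiation under the integral sign, both handled by the dominating bounds already used in the proof of Proposition~\ref{cor:wart_ocz}, and in recalling the classical values $\psi'(1)=\pi^2/6$ and $\tfrac{d^2}{ds^2}\pi\csc(\pi s)|_{s=1/2}=\pi^3$ (equivalently, the Taylor expansion $\tfrac{x}{\sin x}=1+\tfrac{x^2}{6}+\cdots$). I do not expect any genuine obstacle.
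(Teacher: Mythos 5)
Your computation is correct, and it takes a genuinely different route from the paper's. The paper integrates out $z$ first by a contour integral, obtaining $\mE_1[\ln^2 W]=4\int_0^\infty \frac{\ln^2 u}{(u+1)^2}\,\du$, then substitutes $u=e^z$, integrates by parts, and quotes the value $\int_0^\infty \frac{z}{e^z+1}\,\dz=\zeta(2)/2$ from Bateman's tables. You instead decouple the variables by the substitution $w=yv$ (exactly the change of variables the paper already uses in the proof of Proposition~\ref{cor:wart_ocz}), expand $(\ln y+\ln v)^2$, observe that the cross term vanishes by the $y\mapsto 1/y$ symmetry, and evaluate the remaining two one-dimensional integrals by differentiating the Mellin transforms $\mathfrak B(s,1-s)=\pi\csc(\pi s)$ and $\mathfrak B(s,2-s)=\Gamma(s)\Gamma(2-s)$ twice in $s$. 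Your route stays entirely within real-variable special-function identities (no residue calculus, no reference to the Hurwitz formula), at the cost of tracking two integrals rather than one and invoking the classical values $\psi'(1)=\pi^2/6$ and $\frac{d^2}{ds^2}\pi\csc(\pi s)\big|_{s=1/2}=\pi^3$; both of these, like the paper's $8\zeta(2)$, ultimately encode $\zeta(2)=\pi^2/6$. A minor consistency check: your decomposition gives $\sigma^2=\frac1\pi(A+\pi C)$ with $A=\pi^3$, $C=\pi^2/3$, which agrees with the paper's $4C$ since $A=3\pi C$. The justification of Tonelli and of differentiation under the integral sign is adequately disposed of by the domination you cite.
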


\begin{proof}
    We observe that
    \begin{align*}
        \mE_1\big[ \ln^2{W}\big] &= \int_0^\infty R(1,w) \ln^2{w}\,\dw = \frac{1}{\pi} \int_0^\infty \dw \int_{0}^\infty  \dz \,  \frac{z^{1/2} \ln^2{w}}{(1+z)(z+w)^2}.
    \end{align*}
    By the substitution $y=z^{1/2}$, we get
    \begin{align*}
        \mE_1\big[ \ln^2{W}\big] = \frac{1}{\pi} \int_0^\infty \dw \int_{0}^\infty \dy \frac{2y^2 \ln^2{w}}{(y^2+1)(y^2+w)^2} = \frac{1}{\pi} \int_0^\infty \dw \int_{-\infty}^\infty \dy \frac{y^2 \ln^2{w}}{(y^2+1)(y^2+w)^2}.
    \end{align*}
    Using the Cauchy's residue theorem, we obtain that
    \begin{align*}
        \int_{-\infty}^\infty \frac{y^2}{(y^2+1)(y^2+w)^2}\,\dy = \frac{\pi}{2\sqrt{w} (\sqrt{w}+1)^2}, \qquad w>0.
    \end{align*}
    Hence, also by the substitution $u=\sqrt{w}$,
    \begin{align*}
        \mE_1\big[ \ln^2{W}\big] &= \int_0^\infty \frac{\ln^2{w}}{2\sqrt{w} (\sqrt{w}+1)^2} \, \dw = 4 \int_0^\infty \frac{\ln^2{u}}{(u+1)^2}\,\du.
    \end{align*}
    By the substitution $u=e^z$, we get
    \begin{align*}
        \mE_1\big[ \ln^2{W}\big] = 4 \int_{-\infty}^\infty \frac{z^2e^z}{(e^z+1)^2}\,\dz = 8 \int_{0}^\infty \frac{z^2e^z}{(e^z+1)^2}\,\dz.
    \end{align*}
    Integrating by parts we obtain the equality
    \begin{align*}
        \mE_1\big[ \ln^2{W}\big] = 16 \int_0^\infty \frac{z}{e^z+1}\,\dz.
    \end{align*}
    From Bateman \cite[(1.12.5), p. 32]{Bateman} it follows that 
    \begin{equation*}
        \mE_1\big[ \ln^2{W}\big] = 8 \zeta(2) = \frac{4}{3}\pi^2. \qedhere
    \end{equation*}
\end{proof}

\subsection{Consecutive return positions to \texorpdfstring{$D$}{} and their limit}

For $i = 1,2,\ldots$, we define random variables
\begin{align}\label{def_W_i}
    W_i = \frac{X_{R_{2i}}}{X_{R_{2i-2}}}.
\end{align}
Recall that $R_0=0$ and then $W_1 = X_{R_2} / X_0$. 
Note that
\begin{align}
    W_i = W_1\circ\theta_{R_{2i-2}}.
\end{align}
Moreover, for $j<i$ we have
\begin{align}
    W_i = W_{i-j}\circ\theta_{R_{2j}}.
\end{align}
Indeed,
\[
W_i = W_1\circ\theta_{R_{2i-2}} = W_1 \circ \theta_{R_{2(i-j)-2}}\circ\theta_{R_{2j}} = W_{i-j}\circ\theta_{R_{2j}}.
\]

\begin{lemma}\label{lem_Wi_1}
Under $\mP_x$, $x>0$, every random variable $W_i$,  $i=1,2,\ldots$, has the same distribution and the density function of $W_i$ is given by $R(1,w)$. In particular, the distribution does not depend on $x$.
\end{lemma}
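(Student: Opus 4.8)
The plan is to reduce the statement to the case $i=1$, which is essentially the scaling identity for $R$ recorded just above, and then to propagate it to every $i$ by the strong Markov property.

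First I would treat $i=1$. Under $\mP_x$ with $x>0$ one has $X_0=x$ and $R_0=0$, so $W_1=X_{R_2}/x$. Since $X_{R_2}$ has density $R(x,\cdot)$ and $R(x,w)=x^{-1}R(1,w/x)$, the substitution $v=w/x$ gives, for every Borel $A\subset D$,
\[
\mP_x(W_1\in A)=\mP_x\big(X_{R_2}\in xA\big)=\int_{xA}x^{-1}R(1,w/x)\,\dw=\int_A R(1,v)\,\dv .
\]
Hence $W_1$ has density $R(1,\cdot)$ under $\mP_x$ for every $x>0$, independently of $x$.

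Next I would fix $i\ge 2$ and use the identity $W_i=W_1\circ\theta_{R_{2i-2}}$ established before the lemma. The random time $R_{2i-2}$ is a stopping time for $(\mathcal{F}_t)_{t\ge 0}$, being built from $R_1$ via the recursion $R_{n+1}=R_n+R_1\circ\theta_{R_n}$, and $R_{2i-2}<\infty$ $\mP_x$-a.s.\ by Remark~\ref{rem:P_n_prob1}. Moreover, starting from $x>0$ the exit/return times alternate sign: since $\mP_x(X_{R_n}\in\R_*)=1$ (Remark~\ref{rem:P_n_prob1}) and the copies are concatenated between $D$ and $\overline{D}^c$ as in Section~\ref{Chap_Servadei_process}, one has $X_{R_{2k-1}}<0$ and $X_{R_{2k}}>0$ for all $k\ge 1$, so $X_{R_{2i-2}}\in D$ a.s. As $W_1$ is a measurable functional of the path of $X$, the strong Markov property of the right process $X$ (extended from the single-time form in the excerpt to path functionals by the standard monotone class argument) gives, for every Borel $A\subset D$,
\[
\mP_x\big(W_i\in A\mid\mathcal{F}_{R_{2i-2}}\big)=\mP_{X_{R_{2i-2}}}\big(W_1\in A\big).
\]
By the case $i=1$ and $X_{R_{2i-2}}>0$, the right-hand side is the deterministic quantity $\int_A R(1,w)\,\dw$; taking $\mE_x$ gives $\mP_x(W_i\in A)=\int_A R(1,w)\,\dw$, which is exactly the assertion.

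The analytic content --- the scale invariance $R(x,w)=x^{-1}R(1,w/x)$ of the return-position density --- is already in hand, so the only place requiring care is the measurability bookkeeping: checking that $R_{2i-2}$ is a genuine $(\mathcal{F}_t)$-stopping time, that $X_{R_{2i-2}}$ lands in $D$ almost surely, and that the strong Markov property applies to the path functional $W_1\circ\theta_{R_{2i-2}}$ (working, as in the excerpt, with the universal completion of $\sigma(X_s:s\ge 0)$). I expect this to be the main, though routine, obstacle.
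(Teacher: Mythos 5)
Your proof is correct and takes essentially the same route as the paper's: establish the $i=1$ case from the scaling identity $R(x,w)=x^{-1}R(1,w/x)$, then apply the strong Markov property at the stopping time $R_{2i-2}$ to reduce $W_i=W_1\circ\theta_{R_{2i-2}}$ to the $i=1$ case. The paper compresses these two steps into a single chain of equalities for $\mE_x[f(W_i)]$ without separately flagging the path-functional form of strong Markov or the fact that $X_{R_{2i-2}}\in D$ a.s., which you make explicit; otherwise the arguments match.
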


\begin{proof}
For $i=1,2,\ldots$ and a bounded function $f$, by the strong Markov property, we have
\begin{align*}
    \mE_x[f(W_i)] &= \mE_x\big[f\big(W_1 \circ \theta_{R_{2i-2}}\big)\big]  = \mE_x\Big[ \mE\Big[ f\big( W_1 \circ \theta_{R_{2i-2}}\big) \Big| \mathcal{F}_{R_{2i-2}}\Big]\Big] \\
    &= \mE_x\Big[ \mE_y\Big[ f\big(W_1\big)\Big] \Big|_{y = X_{R_{2i-2}}}\Big]  = \mE_x\Big[ \mE_y\Big[ f\Big(\frac{X_{R_2}}{X_0}\Big)\Big] \Big|_{y = X_{R_{2i-2}}}\Big] \\
    &= \mE_x\Big[ \mE_y\Big[ f\Big(\frac{W}{y}\Big)\Big] \Big|_{y = X_{R_{2i-2}}}\Big] =  \mE_x\big[ \mE_1[ f(W)]\big] \\
    &= \mE_1[f(W)],
\end{align*}
which proves the lemma.
\end{proof}

\begin{lemma}\label{lem_Wi_iid}
The random variables $\{W_i\}_{i\in\mathbb{N}}$ are independent under $\mathbb{P}_x$, $x>0$.
\end{lemma}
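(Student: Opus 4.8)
The plan is to prove the following factorization, which is stronger than the asserted independence: for every $n\in\mathbb{N}$, every $x>0$, and all bounded Borel functions $f_1,\dots,f_n$ on $(0,\infty)$,
\[
\mE_x\Big[\,\prod_{i=1}^n f_i(W_i)\,\Big] \;=\; \prod_{i=1}^n \mE_1[f_i(W)].
\]
Combined with Lemma~\ref{lem_Wi_1}, which already identifies the law of each $W_i$ under $\mP_x$ as that of $W$ under $\mP_1$, this immediately yields $\mE_x[\prod_i f_i(W_i)] = \prod_i \mE_x[f_i(W_i)]$, i.e. independence. I would prove the displayed identity by induction on $n$; the case $n=1$ is exactly Lemma~\ref{lem_Wi_1}. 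The driving mechanism of the induction is the cocycle identity $W_i = W_{i-1}\circ\theta_{R_2}$ (the case $j=1$ of the shift identity recorded just before the lemma) together with the strong Markov property of $X$ at the stopping time $R_2$.

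For the inductive step I would condition on $\mathcal{F}_{R_2}$. Note that $W_1 = X_{R_2}/x$ is $\mathcal{F}_{R_2}$-measurable (since $R_2$ is a stopping time and $X$ is progressively measurable), while $\prod_{i=2}^n f_i(W_i) = G\circ\theta_{R_2}$ with $G:=\prod_{j=1}^{n-1}f_{j+1}(W_j)$ a bounded $\mathcal{F}_0$-measurable functional. Applying the strong Markov property in the path-functional form already invoked in the proof of Lemma~\ref{lem:L1} (see \cite[Exercise~8.16]{MR0264757}) then gives
\[
\mE_x\Big[\,\prod_{i=1}^n f_i(W_i)\,\Big|\,\mathcal{F}_{R_2}\Big] \;=\; f_1(W_1)\,\mE_{X_{R_2}}[G].
\]
Since the process returns to $D=(0,\infty)$ at time $R_2$, we have $X_{R_2}>0$ $\mP_x$-a.s., so the induction hypothesis applies with starting point $X_{R_2}$ and shows $\mE_{X_{R_2}}[G]=\prod_{j=1}^{n-1}\mE_1[f_{j+1}(W)]$, a deterministic constant. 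Taking $\mE_x$ of the displayed identity and using Lemma~\ref{lem_Wi_1} once more for the remaining factor $\mE_x[f_1(W_1)]=\mE_1[f_1(W)]$ then closes the induction.

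I do not anticipate a deep obstacle here; the argument is essentially bookkeeping with shift operators and conditioning. The points that need care are purely technical: verifying that $W_1$ is $\mathcal{F}_{R_2}$-measurable, that $\prod_{i\ge 2}f_i(W_i)$ is genuinely the $\theta_{R_2}$-shift of an $\mathcal{F}_0$-measurable functional, and — most importantly — that $X_{R_2}$ takes values in $(0,\infty)$ $\mP_x$-a.s., so that the induction hypothesis, which is stated only for positive starting points, may legitimately be applied at $X_{R_2}$. The strong Markov property in the functional-composed-with-shift form is exactly the one already used for Lemma~\ref{lem:L1}, so it is available without further comment.
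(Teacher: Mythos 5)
Your proof is correct and uses essentially the same mechanism as the paper's: an induction driven by the strong Markov property at the return time and the shift identity $W_i = W_{i-j}\circ\theta_{R_{2j}}$, together with the fact that $X_{R_{2}}\in D$ $\mP_x$-a.s. The only cosmetic difference is that the paper proves the factorization for arbitrary increasing indices $i_1<\cdots<i_n$, conditioning on $\mathcal{F}_{R_{2i_1}}$, whereas you treat consecutive indices $1,\dots,n$ and condition on $\mathcal{F}_{R_2}$; after Lemma~\ref{lem_Wi_1} the two formulations yield the same conclusion.
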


\begin{proof}
Let $f_i$, $i=1,2,\ldots$, be bounded functions. From Lemma \ref{lem_Wi_1} it suffices to show that for $x>0$, $n\in\mathbb{N}$, $i_1<i_2<\ldots<i_n$,
\begin{align}\label{lem_Wi_eq_n}
    \mE_x\big[ f_1(W_{i_1})f_2(W_{i_2})\ldots f_n(W_{i_n})\big] = \mE_x[f_1(W_1)]\mE_x[f_2(W_1)]\ldots\mE_x[f_n(W_1)].
\end{align}

We first show that for $i_1<i_2$,
\begin{align}\label{lem_Wi_two}
    \mE_x\big[ f_1(W_{i_1}) f_2(W_{i_2})\big] = \mE_x\big[f_1(W_1)]\mE_x[f_2(W_1)].
\end{align}
Indeed, from the strong Markov property and Lemma \ref{lem_Wi_1} we get
\begin{align*}
    \mE_x\big[ f_1(W_{i_1}) f_2(W_{i_2})\big] &= \mE_x\Big[ \mE\Big[f_1\big(W_{i_1}\big) f_2\big(W_{i_2-i_1}\circ \theta_{R_{2i_1}} \big) \Big| \mathcal{F}_{R_{2i_1}}\Big]\Big] \\
    &= \mE_x\Big[  f_1\big(W_{i_1}\big) \mE\Big[ f_2\big(W_{i_2-i_1}\circ \theta_{R_{2i_1}}\big) \Big| \mathcal{F}_{R_{2i_1}}\Big]\Big]\\
    &= \mE_x\Big[  f_1(W_{i_1}) \mE_y\big[ f_2(W_{i_2-i_1})\big] \Big|_{y=X_{R_{2i_1}}}\Big]\\
    &= \mE_x\Big[  f_1(W_{i_1}) \mE_y\big[ f_2(W_{1})\big] \Big|_{y=X_{R_{2i_1}}}\Big]\\
    &= \mE_x\Big[  f_1(W_{i_1}) \mE_y\Big[ f_2\Big(\frac{X_{R_2}}{y} \Big)\Big] \Big|_{y=X_{R_{2i_1}}}\Big] \\
    &= \mE_x\Big[  f_1(W_{i_1}) \mE_1[ f_2(X_{R_2})]\Big] \\
    &= \mE_1[ f_2(X_{R_2})] \mE_x[f_1(W_{i_1})] \\
    &= \mE_x[f_1(W_1)] \mE_x[f_2(W_1)].
\end{align*}

Assume now that the equality \eqref{lem_Wi_eq_n} holds for $n\geq 2$. We will prove it for $n+1$,
\begin{align*}
    &\mE_x[ f_1(W_{i_1}) f_2(W_{i_2}) \ldots f_{n+1}(W_{i_{n+1}})] \\
    &=  \mE_x\big[ f_1(W_{i_1}) \mE\big[f_2(W_{i_2-i_1} \circ \theta_{R_{2i_1}}) \ldots f_{n+1}(W_{i_{n+1}- i_1} \circ \theta_{R_{2i_1}}) \big| \mathcal{F}_{R_{2i_1}} \big]\big] \\
    &= \mE_x\big[ f_1(W_{i_1}) \mE_y\big[f_2(W_{i_2-i_1} ) \ldots f_{n+1}(W_{i_{n+1}- i_1} )\big] \big|_{y = X_{R_{2i_1}}}\big] \\
    &= \mE_x\big[ f_1(W_{i_1}) \mE_y[f_2(W_1)]\ldots\mE_y[f_{n+1}(W_1)] \big|_{y = X_{R_{2i_1}}}\big] \\
    &= \mE_x\Big[ f_1(W_{i_1}) \mE_y\Big[f_2\Big( \frac{X_{R_2}}{y}\Big)\Big]\ldots\mE_y\Big[f_{n+1}\Big( \frac{X_{R_2}}{y}\Big)\Big] \Big|_{y = X_{R_{2i_1}}}\Big] \\
    &= \mE_x\big[ f_1(W_{i_1}) \mE_1[f_2( X_{R_2})]\ldots\mE_1[f_{n+1}( X_{R_2})] \big]\\
    &= \mE_x[ f_1(W_{i_1})] \mE_1[f_2( X_{R_2})]\ldots\mE_1[f_{n+1}( X_{R_2})]\\
    &= \mE_x[ f_1(W_1)] \mE_x[f_2( W_1)]\ldots\mE_x[f_{n+1}(W_1)],
\end{align*}
which is our claim.
\end{proof}

From Lemma \ref{lem_Wi_1} and Lemma \ref{lem_Wi_iid} it follows that $\{W_i\}_{i\in\mathbb{N}}$ are i.i.d. 

Now we consider consecutive return positions $V_n$, $n=0,1,2,\ldots$, of the process $X$ to $D$, starting from $x>0$. Thus, $V_0=X_0 = x$ and for $n\
\geq 1$, $V_n = X_{R_{2n}}$. From \eqref{def_W_i} it follows that for $n\geq 1$, $V_n = W_nV_{n-1}$. In other words, there exist i.i.d. random variables $W_i$, $i=1,2,\ldots$, with the density function $R(1,w)$ such that for $n\geq 1$, $V_n = X_0 \prod_{i=1}^n W_i$.

\begin{proposition}\label{Th:hitting1}
The following statements hold $\mP_x$-a.s. for every $x>0$.
\begin{enumerate}
    \item If $\alpha\in (0,1)$, then $\lim\limits_{n\to\infty} X_{R_{2n}} = \lim\limits_{n\to\infty} V_n = +\infty$.
    \item If $\alpha\in (1,2)$, then $\lim\limits_{n\to\infty} X_{R_{2n}} = \lim\limits_{n\to\infty} V_n = 0$.
    \item If $\alpha=1$, then 
    \[
    \liminf_{n\to\infty} X_{R_{2n}} = 0, \qquad \limsup_{n\to\infty} X_{R_{2n}} = +\infty.
    \]
\end{enumerate}
\end{proposition}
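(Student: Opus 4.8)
The plan is to reduce the proposition to the asymptotics of the random walk $S_n := \sum_{i=1}^n \ln W_i$. Under $\mathbb{P}_x$ with $x>0$ we have $X_0 = x$, and by construction $X_{R_{2n}} = V_n = X_0\prod_{i=1}^n W_i$, so that $\ln X_{R_{2n}} = \ln x + S_n$ for $n\ge 1$. By Lemma \ref{lem_Wi_1} and Lemma \ref{lem_Wi_iid}, the increments $\ln W_i$ are i.i.d.\ copies (in law under $\mathbb{P}_1$) of $\ln W$, and Proposition \ref{cor:wart_ocz} guarantees $\mathbb{E}_1|\ln W|<\infty$. Thus the long-run behaviour of $X_{R_{2n}}$ is controlled by $\mu := \mathbb{E}_1\ln W$, together with the oscillation of a centered walk in the borderline case.

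First I would dispatch the two non-critical regimes. For $\alpha\in(0,1)$, \eqref{eq:ln(W)_01} gives $\mu>0$; the strong law of large numbers then yields $S_n/n\to\mu$ $\mathbb{P}_x$-a.s., hence $S_n\to+\infty$ and $X_{R_{2n}} = V_n = x\,e^{S_n}\to+\infty$ $\mathbb{P}_x$-a.s. For $\alpha\in(1,2)$, \eqref{eq:ln(W)_12} gives $\mu<0$, so $S_n\to-\infty$ and $V_n\to 0$ $\mathbb{P}_x$-a.s. Nothing beyond the strong law and the multiplicative representation of the return positions is needed here.

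For $\alpha=1$, \eqref{eq:ln(W)_1} gives $\mu=0$, and $\ln W$ is nondegenerate since $W$ has the density $R(1,\cdot)$. I would then invoke the classical fact that a centered, nondegenerate i.i.d.\ random walk with integrable steps oscillates, i.e.\ $\limsup_n S_n = +\infty$ and $\liminf_n S_n = -\infty$ $\mathbb{P}_x$-a.s.; this follows, for instance, from the Chung--Fuchs recurrence criterion (applicable since $S_n/n\to 0$) combined with the Hewitt--Savage zero--one law, and the finiteness of $\sigma^2 = \mathbb{E}_1[\ln^2 W]$ established in Lemma \ref{lem:E_log_square} is more than enough to run any such argument. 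Translating back through $\ln X_{R_{2n}} = \ln x + S_n$ gives $\limsup_n X_{R_{2n}} = +\infty$ and $\liminf_n X_{R_{2n}} = 0$ $\mathbb{P}_x$-a.s. Since the three cases exhaust $\alpha\in(0,2)$, this completes the proof.

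The argument is essentially routine once the i.i.d.\ multiplicative structure of the consecutive return positions and the sign of $\mathbb{E}_1\ln W$ are in hand; the genuinely nontrivial input was already supplied by Proposition \ref{cor:wart_ocz} and Lemmas \ref{lem_Wi_1}--\ref{lem_Wi_iid}. The only point demanding a little care is the critical case $\alpha=1$: there one must use the \emph{oscillation} of the driftless walk rather than any convergence statement, and note that degeneracy of $\ln W$ is ruled out by the existence of the density $R(1,\cdot)$.
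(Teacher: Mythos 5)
Your proof is correct. Cases (1) and (2) follow the paper's argument verbatim: the multiplicative representation $V_n = x\prod_{i=1}^n W_i$, the i.i.d.\ structure from Lemmas \ref{lem_Wi_1}--\ref{lem_Wi_iid}, and the sign of $\mathbb{E}_1\ln W$ from Proposition \ref{cor:wart_ocz}, closed out by the strong law. In case (3) you take a genuinely different route: where the paper invokes the Hartman--Wintner law of the iterated logarithm (which requires the finite second moment supplied by Lemma \ref{lem:E_log_square}), you instead deduce oscillation of the centered, nondegenerate walk $S_n=\sum_{i\le n}\ln W_i$ from the Chung--Fuchs recurrence criterion together with the Hewitt--Savage zero--one law, correctly noting that nondegeneracy of $\ln W$ follows from $W$ having a density. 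Your approach is actually weaker in hypotheses (only $\mathbb{E}_1|\ln W|<\infty$ is needed, so Lemma \ref{lem:E_log_square} becomes optional for this proposition), though it leans on a deeper general theorem; the paper's LIL route is more elementary to state once the variance is computed, and that computation is anyway of independent interest. Either argument is complete and correct.
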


\begin{proof}
Let $\alpha\in (0,1)$. By the Strong Law of Large Numbers and Proposition \ref{cor:wart_ocz}, it follows that \[
\ln\prod_{i=1}^n W_i = \sum_{i=1}^n \ln{W_i} \to +\infty ~~\mathrm{a.s.,}
\]
as $n\to\infty$, which implies the desired convergence. 

Now let $\alpha\in (1,2)$. Again, by the Strong Law of Large Numbers and Proposition \ref{cor:wart_ocz} it follows that \[
\ln\prod_{i=1}^n W_i = \sum_{i=1}^n \ln{W_i} \to -\infty ~~\mathrm{a.s.,}
\]
as $n\to\infty$, which implies the desired convergence.

Now assume that $\alpha=1$. Let $S_n' := \sum_{i=1}^n \ln{W_i}$. From Proposition \ref{cor:wart_ocz} we know that $\mE_1\ln{W} = 0$ and from Lemma \ref{lem:E_log_square}, $\mE_1 |\ln{W}|^2 = \sigma^2 \in (0,\infty)$. Hence, from the Law of the Iterated Logarithm (see Hartman and Wintner \cite{HartmanWinter} or Acosta \cite{MR690128}), it follows that
\begin{align*}
    \limsup_{n\to\infty} \frac{S_n'}{\sqrt{2n \ln\ln{n}}} = \sigma >0, \qquad \mathrm{and}\qquad \liminf_{n\to\infty} \frac{S_n'}{\sqrt{2n \ln\ln{n}}} = -\sigma <0.
\end{align*}
Therefore,
\begin{align*}
    \limsup_{n\to\infty} \sum_{i=1}^n \ln{W_i} = +\infty, \qquad \liminf_{n\to\infty} \sum_{i=1}^n \ln{W_i} = -\infty,
\end{align*}
and then $\limsup\limits_{n\to\infty} V_n = +\infty$, and $\liminf\limits_{n\to\infty} V_n = 0$.
\end{proof}

\medskip
\subsection{The lifetime of the process \texorpdfstring{$X$}{}}

Let $T = R_2$ be the random time of the first return to $D = (0,\infty)$ of the process $X = (X_t)_{t\geq 0}$ starting from $x>0$. Then, by Lemma \ref{lem:L2},
\begin{align*}
    S(x,t) := \int_0^t \dr \int_0^\infty \da \int_{-\infty}^0 \db \, p_r^D(x,a)\nu(a,b)\nu(b,D)e^{-\nu(b,D)(t-r)}
\end{align*}
is the density function of $T$. By changing variables $a=xc$, $b=xe$, $r=x^\alpha s$ and using \eqref{eq:scaling2}, \eqref{nu_scaling3} and \eqref{nu_scaling2}, we get
\begin{align*}
    S(x,t) &= \int_0^{tx^{-\alpha}} \ds \int_0^\infty \dc  \int_{-\infty}^0 \de \,
    x^{\alpha+1+1} p_{x^\alpha s}^D(x,xc)\nu(xc,xe)\nu(xe,D)e^{-\nu(xe,D)(t-x^\alpha s)} \\
    &= \int_0^{tx^{-\alpha}} \ds \int_0^\infty \dc  \int_{-\infty}^0 \de \,
    x^{\alpha+2} x^{-1} p_{ s}^D(1,c)x^{-1-\alpha}\nu(c,e) x^{-\alpha}\nu(e,D)e^{-\nu(e,D)(tx^{-\alpha}-s)} \\
    &= x^{-\alpha} \int_0^{tx^{-\alpha}} \ds \int_0^\infty \dc  \int_{-\infty}^0 \de \,
      p_{ s}^D(1,c)\nu(c,e) \nu(e,D)e^{-\nu(e,D)(tx^{-\alpha}-s)} \\
    &=x^{-\alpha} S(1,tx^{-\alpha}).
\end{align*}
Therefore, $\mP_x(T\in dt) = \mP_1(x^\alpha T\in dt)$.

\medskip
For $n=1,2,\ldots$, we define random variables
\begin{align}\label{def_T_n}
    T_n = \frac{R_{2n} - R_{2n-2}}{X_{R_{2n-2}}^\alpha}.
\end{align}
Note that for $n\geq 1$, $R_{2n} - R_{2n-2} = R_2\circ\theta_{R_{2n-2}}$, hence 
\begin{align}
T_n = T_1\circ\theta_{R_{2n-2}}.  
\end{align}
Moreover, for $k<n$,
\begin{align}
    T_{n} = T_{n-k}\circ\theta_{R_{2k}}.
\end{align}
Indeed,
\[
T_n = T_1\circ\theta_{R_{2n-2}} = T_1 \circ\theta_{R_{2(n-k)-2}} \circ\theta_{R_{2k}} = T_{n-k} \circ\theta_{R_{2k}}.
\]
\begin{lemma}\label{lem_T_n_1}
Under $\mP_x$, $x>0$, every random variable $T_n$, $n=1,2,\ldots$, has the same distribution and the density function of $T_n$ is given by $S(1,t)$. In particular, the distribution does not depend on $x$.
\end{lemma}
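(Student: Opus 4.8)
\section*{Proof proposal for Lemma~\ref{lem_T_n_1}}

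The plan is to follow the proof of Lemma~\ref{lem_Wi_1} almost verbatim, combining the strong Markov property with the scaling identity $S(x,t)=x^{-\alpha}S(1,tx^{-\alpha})$ derived just above, equivalently $\mP_x(T\in\dt)=\mP_1(x^\alpha T\in\dt)$ for $T=R_2$. The decisive point is precisely this scaling: it makes the law of $T_1$ under $\mP_y$ independent of the starting point $y>0$, after which the strong Markov property transports that law to each $T_n$.

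First I would treat the base case $n=1$. Since $T_1=(R_2-R_0)/X_{R_0}^\alpha=R_2/X_0^\alpha$, under $\mP_x$ with $x>0$ we have $X_0=x$, so $T_1=T/x^\alpha$ with $T=R_2$. By the scaling identity, $T$ under $\mP_x$ has the same law as $x^\alpha T$ under $\mP_1$, hence $T_1=T/x^\alpha$ under $\mP_x$ has the same law as $T$ under $\mP_1$, which has density $S(1,\cdot)$; in particular this law does not depend on $x$. Consequently, for every bounded measurable $f$ and every $y>0$,
\[
\mE_y[f(T_1)]=\int_0^\infty f(t)\,S(1,t)\,\dt,
\]
a value independent of $y$.

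For $n\ge 2$ I would use $T_n=T_1\circ\theta_{R_{2n-2}}$, noted above, together with the strong Markov property of the right process $X$ at the stopping time $R_{2n-2}$. Exactly as in the proof of Lemma~\ref{lem_Wi_1},
\[
\mE_x[f(T_n)]=\mE_x\big[\mE_x\big[f(T_1\circ\theta_{R_{2n-2}})\,\big|\,\mathcal{F}_{R_{2n-2}}\big]\big]=\mE_x\big[\mE_y[f(T_1)]\big|_{y=X_{R_{2n-2}}}\big].
\]
Here $R_{2n-2}$ is $\mP_x$-a.s.\ finite and $X_{R_{2n-2}}\in D$ (even return times land in $D$, since starting from $x>0$ the process alternates between $D$ and $\overline{D}^c$; cf.\ Remark~\ref{rem:P_n_prob1}), so $\mE_y[f(T_1)]$ is evaluated at a strictly positive $y$. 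By the base case $\mE_y[f(T_1)]=\int_0^\infty f(t)S(1,t)\,\dt$ for all $y>0$, so the outer expectation equals $\int_0^\infty f(t)S(1,t)\,\dt=\mE_1[f(T_1)]$. Thus $T_n$ has density $S(1,\cdot)$ under $\mP_x$ for every $x>0$ and every $n\ge 1$, which is the claim. I do not expect a genuine obstacle: the only points requiring care are the same bookkeeping issues as in Lemma~\ref{lem_Wi_1}, namely that $R_{2n-2}$ is a stopping time, that $\mP_x(R_{2n-2}<\infty,\,X_{R_{2n-2}}\in D)=1$, and that the strong Markov property applies at $R_{2n-2}$ --- all of which follow from the construction of $X$ and from Remark~\ref{rem:P_n_prob1}.
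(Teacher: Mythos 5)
Your proof is correct and is essentially the paper's argument: the strong Markov property at the stopping time $R_{2n-2}$ reduces the claim to showing that $\mE_y[f(T_1)]$ is constant in $y>0$, which follows from the scaling identity $\mP_y(T\in\dt)=\mP_1(y^\alpha T\in\dt)$ proved just before the lemma. The only cosmetic difference is that you isolate the $n=1$ case as an explicit base step, while the paper carries the scaling substitution $\mE_y[f(T/y^\alpha)]=\mE_1[f(T)]$ directly inside a single chain of equalities valid for all $n\ge 1$.
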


\begin{proof}
For a bounded or non-negative function $f$, by  the strong Markov property we have
\begin{align*}
    \mE_x[f(T_n)] &= \mE_x[ \mE[f( T_1\circ \theta_{R_{2n-2}})| \mathcal{F}_{R_{2n-2}}]] = \mE_x\big[\mE_y[f(T_1)] \big|_{y=X_{R_{2n-2}}}\big] \\
    &= \mE_x\Big[\mE_y\Big[f\Big(\frac{R_2}{X_0^\alpha}\Big)\Big] \Big|_{y=X_{R_{2n-2}}}\Big] = \mE_x\Big[\mE_y\Big[f\Big(\frac{T}{y^\alpha}\Big)\Big] \Big|_{y=X_{R_{2n-2}}}\Big] \\
    &= \mE_x\big[\mE_1[f(T)] \big] = \mE_1[f(T)],
\end{align*}
which proves the lemma.
\end{proof}

\begin{lemma}\label{lem_T_n_2}
The random variables $\{T_n\}_{n\in\mathbb{N}}$ are independent under $\mathbb{P}_x$, $x>0$.
\end{lemma}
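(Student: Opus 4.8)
The plan is to transcribe the argument of Lemma~\ref{lem_Wi_iid} almost verbatim, replacing the return-ratios $W_i=X_{R_{2i}}/X_{R_{2i-2}}$ by the rescaled return-times $T_n=(R_{2n}-R_{2n-2})/X_{R_{2n-2}}^\alpha$. The two structural facts that drive the proof are the shift identity $T_n=T_{n-k}\circ\theta_{R_{2k}}$ for $k<n$ (recorded just above), and Lemma~\ref{lem_T_n_1}, which says that under $\mP_y$ the law of each $T_n$ coincides with the law of $T=T_1=R_2/X_0^\alpha$ under $\mP_1$, and in particular does not depend on $y>0$. Combined with Lemma~\ref{lem_T_n_1}, establishing independence here gives that $\{T_n\}_{n\in\mathbb{N}}$ are i.i.d.

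First I would dispose of the two-factor case. Fix $n_1<n_2$ and bounded functions $f_1,f_2$. Writing $T_{n_2}=T_{n_2-n_1}\circ\theta_{R_{2n_1}}$, conditioning on $\mathcal{F}_{R_{2n_1}}$ (note that $T_{n_1}$ is $\mathcal{F}_{R_{2n_1}}$-measurable, since $R_{2n_1-2}\le R_{2n_1}$ are stopping times and $X_{R_{2n_1-2}}$ is $\mathcal{F}_{R_{2n_1-2}}$-measurable), and applying the strong Markov property of $X$ at the stopping time $R_{2n_1}$ gives
\[
\mE_x\big[f_1(T_{n_1})f_2(T_{n_2})\big]=\mE_x\Big[f_1(T_{n_1})\,\mE_y\big[f_2(T_{n_2-n_1})\big]\Big|_{y=X_{R_{2n_1}}}\Big].
\]
By Lemma~\ref{lem_T_n_1} the inner expectation is the constant $\mE_1[f_2(T)]=\mE_x[f_2(T_1)]$, so it factors out, and a further application of Lemma~\ref{lem_T_n_1} to $T_{n_1}$ yields $\mE_x[f_1(T_{n_1})f_2(T_{n_2})]=\mE_x[f_1(T_1)]\mE_x[f_2(T_1)]$.

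Next I would run the induction on the number $m$ of factors exactly as in the proof of Lemma~\ref{lem_Wi_iid}. For indices $n_1<n_2<\ldots<n_{m+1}$ and bounded $f_1,\ldots,f_{m+1}$, condition on $\mathcal{F}_{R_{2n_1}}$ and write each later variable as $T_{n_j}=T_{n_j-n_1}\circ\theta_{R_{2n_1}}$, $j\ge2$. The strong Markov property pulls the product $f_2(T_{n_2})\cdots f_{m+1}(T_{n_{m+1}})=\big(f_2(T_{n_2-n_1})\cdots f_{m+1}(T_{n_{m+1}-n_1})\big)\circ\theta_{R_{2n_1}}$ inside an expectation $\mE_y\big[f_2(T_{n_2-n_1})\cdots f_{m+1}(T_{n_{m+1}-n_1})\big]$ evaluated at $y=X_{R_{2n_1}}$; since $n_2-n_1<\ldots<n_{m+1}-n_1$ are $m$ distinct positive integers, the inductive hypothesis factors this into $\prod_{j=2}^{m+1}\mE_y[f_j(T_1)]$, and Lemma~\ref{lem_T_n_1} identifies each factor with the constant $\mE_1[f_j(T)]$. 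Pulling these constants out and using Lemma~\ref{lem_T_n_1} for $T_{n_1}$ gives the desired factorization $\mE_x\big[\prod_{j=1}^{m+1}f_j(T_{n_j})\big]=\prod_{j=1}^{m+1}\mE_x[f_j(T_1)]$, which is independence.

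I do not anticipate a real obstacle; the only point deserving a word of care is that $T_n$ is genuinely a functional of the post-$R_{2n-2}$ trajectory, i.e.\ measurable with respect to $\sigma(X_{R_{2n-2}+t}:t\ge0)$. This is immediate from $R_{2n}-R_{2n-2}=R_2\circ\theta_{R_{2n-2}}$ and $X_{R_{2n-2}}=X_0\circ\theta_{R_{2n-2}}$, whence $T_n=(R_2/X_0^\alpha)\circ\theta_{R_{2n-2}}=T_1\circ\theta_{R_{2n-2}}$, as already noted; everything else is a routine application of the strong Markov property together with the scaling encoded in Lemma~\ref{lem_T_n_1}.
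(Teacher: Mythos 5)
Your proof is correct and mirrors the paper's own argument step for step: the same reduction to the two-factor case via the strong Markov property at $R_{2n_1}$, the same use of Lemma~\ref{lem_T_n_1} to replace inner expectations by the constant $\mE_1[f_j(T)]$, and the same induction on the number of factors. The extra remark on the $\mathcal{F}_{R_{2n_1}}$-measurability of $T_{n_1}$ is a fair and useful clarification of a point the paper leaves implicit.
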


\begin{proof}
Let $f_i$, $i=1,2,\ldots$, be bounded or non-negative functions. From Lemma \ref{lem_T_n_1} it suffices to show that for $x>0$, $k\in\mathbb{N}$, $n_1<n_2<\ldots<n_k$,
\begin{align}\label{lem_Ti_eq_n}
    \mE_x\big[ f_1(T_{n_1})f_2(T_{n_2})\ldots f_k(T_{n_k})\big]  = \mE_x[f_1(T_1)]\mE_x[f_2(T_1)]\ldots\mE_x[f_k(T_1)].
\end{align}

First, we will show that for $n_1<n_2$, 
\begin{align}
    \mE_x\big[ f_1(T_{n_1})f_2(T_{n_2})\big]  = \mE_x[f_1(T_1)]\mE_x[f_2(T_1)].
\end{align}
Indeed, from the strong Markov property and Lemma \ref{lem_T_n_1},
\begin{align*}
    \mE_x\big[ f_1(T_{n_1})f_2(T_{n_2})\big] &= \mE_x\Big[ f_1\big(T_{n_1} \big) \mE\Big[ f_2\big( T_{n_2-n_1}\circ\theta_{R_{2n_1}} \big) \Big| \mathcal{F}_{R_{2n_1}}\Big]\Big] \\
    &= \mE_x\Big[ f_1\big(T_{n_1} \big) \mE_y\big[ f_2\big( T_{n_2-n_1}\big) \big] \big|_{y=X_{R_{2n_1}}} \Big] \\
    &= \mE_x\Big[ f_1\big(T_{n_1} \big) \mE_y\big[ f_2\big( T_{1}\big) \big] \big|_{y=X_{R_{2n_1}}} \Big] 
     = \mE_x\Big[ f_1\big(T_{n_1} \big) \mE_y\Big[ f_2\Big( \frac{T}{y^\alpha}\Big) \Big] \Big|_{y=X_{R_{2n_1}}} \Big] \\
     &= \mE_x\Big[ f_1\big(T_{n_1} \big) \mE_1\big[ f_2\big(T\big) \big]  \Big] 
     = \mE_x\big[ f_1\big(T_{n_1} \big)\big] \mE_1\big[ f_2\big(T\big) \big] \\
     &= \mE_x\big[ f_1\big(T_{1} \big)\big] \mE_x\Big[ f_2\Big(\frac{R_2}{X_0^\alpha}\Big) \Big] 
     = \mE_x\big[ f_1(T_1)\big] \mE_x\big[ f_2(T_1)\big].
\end{align*}
Assume now that the equality \eqref{lem_Ti_eq_n} holds for some $k\geq 2$ and all $x>0$. We will show it for $k+1$. We have,
\begin{align*}
    &\mE_x\big[ f_1(T_{n_1})f_2(T_{n_2})\ldots f_{k+1}(T_{n_{k+1}})\big] \\
    &= \mE_x\big[ f_1(T_{n_1}) \mE\big[ f_2(T_{n_2-n_1}\circ\theta_{R_{2n_1}}) \ldots f_{k+1}(T_{n_{k+1}-n_1}\circ\theta_{R_{2n_1}}) \big| \mathcal{F}_{R_{2n_1}}\big] \big] \\
    &= \mE_x\big[ f_1(T_{n_1}) \mE_y\big[ f_2(T_{n_2-n_1})\ldots f_{k+1}(T_{n_{k+1}-n_1})\big] \big|_{y=X_{R_{2n_1}}}\big] \\
    &=\mE_x\big[ f_1(T_{n_1}) \mE_y[f_2(T_1)]\ldots \mE_y[f_{k+1}(T_1)]
    \big|_{y=X_{R_{2n_1}}}\big] \\ 
    &= \mE_x\big[ f_1(T_{n_1}) \mE_y\Big[f_2\Big(\frac{T}{y^\alpha}\Big)\Big]\ldots \mE_y\Big[f_{k+1}\Big(\frac{T}{y^\alpha}\Big)\Big]
    \big|_{y=X_{R_{2n_1}}}\big] \\
    &= \mE_x\big[ f_1(T_{n_1}) \mE_1\big[f_2(T)\big]\ldots \mE_1\big[f_{k+1}(T)\big]\big] \\
    &= \mE_x\big[ f_1(T_{n_1}) \big]\mE_1\big[f_2(T)\big]\ldots \mE_1\big[f_{k+1}(T)\big] \\ 
    &= \mE_x\big[ f_1(T_{1}) \big]\mE_x\big[f_2(T_1)\big]\ldots \mE_x\big[f_{k+1}(T_1)\big],
\end{align*}
which is the desired conclusion.
\end{proof}

From Lemma \ref{lem_T_n_1} and Lemma \ref{lem_T_n_2} it follows that the family $\{T_n\}_{n\in\mathbb{N}}$ is i.i.d.

Now we consider the increments of times $S_n$, $n=0,1,2,\ldots$, of consecutive returns of the process $X$ to $D$ starting from $x>0$. Thus, $S_0:=0$ and for $n\geq 1$, $S_n := R_{2n} - R_{2n-2}$. From \eqref{def_T_n} it follows that for $n\geq 1$, $S_n = V_{n-1}^\alpha T_n$. In other words, there exist i.i.d. random variables $T_n$ with the density function $S(1,t)$ and i.i.d. random variables $W_i$ with the density function $R(1,w)$ such that for $n\geq 1$, $S_n = X_0^\alpha \big( \prod_{0<k<n} W_k^\alpha\big) T_n$.

\begin{lemma}\label{Z_T_independence}
For $n\geq 1$, $x>0$,  and bounded or non-negative functions $f,g$,
\[
\mE_x\big[ f(V_{n-1})g(T_n)\big] = \mE_x\big[ f(V_{n-1})] \mE_x[g(T_n)\big].
\]
\end{lemma}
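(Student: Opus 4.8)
The plan is to establish the asserted independence of $V_{n-1}=X_{R_{2n-2}}$ and $T_n$ by conditioning on $\mathcal F_{R_{2n-2}}$ and using the strong Markov property, exactly as in the proofs of Lemma~\ref{lem_Wi_iid} and Lemma~\ref{lem_T_n_2}. The key point is that $V_{n-1}$ is $\mathcal F_{R_{2n-2}}$-measurable (it is the value of $X$ at the stopping time $R_{2n-2}$), while $T_n=T_1\circ\theta_{R_{2n-2}}$ is a functional of the post-$R_{2n-2}$ trajectory.

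First I would write
\[
\mE_x\big[f(V_{n-1})g(T_n)\big]=\mE_x\big[f(V_{n-1})\,\mE\big[g(T_1\circ\theta_{R_{2n-2}})\,\big|\,\mathcal F_{R_{2n-2}}\big]\big],
\]
and then invoke the strong Markov property (in the form used above, e.g. \cite[Exercise 8.16]{MR0264757}) to replace the inner conditional expectation by $\mE_y[g(T_1)]\big|_{y=X_{R_{2n-2}}}=\mE_y[g(T_1)]\big|_{y=V_{n-1}}$. Next, using the scaling identity $\mP_y(T\in dt)=\mP_1(y^\alpha T\in dt)$ established just before Lemma~\ref{lem_T_n_1} (equivalently $T_1=R_2/X_0^\alpha$ under $\mP_y$ has the law of $T/y^\alpha$), one gets $\mE_y[g(T_1)]=\mE_1[g(T)]$, which is a constant independent of $y$. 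Therefore
\[
\mE_x\big[f(V_{n-1})g(T_n)\big]=\mE_1[g(T)]\,\mE_x\big[f(V_{n-1})\big]=\mE_x[g(T_n)]\,\mE_x\big[f(V_{n-1})\big],
\]
where the last equality uses Lemma~\ref{lem_T_n_1}, which identifies the law of $T_n$ with that of $T$ under $\mP_1$. This gives the claim for bounded $f,g$; the non-negative case then follows by monotone convergence (truncating $f,g$ at level $m$ and letting $m\to\infty$).

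The only genuine subtlety — rather than the main obstacle — is making sure that $V_{n-1}$ really is measurable with respect to the pre-$R_{2n-2}$ $\sigma$-algebra $\mathcal F_{R_{2n-2}}$, so that it can be pulled out of the conditional expectation; this is standard for right processes since $R_{2n-2}$ is a stopping time and $x\mapsto X_x$ is progressively measurable, and it has already been used implicitly in the preceding lemmas. Beyond that, the argument is a direct repetition of the bookkeeping in Lemmas~\ref{lem_Wi_1}--\ref{lem_T_n_2}, so I expect no real difficulty; the proof is short.
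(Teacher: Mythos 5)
Your proof is correct and follows essentially the same route as the paper: condition on $\mathcal F_{R_{2n-2}}$, invoke the strong Markov property to write the inner conditional expectation as $\mE_y[g(T_1)]\big|_{y=V_{n-1}}$, and use the scaling identity $\mE_y[g(T_1)]=\mE_1[g(T)]$ (a constant in $y$) together with Lemma~\ref{lem_T_n_1} to factor the product. The paper does not spell out the measurability of $V_{n-1}$ with respect to $\mathcal F_{R_{2n-2}}$ or the monotone-convergence reduction from bounded to nonnegative $f,g$, but these are standard and your including them does not change the argument.
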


\begin{proof}
Note that $V_{n-1} = X_{R_{2n-2}}$ and $T_n = T_1\circ\theta_{R_{2n-2}}$. Then from the strong Markov property and Lemma \ref{lem_T_n_1} we have
\begin{align*}
    \mE_x\big[f(V_{n-1})g(T_n)\big] &= \mE_x\big[ f(X_{R_{2n-2}}) \mE\big[ g(T_1\circ \theta_{R_{2n-2}}) \big| \mathcal{F}_{R_{2n-2}}\big]\big] \\
    &= \mE_x\big[ f(X_{R_{2n-2}}) \mE_y\big[ g(T_1) \big] \big|_{y = X_{R_{2n-2}}}\big] \\
    &= \mE_x\Big[ f(X_{R_{2n-2}}) \mE_y\Big[ g\Big(\frac{R_2}{y^\alpha}\Big) \Big] \Big|_{y = X_{R_{2n-2}}}\Big] \\ 
    &= \mE_x\big[ f(V_{n-1})\big] \mE_1\big[ g(R_2) \big] 
    =  \mE_x\big[ f(V_{n-1})\big] \mE_x\Big[ g\Big(\frac{R_2}{x^\alpha}\Big)\Big]\\
    &= \mE_x\big[ f(V_{n-1})\big] \mE_x\big[g(T_1)\big] 
    = \mE_x\big[ f(V_{n-1})\big] \mE_x\big[g(T_n)\big],
\end{align*}
which completes the proof.
\end{proof}

Recall that $R_\infty = \lim\limits_{n\to\infty} R_{2n} = \sum_{n=1}^\infty S_n  = X_0^\alpha \sum_{n=1}^\infty \big(\prod_{0<k<n} W_k^\alpha\big) T_n$ is the lifetime of the process $X$. 

\begin{proposition}\label{Th:hitting2}
The following statements hold $\mP_x$-a.s. for every $x\neq 0$. 
\begin{enumerate}
    \item If $\alpha\in (0,1]$, then $R_\infty = \infty$.    
    \item If $\alpha\in (1,2)$, then $R_\infty <\infty$.
\end{enumerate}
\end{proposition}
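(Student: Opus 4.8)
The plan is to exploit the representation, valid $\mP_x$-a.s.\ for $x>0$,
\[
R_\infty=\sum_{n=1}^\infty S_n=\sum_{n=1}^\infty V_{n-1}^\alpha T_n=x^\alpha\sum_{n=1}^\infty\Big(\prod_{k=1}^{n-1}W_k\Big)^{\!\alpha}T_n,
\]
where $\{W_k\}$ are i.i.d.\ with density $R(1,\cdot)$, $\{T_n\}$ are i.i.d.\ with density $S(1,\cdot)$, and each $S_n<\infty$ a.s. The case $x<0$ I would reduce to this one: under $\mP_x$ the process waits at $x$ an a.s.\ finite $\mathrm{Exp}(\nu(x,D))$ time $\sigma=\widetilde R_1$, then enters $D$, so $R_\infty=\sigma+R_\infty\circ\theta_\sigma$, and the strong Markov property at $\sigma$ gives $\mP_x(R_\infty=\infty)=\mE_x\big[\mP_{X_\sigma}(R_\infty=\infty)\big]$ with $X_\sigma>0$, and similarly for $\{R_\infty<\infty\}$.

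For part (1) ($\alpha\in(0,1]$) the idea is to produce, for a fixed $\eps>0$, infinitely many indices $n$ with $S_n\ge\eps$. By Proposition \ref{Th:hitting1}(1),(3) we have $\limsup_{n}V_n=\infty$ $\mP_x$-a.s., so $\mathcal M:=\{n\ge1:V_{n-1}\ge1\}$ is a.s.\ infinite; and since $T_1=R_2>0$ a.s.\ (Remark \ref{rem:P_n_prob1}) we may fix $\eps>0$ with $q:=\int_\eps^\infty S(1,t)\,\dt>0$. By the strong Markov property, exactly as in the proof of Lemma \ref{lem_T_n_1}, the conditional law of $T_n$ given $\mathcal F_{R_{2n-2}}$ is $S(1,t)\,\dt$ regardless of $X_{R_{2n-2}}$, so $\mP_x(T_n>\eps\mid\mathcal F_{R_{2n-2}})=q$; since $\{n\in\mathcal M\}=\{V_{n-1}\ge1\}\in\mathcal F_{R_{2n-2}}$, it follows that $\sum_{n}\mP_x\big(\{V_{n-1}\ge1\}\cap\{T_n>\eps\}\mid\mathcal F_{R_{2n-2}}\big)=q\,\#\mathcal M=\infty$ a.s., and the conditional Borel--Cantelli lemma yields infinitely many $n$ with $V_{n-1}\ge1$ and $T_n>\eps$. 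For each such $n$, $S_n=V_{n-1}^\alpha T_n\ge\eps$, hence $R_\infty=\sum_nS_n=\infty$ $\mP_x$-a.s.

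For part (2) ($\alpha\in(1,2)$) I would combine an exponential decay of the prefactors with a logarithmic-moment bound on the increments. By Proposition \ref{cor:wart_ocz}, $\mu:=-\mE_1\ln W>0$ and $\mE_1|\ln W|<\infty$, so the strong law of large numbers gives, a.s., $\prod_{k=1}^{n-1}W_k\le e^{-\mu(n-1)/2}$ for all large $n$. Next one checks $\mE_1[\ln^+T_1]<\infty$: under $\mP_1$, $T_1=R_2$ is a sum of an exit time $\tau_D$ from $D$ for the stable motion and an $\mathrm{Exp}(\nu(X_{\tau_D},D))$ holding time in $\overline{D}^c$ at the landing point $X_{\tau_D}$; using $\ln^+(a+b)\le\ln2+\ln^+a+\ln^+b$, together with $\mE_1^Y[\ln^+\tau_D]=\int_0^\infty(1+r)^{-1}\mP_1^Y(\tau_D>r)\,\dr<\infty$ by \eqref{eq:asymp_pstwo}, $\mE_1[\ln^+\tau_{\overline{D}^c}\mid X_{\tau_D}]\le\ln\!\big(1+\nu(X_{\tau_D},D)^{-1}\big)\lesssim1+\ln^+|X_{\tau_D}|$ by Jensen, and $\mE_1[\ln^+|X_{\tau_D}|]=\int_{-\infty}^0P_D(1,y)\ln^+|y|\,\dy<\infty$ by \eqref{eq:poisson_kernel_halfline}, one obtains $\mE_1[\ln^+T_1]<\infty$. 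Hence $\sum_n\mP_x(T_n>e^{\alpha\mu n/8})=\sum_n\mP_1(\ln^+T_1>\tfrac{\alpha\mu}{8}n)<\infty$, so by Borel--Cantelli $T_n\le e^{\alpha\mu n/8}$ for all large $n$, and therefore $S_n=x^\alpha(\prod_{k<n}W_k)^\alpha T_n\le Ce^{-\alpha\mu n/4}$ for all large $n$; as each $S_n<\infty$, $R_\infty=\sum_nS_n<\infty$ $\mP_x$-a.s.

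I expect the borderline case $\alpha=1$ of part (1) to be the main obstacle: there $V_n$ does not tend to $\infty$ (only $\limsup_nV_n=\infty$, coming from the law of the iterated logarithm for the mean-zero random walk $\sum_k\ln W_k$, whose variance $\tfrac43\pi^2$ is supplied by Lemma \ref{lem:E_log_square}), so the indices with $V_{n-1}\ge1$ form a random set correlated with the $W_k$'s and a plain Borel--Cantelli argument does not apply; the conditional Borel--Cantelli lemma, used above via the $\mathcal F_{R_{2n-2}}$-measurability of $\{V_{n-1}\ge1\}$ and the constancy of the conditional law of $T_n$, is what circumvents this. In part (2) the subtlety worth noting is that no first-moment argument is available, since $\mE_1T_1=\infty$ (the holding-time mean $\nu(X_{\tau_D},D)^{-1}\propto|X_{\tau_D}|^\alpha$ is not integrable against the Poisson kernel \eqref{eq:poisson_kernel_halfline}), so one genuinely needs the logarithmic moment together with the exponential rate from the SLLN.
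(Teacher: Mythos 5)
Your proposal is correct, and in both halves it departs from the paper's own proof in a substantive way.

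For part (1) the paper argues as follows: for $\alpha\in(0,1)$ it combines $V_n\to\infty$ (Proposition \ref{Th:hitting1}) with the \emph{unconditional} second Borel--Cantelli lemma applied to the independent events $\{T_n>1\}$; along the resulting random subsequence $\ln S_{n_k}=\alpha\ln V_{n_k-1}+\ln T_{n_k}\to\infty$. For $\alpha=1$ the paper only has $\limsup_n V_n=\infty$ and passes through a random subsequence and a ``convergence in probability'' step that is genuinely delicate, precisely because the set $\{n:V_{n-1}\ge 1\}$ is correlated with the $W_k$'s and hence with the $T_n$'s (both $W_n$ and $T_n$ are built from the same excursion). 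Your conditional Borel--Cantelli argument resolves this cleanly: you use that $\{V_{n-1}\ge1\}\in\mathcal F_{R_{2n-2}}$ while, by the scaling in Lemma \ref{lem_T_n_1}, the conditional law of $T_n$ given $\mathcal F_{R_{2n-2}}$ is \emph{constant} equal to $S(1,\cdot)\,\dt$, so $\sum_n\mP_x(V_{n-1}\ge1,\,T_n>\eps\mid\mathcal F_{R_{2n-2}})=q\,\#\{n:V_{n-1}\ge1\}=\infty$ a.s.\ and L\'evy's extended Borel--Cantelli lemma gives infinitely many $n$ with $S_n\ge\eps$. This treats $\alpha\in(0,1)$ and $\alpha=1$ uniformly and, in my view, is tighter than the paper's treatment of the $\alpha=1$ case.

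For part (2) the paper proves finiteness of $R_\infty$ via a \emph{fractional-moment} argument: using subadditivity of $r\mapsto r^{(\alpha-1)/(2\alpha)}$, the independence of $V_{n-1}$ and $T_n$ (Lemma \ref{Z_T_independence}), and $\rho=\mE_1 W^{(\alpha-1)/2}<1$ (Lemma \ref{lem:L3}), it shows $\mE_x[R_\infty^{(\alpha-1)/(2\alpha)}]\lesssim\mE_1[T^{(\alpha-1)/(2\alpha)}]\cdot\sum_n\rho^{n-1}<\infty$, where the remaining estimate $\mE_1[T^{(\alpha-1)/(2\alpha)}]<\infty$ is checked by splitting $T=\tau_D+\zeta$ and bounding a small moment of $\tau_D$ and of the exponential holding time. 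You instead prove the stronger almost-sure statement directly: the SLLN applied to $\ln W_k$ (valid by Proposition \ref{cor:wart_ocz}, Lemmas \ref{lem_Wi_1}--\ref{lem_Wi_iid}) gives exponential decay of $\prod_{k<n}W_k$, and a first Borel--Cantelli applied to $\{T_n>e^{cn}\}$ with the logarithmic moment $\mE_1[\ln^+T_1]<\infty$ (your estimate via Jensen and the Poisson kernel is sound) gives sub-exponential growth of $T_n$; together these force $S_n\lesssim e^{-\delta n}$ eventually. The two approaches buy different things: the paper's gives an explicit moment of $R_\infty$, yours is arguably more elementary (no fractional-moment computation, only logarithmic moments) and isolates the mechanism driving finiteness, namely the a.s.\ geometric collapse of the return positions. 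Your reduction of the case $x<0$ to $x>0$ via the strong Markov property at the first exponential holding time is the same as the paper's.
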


\begin{proof}
Let $x > 0$. Assume that $\alpha\in (0,1)$. We have $\ln{S_n} = \alpha \ln{V_{n-1}} + \ln{T_n}$. By Proposition \ref{Th:hitting1}, $V_{n-1}\to \infty$ a.e. as $n\to\infty$. Moreover, for any $n = 1,2,\ldots$, $\mP_x(T_n>1) = c > 0$. Hence, $\sum_{n=1}^\infty \mP_x(T_n > 1) = \infty$. Therefore, by Lemma \ref{lem_T_n_2} and the Borel--Cantelli lemma we have $\mP_x\big( \limsup\limits_{n\to\infty} \{T_n>1\}\big) = 1$, which means that with probability one there exists a subsequence $(n_k)_{k\in\mathbb{N}}$ such that $T_{n_k}>1$ and then $\ln{T_{n_k}}>0$. Therefore, $\ln{S_{n_k}}\to \infty$ a.e., as $k\to\infty$. As a result, $R_\infty = \sum_{n=1}^\infty S_n = \infty$ $\mP_x$-a.s.

Assume that $\alpha\in (1,2)$. From the subadditivity of the function $r^{\frac{\alpha-1}{2\alpha}}$ for $r>0$, the Tonelli's theorem, Lemma \ref{Z_T_independence} and Lemma \ref{lem_T_n_1},
\begin{align*}
\mE_x\Big[ \sum_{n=1}^\infty S_n\Big]^{\frac{\alpha-1}{2\alpha}}    &\leq \mE_x \Big[\sum_{n=1}^\infty S_n^{\frac{\alpha-1}{2\alpha}} \Big] = \mE_x \Big[ \sum_{n=1}^\infty V_{n-1}^{\frac{\alpha-1}{2}} T_n^{\frac{\alpha-1}{2\alpha}} \Big] =
\sum_{n=1}^\infty \mE_x \Big[  V_{n-1}^{\frac{\alpha-1}{2}} T_n^{\frac{\alpha-1}{2\alpha}} \Big] \\
&= \sum_{n=1}^\infty \mE_x\Big[V_{n-1}^{\frac{\alpha-1}{2}}\Big] \mE_x\Big[T_n^{\frac{\alpha-1}{2\alpha}}\Big] = \mE_x\Big[T_1^{\frac{\alpha-1}{2\alpha}}\Big] \sum_{n=1}^\infty \mE_x\Big[V_{n-1}^{\frac{\alpha-1}{2}}\Big]\\
&=\mE_1\Big[T^{\frac{\alpha-1}{2\alpha}}\Big] \sum_{n=1}^\infty \mE_x\Big[V_{n-1}^{\frac{\alpha-1}{2}}\Big].
\end{align*}
Moreover, from Lemma \ref{lem_Wi_iid} and Lemma \ref{lem:L3},
\begin{align*}
\sum_{n=1}^\infty \mE_x\Big[V_{n-1}^{\frac{\alpha-1}{2}}\Big] &=\sum_{n=1}^\infty \mE_x\Big[X_0 \prod_{i=1}^{n-1} W_i\Big]^{\frac{\alpha-1}{2}} = x^{\frac{\alpha-1}{2}} \sum_{n=1}^\infty \prod_{i=1}^{n-1} \mE_1\big[  W_1^{\frac{\alpha-1}{2}}\big] \\
&= x^{\frac{\alpha-1}{2}} \sum_{n=1}^\infty \prod_{i=1}^{n-1} \mE_1\big[  W^{\frac{\alpha-1}{2}}\big] =  x^{\frac{\alpha-1}{2}} \sum_{n=1}^\infty \rho^{n-1} = x^{\frac{\alpha-1}{2}} \frac{1}{1-\rho}<\infty.
\end{align*}
It suffices to show that $\mE_1\big[ T^{\frac{\alpha-1}{2\alpha}}\big] <\infty$. 
For $T=R_2$, we have $T=R_1+R_1\circ\theta_{R_1}$.
Using this observation, we get 
\begin{align*}
    \mE_1 T^{\frac{\alpha-1}{2\alpha}} &= \mE_1[R_1 + R_1\circ \theta_{R_1}]^{\frac{\alpha-1}{2\alpha}} 
    \leq \mE_1\big[R_1\big]^{\frac{\alpha-1}{2\alpha}} + \mE_1\big[R_1\circ \theta_{R_1}\big]^{\frac{\alpha-1}{2\alpha}} \\
    &= \mE_1^Y\tau_D^{\frac{\alpha-1}{2\alpha}} + \mE_1\Big[\mE_{y}\big(R_1\big)^{\frac{\alpha-1}{2\alpha}} \Big|_{y=X_{R_1}}\Big],
\end{align*}
where $\tau_D$ is the first exit time from $D$ of the process $Y$. Recall that $R_1<\infty$ a.s.

By Ba{\~n}uelos and Bogdan \cite[Exercise 3.2 and Theorem 4.1]{MR2075671}, $\mE_1^Y\tau_D^{\frac{\alpha-1}{2\alpha}}<\infty$, because $\frac{\alpha-1}{2\alpha}<\frac{1}{2}$, for $\alpha\in (1,2)$. Alternatively, the estimates of $\mE_1^Y\tau_D^{\frac{\alpha-1}{2\alpha}}$ can be obtained from the estimates of survival probability in Bogdan et al. \cite{MR2722789} and the equality
\[
\mE_x^Y \tau_D^p = p \int_0^\infty t^{p-1}\mP_x^Y(\tau_D>t)\,\dt, \quad p>0, ~x>0.
\]

Furthermore, since $R_1$ for the starting point $y<0$ has the exponential distribution with mean $1/\nu(y,D)$, then from \eqref{nu_scaling} we have
\begin{align*}
    \mE_{y}\big(R_1\big)^{\frac{\alpha-1}{2\alpha}} &= \int_0^\infty t^{\frac{\alpha-1}{2\alpha}} \nu(y,D) e^{-\nu(y,D)t} \,\dt = \big[ \nu(y,D)\big]^{\frac{1-\alpha}{2\alpha}} \int_0^\infty s^{\frac{\alpha-1}{2\alpha}} e^{-s}\,\ds \\
    &\approx |y|^{\frac{\alpha-1}{2}} \int_0^\infty s^{\frac{\alpha-1}{2\alpha}} e^{-s}\,\ds = \Gamma\big(\tfrac{3\alpha-1}{2\alpha}\big) |y|^{\frac{\alpha-1}{2}} \approx |y|^{\frac{\alpha-1}{2}}.
\end{align*}
Hence,
\begin{align*}
    \mE_1\Big[\mE_{y}\big(R_1\big)^{\frac{\alpha-1}{2\alpha}} \Big|_{y=X_{R_1}}\Big] &\approx \mE_1 \big[ |X_{R_1}|^{\frac{\alpha-1}{2}}\big]  = \mE_1^Y \big[ |Y_{\tau_D}|^{\frac{\alpha-1}{2}}\big],
\end{align*}
and from \eqref{eq:poisson_kernel_halfline},
\begin{align*}
     \mE_1^Y |Y_{\tau_D}|^{\frac{\alpha-1}{2}} &= \int_{-\infty}^0 |y|^{\frac{\alpha-1}{2}}P_D(1,y)\,\dy \approx \int_{-\infty}^0 |y|^{-1/2}|1-y|^{-1}\,\dy  = \int_0^\infty \frac{\dy}{\sqrt{y}(1+y)} <\infty.
\end{align*}

Let $\alpha=1$. From Proposition \ref{Th:hitting1}, $\limsup\limits_{n\to\infty} V_n = +\infty$ $\mP_x$-a.s. Thus, there exists subsequence $(V_{n_k})_{k\in\mathbb{N}}$ such that $V_{n_k}\to+\infty$ $\mP_x$-a.s., as $k\to\infty$. By the same argument as in the case of $\alpha\in (0,1)$, $\ln{S_{n_k}} = \ln{V_{n_k-1}} + \ln{T_{n_k}} \to +\infty$ in probability. Therefore, there exists a subsequence $(\ln S_{n_{k_l}})_{l\in\mathbb{N}}$ such that $\ln S_{n_{k_l}}\to +\infty$ $\mP_x$-a.s., as $l\to\infty$, which implies that $R_\infty = \infty$.

\medskip
Now let $x<0$. Then,
\begin{align*}
    \mP_x\big(R_\infty < \infty\big) &= \mE_x\big[ \mE \big( \ind_{(0,\infty)}(R_\infty) ~\big|~ \mathcal{F}_{R_1} \big)\big] = \mE_x\big[ \mE \big( \ind_{(0,\infty)}(R_\infty\circ\theta_{R_1} + R_1) ~\big|~ \mathcal{F}_{R_1} \big)\big]\\
    &= \mE_x\left[ \mE_{X_{R_1}} \big( \ind_{(0,\infty)}(R_\infty + s)\big) \Big|_{s=R_1}\right] = \mE_x\Big[ \mP_{X_{R_1}}\big( R_\infty + s < \infty\big)\Big|_{s=R_1} \Big] \\
    &= \mE_x\big[ \mP_{X_{R_1}}\big( R_\infty < \infty\big) \big],
\end{align*}
since $R_1<\infty$ a.s.
Note that for $\alpha\in (0,1]$, from the first part of the proof, it follows that
\begin{align*}
    \mP_x\big(R_\infty < \infty\big) = \mE_x\big[ \mP_{X_{R_1}}\big( R_\infty < \infty\big) \big] = \mE_x0 = 0,
\end{align*}
and for $\alpha\in (1,2)$,
\begin{align*}
    \mP_x\big(R_\infty < \infty\big) = \mE_x\big[ \mP_{X_{R_1}}\big( R_\infty < \infty\big) \big] = \mE_x1 = 1,
\end{align*}
which completes the proof.
\end{proof}

\subsection{Proof of Theorem~\ref{lifetime_of_the_process}}\label{sec:main_theorem}

Let $\alpha=1$. The case $x>0$ follows directly from Proposition \ref{Th:hitting1} and Proposition \ref{Th:hitting2}. In case $x<0$, we have
\[
\liminf_{n\to\infty} X_{R_{2n+1}} = \big( \liminf_{n\to\infty} X_{R_{2n}}\big) \circ \theta_{R_1}
\]
and then
\begin{align*}
    \mP_x\big( \liminf_{n\to\infty} X_{R_{2n+1}} = 0\big) &= \mE_x\big( \mE\big( \ind_{\{0\}}\big(\liminf_{n\to\infty} X_{R_{2n+1}}\big)~\big|~ \mathcal{F}_{R_1}\big)\big) \\
    &= \mE_x\big( \mE\big( \ind_{\{0\}}\big(\liminf_{n\to\infty} X_{R_{2n}}\big) \circ \theta_{R_1}\big)~\big|~ \mathcal{F}_{R_1}\big) \\
    &= \mE_x\big( \mE_{X_{R_1}}\big( \ind_{\{0\}}\big(\liminf_{n\to\infty} X_{R_{2n}}\big)\big)\big) \\
    &= \mE_x\big( \mP_{X_{R_1}}\big(\liminf_{n\to\infty} X_{R_{2n}} = 0\big)\big) = 1.
\end{align*}
Similarly, we prove that $\mP_x\big( \limsup\limits_{n\to\infty} X_{R_{2n+1}} = \infty\big) = 1$.

\medskip
Now assume that $\alpha\neq 1$. Note that $Y_t:= h_{\alpha-1}(X_t) \geq 0$ is a supermartingale with right-continuous trajectories. Indeed, for $s<t$, from Markov property and Proposition \ref{theorem_excessive_function} we get
\begin{align*}
    \mE_x\big[ Y_t ~\big|~\mathcal{F}_s\big] &= \mE_x\big[ h_{\alpha-1}(X_t) ~\big|~\mathcal{F}_s\big] = \mE_x\big[ h_{\alpha-1}(X_{t-s}\circ \theta_s) ~\big|~\mathcal{F}_s\big] = \mE_x\big[ h_{\alpha-1}(X_{t-s})\circ \theta_s ~\big|~\mathcal{F}_s\big] \\
    &= \mE_{X_s}\big[ h_{\alpha-1}(X_{t-s})\big] = K_{t-s}h_{\alpha-1}(X_s) \leq h_{\alpha-1}(X_s) = Y_s.
\end{align*}
Hence, it is clear that $Z_t := -h_{\alpha-1}(X_t)$ is a submartingale.

Following \cite[Chapter II.2]{GVK25295551X}, we consider a function $f:\mathbb{T}\to\R$, where $\mathbb{T}\subseteq [0,\infty)\cap \mathbb{Q}$ is a countable set, and define the \emph{number of downcrossings} of the interval $[a,b]$ by the function $f$ as follows. Let $F := \{t_1, t_2,\ldots, t_m\} \subset\mathbb{T}$. For $a,b\in\R$, $a<b$, we define inductively,
\begin{align*}
    s_1 &= \inf\{ t_i:~ t_i\in F, ~f(t_i)>b\}, \\
    s_2 &= \inf\{t_i>s_1:~t_i\in F, ~f(t_i)<a\}, \\
    \vdots&\\
    s_{2n+1} &=\inf\{t_i> s_{2n}:~t_i\in F, ~f(t_i)>b\}, \\
    s_{2n+2} &= \inf\{t_i>s_{2n+1}:~t_i\in F, ~f(t_i)<a\},
\end{align*}
while $\inf(\emptyset) := t_m$. 
We set
\begin{align*}
    D_F(f,[a,b]) := \sup\{n:~s_{2n}<t_m\}.
\end{align*}
The \emph{number of downcrossings} of the interval $[a,b]$ by the function $f:\mathbb{T}\to\R$ we define as the number
\begin{align*}
    D_\mathbb{T}(f,[a,b]) := \sup\{D_F(f,[a,b]):~F\subseteq \mathbb{T}, ~F ~\mathrm{finite}\}.
\end{align*}

From the Doob's downcrossing lemma \cite[Proposition 2.1, p. 61]{GVK25295551X},
\begin{align*}
    \mE_x\big[ D_\mathbb{T}(Z, [a,b])\big] \leq \sup_{t\in\mathbb{T}} \frac{\mE_x\big[ (Z_t-b)^+ \big]}{b-a} = \sup_{t\in\mathbb{T}} \frac{\mE_x\big[ (h_{\alpha-1}(X_t)+b)^- \big]}{b-a} \leq \frac{|b|}{b-a} <\infty.
\end{align*}
Hence, $\mP_x\big( D_\mathbb{T}(Z, [a,b]) = \infty) = 0$. Let
\begin{align*}
    \mathcal{A}_\mathbb{T} := \bigcap_{a,b\in\mathbb{Q},~ a<b} \big\{\omega:~ D_{\mathbb{T}}(Z(\omega), [a,b]) < \infty\big\}.
\end{align*}
Then $\mP_x(\mathcal{A}_\mathbb{T}) = 1$, because $\mathcal{A}_\mathbb{T}$ is an intersection of countable many sets of the measure one. 

\medskip
Let $\xi$ denote the lifetime of the process $X$, i.e. $\xi = R_\infty$. Moreover, let $\mathbb{T} = [0,\infty)\cap\mathbb{Q}$. 
If $\omega\in\mathcal{A}_\mathbb{T}$, then $D_\mathbb{T}(Z(\omega), [a,b]) <\infty$ for any rational numbers $a<b$.
We claim that the limit $Z(\omega) := \lim\limits_{t\to \xi} Z_t(\omega)$ exists (but it may be infinite).
Indeed, assume that this limit does not exist. Then we can find rational numbers $\alpha<\beta$ such that
\begin{align*}
    \liminf_{t\to\xi} Z_t(\omega) <\alpha < \beta < \limsup_{t\to\xi} Z_t(\omega).
\end{align*}
From this fact we conclude that there exists an increasing sequence $(r_n)_{n\in\mathbb{N}} \subset \mathbb{T}$ such that $Z_{r_{2n}}(\omega) \leq \alpha <\beta \leq Z_{r_{2n-1}}(\omega)$. By taking $\mathbb{I} = \{r_1, r_2, r_3,\ldots\}$, we obtain that
\[
D_{\mathbb{T}}(Z(\omega), [\alpha,\beta]) \geq D_{\mathbb{I}}(Z(\omega), [\alpha,\beta]) = \infty,
\]
which is a contradiction. Hence, the limit $Z := -\lim\limits_{t\nearrow \xi} h_{\alpha-1}(X_t)$ exists with probability one. Therefore, $\lim\limits_{t\nearrow \xi} |X_t|$ exists with probability one.

\medskip
Assume that $\alpha\in (0,1)$. Then from Proposition \ref{Th:hitting2}, the lifetime of the process $X$ is infinite a.e. Moreover, from Proposition \ref{Th:hitting1} for $x>0$ we have $\lim\limits_{n\to\infty} X_{R_{2n}} = \infty$ a.e., hence $\lim\limits_{n\to\infty} h_{\alpha-1}(X_{R_{2n}}) = 0$ a.e. From the uniqueness of the limit we get that $\lim\limits_{t\to\infty} h_{\alpha-1}(X_t) = 0$ a.e., hence $\lim\limits_{t\to\infty} |X_t| = \infty$ a.e. For $x<0$, by the analogous computations as in the case $\alpha=1$, we get $\lim\limits_{n\to\infty} X_{R_{2n+1}} = \infty$ a.e., hence in the same way as above we get $\lim\limits_{t\to\infty} |X_t| = \infty$ a.e.

\medskip
Now assume that $\alpha\in (1,2)$. Then from Proposition \ref{Th:hitting2} for $x>0$, the lifetime of the process $X$ is finite a.e. Moreover, from Proposition \ref{Th:hitting1}, $\lim\limits_{n\to\infty} X_{R_{2n}} = 0$ a.e., hence $\lim\limits_{n\to\infty} h_{\alpha-1}(X_{R_{2n}}) = 0$ a.e. From the uniqueness of the limit we get that $\lim\limits_{t\nearrow \xi} h_{\alpha-1}(X_t) = 0$ a.e., hence $\lim\limits_{t\nearrow \xi} X_t = 0$ a.e. For $x<0$, we argue in the same way.

The proof is complete. In passing we note that the process $X$ crosses zero an infinite number of times before the lifetime because $0<R_n<\infty$ for every $n$ by Remark \ref{rem:P_n_prob1}. \hfill \qed

\subsection{Feller property} 

Here we prove that for $\alpha\in (1,2)$, $(K_t)_{t\geq 0}$ forms a Feller semigroup on the space $C_0(\R_*)$. For this purpose, we will use the fact that the process $X$ has finite lifetime for such $\alpha$.

\begin{corollary}\label{cor_zanik_masy}
For $\alpha\in (1,2)$, $x\neq 0$, and $t>0$,
\[
\lim_{x\to 0} K_t(x,\R) = \lim_{x\to 0}  \mP_x(R_\infty>t) = 0.
\]
\end{corollary}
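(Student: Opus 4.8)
The plan is to combine the scaling identity of Lemma~\ref{K_t_scaling} with the finiteness of the lifetime $R_\infty$ for $\alpha\in(1,2)$ established in Proposition~\ref{Th:hitting2}. The whole argument is short: no perturbation series manipulation is needed, only the two facts just mentioned together with an elementary continuity-of-measure step.

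First I would record the second equality in the statement. Applying Proposition~\ref{Th:semigroup} to the constant function $\mathbf 1$ on $\R_*$ gives $K_t(x,\R)=K_t\mathbf 1(x)=\mE_x\mathbf 1(X_t)=\mP_x(X_t\in\R_*)$ for $x\neq 0$ and $t>0$; since $X_t=\Delta$ exactly when $t\ge \widetilde R_\infty=R_\infty$ and $\mathbf 1(\Delta)=0$ by our conventions, this equals $\mP_x(R_\infty>t)$. (Note $K_t(x,\{0\})=0$, so $K_t(x,\R)=K_t(x,\R_*)$, and there is no ambiguity.)

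Next I would use Lemma~\ref{K_t_scaling}(b) with $k=|x|$ and $A=\R$. For $x>0$ this reads
\[
K_t(x,\R)=K_t\big(|x|\cdot 1,\ |x|\cdot\R\big)=K_{t|x|^{-\alpha}}(1,\R)=\mP_1\big(R_\infty>t|x|^{-\alpha}\big),
\]
and for $x<0$, taking the base point $-1$, $K_t(x,\R)=K_{t|x|^{-\alpha}}(-1,\R)=\mP_{-1}\big(R_\infty>t|x|^{-\alpha}\big)$. As $x\to 0$ we have $t|x|^{-\alpha}\to\infty$, so it suffices to show $\mP_{\pm1}(R_\infty>s)\to 0$ as $s\to\infty$. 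By Proposition~\ref{Th:hitting2}(2), for $\alpha\in(1,2)$ we have $R_\infty<\infty$ both $\mP_1$-a.s.\ and $\mP_{-1}$-a.s.; hence, by continuity of measure along the decreasing family of events $\{R_\infty>s\}$, $\mP_{\pm1}(R_\infty>s)\downarrow \mP_{\pm1}(R_\infty=\infty)=0$. Combining the two one-sided limits yields $\lim_{x\to 0}K_t(x,\R)=0$.

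There is no genuine obstacle here; the only points demanding a little care are the identification $K_t(x,\R)=\mP_x(R_\infty>t)$ (which relies on the convention $f(\Delta)=0$ and on the lifetime of the concatenated process being $\widetilde R_\infty=R_\infty$) and the bookkeeping of the sign of the starting point when applying the scaling identity with $k=|x|$, so that $x>0$ is reduced to base point $1$ and $x<0$ to base point $-1$.
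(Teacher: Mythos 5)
Your proof is correct and follows essentially the same route as the paper: apply Lemma~\ref{K_t_scaling}(b) to reduce to base points $\pm 1$, then invoke Proposition~\ref{Th:hitting2} and continuity of measure to conclude $\mP_{\pm1}(R_\infty>s)\to 0$ as $s\to\infty$. The only addition is your explicit justification of the identity $K_t(x,\R)=\mP_x(R_\infty>t)$ via Proposition~\ref{Th:semigroup} and the convention $f(\Delta)=0$, which the paper leaves implicit.
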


\begin{proof}
From Lemma \ref{K_t_scaling} it follows that $K_{t}(x,\R) = K_{tx^{-\alpha}}(1,\R)$ for all $x>0$. By Proposition \ref{Th:hitting2},
\[
K_t(x,\R) = \mP_1(R_\infty > tx^{-\alpha}) \to 0, 
\]
as $x\to 0^+$.

Similarly, for all $x<0$, from Lemma \ref{K_t_scaling} it follows that $K_{t}(x,\R) = K_{t|x|^{-\alpha}}(-1,\R)$, and again by Proposition \ref{Th:hitting2},
\[
K_t(x,\R) = \mP_{-1}(R_\infty > t|x|^{-\alpha}) \to 0, 
\]
as $x\to 0^-$.
\end{proof}

\begin{lemma}\label{K_t_pointwise_convergence}
    For $f\in C_0(\R_*)$ and $x\neq 0$, $K_tf(x) \to f(x)$, as $t\to 0^+$.
\end{lemma}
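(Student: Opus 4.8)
The plan is to isolate the leading term $K_{t,0}=\widehat{P}_t$ of the perturbation series \eqref{eq:K_t_def} and to control the rest uniformly in $f$ by means of the subprobability property. By Corollary~\ref{perturbation_formula} together with Lemma~\ref{proposition_recurrence} we may write $K_t=\widehat{P}_t+\sum_{n\ge 1}K_{t,n}$, where the tail satisfies $\sum_{n\ge 1}K_{t,n}=\int_0^t\widehat{P}_r\widehat{\nu}K_{t-r}\,\dr$. Since $f\in C_0(\R_*)$ is bounded and, by Lemma~\ref{K_subprobability}, $K_{t-r}\mathbf{1}\le 1$, I would estimate
\[
|K_tf(x)-f(x)|\le|\widehat{P}_tf(x)-f(x)|+\Big|\sum_{n\ge 1}K_{t,n}f(x)\Big|\le|\widehat{P}_tf(x)-f(x)|+\|f\|_\infty\int_0^t\widehat{P}_r\widehat{\nu}\mathbf{1}(x)\,\dr .
\]

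First I would dispose of the leading term: by Lemma~\ref{P_t_feller}, $(\widehat{P}_t)_{t\ge 0}$ is a Feller semigroup on $C_0(\R_*)$, so property (P2) gives $\widehat{P}_tf(x)\to f(x)$ as $t\to 0^+$. For the remainder it is enough to show $\int_0^t\widehat{P}_r\widehat{\nu}\mathbf{1}(x)\,\dr\to 0$ as $t\to 0^+$, and this is exactly the computation already performed in the proof of strong continuity of $(K_t)_{t\ge 0}$ on $L^2(\R)$. For $x>0$, by the Ikeda--Watanabe formula \eqref{eq:Ikeda_Watanabe} and \eqref{eq:p_survival},
\[
\int_0^t\widehat{P}_r\widehat{\nu}\mathbf{1}(x)\,\dr=\int_0^t\dr\int_D\dy\int_{D^c}\dz\,p_r^D(x,y)\nu(y,z)=\mP_x^Y(\tau_D\le t)\xrightarrow[\;t\to 0^+\;]{}0,
\]
whereas for $x<0$, using $\widehat{P}_r(x,\cdot)=e^{-\nu(x,D)r}\delta_x$ and $\widehat{\nu}\mathbf{1}(x)=\nu(x,D)$,
\[
\int_0^t\widehat{P}_r\widehat{\nu}\mathbf{1}(x)\,\dr=\int_0^t\nu(x,D)e^{-\nu(x,D)r}\,\dr=1-e^{-\nu(x,D)t}\xrightarrow[\;t\to 0^+\;]{}0 .
\]
Feeding these two displays into the estimate above yields $K_tf(x)\to f(x)$.

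I do not expect a genuinely hard step here; the argument is a repackaging of ingredients already in place. The only points deserving a word of care are that $f$ need not be nonnegative --- handled by the pointwise bound $|K_{t,n}f|\le\|f\|_\infty K_{t,n}\mathbf{1}$, or equivalently by linearity after splitting $f=f_+-f_-$ with $f_\pm\in C_0(\R_*)$ --- and the absolute convergence of $\sum_{n\ge 1}K_{t,n}f(x)$, which follows at once from $\sum_{n\ge 1}K_{t,n}\mathbf{1}(x)=K_t\mathbf{1}(x)-\widehat{P}_t\mathbf{1}(x)\le 1$ by Lemma~\ref{K_subprobability}. Conceptually, $\int_0^t\widehat{P}_r\widehat{\nu}\mathbf{1}(x)\,\dr$ is the probability that the process, started at $x$, has already attempted to leave its starting region --- and hence performed at least one reflection --- by time $t$, and the content of the lemma is simply that this probability tends to $0$ as $t\to 0^+$.
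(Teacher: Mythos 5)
Your proof is correct and follows essentially the same route as the paper: split off $\widehat P_t$ via the perturbation formula of Corollary~\ref{perturbation_formula}, use the Feller property of $(\widehat P_t)$ from Lemma~\ref{P_t_feller} for the leading term, and bound the tail by $\|f\|_\infty\int_0^t\widehat P_r\widehat\nu\mathbf 1(x)\,\dr$, which vanishes by the Ikeda--Watanabe formula for $x>0$ and by direct computation for $x<0$. The only cosmetic difference is that you phrase the tail as $\sum_{n\ge 1}K_{t,n}$ before invoking the same bound the paper writes directly.
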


\begin{proof}
    From Corollary \ref{perturbation_formula} and Lemma \ref{K_subprobability} we have
\begin{align*}
    |K_tf(x) - f(x)| &= \left| \P_tf(x) + \int_0^t \P_s\widehat{\nu} K_{t-s}f(x)\,\ds - f(x)\right| \\
    &\leq \big|\P_tf(x) - f(x)\big| + \norm{f}_\infty \int_0^t \P_s\widehat{\nu} \mathbf{1}(x)\,\ds. 
\end{align*}
Moreover, from Ikeda--Watanabe formula \eqref{eq:Ikeda_Watanabe}, for $x>0$, we have
\begin{align*}
    \int_0^t \P_s\widehat{\nu} \mathbf{1}(x)\,\ds &= \int_0^t \ds \int_D \dy \int_{D^c} \dz \,  p_s^D(x,y)\nu(y,z) = \mP_x^Y(\tau_D < t)\to 0,
\end{align*}
as $t\to 0^+$.

Similarly, for $x<0$, 
\begin{align*}
    \int_0^t \P_s\widehat{\nu} \mathbf{1}(x)\,\ds &= \int_0^t \nu(x, D)e^{-\nu(x,D)s} \,\ds = 1 - e^{-\nu(x,D)t}\to 0,
\end{align*}
as $t\to 0^+$. Hence, from Lemma \ref{P_t_feller} we obtain a desired convergence.
\end{proof}

\begin{proposition}\label{K_t_feller}
For $\alpha\in (1,2)$, $(K_t)_{t\geq 0}$ is a Feller semigroup on $C_0(\R_*)$.
\end{proposition}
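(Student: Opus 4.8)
The plan is to verify the two defining properties of a Feller semigroup, namely (P1) $K_t C_0(\R_*)\subseteq C_0(\R_*)$ and (P2) $K_tf(x)\to f(x)$ as $t\to 0^+$, since $(K_t)_{t\geq 0}$ is already known to be a sub-Markov semigroup by Lemma~\ref{K_subprobability} (so $0\le K_tf\le 1$ whenever $0\le f\le 1$). Property (P2) is precisely Lemma~\ref{K_t_pointwise_convergence}, so all the work lies in (P1). Fix $f\in C_0(\R_*)$ and $t>0$. Continuity of $K_tf$ on $\R_*$ is Proposition~\ref{K_bounded_continuity}, so it remains to show that $K_tf$ vanishes at the three ends of $\R_*$: as $x\to 0$, as $x\to+\infty$, and as $x\to-\infty$.

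For the vanishing as $x\to 0$ I would simply write $|K_tf(x)|\le\|f\|_\infty K_t(x,\R)$ and invoke Corollary~\ref{cor_zanik_masy}; this is the one place where the hypothesis $\alpha\in(1,2)$ (finite lifetime, absorption at the origin) is used.

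For the two ends at infinity I would compare $K_t$ with $\widehat P_t=K_{t,0}$. From the perturbation formula (Corollary~\ref{perturbation_formula}) and $K_{t-s}\mathbf 1\le 1$ (Lemma~\ref{K_subprobability}) one obtains
$|K_tf(x)-\widehat P_tf(x)|\le\|f\|_\infty\int_0^t \widehat P_s\widehat\nu\mathbf 1(x)\,\ds=\|f\|_\infty\bigl(1-\widehat P_t\mathbf 1(x)\bigr)$,
the last identity being a short computation that already appears in the proof of Lemma~\ref{K_t_pointwise_convergence} (for $x>0$ it is \eqref{eq:p_survival}, for $x<0$ it comes from the exponential holding time). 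Since $\widehat P_tf\in C_0(\R_*)$ by Lemma~\ref{P_t_feller}, it suffices to prove that $\widehat P_t\mathbf 1(x)\to 1$ as $|x|\to\infty$. For $x\to-\infty$ this is immediate: $\widehat P_t\mathbf 1(x)=e^{-\nu(x,D)t}$ and $\nu(x,D)=\mathcal{A}_{1,\alpha}\alpha^{-1}|x|^{-\alpha}\to 0$ by \eqref{nu_scaling}.

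The only genuinely substantive point is $\widehat P_t\mathbf 1(x)=P_t^D\mathbf 1(x)=\mP_x^Y(\tau_D>t)\to 1$ as $x\to+\infty$. I would argue pathwise: if $\tau_D\le t$ then $Y_s\le 0$ for some $s\le t$, hence $\sup_{0\le s\le t}|Y_s-Y_0|\ge Y_0=x$; since under $\mP_x^Y$ the process $(Y_s-Y_0)_{s\ge0}$ has the law of $Y$ started at $0$, with c\`adl\`ag and therefore locally bounded paths, one gets $\mP_x^Y(\tau_D\le t)\le\mP_0^Y\bigl(\sup_{0\le s\le t}|Y_s|\ge x\bigr)\to 0$ as $x\to+\infty$. (Alternatively, the scaling $p_t^D(x,xz)=x^{-1}p_{tx^{-\alpha}}^D(1,z)$ from \eqref{eq:scaling2} gives $\mP_x^Y(\tau_D>t)=\mP_1^Y(\tau_D>tx^{-\alpha})\to\mP_1^Y(\tau_D>0)=1$.) Combining the three ends shows $K_tf\in C_0(\R_*)$, which is (P1), and the proposition follows. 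The main obstacle is therefore just this last limit at $+\infty$; everything else is a matter of assembling lemmas already proved above.
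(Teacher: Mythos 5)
Your proposal is correct and follows essentially the same route as the paper: continuity from Proposition~\ref{K_bounded_continuity}, vanishing at the origin from Corollary~\ref{cor_zanik_masy} (the only place $\alpha\in(1,2)$ enters), vanishing at $\pm\infty$ by comparing $K_t$ with $\widehat P_t=K_{t,0}$ via the perturbation formula, and (P2) from Lemma~\ref{K_t_pointwise_convergence}. The paper writes the decay estimate at $+\infty$ and $-\infty$ as two separate inequalities, whereas you package them once as $|K_tf-\widehat P_tf|\le\|f\|_\infty(1-\widehat P_t\mathbf 1)$ and invoke $\widehat P_tf\in C_0(\R_*)$; for $\mP_x^Y(\tau_D>t)\to 1$ you give a pathwise maximal-inequality argument (valid for any c\`adl\`ag Feller process) in addition to the scaling argument the paper uses, but these are cosmetic differences in an otherwise identical proof.
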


\begin{proof}
From Lemma \ref{K_subprobability} it follows that for any $f\in C_0(\R_*)$, $0\leq f\leq 1$ we have $0\leq K_tf\leq 1$, $t\geq 0$. Hence $(K_t)_{t\geq 0}$, is a semigroup of nonnegative contraction operators on $C_0(\R_*)$. 

We claim that $K_t C_0(\R_*)\subset C_0(\R_*)$. Let $t>0$ and $f\in C_0(\R_*)$. Then from Proposition \ref{K_bounded_continuity}, $K_tf\in C(\R_*)$. Moreover, from Corollary \ref{cor_zanik_masy},
\[
\lim_{x\to 0} \big|K_tf(x)\big| \leq \norm{f}_\infty \lim_{x\to 0} K_t\mathbf{1}(x) = \norm{f}_\infty \lim_{x\to 0} K_t(x,\R) = 0.
\]
From Corollary \ref{perturbation_formula} and Lemma \ref{K_subprobability}, for $x>0$,
\begin{align*}
    \big| K_tf(x)\big| &\leq P_t^D|f|(x) + \norm{f}_\infty \int_0^t \ds \int_D \dy \int_{D^c} \dz \, p_s^D(x,y)\nu(y,z).
\end{align*}
From \eqref{eq:Ikeda_Watanabe} we get 
\begin{equation}\label{K_t_feller_eq_1}
    \big| K_tf(x)\big| \leq P_t^D|f|(x) + \norm{f}_\infty \mP_x^Y(\tau_D<t).
\end{equation}
It follows from \eqref{eq:scaling2} that 
$\mP_x^Y(\tau_D<t) = 1 - \int_D p_t^D(x,y)\, \dy = 1 - \int_D p_{x^{-\alpha}t}^D(1,y)\, \dy = \mP_1^Y(\tau_D<x^{-\alpha }t) \to 0,$ as $x\to\infty.$
From \eqref{K_t_feller_eq_1} and Lemma \ref{theorem_p^D_feller} it follows that $\lim\limits_{x\to +\infty} K_tf(x) = 0$. Similarly, from Corollary \ref{perturbation_formula} and Lemma \ref{K_subprobability}, for $x<0$,
\begin{align}\label{K_t_feller_eq_2}
    \big|K_tf(x)\big| &\leq |f(x)|e^{-\nu(x,D)t} + \norm{f}_\infty \int_0^t \nu(x,D)e^{-\nu(x,D)s}\,\ds \nonumber \\
    &= |f(x)|e^{-\nu(x,D)t} + \norm{f}_\infty \big[1 - e^{-\nu(x,D)t}\big].
\end{align}
From \eqref{K_t_feller_eq_2} and \eqref{nu_scaling} it follows that $\lim\limits_{x\to -\infty} K_tf(x) = 0$. Therefore, we obtained that $K_tf\in C_0(\R_*)$. 

Hence, with Lemma \ref{K_t_pointwise_convergence}, we finish the proof.
\end{proof}

\section{Short-time behavior} \label{chap_generator} 

In this section, we are  interested in infinitesimal behavior of the semigroup at $t=0$. 
Recall that the classical \emph{infinitesimal generator} (see, e.g.,  \cite{MRockner}) for a Feller semigroup $(T_t)_{t\geq 0}$ on $C_0(\mathscr{X})$ is 
\begin{align}\label{eq:generator_def}
\mathcal{L}f := \lim_{t\to 0^+} \frac{T_tf-f}{t} \quad\mathrm{ in } ~C_0(\mathscr{X}).
\end{align}
The domain $\mathcal{D}(\mathcal{L})$ of $\mathcal{L}$ consists of all the functions $f\in C_0(\mathscr{X})$ for which the limit \eqref{eq:generator_def} exists. Dealing with  generators is somehow difficult because for nonlocal operators, test functions as a rule are not in their domain; see Bäumer, Luks, Meerschaert \cite[Theorem 2.3]{MR3897925}. 

The main goal of this section is to derive the \textit{pointwise formula} 
\begin{align*}
    \lim_{t\to 0^+} \frac{K_th_\beta(x) - h_\beta(x)}{t}, \quad x\neq 0,
\end{align*}
for the functions $h_\beta$ defined in Section \ref{Chap_Servadei_process}. 
The precise statement is given in Proposition \ref{gen_inequality} after a number of auxiliary results.

\subsection{Estimation of integrals}

\begin{lemma}\label{lem_oszac_1_1}
    For $\alpha\in (0,2)$, $x>0$, $y<0$,
    \begin{align*}
        \int_0^{1/2} s^{-1} \Big(1\wedge \frac{|x|}{s^{1/\alpha}}\Big)^{\alpha/2} &\Big(1\wedge \frac{|y|}{s^{1/\alpha}}\Big)^{-\alpha/2} p_s(x,y)\,\ds \\[3pt]
        &\approx \big(1\wedge |x|\big)^{\alpha/2} \big(1\wedge |y|\big)^{-\alpha/2} \big(|x-y|^{-1}\wedge |x-y|^{-\alpha-1}\big).
    \end{align*}
\end{lemma}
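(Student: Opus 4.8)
Fix $x>0>y$ and put $r:=|x-y|=|x|+|y|$. Since $x,y$ have opposite signs,
\[
|x|\vee|y|\ \le\ r\ \le\ 2\,(|x|\vee|y|),\qquad \sqrt{|x||y|}\ \le\ r .
\]
I would first rewrite the integrand in a transparent form. From the elementary identity $(1\wedge t)^{\kappa}=1\wedge t^{\kappa}$ ($t\ge0$, $\kappa>0$) we get $\bigl(1\wedge\tfrac{|x|}{s^{1/\alpha}}\bigr)^{\alpha/2}=1\wedge\tfrac{|x|^{\alpha/2}}{\sqrt s}$ and $\bigl(1\wedge\tfrac{|y|}{s^{1/\alpha}}\bigr)^{-\alpha/2}=1\vee\tfrac{\sqrt s}{|y|^{\alpha/2}}$, while \eqref{eq:p_t_approx} gives $p_s(x,y)\approx s^{-1/\alpha}\wedge\tfrac{s}{r^{1+\alpha}}$. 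Hence it suffices to show
\[
I:=\int_0^{1/2}\frac1s\Bigl(1\wedge\tfrac{|x|^{\alpha/2}}{\sqrt s}\Bigr)\Bigl(1\vee\tfrac{\sqrt s}{|y|^{\alpha/2}}\Bigr)\Bigl(s^{-1/\alpha}\wedge\tfrac{s}{r^{1+\alpha}}\Bigr)\ds\ \approx\ \bigl(1\wedge|x|^{\alpha/2}\bigr)\bigl(1\vee|y|^{-\alpha/2}\bigr)\bigl(r^{-1}\wedge r^{-1-\alpha}\bigr).
\]

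The key observation is that the integrand of $I$ is, on each interval cut out by the three breakpoints $|x|^\alpha$, $|y|^\alpha$, $r^\alpha$, a constant times a power of $s$: the first bracket switches between $1$ and $|x|^{\alpha/2}s^{-1/2}$ at $s=|x|^\alpha$, the second between $1$ and $|y|^{-\alpha/2}s^{1/2}$ at $s=|y|^\alpha$, and the last between $s\,r^{-1-\alpha}$ and $s^{-1/\alpha}$ at $s=r^\alpha$; moreover $|x|^\alpha\le r^\alpha$ and $|y|^\alpha\le r^\alpha$. So the plan is to split $(0,\tfrac12)$ at those of $|x|^\alpha,|y|^\alpha,r^\alpha$ that lie below $\tfrac12$, integrate the resulting monomials, and compare with the right-hand side, distinguishing the two orderings $|x|\le|y|$ and $|y|<|x|$ and, within each, where $\tfrac12$ sits among the breakpoints. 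One finds: on $(0,(|x|\wedge|y|)^\alpha)$ the integrand is the constant $r^{-1-\alpha}$; on the band between $(|x|\wedge|y|)^\alpha$ and $(|x|\vee|y|)^\alpha$ exactly one of the first two brackets contributes a factor $s^{\pm1/2}$; on $((|x|\vee|y|)^\alpha,r^\alpha)$ the integrand is the constant $|x|^{\alpha/2}|y|^{-\alpha/2}r^{-1-\alpha}$; and on $(r^\alpha,\tfrac12)$---nonempty precisely when $r<(\tfrac12)^{1/\alpha}$, hence only when $r\lesssim1$ (and then automatically $|x|,|y|\le r<1$)---the integrand is $|x|^{\alpha/2}|y|^{-\alpha/2}s^{-1-1/\alpha}$, whose integral is $\approx|x|^{\alpha/2}|y|^{-\alpha/2}r^{-1}$.

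Assembling these, I would use $\sqrt{|x||y|}\le r\approx|x|\vee|y|$ together with the split $r\lessgtr1$ to verify that every contributing interval produces a quantity $\lesssim\bigl(1\wedge|x|^{\alpha/2}\bigr)\bigl(1\vee|y|^{-\alpha/2}\bigr)\bigl(r^{-1}\wedge r^{-1-\alpha}\bigr)$, so $I\lesssim\text{RHS}$. For the matching lower bound the dominant contribution comes from the largest admissible values of $s$: if $r\gtrsim1$ then $(r^\alpha,\tfrac12)$ is empty, $r^{-1}\wedge r^{-1-\alpha}=r^{-1-\alpha}$, and the interval ending at $r^\alpha\wedge\tfrac12$ (on which the integrand is the above constant, or simply $r^{-1-\alpha}$ when $(|x|\vee|y|)^\alpha>\tfrac12$) already produces $\gtrsim\bigl(1\wedge|x|^{\alpha/2}\bigr)\bigl(1\vee|y|^{-\alpha/2}\bigr)r^{-1-\alpha}$; if $r<(\tfrac14)^{1/\alpha}$ the interval $(r^\alpha,\tfrac12)$ produces $\gtrsim|x|^{\alpha/2}|y|^{-\alpha/2}r^{-1}=\bigl(1\wedge|x|^{\alpha/2}\bigr)\bigl(1\vee|y|^{-\alpha/2}\bigr)r^{-1}$; and the leftover regime $r\approx1$ is immediate since then $|x|\vee|y|$ and $r$ are all comparable to absolute constants. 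Together these give $I\gtrsim\text{RHS}$, completing the proof.

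The mathematics here is shallow---nothing beyond \eqref{eq:p_t_approx} and the two inequalities for $r$---and the real work is bookkeeping. I expect the main obstacle to be exactly this case analysis: the constant $\tfrac12$ in the upper limit is genuinely needed (the integral over $(0,\infty)$ is \emph{not} comparable to the claimed expression---this fails already when $|x|,|y|$ are large, where the tail over $(1/2,\infty)$ is too big), so one cannot normalize by scaling and must carry the ordering of $|x|^\alpha$, $|y|^\alpha$, $|x-y|^\alpha$ and $\tfrac12$ through every case.
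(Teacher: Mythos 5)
Your plan is correct and in substance coincides with the paper's proof: approximate $p_s(x,y)$ via \eqref{eq:p_t_approx}, observe that the integrand is piecewise a constant multiple of a power of $s$ with breakpoints $|x|^\alpha$, $|y|^\alpha$, $|x-y|^\alpha$, integrate each monomial, and compare with the right-hand side while tracking where $1/2$ falls among the breakpoints. The paper merely organizes the cases differently (first splitting on $x^\alpha\gtrless1/2$ and $|x-y|^\alpha\gtrless1$, then on $|x|\lessgtr|y|$ with further subcases for the lower bound) rather than on $|x|\lessgtr|y|$ and $|x-y|\lessgtr1$ as you do, but the same piecewise integrals appear and the underlying computation is identical.
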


\begin{proof}
Let $A := \int_0^{1/2} s^{-1} \big(1\wedge \frac{|x|}{s^{1/\alpha}}\big)^{\alpha/2} \big(1\wedge \frac{|y|}{s^{1/\alpha}}\big)^{-\alpha/2} p_s(x,y)\,\ds$ and consider three cases.

\medskip
\textsc{Case 1.} Let $x^\alpha \geq 1/2$. Then of course $|x-y|^\alpha \geq 1/2$ and then from \eqref{eq:p_t_approx},
\begin{align*}
    A &\approx \int_0^{1/2} s^{-1} \Big(1\wedge \frac{|y|}{s^{1/\alpha}}\Big)^{-\alpha/2} \frac{s}{|x-y|^{\alpha+1}}\,\ds \\
    &= |x-y|^{-\alpha-1} \Big[ \int_0^{|y|^\alpha \wedge 1/2} \ds  + |y|^{-\alpha/2} \int_{|y|^\alpha\wedge 1/2}^{1/2}  s^{1/2} \,\ds \Big] \\
    &= |x-y|^{-\alpha-1} \Big[ \big(|y|^\alpha\wedge 1/2\big) + \tfrac23 |y|^{-\alpha/2}  \Big( \Big(\frac{1}{2}\Big)^{3/2} - \big(|y|^\alpha\wedge 1/2\big)^{3/2} \Big)\Big].
\end{align*}
For $|y|^\alpha > 1/4$, 
\begin{align*}
    A &\approx |x-y|^{-\alpha-1} 
    \approx |x-y|^{-\alpha-1} \big(1 \vee |y|^{-\alpha/2}\big) \\
    &\approx \big(1\wedge |x|\big)^{\alpha/2} \big(1\wedge |y|\big)^{-\alpha/2} \big(|x-y|^{-1} \wedge |x-y|^{-\alpha-1}\big),
\end{align*}
and for $|y|^\alpha \leq 1/4$,
\begin{align*}
    A &\approx |x-y|^{-\alpha-1} \big[ |y|^\alpha + |y|^{-\alpha/2} \big]  
    = |x-y|^{-\alpha-1} |y|^{-\alpha/2} \big[ 1 + |y|^{3\alpha/2}\big] \\
    &\approx |x-y|^{-\alpha-1} |y|^{-\alpha/2} 
    \approx |x-y|^{-\alpha-1} \big(1 \vee |y|^{-\alpha/2}\big) \\
    &\approx \big(1\wedge |x|\big)^{\alpha/2} \big(1\wedge |y|\big)^{-\alpha/2} \big(|x-y|^{-1} \wedge |x-y|^{-\alpha-1}\big).
\end{align*}

\medskip
\textsc{Case 2.} Let $x^\alpha<1/2$ and $|x-y|^\alpha>1$.  Then $|y|^\alpha = \big( |x-y| - |x| \big)^\alpha > \big(1 - 2^{-1/\alpha}\big)^\alpha$. Moreover,
\begin{align*}
    A &\approx \int_0^{x^\alpha} |x-y|^{-\alpha-1} \,\ds + \int_{x^\alpha}^{1/2}  \frac{x^{\alpha/2}}{\sqrt{s}} |x-y|^{-\alpha-1}\,\ds = |x-y|^{-\alpha-1} \Big[ x^\alpha + 2x^{\alpha/2} \big( 2^{-1/2} - x^{\alpha/2} \big)
    \Big] \\
    &= |x-y|^{-\alpha-1} x^{\alpha/2} \big[ \sqrt{2} - x^{\alpha/2}\big].
\end{align*}
Note that $2^{-1/2} = \sqrt{2} - 2^{-1/2} \leq \sqrt{2} - x^{\alpha/2} \leq \sqrt{2}$. Therefore,
\begin{align*}
    A \approx |x-y|^{-\alpha-1} x^{\alpha/2} \approx \big(1\wedge |x|\big)^{\alpha/2} \big(1\wedge |y|\big)^{-\alpha/2} \big(|x-y|^{-1} \wedge |x-y|^{-\alpha-1}\big).
\end{align*}

\medskip
\textsc{Case 3.} Let $x^\alpha <1/2$ and $|x-y|^\alpha\leq 1$. Then $|y|^\alpha = \big(|x-y| - |x|\big)^\alpha \leq 1$. Assume that $|x| < |y|$. Then
\begin{align}
    A &\approx \int_0^{x^\alpha} |x-y|^{-\alpha-1}\,\ds + \int_{x^\alpha}^{|y|^\alpha\wedge 1/2} \frac{|x|^{\alpha/2}}{\sqrt{s}}  |x-y|^{-\alpha-1}\,\ds \nonumber \\
    &+ \int_{|y|^\alpha\wedge 1/2}^{|x-y|^\alpha \wedge 1/2}  |x|^{\alpha/2}|y|^{-\alpha/2} |x-y|^{-\alpha-1}\,\ds + \int_{|x-y|^\alpha \wedge 1/2}^{1/2} |x|^{\alpha/2}|y|^{-\alpha/2} s^{-1/\alpha-1}\,\ds \nonumber\\
    &= |x-y|^{-\alpha-1} |x|^\alpha + 2|x|^{\alpha/2} |x-y|^{-\alpha-1} \big( \big(|y|^{\alpha}\wedge 1/2\big)^{1/2} - |x|^{\alpha/2}\big) \nonumber\\
    &+ |x|^{\alpha/2} |y|^{-\alpha/2} |x-y|^{-\alpha-1} \big[ \big(|x-y|^\alpha\wedge 1/2\big) - \big(|y|^\alpha\wedge 1/2\big) \big] \nonumber \\
    &+ \alpha |x|^{\alpha/2} |y|^{-\alpha/2} \big[ \big(|x-y|^\alpha \wedge 1/2\big)^{-1/\alpha} - 2^{1/\alpha}\big] \label{eq:rownanie_na_A}\\
    &\leq \frac{|x|^\alpha}{|x-y|^{\alpha+1}}  + \frac{2|x|^{\alpha/2}|y|^{\alpha/2}}{|x-y|^{\alpha+1}} + \frac{|x|^{\alpha/2} |y|^{-\alpha/2}}{|x-y|} + \alpha |x|^{\alpha/2} |y|^{-\alpha/2} \big(|x-y|^\alpha \wedge 1/2\big)^{-1/\alpha}.\nonumber
\end{align}
Note that $|x|^\alpha = |x|^{\alpha/2}|x|^{\alpha/2} < |x|^{\alpha/2}|y|^{\alpha/2}$.
Moreover,
\begin{align*}
    |x-y|^\alpha \wedge 1/2 \geq |x-y|^\alpha \wedge \tfrac12 |x-y|^\alpha = \tfrac12 |x-y|^\alpha.
\end{align*}
Therefore,
\begin{align*}
    A &\lesssim \frac{|x|^{\alpha/2}|y|^{\alpha/2}}{|x-y|^{\alpha+1}} + \frac{|x|^{\alpha/2} |y|^{-\alpha/2}}{|x-y|} = \frac{|x|^{\alpha/2} |y|^{-\alpha/2}}{|x-y|} \left[ 1 + \frac{|y|^\alpha}{|x-y|^\alpha} \right] \lesssim \frac{|x|^{\alpha/2} |y|^{-\alpha/2}}{|x-y|} \\
    &= \big(1\wedge |x|\big)^{\alpha/2} \big(1\wedge |y|\big)^{-\alpha/2} \big(|x-y|^{-1} \wedge |x-y|^{-\alpha-1}\big).
\end{align*}

\medskip
For the estimate from below, we consider two cases. For $|x|<|y|< (8^{1/\alpha}-1)|x|$, from \eqref{eq:rownanie_na_A}, we have that
\begin{align*}
    A &\gtrsim |x-y|^{-\alpha-1} |x|^\alpha  = \frac{|x|^{\alpha/2} |y|^{-\alpha/2}}{|x-y|} \cdot \left[ \frac{|x| \, |y|}{(|x|+|y|)^2}\right]^{\alpha/2} \geq \frac{|x|^{\alpha/2} |y|^{-\alpha/2}}{|x-y|} \cdot \left[ \frac{|x|}{|x|+|y|}\right]^{\alpha} \\
    &\geq \frac{|x|^{\alpha/2} |y|^{-\alpha/2}}{|x-y|} \cdot \left[ \frac{|x|}{|x|+(8^{1/\alpha}-1)|x|}\right]^{\alpha} \approx \frac{|x|^{\alpha/2} |y|^{-\alpha/2}}{|x-y|} \\
    &= \big(1\wedge |x|\big)^{\alpha/2} \big(1\wedge |y|\big)^{-\alpha/2} \big(|x-y|^{-1} \wedge |x-y|^{-\alpha-1}\big).
\end{align*}
For $|y| \geq (8^{1/\alpha} - 1)|x|$, from \eqref{eq:rownanie_na_A}, we have that
\begin{align*}
    A &\gtrsim 2|x|^{\alpha/2} |x-y|^{-\alpha-1} \big( \big(|y|^{\alpha}\wedge 1/2\big)^{1/2} - |x|^{\alpha/2}\big) \\
    &\approx \frac{|x|^{\alpha/2} |y|^{-\alpha/2}}{|x-y|} \left( \frac{|y|}{|x|+|y|}\right)^{\alpha/2} \left[ \left( \frac{|y|^\alpha \wedge 1/2}{|x-y|^{\alpha}}\right)^{1/2} - \left( \frac{|x|}{|x-y|}\right)^{\alpha/2}\right] \\
    &\gtrsim \frac{|x|^{\alpha/2} |y|^{-\alpha/2}}{|x-y|} \left( \frac{1}{(8^{1/\alpha}-1)^{-1}+1 }\right)^{\alpha/2} \left[ \left( \frac{|y| \wedge 2^{-1/\alpha}}{\big((8^{1/\alpha}-1)^{-1}+1\big)|y|}\right)^{\alpha/2} - \left( \frac{1}{8^{1/\alpha}}\right)^{\alpha/2}\right] \\
    &\approx \frac{|x|^{\alpha/2} |y|^{-\alpha/2}}{|x-y|}  \left[ \big(8^{1/\alpha}-1)^{-1}+1\big)^{-\alpha/2} \big( 1 \wedge 2^{-1/\alpha}|y|^{-1}\big)^{\alpha/2} - 8^{-1/2} \right].
\end{align*}
Recall that $|y|\leq 1$ and then
\begin{align*}
    A &\gtrsim \frac{|x|^{\alpha/2} |y|^{-\alpha/2}}{|x-y|}  \left[ \big((8^{1/\alpha}-1)^{-1}+1\big)^{-\alpha/2} \big( 1 \wedge 2^{-1/\alpha}\big)^{\alpha/2} - 8^{-1/2} \right] \\[1.5pt]
    &= \frac{|x|^{\alpha/2} |y|^{-\alpha/2}}{|x-y|} \left[ \frac{(8^{1/\alpha}-1)^{\alpha/2}}{4} - \frac{1}{2\sqrt{2}}\right] \\[1.5pt]
    &\approx \big(1\wedge |x|\big)^{\alpha/2} \big(1\wedge |y|\big)^{-\alpha/2} \big(|x-y|^{-1} \wedge |x-y|^{-\alpha-1}\big).
\end{align*}

\medskip
Now assume that $|x| \geq |y|$. Then, similarly,
\begin{align}\label{A_est2}
    A &\approx \int_0^{|y|^\alpha} |x-y|^{-\alpha-1}\,\ds + \int_{|y|^\alpha}^{x^\alpha} s^{1/2}  |y|^{-\alpha/2} |x-y|^{-\alpha-1}\,\ds \nonumber \\[1.5pt]
    &+ \int_{x^\alpha}^{|x-y|^\alpha \wedge 1/2} |x|^{\alpha/2}|y|^{-\alpha/2} |x-y|^{-\alpha-1}\,\ds + \int_{|x-y|^\alpha \wedge 1/2}^{1/2} s^{-1/\alpha-1} |x|^{\alpha/2}|y|^{-\alpha/2}\,\ds \nonumber\\[1.5pt]
    &= |x-y|^{-\alpha-1} |y|^\alpha + \tfrac23|y|^{-\alpha/2} |x-y|^{-\alpha-1} \big[ |x|^{3\alpha/2} - |y|^{3\alpha/2}\big] \nonumber\\[1.5pt]
    &+ |x|^{\alpha/2} |y|^{-\alpha/2} |x-y|^{-\alpha-1} \big[ \big(|x-y|^\alpha \wedge 1/2\big) - |x|^\alpha\big] \nonumber\\[1.5pt]
    &+ \alpha |x|^{\alpha/2} |y|^{-\alpha/2} \big[ \big(|x-y|^\alpha \wedge 1/2\big)^{-1/\alpha} - 2^{1/\alpha}\big]  \nonumber\\[1.5pt]
    &\lesssim \frac{|y|^\alpha}{|x-y|^{\alpha+1}} + \frac{|x|^{\alpha/2} |y|^{-\alpha/2}}{|x-y|} + \frac{\alpha |x|^{\alpha/2} |y|^{-\alpha/2}}{\big(|x-y|^\alpha \wedge 1/2\big)^{1/\alpha}} \nonumber\\[1.5pt]
    &\lesssim \frac{|y|^\alpha}{|x-y|^{\alpha+1}} + \frac{|x|^{\alpha/2} |y|^{-\alpha/2}}{|x-y|}.
\end{align}
Note that from the assumption, we have that
\[
 \frac{|y|^\alpha}{|x-y|^{\alpha+1}} = \frac{|y|^{-\alpha/2}}{|x-y|} \cdot \left[ \frac{|y|}{|x-y|}\right]^\alpha |y|^{\alpha/2} \leq \frac{|x|^{\alpha/2} |y|^{-\alpha/2}}{|x-y|}.
\]
Hence,
\begin{align*}
    A \lesssim \frac{|x|^{\alpha/2} |y|^{-\alpha/2}}{|x-y|} = \big(1\wedge |x|\big)^{\alpha/2} \big(1\wedge |y|\big)^{-\alpha/2} \big(|x-y|^{-1} \wedge |x-y|^{-\alpha-1}\big).
\end{align*}

\medskip
For the estimate from below, we consider two cases. For $|x|\geq |y|\geq 2^{2/3-2/\alpha}|x|$, from \eqref{A_est2},
\begin{align*}
    A &\gtrsim \frac{|y|^\alpha}{|x-y|^{\alpha+1}} = \frac{|x|^{\alpha/2} |y|^{-\alpha/2}}{|x-y|} \left[ \frac{|y|^{3/2} |x|^{-1/2}}{|x-y|} \right]^\alpha \geq \frac{|x|^{\alpha/2} |y|^{-\alpha/2}}{|x-y|} \left[ \frac{|y|^{3/2} |x|^{-1/2}}{2|x|} \right]^\alpha \\[1.5pt]
    &= \frac{|x|^{\alpha/2} |y|^{-\alpha/2}}{|x-y|} \left( \frac{|y|}{2^{2/3}|x|} \right)^{3\alpha/2} \geq \frac18 \frac{|x|^{\alpha/2} |y|^{-\alpha/2}}{|x-y|} \\[1.5pt]
    &\approx \big(1\wedge |x|\big)^{\alpha/2} \big(1\wedge |y|\big)^{-\alpha/2} \big(|x-y|^{-1} \wedge |x-y|^{-\alpha-1}\big).
\end{align*}
Similarly, for $|y|< 2^{2/3-2/\alpha}|x|$, from \eqref{A_est2},
\begin{align*}
    A &\gtrsim |y|^{-\alpha/2} |x-y|^{-\alpha-1} \big[ |x|^{3\alpha/2} - |y|^{3\alpha/2}\big] \geq  |y|^{-\alpha/2} |x-y|^{-\alpha-1} |x|^{3\alpha/2}\big[ 1 - 2^{\alpha-3} \big] \\[1.5pt]
    &\approx \frac{|x|^{\alpha/2} |y|^{-\alpha/2}}{|x-y|} \left( \frac{|x|}{|x-y|}\right)^\alpha \geq \frac{|x|^{\alpha/2} |y|^{-\alpha/2}}{|x-y|} \left(\frac{|x|}{2|x|}\right)^\alpha \\[1.5pt]
    &\approx \big(1\wedge |x|\big)^{\alpha/2} \big(1\wedge |y|\big)^{-\alpha/2} \big(|x-y|^{-1} \wedge |x-y|^{-\alpha-1}\big),
\end{align*}
which completes the proof.
\end{proof}

\begin{lemma}\label{lem_oszac_calka_z_p^D_po_D}
    For $\alpha\in (0,2)$, $t>0$, $x>0$, and $y<0$, 
    \begin{align*}
        \int_0^\infty \Big( 1 \wedge \frac{z^{\alpha/2}}{\sqrt{t}}\Big) p_t(x,z)\nu(z,y)\,\dz \approx t^{-1} \Big(1\wedge \frac{|y|}{t^{1/\alpha}}\Big)^{-\alpha/2} p_t(x,y).
    \end{align*}
\end{lemma}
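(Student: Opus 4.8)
The plan is to reduce the estimate to a single scaling normalization and then split the $z$-integral according to which of the two competing bounds for $p_t(x,z)$ and $\nu(z,y)$ dominates. First I would use scaling to set $t=1$: substituting $z = t^{1/\alpha}w$, $x = t^{1/\alpha}\tilde x$, $y = t^{1/\alpha}\tilde y$ and using $p_t(x,z) = t^{-1/\alpha}p_1(\tilde x,w)$ together with $\nu(z,y) = t^{-(\alpha+1)/\alpha}\nu(w,\tilde y)$ reduces the claim to
\[
\int_0^\infty \big(1\wedge w^{\alpha/2}\big) p_1(\tilde x,w)\nu(w,\tilde y)\,\mathrm{d}w \approx \big(1\wedge |\tilde y|\big)^{-\alpha/2} p_1(\tilde x,\tilde y), \qquad \tilde x>0,~\tilde y<0,
\]
the factor $t^{-1}$ being exactly what the Jacobian and the density rescalings produce. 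So from now on write $t=1$, $x>0$, $y<0$.

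Next I would feed in the two-sided estimate \eqref{eq:p_t_approx}, namely $p_1(u)\approx 1\wedge|u|^{-\alpha-1}$, for both $p_1(x,z)$ and $\nu(z,y)=\mathcal{A}_{1,\alpha}|z-y|^{-\alpha-1}$ (the latter is just a clean power, no truncation). The integrand is then, up to constants,
\[
\big(1\wedge z^{\alpha/2}\big)\,\big(1\wedge |x-z|^{-\alpha-1}\big)\,|z-y|^{-\alpha-1}.
\]
Because $y<0<z$ we have $|z-y| = z+|y|\ge |y|$ and $|z-y|\ge z$, so $|z-y|^{-\alpha-1}$ is bounded and decaying; the main contributions come from $z$ near $0$ (where the factor $1\wedge z^{\alpha/2} = z^{\alpha/2}$ is small) and from $z$ near $x$ (where $p_1(x,z)\approx 1$). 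I would split $(0,\infty)$ into the regions $\{0<z<1\}$, $\{1\le z\}$, and within each treat separately whether $|x-z|\le 1$ or $>1$, and whether $z\lessgtr|y|$. On each piece the integrand is a product of pure powers of $z$, $|x-z|$, $|z-y|$, so each sub-integral is an elementary Beta-type integral whose value is a power of the relevant distances; collecting the dominant terms should reproduce the right-hand side. This is essentially the same bookkeeping that was just carried out in the proof of Lemma~\ref{lem_oszac_1_1}, and indeed one can try to quote that lemma directly: rewriting $p_1(x,z) = \int_0^{\cdot}\!\cdots$ is not needed, but the structure of the two-sided bounds is identical, so the casework there is a template.

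The step I expect to be the main obstacle is the sharpness of the lower bound near $\tilde y$, i.e.\ showing the integral is $\gtrsim (1\wedge|y|)^{-\alpha/2}p_1(x,y)$ and not merely $\lesssim$. For the upper bound one simply dominates $1\wedge z^{\alpha/2}$ and $|z-y|^{-\alpha-1}$ by their values at convenient endpoints and integrates, but for the lower bound one must exhibit a subregion of $z$ of the right size on which all three factors are comparable to their "expected" values; the delicate case is when $|y|$ is small, where the factor $1\wedge z^{\alpha/2}$ forces $z$ to stay away from $0$ while $|z-y|^{-\alpha-1}\approx|y|^{-\alpha-1}$ wants $z$ near $y$, and one must check these constraints are compatible with $p_1(x,z)$ not being too small. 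Choosing the subregion (something like $z\in(|y|,2|y|)$ when $|y|\lesssim |x|\wedge 1$, and $z$ comparable to $x$ otherwise) and verifying the three resulting powers add up correctly is where the real work lies; the rest is routine integration of power functions.
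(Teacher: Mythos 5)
Your plan matches the paper's proof: scale to $t=1$, substitute $p_1\approx 1\wedge|\cdot|^{-\alpha-1}$, and do casework reducing to elementary power integrals that yield $(1\wedge|y|)^{-\alpha/2}p_1(x,y)$. The paper's decomposition (first $x\lessgtr 1/2$, then splitting the $z$-integral at $1$, resp.\ into $z<1/4$, $|z-x|<1/4$, and the remainder) differs from yours in bookkeeping detail but not in substance, and your diagnosis of the small-$|y|$ lower bound as the delicate step is exactly right and is resolved the same way there, via a localized Beta-type integral ($A(y)=\int_0^1 w^{\alpha/2}|w-y|^{-\alpha-1}\,dw$) that already attains the full claimed order.
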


\begin{proof}
Let $I_t(x,y) := \int_0^\infty \big( 1 \wedge \frac{z^{\alpha/2}}{\sqrt{t}}\big) p_t(x,z)\nu(z,y)\,\dz$. By substitution $z = t^{1/\alpha}w$, from \eqref{eq:scaling_pt}, we get
\begin{align}\label{eq:I_t_scaling1}
    I_t(t^{1/\alpha}x, t^{1/\alpha}y) = t^{-1/\alpha-1} \int_0^\infty \big( 1 \wedge w^{\alpha/2}\big) p_1(x,w)\nu(w,y)\,\dw = t^{-1/\alpha-1} I_1(x,y),
\end{align}
hence it suffices to find an estimation of $I_1(x,y)$.

\medskip
First, assume that $x < 1/2$. From \eqref{eq:p_t_approx} we then have
\begin{align*}
I_1(x,y) &\approx \int_0^\infty \big( 1 \wedge w^{\alpha/2}\big) \big(1 \wedge |x-w|^{-\alpha-1}\big)|w-y|^{-\alpha-1}\,\dw \\
&\approx \int_0^1 w^{\alpha/2}|w-y|^{-\alpha-1}\,\dw + \int_1^\infty |x-w|^{-\alpha-1}|w-y|^{-\alpha-1} \,\dw =: A(y) + B(x,y).
\end{align*}
Using the substitution $w = |y|u$ we obtain that
\begin{align}\label{eq:Axy_podst}
    A(y) = |y|^{-\alpha/2} \int_0^{1/|y|} \frac{u^{\alpha/2}}{(u+1)^{\alpha+1}}\,\du.
\end{align}
Furthermore, for $|y|\leq 1$, 
\begin{align*}
    \int_0^1 \frac{u^{\alpha/2}}{(u+1)^{\alpha+1}}\,\du \leq \int_0^{1/|y|} \frac{u^{\alpha/2}}{(u+1)^{\alpha+1}}\,\du \leq \int_0^\infty \frac{u^{\alpha/2}}{(u+1)^{\alpha+1}}\,\du <\infty,
\end{align*}
hence $\int_0^{1/|y|} \frac{u^{\alpha/2}}{(u+1)^{\alpha+1}}\,\du \approx 1$. Moreover, for $|y| >1$, 
\begin{align*}
    \int_0^{1/|y|} \frac{u^{\alpha/2}}{(u+1)^{\alpha+1}}\,\du \approx \int_0^{1/|y|} u^{\alpha/2}\,\du \approx |y|^{-\alpha/2-1}.
\end{align*}
Therefore, 
\begin{align*}
    \int_0^{1/|y|} \frac{u^{\alpha/2}}{(u+1)^{\alpha+1}}\,\du \approx 1\wedge |y|^{-\alpha/2-1}.
\end{align*}
From \eqref{eq:Axy_podst},
\begin{align}\label{eq:oszac_A}
    A(y) \approx |y|^{-\alpha-1} \wedge |y|^{-\alpha/2}.
\end{align}
For the integral $B(x,y)$, we proceed as follows,
\begin{align*}
    B(x,y) \lesssim  \int_1^\infty |w-y|^{-\alpha-1} \,\dw \leq \int_1^\infty w^{-\alpha-1}\,\dw \approx 1,
\end{align*}
and 
\begin{align*}
    B(x,y) \leq |y|^{-\alpha-1} \int_1^\infty |w-1/2|^{-\alpha-1}\,\dw \approx |y|^{-\alpha-1}.
\end{align*}
Hence,
\begin{align}\label{eq:oszac_B}
    B(x,y) \lesssim 1\wedge |y|^{-\alpha-1}.
\end{align}

Furthermore, note that from \eqref{eq:oszac_A} and \eqref{eq:oszac_B},
\begin{align*}
    I_1(x,y) \approx A(y) + B(x,y) \lesssim \big(|y|^{-\alpha-1}\wedge |y|^{-\alpha/2}\big) + \big(1\wedge |y|^{-\alpha-1}\big) \lesssim |y|^{-\alpha-1}\wedge |y|^{-\alpha/2}.
\end{align*}
Indeed, for $|y|\leq 1$ it is obvious, because $|y|^{-\alpha/2} \geq 1$. On the other hand, for $|y|>1$, $1\wedge |y|^{-\alpha-1} = |y|^{-\alpha-1} = |y|^{-\alpha-1} \wedge |y|^{-\alpha/2}$.
Further, it is obvious that
\begin{align*}
    I_1(x,y) \gtrsim A(y) \approx |y|^{-\alpha-1} \wedge |y|^{-\alpha/2},
\end{align*}
hence
\begin{align}\label{eq:oszac_A+B}
    I_1(x,y) \approx |y|^{-\alpha-1} \wedge |y|^{-\alpha/2}.
\end{align}

Moreover, for $|y|<1$ we have $|x-y| \lesssim 1$, hence from \eqref{eq:oszac_A+B}, we get
\[
I_1(x,y) \approx |y|^{-\alpha/2} \approx \big( 1\wedge |y|\big)^{-\alpha/2} \big( 1\wedge |x-y|^{-\alpha-1}\big).
\]
Similarly, for $|y|\geq 1$, we have $|x-y| = x + |y| \geq x+1 > x + 2x = 3x$ and $|x-y|\geq |y| = |x-y| - |x| > |x-y| - \tfrac13 |x-y| = \tfrac23 |x-y|$ and from \eqref{eq:oszac_A+B},
\begin{align*}
    I_1(x,y) \approx |y|^{-\alpha-1} \approx |x-y|^{-\alpha-1} =  \big( 1\wedge |y|\big)^{-\alpha/2} \big( 1\wedge |x-y|^{-\alpha-1}\big).
\end{align*}
To sum up, we obtain so far that for $x\in (0,1/2)$, 
\begin{align}\label{eq:I_1_2}
    I_1(x,y) \approx \big( 1\wedge |y|\big)^{-\alpha/2} \big( 1\wedge |x-y|^{-\alpha-1}\big).
\end{align}

\medskip
Now, assume that $x\geq 1/2$. From \eqref{eq:p_t_approx} we have
\begin{align*}
    I_1(x,y) &\approx \int_0^\infty \big( 1 \wedge w^{\alpha/2}\big) \big(1 \wedge |x-w|^{-\alpha-1}\big)|w-y|^{-\alpha-1}\,\dw \\
    &\approx \int_0^{1/4} w^{\alpha/2} |x-w|^{-\alpha-1} |w-y|^{-\alpha-1} \dw \\
    &+ \int\limits_{|x-w|< 1/4} |w-y|^{-\alpha-1} \dw \\
    &+ \int\limits_{\,\,\substack{ |x-w| \geq 1/4 \\[3pt] w\geq 1/4 }} |x-w|^{-\alpha-1} |w-y|^{-\alpha-1} \dw =: C(x,y) + D(x,y) + E(x,y). 
\end{align*}
From \eqref{eq:oszac_A},
\begin{align*}
    C(x,y) &\approx |x|^{-\alpha-1} \int_0^{1/4} w^{\alpha/2} |w-y|^{-\alpha-1} \,\dw \approx |x|^{-\alpha-1} A(4y) \\
    &\approx |x|^{-\alpha-1}\big(|4y|^{-\alpha-1} \wedge |4y|^{-\alpha/2}\big) \approx |x|^{-\alpha-1}\big(|y|^{-\alpha-1} \wedge |y|^{-\alpha/2}\big). 
\end{align*}
Furthermore,
\begin{align*}
    D(x,y) &= \int\limits_{|x-w|< 1/4} |w-y|^{-\alpha-1} \dw = \alpha^{-1} (x+|y|)^{-\alpha} \Big[ \Big( 1 - \frac{1/4}{x+|y|}\Big)^{-\alpha} - \Big( 1 + \frac{1/4}{x+|y|}\Big)^{-\alpha}  \Big] \\
    &\approx \alpha^{-1} (x+|y|)^{-\alpha} \frac{1/4}{x+|y|} \approx |x-y|^{-\alpha-1}.
\end{align*}
From the fact that $a\vee b \approx a+b$, $a,b\geq 0$, we also get that
\begin{align*}
    E(x,y) &= \int\limits_{\,\,\substack{ |x-w| \geq 1/4 \\[3pt] w\geq 1/4 }} \big(|x-w|\wedge |w-y|\big)^{-\alpha-1} \big(|x-w| \vee |w-y|\big)^{-\alpha-1} \dw \\
    &\approx \int\limits_{\,\,\substack{ |x-w| \geq 1/4 \\[3pt] w\geq 1/4 }} \big(|x-w|\wedge |w-y|\big)^{-\alpha-1} \big(|x-w| + |w-y|\big)^{-\alpha-1} \dw \\
    &\leq |x-y|^{-\alpha-1} \int\limits_{\,\,\substack{ |x-w| \geq 1/4 \\[3pt] w\geq 1/4 }} \big(|x-w|\wedge |w-y|\big)^{-\alpha-1}\,\dw \\
    &\approx |x-y|^{-\alpha-1}.
\end{align*}
Hence, in case $x\geq 1/2$ we obtain that
\begin{align}\label{eq:I_1_x}
    I_1(x,y) \approx |x|^{-\alpha-1}\big(|y|^{-\alpha-1} \wedge |y|^{-\alpha/2}\big) + |x-y|^{-\alpha-1}.
\end{align}
Let $|y|\geq 1$. It is obvious that $|x|\cdot |y| \approx \big(|x|\wedge |y|\big) \big(|x|+|y|\big) = \big(|x|\wedge |y|\big) |x-y|$ and then from \eqref{eq:I_1_x},
\begin{align*}
    I_1(x,y) &\approx \big(|x|\cdot |y|\big)^{-\alpha-1} + |x-y|^{-\alpha-1} \approx |x-y|^{-\alpha-1} \big[ 1 + \big(|x| \wedge |y|\big)^{-\alpha-1}\big] \\
    &\approx |x-y|^{-\alpha-1} = \big(1\wedge |y|\big)^{-\alpha/2} \big(1\wedge |x-y|^{-\alpha-1} \big).
\end{align*}
Now assume that $|y|<1$. Then $|x-y| = x + |y| \geq \tfrac12|y| + |y| = \tfrac32 |y|$ and $|x| = |x-y| - |y| \geq |x-y| - \tfrac23 |x-y| = \tfrac13 |x-y|$. Therefore, from \eqref{eq:I_1_x},
\begin{align*}
    I_1(x,y) &\approx |x|^{-\alpha-1} |y|^{-\alpha/2} + |x-y|^{-\alpha-1} \approx |x-y|^{-\alpha-1} \big(1 + |y|^{-\alpha/2}\big) \approx |x-y|^{-\alpha-1} |y|^{-\alpha/2} \\
    &= |x-y|^{-\alpha-1} \big(1 \wedge |y|\big)^{-\alpha/2} \approx \big(1\wedge |y|\big)^{-\alpha/2} \big(1\wedge |x-y|^{-\alpha-1}\big).
\end{align*}
Hence, for $x\geq 1/2$,
\begin{align}\label{eq:I_1_1}
    I_1(x,y) \approx \big( 1\wedge |y|\big)^{-\alpha/2} \big( 1\wedge |x-y|^{-\alpha-1}\big).
\end{align}

From \eqref{eq:I_1_2},  \eqref{eq:I_1_1} and \eqref{eq:p_t_approx} it follows that
\begin{align*}
    I_1(x,y) \approx \big( 1\wedge |y|\big)^{-\alpha/2} p_1(x,y), \qquad x>0, ~y<0,
\end{align*}
and then from \eqref{eq:I_t_scaling1} and \eqref{eq:scaling_pt}, for $t>0$, 
\begin{align*}
    I_t(x,y) &= t^{-1/\alpha-1} I_1\big( t^{-1/\alpha} x, t^{-1/\alpha} y\big) \\
    &\approx t^{-1} \Big( 1\wedge \frac{|y|}{t^{1/\alpha}}\Big)^{-\alpha/2} t^{-1/\alpha}p_1\big(t^{-1/\alpha} x, t^{-1/\alpha} y\big) \\
    &= t^{-1} \Big( 1\wedge \frac{|y|}{t^{1/\alpha}}\Big)^{-\alpha/2} p_t(x,y),
\end{align*}
which completes the proof.
\end{proof}

\begin{corollary}\label{cor:oszac_p^D_nu}
    For $\alpha\in (0,2)$, $t>0$, $x>0$ and $y<0$,
    \begin{align*}
        \mathcal{J}(t,x,y) := \int_D  p_t^D(x,z)\nu(z,y)\,\dz \approx t^{-1} \Big(1\wedge \frac{|x|}{t^{1/\alpha}}\Big)^{\alpha/2}\Big(1\wedge \frac{|y|}{t^{1/\alpha}}\Big)^{-\alpha/2} p_t(x,y).
    \end{align*}
\end{corollary}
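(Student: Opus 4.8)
The plan is to reduce the statement to Lemma~\ref{lem_oszac_calka_z_p^D_po_D}. The starting point is the two-sided heat kernel estimate: by \eqref{eq:approx_p_D} together with \eqref{eq:p_t_approx}, for $t>0$ and $x,z>0$ one has
\[
p_t^D(x,z) \approx \Big(1\wedge \frac{x^{\alpha/2}}{\sqrt t}\Big)\Big(1\wedge \frac{z^{\alpha/2}}{\sqrt t}\Big) \Big(t^{-1/\alpha}\wedge \frac{t}{|x-z|^{\alpha+1}}\Big) \approx \Big(1\wedge \frac{x^{\alpha/2}}{\sqrt t}\Big)\Big(1\wedge \frac{z^{\alpha/2}}{\sqrt t}\Big) p_t(x,z).
\]
First I would observe the elementary identity $w^{\alpha/2}/\sqrt t = (w/t^{1/\alpha})^{\alpha/2}$ for $w>0$, so that $1\wedge \frac{x^{\alpha/2}}{\sqrt t} = \big(1\wedge \frac{|x|}{t^{1/\alpha}}\big)^{\alpha/2}$, and crucially this factor does not depend on the integration variable $z$.

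Next, since $\nu(z,y)\ge 0$, I would multiply the displayed comparison by $\nu(z,y)$ and integrate over $z\in D=(0,\infty)$, pulling the $z$-independent factor out of the integral. This gives
\[
\mathcal{J}(t,x,y) \approx \Big(1\wedge \frac{|x|}{t^{1/\alpha}}\Big)^{\alpha/2} \int_0^\infty \Big(1\wedge \frac{z^{\alpha/2}}{\sqrt t}\Big) p_t(x,z)\,\nu(z,y)\,\dz,
\]
with comparability constants independent of $t,x,y$. The remaining integral is exactly the quantity $I_t(x,y)$ of Lemma~\ref{lem_oszac_calka_z_p^D_po_D}, which asserts $I_t(x,y)\approx t^{-1}\big(1\wedge \frac{|y|}{t^{1/\alpha}}\big)^{-\alpha/2} p_t(x,y)$; substituting this in yields the claimed estimate.

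Since every step is just an application of an already-established comparison, there is no real obstacle here: the corollary is essentially a repackaging of Lemma~\ref{lem_oszac_calka_z_p^D_po_D} after separating off the factor depending only on $x$. The only points that require a little care are the elementary identity $1\wedge \frac{w^{\alpha/2}}{\sqrt t} = \big(1\wedge \frac{w}{t^{1/\alpha}}\big)^{\alpha/2}$, used to match the normalization in the statement, and keeping track of the fact that all the implied constants depend only on $\alpha$ (in particular, the passage from $p_t^D$ to $p_t$ is uniform via \eqref{eq:approx_p_D} and \eqref{eq:p_t_approx}).
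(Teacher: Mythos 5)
Your proposal is correct and takes essentially the same route as the paper, which also reduces the statement to Lemma~\ref{lem_oszac_calka_z_p^D_po_D} via the product-form heat kernel estimate \eqref{eq:asymptotic}--\eqref{eq:asymp_pstwo} (equivalently, \eqref{eq:approx_p_D}) and pulls out the $z$-independent factor $1\wedge |x|^{\alpha/2}/\sqrt{t}$. You have simply spelled out the elementary rewriting $1\wedge \tfrac{w^{\alpha/2}}{\sqrt{t}}=\big(1\wedge \tfrac{w}{t^{1/\alpha}}\big)^{\alpha/2}$ that the paper leaves implicit.
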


\begin{proof}
    It follows directly from \eqref{eq:asymptotic}, \eqref{eq:asymp_pstwo} and from Lemma \ref{lem_oszac_calka_z_p^D_po_D}.
\end{proof}

\begin{corollary}\label{cor:p_t_nu_po_D_i_Dc}
    For $\alpha\in (0,2)$, $t>0$ and $x>0$,
    \begin{align*}
        \int_D  \dz \int_{D^c} \dy \, p_t^D(x,z)\nu(z,y) \approx  t^{-1} 
        \left[ \left(\frac{|x|}{t^{1/\alpha}}\right)^{-\alpha} \wedge \left(\frac{|x|}{t^{1/\alpha}}\right)^{\alpha/2}\right].
    \end{align*}
\end{corollary}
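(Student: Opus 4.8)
The plan is to reduce the double integral to the quantity $\mathcal J(t,x,y)$ already estimated in Corollary~\ref{cor:oszac_p^D_nu}, then integrate in $y$ over $D^c=(-\infty,0]$ and use the scaling \eqref{eq:scaling_pt} of $p_t$ to pass to a dimensionless integral depending only on $\widetilde x:=|x|t^{-1/\alpha}$.

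First, by Tonelli's theorem the left-hand side equals $\mathcal I(t,x):=\int_{D^c}\mathcal J(t,x,y)\,\dy$, so Corollary~\ref{cor:oszac_p^D_nu} gives
\[
\mathcal I(t,x)\approx t^{-1}\Big(1\wedge\frac{|x|}{t^{1/\alpha}}\Big)^{\alpha/2}\int_{-\infty}^0\Big(1\wedge\frac{|y|}{t^{1/\alpha}}\Big)^{-\alpha/2}p_t(x,y)\,\dy.
\]
Writing $p_t(x,y)=p_t(y-x)$ and recalling that $p_t$ is even, the substitution $y=-t^{1/\alpha}u$ together with \eqref{eq:scaling_pt} turns the last integral into
\[
H(\widetilde x):=\int_0^\infty(1\wedge u)^{-\alpha/2}\,p_1(\widetilde x+u)\,\du,\qquad \widetilde x=\frac{|x|}{t^{1/\alpha}},
\]
so that $\mathcal I(t,x)\approx t^{-1}(1\wedge\widetilde x)^{\alpha/2}H(\widetilde x)$.

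Next I would estimate $H$ by splitting the integral at $u=1$ and inserting the bound \eqref{eq:p_t_approx} for $p_1$. For $\widetilde x\le1$, on $(0,1)$ the factor $p_1(\widetilde x+u)$ is comparable to a positive constant and $\alpha/2<1$, so $\int_0^1 u^{-\alpha/2}p_1(\widetilde x+u)\,\du\approx1$; on $(1,\infty)$, $p_1(\widetilde x+u)\approx(\widetilde x+u)^{-1-\alpha}$ gives $\int_1^\infty p_1(\widetilde x+u)\,\du\approx1$; hence $H(\widetilde x)\approx1$. For $\widetilde x>1$, on $(0,1)$ one has $p_1(\widetilde x+u)\approx\widetilde x^{-1-\alpha}$, so $\int_0^1 u^{-\alpha/2}p_1(\widetilde x+u)\,\du\approx\widetilde x^{-1-\alpha}$, while $\int_1^\infty p_1(\widetilde x+u)\,\du\approx\int_1^\infty(\widetilde x+u)^{-1-\alpha}\,\du\approx\widetilde x^{-\alpha}$; adding these gives $H(\widetilde x)\approx\widetilde x^{-\alpha}$. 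Thus $H(\widetilde x)\approx1\wedge\widetilde x^{-\alpha}$ for all $\widetilde x>0$.

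Finally, combining $\mathcal I(t,x)\approx t^{-1}(1\wedge\widetilde x)^{\alpha/2}H(\widetilde x)$ with the elementary identity $(1\wedge\widetilde x)^{\alpha/2}(1\wedge\widetilde x^{-\alpha})=\widetilde x^{\alpha/2}\wedge\widetilde x^{-\alpha}$ (checked separately for $\widetilde x\le1$ and $\widetilde x>1$) yields exactly
\[
\mathcal I(t,x)\approx t^{-1}\Big[\Big(\tfrac{|x|}{t^{1/\alpha}}\Big)^{-\alpha}\wedge\Big(\tfrac{|x|}{t^{1/\alpha}}\Big)^{\alpha/2}\Big].
\]
There is no serious obstacle here; the only point requiring a little care is that all comparability constants stay uniform in $t$ and $x$, which is automatic since after the scaling substitution everything depends on $\widetilde x$ alone, and the elementary integrals involving $u^{-\alpha/2}$ converge precisely because $\alpha<2$. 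Alternatively one can observe that $\mathcal I(t,x)=\partial_t\,\mathbb{P}_x^Y(\tau_D\le t)$ and use the scaling $\mathbb{P}_x^Y(\tau_D>t)=\mathbb{P}_1^Y(\tau_D>t|x|^{-\alpha})$ coming from \eqref{eq:pair_scling}, but that route needs sharp bounds on the density of $\tau_D$, so the argument via Corollary~\ref{cor:oszac_p^D_nu} is cleaner.
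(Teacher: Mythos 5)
Your proof is correct and follows essentially the same route as the paper's: Tonelli plus Corollary~\ref{cor:oszac_p^D_nu}, then scaling to a dimensionless variable, a split of the resulting integral at $1$, and the two-regime estimate giving $1\wedge\widetilde x^{-\alpha}$. The only difference is cosmetic — you avoid the extra substitution $y=xw$ the paper uses in the $\widetilde x>1$ regime, which shortens the computation slightly.
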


\begin{proof}
From the Tonelli's theorem and from Corollary \ref{cor:oszac_p^D_nu},
\begin{align}\label{eq:I_1_1_1}
    I(t,x) &:= \int_D  \dz  \int_{D^c} \dy \, p_t^D(x,z)\nu(z,y) \nonumber\\
    &\approx t^{-1} \Big(1\wedge \frac{|x|}{t^{1/\alpha}}\Big)^{\alpha/2} \int_{D^c} \Big(1\wedge \frac{|y|}{t^{1/\alpha}}\Big)^{-\alpha/2} p_t(x,y)\,\dy.
\end{align}
Hence, it suffices to find an estimation of $A(t,x) := \int_{-\infty}^0 \big(1\wedge \frac{|y|}{t^{1/\alpha}}\big)^{-\alpha/2} p_t(x,y)\,\dy$. Note that by substitution $y = t^{1/\alpha}w$,
\begin{align}\label{eq:A_1_scaling}
    A(t, t^{1/\alpha}x) = \int_{-\infty}^0 \big(1\wedge |w|\big)^{-\alpha/2} p_1(x,w)\,\dw = A(1,x).
\end{align}
Moreover, from \eqref{eq:p_t_approx},
\begin{align*}
    A(1,x) &\approx \int_{0}^\infty \big(1\wedge |y|\big)^{-\alpha/2} \big(1\wedge |x+y|^{-\alpha-1}\big)\,\dy \\
    &= \int_0^1 |y|^{-\alpha/2} \big(1\wedge |x+y|^{-\alpha-1}\big)\,\dy + \int_1^\infty \big(1\wedge |x+y|^{-\alpha-1}\big)\,\dy.
\end{align*}
For $|x|\leq 1$, 
\begin{align*}
    A(1,x) &\approx \int_0^1 |y|^{-\alpha/2}\,\dy + \int_1^\infty |x+y|^{-\alpha-1}\,\dy \approx 1,
\end{align*}
and for $|x|>1$,
\begin{align*}
    A(1,x) &\approx \int_0^1 |y|^{-\alpha/2} |x+y|^{-\alpha-1} \,\dy + \int_1^\infty |x+y|^{-\alpha-1}\,\dy\\
    &= |x|^{-3\alpha/2} \int_0^{1/x} w^{-\alpha/2}(1+w)^{-\alpha-1}\,\dw + |x|^{-\alpha} \int_{1/x}^\infty \frac{\dw}{(w+1)^{\alpha+1}} \\
    &\approx |x|^{-3\alpha/2} \int_0^{1/x} w^{-\alpha/2}\,\dw + |x|^{-\alpha}
    \\
    &\approx |x|^{-\alpha-1} + |x|^{-\alpha} \approx |x|^{-\alpha}.
\end{align*}
Hence, 
\begin{align}
    A(1,x) \approx 1\wedge |x|^{-\alpha},
\end{align}
and from \eqref{eq:A_1_scaling},
\begin{align*}
    A(t,x) = A\big(1, t^{-1/\alpha}x\big) \approx 1\wedge t|x|^{-\alpha}.
\end{align*}
Combining this result with \eqref{eq:I_1_1_1} we get
\begin{align*}
    I(t,x) \approx t^{-1} \Big(1\wedge \frac{|x|^\alpha}{t}\Big)^{1/2} \Big(1\wedge \frac{t}{|x|^{\alpha}}\Big) = t^{-1} \big(t|x|^{-\alpha} \wedge t^{-1/2}|x|^{\alpha/2} \big),
\end{align*}
which is our claim.
\end{proof}

\begin{lemma}\label{lem_oszac_calka_z_p^D}
    For $\alpha\in (0,2)$, $t>0$, $x>0$, $y<0$,
    \begin{align*}
        \int_0^t s^{-1}\Big(1\wedge \frac{|x|^{\alpha/2}}{\sqrt{s}}\Big) &\Big(1\wedge \frac{|y|}{s^{1/\alpha}}\Big)^{-\alpha/2} p_s(x,y) e^{-\nu(y,D)(t-s)}\,\ds \\
        &\approx \Big(1\wedge \frac{|x|^{\alpha/2}}{\sqrt{t}}\Big)\Big(1\wedge \frac{|y|^{\alpha/2}}{\sqrt{t}}\Big)p_t(x,y).
    \end{align*} 
\end{lemma}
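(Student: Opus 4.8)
\emph{Reduction to $t=1$.} The plan is to first remove $t$ by scaling and then split the integral at $s=1/2$. Write $F(t,x,y)$ for the left-hand side. Substituting $s=t\sigma$ and using the scaling \eqref{eq:scaling_pt} for $p_s$ and \eqref{nu_scaling2} for $\nu(\cdot,D)$, each factor transforms homogeneously: $s^{-1}=t^{-1}\sigma^{-1}$, $|x|^{\alpha/2}/\sqrt s=|t^{-1/\alpha}x|^{\alpha/2}/\sqrt\sigma$, $|y|/s^{1/\alpha}=|t^{-1/\alpha}y|/\sigma^{1/\alpha}$, $p_{t\sigma}(x,y)=t^{-1/\alpha}p_\sigma(t^{-1/\alpha}x,t^{-1/\alpha}y)$, and $\nu(y,D)(t-s)=\nu(t^{-1/\alpha}y,D)(1-\sigma)$. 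Hence $F(t,x,y)=t^{-1/\alpha}F(1,t^{-1/\alpha}x,t^{-1/\alpha}y)$, and since the right-hand side of the claim obeys the same homogeneity (again by \eqref{eq:scaling_pt}), it suffices to prove
\begin{equation*}
F(1,x,y)\ \approx\ (1\wedge|x|^{\alpha/2})(1\wedge|y|^{\alpha/2})\,p_1(x,y),\qquad x>0,\ y<0.
\end{equation*}

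\emph{The piece $\int_{1/2}^{1}$.} On $s\in[1/2,1]$ every $s$-dependent factor except the exponential is comparable to its value at $s=1$: $s^{-1}\approx1$, $1\wedge|x|^{\alpha/2}/\sqrt s\approx1\wedge|x|^{\alpha/2}$, $(1\wedge|y|/s^{1/\alpha})^{-\alpha/2}\approx(1\wedge|y|)^{-\alpha/2}$, and $p_s(x,y)\approx p_1(x,y)$ by \eqref{eq:p_t_approx}. Therefore this piece is comparable to $(1\wedge|x|^{\alpha/2})(1\wedge|y|)^{-\alpha/2}p_1(x,y)\int_{0}^{1/2}e^{-\nu(y,D)r}\,\dr$. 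Since $\int_0^{1/2}e^{-\lambda r}\,\dr=(1-e^{-\lambda/2})/\lambda\approx1\wedge\lambda^{-1}$ and $\nu(y,D)\approx|y|^{-\alpha}$ by \eqref{nu_scaling}, the last integral is $\approx 1\wedge|y|^{\alpha}$, and $(1\wedge|y|)^{-\alpha/2}(1\wedge|y|^{\alpha})=1\wedge|y|^{\alpha/2}$. Thus $\int_{1/2}^{1}\approx(1\wedge|x|^{\alpha/2})(1\wedge|y|^{\alpha/2})p_1(x,y)$, which is exactly the target order; in particular this already gives the lower bound $F(1,x,y)\gtrsim(1\wedge|x|^{\alpha/2})(1\wedge|y|^{\alpha/2})p_1(x,y)$.

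\emph{The piece $\int_0^{1/2}$, the main obstacle, and conclusion.} The matching upper bound is the delicate point. For $s\in(0,1/2)$ we have $e^{-\nu(y,D)(1-s)}\le e^{-\nu(y,D)/2}$, and since $(1\wedge|x|/s^{1/\alpha})^{\alpha/2}=1\wedge|x|^{\alpha/2}/\sqrt s$, Lemma~\ref{lem_oszac_1_1} yields
\begin{equation*}
\int_0^{1/2}\ \lesssim\ e^{-\nu(y,D)/2}\,(1\wedge|x|)^{\alpha/2}(1\wedge|y|)^{-\alpha/2}\big(|x-y|^{-1}\wedge|x-y|^{-\alpha-1}\big).
\end{equation*}
If the exponential were dropped this bound would be too large exactly when $|y|$ is small and $|x-y|\gtrsim1$; the point is that there $e^{-\nu(y,D)/2}\approx e^{-c|y|^{-\alpha}}$ is superpolynomially small, and this is the only genuine difficulty in the proof. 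Concretely: for $|y|\ge1$ one has $e^{-\nu(y,D)/2}\le1$, $(1\wedge|y|)^{-\alpha/2}=1$ and $|x-y|=x+|y|\ge1$, so the displayed bound is $\lesssim(1\wedge|x|^{\alpha/2})(1\wedge|y|^{\alpha/2})p_1(x,y)$; for $|y|<1$, writing $u=|y|^{-\alpha}>1$ and $\nu(y,D)=c_0u$ with $c_0=\mathcal{A}_{1,\alpha}/\alpha$, and using $|x-y|\ge|y|$, the required inequality reduces to the boundedness of $u\mapsto u^{1+1/\alpha}e^{-c_0u/2}$ on $[1,\infty)$. Hence $\int_0^{1/2}\lesssim(1\wedge|x|^{\alpha/2})(1\wedge|y|^{\alpha/2})p_1(x,y)$. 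Adding the two pieces gives $F(1,x,y)\approx(1\wedge|x|^{\alpha/2})(1\wedge|y|^{\alpha/2})p_1(x,y)$, and undoing the scaling through $t^{-1/\alpha}p_1(t^{-1/\alpha}x,t^{-1/\alpha}y)=p_t(x,y)$ and $1\wedge|t^{-1/\alpha}x|^{\alpha/2}=1\wedge|x|^{\alpha/2}/\sqrt t$ yields the statement.
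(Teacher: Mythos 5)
Your proof is correct and follows essentially the same route as the paper: scale to $t=1$, split at $s=1/2$, handle the $[1/2,1]$ piece by freezing the slowly varying factors and integrating the exponential to produce the factor $(1\wedge|y|)^{\alpha}$, and dominate the $(0,1/2)$ piece via Lemma~\ref{lem_oszac_1_1} together with the superpolynomial decay of $e^{-\nu(y,D)/2}$. The only cosmetic difference is in the last step: the paper bounds $e^{-\nu(y,D)(1-u)}\lesssim(1\wedge|y|)^{\alpha+1}$ up front and then absorbs the discrepancy by a case split on $|x-y|\lessgtr1$, whereas you keep the exponential and reduce directly (via $|x-y|\ge|y|$) to the boundedness of $u^{1+1/\alpha}e^{-c_0u/2}$ on $[1,\infty)$; these are the same estimate, since $|y|^{-(\alpha+1)}e^{-c|y|^{-\alpha}}\lesssim1$ is precisely that boundedness in the variable $u=|y|^{-\alpha}$.
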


\begin{proof}
    Let $I_t(x,y) := \int_0^t s^{-1}\big(1\wedge \frac{|x|^{\alpha/2}}{\sqrt{s}}\big) \big(1\wedge \frac{|y|}{s^{1/\alpha}}\big)^{-\alpha/2} p_s(x,y) e^{-\nu(y,D)(t-s)}\,\ds$. By substitution $s = tu$ and from \eqref{eq:scaling_pt} and \eqref{nu_scaling2}, we get
    \begin{align}\label{eq:I_t_1_sc}
        &I_t(t^{1/\alpha}x, t^{1/\alpha}y) \\
        &= \int_0^t s^{-1}\Big(1\wedge \frac{\sqrt{t}|x|^{\alpha/2}}{\sqrt{s}}\Big) \Big(1\wedge \frac{t^{1/\alpha}|y|}{s^{1/\alpha}}\Big)^{-\alpha/2} p_s(t^{1/\alpha}x,t^{1/\alpha}y) e^{-\nu(t^{1/\alpha}y,D)(t-s)}\,\ds \nonumber \\
        &= t^{-1/\alpha} \int_0^1 u^{-1}\Big(1\wedge \frac{|x|^{\alpha/2}}{\sqrt{u}}\Big) \Big(1\wedge \frac{|y|}{u^{1/\alpha}}\Big)^{-\alpha/2}  p_{u}(x,y) e^{-\nu(y,D)(1-u)}\,\du \nonumber \\
        &= t^{-1/\alpha} I_1(x,y).
    \end{align}
    Hence it suffices to find an estimation of $I_1(x,y)$.

    Note that
    \begin{align}\label{eq:I_1_A_B}
        I_1(x,y) &=
        \int_0^{1/2}\ldots \du + \int_{1/2}^1 \ldots \du  := A(x,y) + B(x,y).
    \end{align}
    For $u\in [1/2, 1)$, we have
    \begin{align*}
        u^{-1}\Big(1\wedge \frac{|x|^{\alpha/2}}{\sqrt{u}}\Big) \Big(1\wedge \frac{|y|}{u^{1/\alpha}}\Big)^{-\alpha/2}  p_{u}(x,y) \approx \big(1\wedge |x|\big)^{\alpha/2} \big(1\wedge |y|\big)^{-\alpha/2} p_{1}(x,y).
    \end{align*}
Furthermore, note that $1-e^{-\eta/2} \approx 1\wedge \eta$, $\eta\geq 0$. Indeed, for $\eta\geq 1$, $1-e^{-\eta/2} \approx 1 = 1\wedge \eta$. For $0\leq \eta<1$, there exists a constant $c\in (0,\eta)$ such that $1-e^{-\eta/2} = \tfrac\eta2\big(1  - \tfrac14 e^{-c} \eta\big)$. Then, $1-e^{-\eta/2} \approx \tfrac\eta2 \approx 1\wedge \eta$, which follows from the inequality $1\geq 1-\tfrac14 e^{-c}\eta \geq 3/4$. From this observation, from \eqref{nu_scaling}, it follows that
    \begin{align*}
        \int_{1/2}^1 e^{-\nu(y,D)(1-u)}\,\du = \frac{1}{\nu(y,D)} \Big[ 1 - e^{-\nu(y,D)/2}\Big] \approx \frac{1}{\nu(y,D)} \big( 1 \wedge \nu(y,D)\big) \approx \big(1 \wedge |y|\big)^\alpha.
    \end{align*}

Therefore,
\begin{align}\label{eq:B_est}
    B(x,y) \approx \big(1\wedge |x|\big)^{\alpha/2} \big(1\wedge |y|\big)^{\alpha/2} p_{1}(x,y).
\end{align}

For $u\in (0,1/2)$, there exists a constant $C>0$ such that 
    \begin{align}\label{eq:oszac_exp}
        e^{-\nu(y,D)(1-u)} \leq e^{-C|y|^{-\alpha}} \lesssim \big(1\wedge |y|\big)^{\alpha+1}.
    \end{align}
From Lemma \ref{lem_oszac_1_1} and \eqref{eq:oszac_exp},
\begin{align*}
    A(x,y) \lesssim \big(1\wedge |x|\big)^{\alpha/2} \big(1\wedge |y|\big)^{\alpha/2} \big(|x-y|^{-1}\wedge |x-y|^{-\alpha-1}\big)\big(1\wedge |y|\big).
\end{align*}
Note that for $|x-y|<1$ obviously we have $|y| < 1$ and
\[
\big(|x-y|^{-1}\wedge |x-y|^{-\alpha-1}\big)\big(1\wedge |y|\big) = \frac{|y|}{|x-y|} < 1 = \big(1 \wedge |x-y|^{-\alpha-1}\big).
\]
Similarly, for $|x-y|\geq 1$, 
\[
\big(|x-y|^{-1}\wedge |x-y|^{-\alpha-1}\big)\big(1\wedge |y|\big) = |x-y|^{-\alpha-1} \big(1\wedge |y|\big) \leq |x-y|^{-\alpha-1} = \big(1 \wedge |x-y|^{-\alpha-1}\big).
\]
Hence,
\begin{align}\label{eq:A_est}
    A(x,y) \lesssim \big(1\wedge |x|\big)^{\alpha/2} \big(1\wedge |y|\big)^{\alpha/2} \big(1 \wedge |x-y|^{-\alpha-1}\big) \approx \big(1\wedge |x|\big)^{\alpha/2} \big(1\wedge |y|\big)^{\alpha/2} p_1(x,y).
\end{align}

From \eqref{eq:I_1_A_B}, \eqref{eq:B_est} and \eqref{eq:A_est} we get
\begin{align}\label{eq:I_t_est}
    I_1(x,y) \approx \big(1\wedge |x|\big)^{\alpha/2} \big(1\wedge |y|\big)^{\alpha/2} p_1(x,y).
\end{align}
and then from \eqref{eq:I_t_1_sc} and \eqref{eq:I_t_est} and \eqref{eq:scaling_pt}, for $t>0$,
\begin{align*}
    I_t(x,y) &= t^{-1/\alpha} I_1\big( t^{-1/\alpha}x, t^{-1/\alpha}y \big) \approx \Big(1\wedge \frac{|x|^{\alpha/2}}{\sqrt{t}}\Big) \Big(1\wedge \frac{|y|^{\alpha/2}}{\sqrt{t}}\Big) p_t(x,y),
\end{align*}
which completes the proof.
\end{proof}

From Lemma \ref{lem_oszac_calka_z_p^D_po_D} and Lemma \ref{lem_oszac_calka_z_p^D} and \eqref{eq:approx_p_D} the following conclusion follows immediately. 

\begin{corollary}\label{K_est}
    For $\alpha\in (0,2)$, $t>0$, $x>0$, and $y<0$,
    \begin{align*}
        \mathcal{K}(t,x,y) := \int_0^t \ds \int_D \dz \, p_s^D(x,z)\nu(z,y) e^{-\nu(y,D)(t-s)} \approx \Big(1\wedge \frac{|x|^{\alpha/2}}{\sqrt{t}}\Big)\Big(1\wedge \frac{|y|^{\alpha/2}}{\sqrt{t}}\Big)p_t(x,y).
    \end{align*}
\end{corollary}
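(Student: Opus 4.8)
The plan is to chain the two integral estimates already in hand, performing the spatial integration first and the time integration second. First I would interchange the order of integration (legitimate by Tonelli's theorem, the integrand being nonnegative), so that for each fixed $s\in(0,t)$ the inner object is $\int_D p_s^D(x,z)\nu(z,y)\,\dz=\mathcal{J}(s,x,y)$. By Corollary~\ref{cor:oszac_p^D_nu}---or, unwinding its proof, by using \eqref{eq:approx_p_D} to write $p_s^D(x,z)\approx(1\wedge |x|^{\alpha/2}s^{-1/2})(1\wedge z^{\alpha/2}s^{-1/2})p_s(x,z)$, pulling the $z$-independent factor out, and then invoking Lemma~\ref{lem_oszac_calka_z_p^D_po_D} for $\int_D(1\wedge z^{\alpha/2}s^{-1/2})p_s(x,z)\nu(z,y)\,\dz$---this inner integral is comparable to
\[
s^{-1}\Big(1\wedge\frac{|x|^{\alpha/2}}{\sqrt s}\Big)\Big(1\wedge\frac{|y|}{s^{1/\alpha}}\Big)^{-\alpha/2}p_s(x,y),
\]
with comparability constants depending only on $\alpha$, hence uniform in $s$, $x$, $y$. (Here I would note the trivial identity $1\wedge |x|^{\alpha/2}s^{-1/2}=(1\wedge |x|s^{-1/\alpha})^{\alpha/2}$ so that this matches the form in Corollary~\ref{cor:oszac_p^D_nu}.)

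Next I would substitute this comparison under the remaining time integral. Because the implied constants are uniform in $s$, this yields
\[
\mathcal{K}(t,x,y)\approx\int_0^t s^{-1}\Big(1\wedge\frac{|x|^{\alpha/2}}{\sqrt s}\Big)\Big(1\wedge\frac{|y|}{s^{1/\alpha}}\Big)^{-\alpha/2}p_s(x,y)\,e^{-\nu(y,D)(t-s)}\,\ds,
\]
which is exactly the integral appearing on the left-hand side of Lemma~\ref{lem_oszac_calka_z_p^D}. Applying that lemma finishes the argument:
\[
\mathcal{K}(t,x,y)\approx\Big(1\wedge\frac{|x|^{\alpha/2}}{\sqrt t}\Big)\Big(1\wedge\frac{|y|^{\alpha/2}}{\sqrt t}\Big)p_t(x,y).
\]

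There is essentially no genuine obstacle here: the statement is a two-step composition of Corollary~\ref{cor:oszac_p^D_nu} and Lemma~\ref{lem_oszac_calka_z_p^D}. The only point deserving a word of care is that both comparisons are to be understood pointwise with $\alpha$-dependent constants, so they may be inserted freely inside the $s$-integral (and, at the first step, inside the $z$-integral); this is why I would record the intermediate estimate for $\int_D p_s^D(x,z)\nu(z,y)\,\dz$ explicitly before integrating in time, rather than quoting both results at once. This accounts for the phrase ``follows immediately'' preceding the statement.
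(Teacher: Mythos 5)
Your proposal is correct and follows essentially the same route as the paper: the paper likewise replaces $p_s^D(x,z)$ by its factorized estimate via \eqref{eq:asymptotic}--\eqref{eq:asymp_pstwo}, pulls the $z$-independent factors out, applies Lemma~\ref{lem_oszac_calka_z_p^D_po_D} to the spatial integral, and then Lemma~\ref{lem_oszac_calka_z_p^D} to the time integral, which is exactly the content of your ``unwinding'' of Corollary~\ref{cor:oszac_p^D_nu}. (A minor remark: no interchange of integration is actually needed, since the $z$-integral is already inner in the definition of $\mathcal{K}$; the point is simply that the $s$-dependent and $z$-independent factors may be pulled out of the inner integral.)
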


\begin{proof}
From \eqref{eq:asymptotic} and \eqref{eq:asymp_pstwo},
    \begin{align*}
         \mathcal{K}(t,x,y) &\approx \int_0^t \ds \int_D \dz \, \Big( 1 \wedge \frac{|x|^{\alpha/2}}{\sqrt{s}}\Big) \Big( 1 \wedge \frac{|z|^{\alpha/2}}{\sqrt{s}}\Big)p_s(x,z) \nu(z,y) e^{-\nu(y,D)(t-s)} \\
         &= \int_0^t \ds  \, \Big( 1 \wedge \frac{|x|^{\alpha/2}}{\sqrt{s}}\Big) e^{-\nu(y,D)(t-s)} \int_D\Big( 1 \wedge \frac{|z|^{\alpha/2}}{\sqrt{s}}\Big)p_s(x,z)\nu(z,y)\,\dz.
    \end{align*}
From Lemma \ref{lem_oszac_calka_z_p^D_po_D},
\begin{align*}
    \mathcal{K}(t,x,y) \approx \int_0^t  s^{-1}\Big( 1 \wedge \frac{|x|^{\alpha/2}}{\sqrt{s}}\Big)  \Big(1\wedge \frac{|y|}{s^{1/\alpha}}\Big)^{-\alpha/2} p_s(x,y)e^{-\nu(y,D)(t-s)}\,\ds, 
\end{align*}
and further, from Lemma \ref{lem_oszac_calka_z_p^D},
\begin{align*}
    \mathcal{K}(t,x,y) \approx \Big(1\wedge \frac{|x|^{\alpha/2}}{\sqrt{t}}\Big)\Big(1\wedge \frac{|y|^{\alpha/2}}{\sqrt{t}}\Big)p_t(x,y),
\end{align*}
which is our claim.
\end{proof}

\begin{corollary}
    For $\alpha\in (0,2)$, $t>0$, and $x>0$,
    \begin{align*}
        \mathcal{L}(t,x) := \int_0^t \ds \int_D \dz \int_{D^c} \dy \, p_s^D(x,z)\nu(z,y) e^{-\nu(y,D)(t-s)} \approx \left(\frac{|x|^{\alpha/2}}{\sqrt{t}}\right)^{-2} \wedge \frac{|x|^{\alpha/2}}{\sqrt{t}}.
    \end{align*}
\end{corollary}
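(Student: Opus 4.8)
\emph{Proof plan.} The plan is to integrate out the variable $y$ and invoke Corollary~\ref{K_est}, which reduces the claim to a one-dimensional integral estimate handled by scaling and \eqref{eq:p_t_approx}; the overall structure is parallel to the proof of Corollary~\ref{cor:p_t_nu_po_D_i_Dc}.

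First, by Tonelli's theorem $\mathcal{L}(t,x) = \int_{D^c} \mathcal{K}(t,x,y)\,\dy$, so Corollary~\ref{K_est} gives
\[
\mathcal{L}(t,x) \approx \Big(1\wedge \frac{|x|^{\alpha/2}}{\sqrt{t}}\Big)\, B(t,x), \qquad B(t,x) := \int_{-\infty}^0 \Big(1\wedge \frac{|y|^{\alpha/2}}{\sqrt{t}}\Big) p_t(x,y)\,\dy.
\]
Next I would record the scaling identity $B(t, t^{1/\alpha}x) = B(1,x)$, which follows from the substitution $y = t^{1/\alpha}w$ together with \eqref{eq:scaling_pt}; hence it suffices to estimate $B(1,\xi)$ for $\xi>0$. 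By the symmetry of $p_1$ and \eqref{eq:p_t_approx},
\[
B(1,\xi) \approx \int_0^\infty \big(1\wedge w^{\alpha/2}\big)\big(1\wedge (\xi+w)^{-\alpha-1}\big)\,\dw.
\]
For $\xi\le 1$ the factor $1\wedge(\xi+w)^{-\alpha-1}$ is comparable to $1$ on $(0,1)$, so $\int_0^1 w^{\alpha/2}\big(1\wedge(\xi+w)^{-\alpha-1}\big)\,\dw \approx 1$, while $\int_1^\infty (\xi+w)^{-\alpha-1}\,\dw \lesssim 1$, whence $B(1,\xi)\approx 1$. For $\xi>1$ one has $\xi+w\approx\xi$ on $(0,1)$, so $\int_0^1 w^{\alpha/2}(\xi+w)^{-\alpha-1}\,\dw\approx\xi^{-\alpha-1}$, whereas $\int_1^\infty (\xi+w)^{-\alpha-1}\,\dw\approx\xi^{-\alpha}$, giving $B(1,\xi)\approx\xi^{-\alpha}$. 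Therefore $B(1,\xi)\approx 1\wedge\xi^{-\alpha}$ and, by scaling, $B(t,x)\approx 1\wedge \big(t|x|^{-\alpha}\big)$.

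Plugging this into the first display,
\[
\mathcal{L}(t,x) \approx \Big(1\wedge \frac{|x|^{\alpha/2}}{\sqrt{t}}\Big)\Big(1\wedge \frac{t}{|x|^{\alpha}}\Big).
\]
Writing $a := |x|^{\alpha/2}/\sqrt{t}$, so that $t/|x|^\alpha = a^{-2}$, an elementary check of the two cases $a\le 1$ and $a>1$ shows $(1\wedge a)(1\wedge a^{-2}) = a\wedge a^{-2}$, which is precisely $\big(|x|^{\alpha/2}/\sqrt{t}\big)^{-2}\wedge \big(|x|^{\alpha/2}/\sqrt{t}\big)$, as claimed. The only step requiring genuine care is the two-regime estimate of $B(1,\xi)$; everything else is bookkeeping with minima and the scaling relations.
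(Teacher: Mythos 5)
Your proposal is correct and takes essentially the same approach as the paper. The only difference is bookkeeping: the paper scales $\mathcal{L}$ directly (reducing to $\mathcal{L}(1,x)$) before invoking Tonelli and Corollary~\ref{K_est}, whereas you apply Tonelli and Corollary~\ref{K_est} first and then scale the auxiliary integral $B$; both routes land on the same two-regime estimate of $\int_0^\infty(1\wedge w^{\alpha/2})(1\wedge(\xi+w)^{-\alpha-1})\,\dw \approx 1\wedge\xi^{-\alpha}$ and the same elementary identity $(1\wedge a)(1\wedge a^{-2}) = a\wedge a^{-2}$.
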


\begin{proof}
By substituting $s=tu$, $y=t^{1/\alpha}a$, and $z = t^{1/\alpha}b$,we get
\begin{align*}
    \mathcal{L}(t,t^{1/\alpha}x) = \int_0^1 \du \int_D \db \int_{D^c} \da \, t^{1+2/\alpha}p_{tu}^D(t^{1/\alpha}x,t^{1/\alpha}b)\nu(t^{1/\alpha}b,t^{1/\alpha}a) e^{-t\nu(t^{1/\alpha}a,D)(1-u)}.
\end{align*}
From \eqref{eq:scaling} and \eqref{nu_scaling2},
\begin{align}\label{eq:L_1_scaling}
    \mathcal{L}(t,t^{1/\alpha}x) = \int_0^1 \du \int_D \db \int_{D^c} \da \, p_u^D(x,b) \nu(b,a) e^{-\nu(a,D)(1-u)} = \mathcal{L}(1,x).
\end{align}
Hence, it suffices to find an estimation of $\mathcal{L}(1,x)$.

Note that from the Tonelli's theorem and from Corollary \ref{K_est},
\begin{align*}
    \mathcal{L}(1,x) &= \int_{-\infty}^0 \mathcal{K}(1,x,y)\,\dy \approx \big(1\wedge |x|^{\alpha/2}\big) \int_{-\infty}^0 \big(1\wedge |y|^{\alpha/2}\big)\big(1 \wedge |x-y|^{-\alpha-1}\big)\,\dy.
\end{align*}
Moreover,
\begin{align*}
    A&:=\int_{-\infty}^0 \big(1\wedge |y|^{\alpha/2}\big)\big(1 \wedge |x-y|^{-\alpha-1}\big)\,\dy \\
    &\approx \int_{-1}^0 |y|^{\alpha/2}\big(1 \wedge |x-y|^{-\alpha-1}\big)\,\dy + \int_{-\infty}^{-1} \big(1 \wedge |x-y|^{-\alpha-1}\big)\,\dy.
\end{align*}
For $|x|\leq 1$, 
\begin{align*}
    A \approx \int_{-1}^0 |y|^{\alpha/2} \,\dy + \int_{-\infty}^{-1} |x-y|^{-\alpha-1}\,\dy \approx 1 = |x|^{-\alpha} \big(1\wedge |x|\big)^{\alpha},
\end{align*}
and for $|x|>1$,
\begin{align*}
    A &\approx \int_{-1}^0 |y|^{\alpha/2} |x-y|^{-\alpha-1} \,\dy + \int_{-\infty}^{-1} |x-y|^{-\alpha-1}\,\dy \lesssim \int_{-\infty}^{0} |x-y|^{-\alpha-1}\,\dy \approx |x|^{-\alpha}\\
    &= |x|^{-\alpha} \big(1\wedge |x|\big)^{\alpha},
\end{align*}
and
\begin{align*}
    A \gtrsim \int_{-\infty}^{-1} |x-y|^{-\alpha-1}\,\dy = 
    \frac{1}{\alpha}(1+x)^{-\alpha} 
    \geq \frac{2^{-\alpha}}{\alpha} x^{-\alpha}
    \approx 
    |x|^{-\alpha} \big(1\wedge |x|\big)^{\alpha}.
\end{align*}
Hence, $A\approx |x|^{-\alpha} \big(1\wedge |x|\big)^{\alpha}$. 

Therefore, we obtain that 
\begin{align}\label{eq:L_1_est}
    \mathcal{L}(1,x) \approx \big(1\wedge |x|\big)^{\alpha/2} |x|^{-\alpha} \big(1\wedge |x|\big)^{\alpha} = |x|^{-\alpha} \wedge |x|^{\alpha/2}.
\end{align}
Hence, from \eqref{eq:L_1_scaling} and \eqref{eq:L_1_est},
\begin{align*}
    \mathcal{L}(t,x) = \mathcal{L}(1, t^{-1/\alpha}x) \approx t |x|^{-\alpha} \wedge t^{-1/2} |x|^{\alpha/2},
\end{align*}
which is the desired conclusion.
\end{proof}

\subsection{Convergence of integrals}

\begin{lemma}\label{eq:convergence_p_D/t}
For $x,y>0$,
\begin{align*}
    \lim_{t\to 0^+} \frac{p_t^D(x,y)}{t} = \nu(x,y).
\end{align*}
\end{lemma}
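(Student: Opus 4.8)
## Proof Strategy for $\lim_{t\to 0^+} p_t^D(x,y)/t = \nu(x,y)$

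The plan is to use the Hunt formula \eqref{eq:Hunt's_formula} to extract the leading-order term. Write
\[
\frac{p_t^D(x,y)}{t} = \frac{p_t(x,y)}{t} - \frac{1}{t}\mE_x^Y\big[\tau_D < t;\ p_{t-\tau_D}(Y_{\tau_D},y)\big].
\]
For fixed $x,y > 0$ with $x\neq y$, the first term converges to $\nu(x,y)$: indeed from \eqref{eq:p_t_approx} we have $p_t(x,y)\approx t^{-1/\alpha}\wedge t|x-y|^{-\alpha-1}$, so for small $t$ the minimum is $t|x-y|^{-\alpha-1}$, and more precisely the well-known small-time asymptotics of the stable density give $p_t(x,y)/t \to \mathcal{A}_{1,\alpha}|x-y|^{-\alpha-1} = \nu(x,y)$ (this follows from \eqref{p_t_Fourier} or the representation of $p_1$; alternatively it is the statement that $\nu$ is the Lévy density, $\lim_{t\to 0^+} P_t(x,A)/t = \nu(x,A)$ for $A$ bounded away from $x$). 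So the crux is to show the second term is $o(t)$, i.e.
\[
\frac{1}{t}\,\mE_x^Y\big[\tau_D < t;\ p_{t-\tau_D}(Y_{\tau_D},y)\big] \xrightarrow[t\to 0^+]{} 0.
\]

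To control this term I would use the Ikeda--Watanabe formula \eqref{eq:Ikeda_Watanabe} to write, with $x > 0$,
\[
\mE_x^Y\big[\tau_D<t;\ p_{t-\tau_D}(Y_{\tau_D},y)\big] = \int_0^t \ds \int_D \da \int_{D^c}\db\ p_s^D(x,a)\,\nu(a,b)\,p_{t-s}(b,y).
\]
Since $x,y>0$ are fixed and $b \le 0$, we have $|b-y|\ge y$, so $p_{t-s}(b,y)\le \|p_{t-s}\|_\infty \wedge (t-s)\nu(b,y)$; more usefully, using \eqref{eq:p_t_approx}, $p_{t-s}(b,y) \lesssim (t-s)|b-y|^{-1-\alpha} \le (t-s)y^{-1-\alpha}$ uniformly. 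Hence the triple integral is bounded by
\[
y^{-1-\alpha}\int_0^t (t-s)\,\ds\int_D\da\int_{D^c}\db\ p_s^D(x,a)\nu(a,b)
\le t\, y^{-1-\alpha}\int_0^t \ds\int_D\da\int_{D^c}\db\ p_s^D(x,a)\nu(a,b).
\]
By \eqref{eq:p_survival} the inner triple integral over $[0,t]$ equals $\mP_x^Y(\tau_D\le t) = 1 - P_t^D(x,D)$, which by \eqref{eq:asymp_pstwo} is comparable to $1\wedge t|x|^{-\alpha}$ and in particular tends to $0$ as $t\to 0^+$ (for $x$ fixed, it is $O(t)$). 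Thus the whole second term is $O(t)\cdot o(1) = o(t)$ after dividing by $t$: more precisely it is bounded by $y^{-1-\alpha}\big(1\wedge t|x|^{-\alpha}\big) = o(1)$ after the division by $t$ — wait, we must divide by $t$, giving a bound $\lesssim y^{-1-\alpha}\mP_x^Y(\tau_D\le t) \to 0$. Good.

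Alternatively, and perhaps more cleanly, one can invoke Corollary~\ref{cor:oszac_p^D_nu} applied with roles adjusted, or simply the sharp estimate \eqref{eq:approx_p_D} directly: $p_t^D(x,y)\approx (1\wedge |x|^{\alpha/2}t^{-1/2})(1\wedge |y|^{\alpha/2}t^{-1/2})(t^{-1/\alpha}\wedge t|x-y|^{-1-\alpha})$, which for fixed $x,y$ and small $t$ reads $p_t^D(x,y)\approx t|x-y|^{-1-\alpha}$, giving the right order but only up to constants — so the two-sided estimate alone is not enough to pin down the constant $\mathcal{A}_{1,\alpha}$, and one really does need the Hunt-formula argument above to get the exact limit. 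The main obstacle is therefore making the $o(t)$ bound on the $\mE_x^Y[\tau_D<t;\dots]$ term rigorous with the exact constant surviving from the free term $p_t(x,y)/t\to\nu(x,y)$; the estimates needed for this are exactly of the type already developed in Section~\ref{chap_generator} (Corollary~\ref{cor:oszac_p^D_nu} and the survival-probability asymptotics \eqref{eq:asymp_pstwo}), so no new machinery is required.
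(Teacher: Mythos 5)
Your proof is correct and follows essentially the same route as the paper's: the Hunt formula splits $p_t^D(x,y)/t$ into $p_t(x,y)/t \to \nu(x,y)$ (P\'olya's small-time asymptotics) plus a remainder controlled via $|Y_{\tau_D}-y|>y$ and \eqref{eq:p_t_approx} by $|y|^{-1-\alpha}\mP_x^Y(\tau_D<t)\to 0$. The only cosmetic difference is that you pass through the Ikeda--Watanabe integral representation before bounding, whereas the paper bounds the expectation $\mE_x^Y[\tau_D<t;\ p_{t-\tau_D}(Y_{\tau_D},y)]$ directly pointwise; the key estimates are identical.
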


\begin{proof}
By the Hunt formula \eqref{eq:Hunt's_formula} and the Ikeda--Watanabe formula \eqref{eq:Ikeda_Watanabe},
\begin{align}
\label{eq:p_D/t_1}
    \frac{p_t^D(x,y)}{t} &=  \frac{p_t(x,y)}{t} - \frac{1}{t} \mE_x^Y\big[ \tau_D < t; ~p_{t-\tau_D}(Y_{\tau_D},y)\big].
\end{align}
From P{\`o}lya \cite{Polya1923} (see also Cygan et al. \cite{MR3646773}), $\lim\limits_{t\to 0^+} p_t(x,y)/t = \nu(x,y)$. Moreover, it is obvious that $Y_{\tau_D}<0,$ hence $|Y_{\tau_D}-y|>y$ and from \eqref{eq:p_t_approx} we get 
\begin{align}\label{eq:p_D/t_2}
\frac{1}{t} \mE_x^Y\left[ \tau_D < t; ~p_{t-\tau_D}(Y_{\tau_D},y)\right] \lesssim 
  \frac{1}{t} \mE_x^Y\left[ \tau_D < t; \frac{t-\tau_D}{|Y_{\tau_D}-y|^{\alpha+1}}\right] \leq
  |y|^{-1-\alpha} \,\mP_x^Y\left( \tau_D < t \right)\to 0,
\end{align}
as $t\to 0^+$.
\end{proof}

\begin{lemma}\label{ciaglosc2}
The function
\[
(0,\infty) \times D \times \overline{D}^c \ni (t,x,y) \mapsto \mathcal{J}(t,x,y) = \int_D p_t^D(x,z)\nu(z,y)\,\dz
\]
is continuous.
\end{lemma}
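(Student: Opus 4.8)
The plan is to prove joint continuity of $(t,x,y)\mapsto \mathcal{J}(t,x,y)=\int_D p_t^D(x,z)\nu(z,y)\,\dz$ by a dominated-convergence/uniform-integrability argument, using the sharp two-sided bound from Corollary \ref{cor:oszac_p^D_nu} as the source of domination and the known joint continuity of $p^D$ and of $\nu$ as the source of pointwise continuity of the integrand. Fix a point $(t_0,x_0,y_0)\in(0,\infty)\times D\times\overline{D}^c$ and a compact neighborhood $Q=[t_1,t_2]\times[x_1,x_2]\times[y_1,y_2]$ of it with $0<t_1$, $0<x_1$, $y_2<0$. For $(t,x,y)\in Q$ the integrand $z\mapsto p_t^D(x,z)\nu(z,y)$ is continuous in $(t,x,y)$ for each fixed $z>0$ (the map $(t,x,z)\mapsto p_t^D(x,z)$ is jointly continuous, as recalled after \eqref{eq:scaling2}, and $(z,y)\mapsto\nu(z,y)=\mathcal{A}_{1,\alpha}|z-y|^{-1-\alpha}$ is continuous for $z>0>y$). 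So it remains only to produce an integrable-in-$z$ majorant, uniform over $Q$, and then invoke dominated convergence along an arbitrary sequence $(t_n,x_n,y_n)\to(t_0,x_0,y_0)$ in $Q$.

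For the majorant I would split $\int_D = \int_0^1 + \int_1^\infty$. On $z\in[1,\infty)$, using \eqref{eq:killed_leq_stable} and \eqref{eq:p_t_approx} we have $p_t^D(x,z)\le p_t(x,z)\lesssim t^{-1/\alpha}\wedge t|x-z|^{-\alpha-1}$, which on $Q$ is bounded by a constant (depending on $Q$) times $(1+z)^{-\alpha-1}$ for $z$ large, and $\nu(z,y)\le\mathcal{A}_{1,\alpha}|z-y_2|^{-1-\alpha}\le\mathcal{A}_{1,\alpha}z^{-1-\alpha}$ since $y\le y_2<0<1\le z$; the product is integrable on $[1,\infty)$. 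On $z\in(0,1]$ the factor $\nu(z,y)$ is bounded on $Q$ by $\mathcal{A}_{1,\alpha}|y_2|^{-1-\alpha}$ (since $|z-y|\ge|y|\ge|y_2|$), so it suffices to bound $\int_0^1 p_t^D(x,z)\,\dz$ uniformly over $(t,x)\in[t_1,t_2]\times[x_1,x_2]$; this follows from $p_t^D(x,z)\le p_t(x,z)$ and $\int_0^1 p_t(x,z)\,\dz\le 1$, or even more crudely from \eqref{eq:approx_p_D}. Alternatively, and perhaps more cleanly, one can use Corollary \ref{cor:oszac_p^D_nu} directly: on $Q$ the right-hand side $t^{-1}\big(1\wedge|x|t^{-1/\alpha}\big)^{\alpha/2}\big(1\wedge|y|t^{-1/\alpha}\big)^{-\alpha/2}p_t(x,y)$ is bounded by a constant depending only on $Q$, which immediately gives that $z\mapsto p_t^D(x,z)\nu(z,y)$ has an $L^1(D,\dz)$ norm bounded uniformly over $Q$, but for dominated convergence in $z$ one still wants a pointwise-in-$z$ majorant, so the explicit split above is what I would actually write.

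With the uniform majorant $g(z)\in L^1(D,\dz)$ in hand, dominated convergence yields $\mathcal{J}(t_n,x_n,y_n)\to\mathcal{J}(t_0,x_0,y_0)$ along every sequence in $Q$ converging to $(t_0,x_0,y_0)$, hence continuity at $(t_0,x_0,y_0)$; since this point was arbitrary, $\mathcal{J}$ is continuous on $(0,\infty)\times D\times\overline{D}^c$. The only mildly delicate point — the main obstacle, such as it is — is the behavior of the integrand near $z=0$ and near $z=\infty$: near $z=0$ one must use that $\nu(z,y)$ stays bounded (thanks to $y$ being bounded away from $0$ on $Q$) and control $\int_0^1 p_t^D(x,z)\,\dz$ uniformly, while near $z=\infty$ one must combine the polynomial decay of $p_t(x,\cdot)$ with that of $\nu(\cdot,y)$; both are routine given \eqref{eq:p_t_approx}, \eqref{eq:killed_leq_stable} and the compactness of $Q$. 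Everything else is a standard continuity-under-the-integral-sign argument.
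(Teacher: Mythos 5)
Your proposal is correct in outline and takes essentially the same route as the paper: joint continuity of the integrand together with a uniform (in parameter) integrable-in-$z$ majorant, then dominated convergence. Two small remarks are worth making.

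First, the split at $z=1$ is unnecessary. Since $y$ is bounded away from $0$ on your compact set $Q$, say $y\le y_2<0$, for all $z>0$ one has $|z-y|=z+|y|\ge z+|y_2|$, so
\[
p_t^D(x,z)\nu(z,y)\lesssim t^{-1/\alpha}\nu(z,y)\lesssim t_1^{-1/\alpha}\,\mathcal{A}_{1,\alpha}\,(z+|y_2|)^{-1-\alpha},
\]
which is already integrable on all of $D=(0,\infty)$; this is the single majorant the paper uses (with $\varepsilon$ in place of $|y_2|$ and $t_1$). The $x$-dependence drops out entirely, so the paper's bound is even uniform over all $x\in D$, not just a compact interval.

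Second, in your treatment of the $(0,1]$ piece you write that it ``suffices to bound $\int_0^1 p_t^D(x,z)\,\dz$ uniformly'' and cite $\int_0^1 p_t(x,z)\,\dz\le 1$; that is an $L^1$-norm bound, not a pointwise-in-$z$ majorant, and as you yourself note later, dominated convergence needs the latter. Your fallback reference to \eqref{eq:approx_p_D} (equivalently $p_t^D\le p_t\lesssim t^{-1/\alpha}\le t_1^{-1/\alpha}$ on $Q$) is the correct repair, giving a pointwise constant bound on $(0,1]$; the argument is sound once that is made explicit. But again the global majorant above makes the whole split moot.
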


\begin{proof}
Let $g(t,x,y,z) := p_t^D(x,z)\nu(z,y)$. 
Let $\varepsilon >0$ and define $T = [\varepsilon, \infty)$, $K=[\varepsilon, \infty)$, $L = (-\infty, -\varepsilon]$. 
We will show that for all $(t,x,y)\in T\times K\times L$ and for any sequence $(t_n, x_n,y_n)\subset T\times K\times L$ such that $\lim\limits_{n\to\infty} (t_n, x_n,y_n) = (t,x,y)$, we have $
\lim\limits_{n\to\infty} \mathcal{J}(t_n,x_n,y_n) = \mathcal{J}(t,x,y)$.

Note that $T\times K\times L \ni (t,x,y) \mapsto p_t^D(x,y)$ is continuous. Therefore, the function $(t,x,y) \mapsto g(t,x,y,z)$, $z\in D$, is continuous on $T\times K\times L$, hence $g(t_n, x_n, y_n, z) \to g(t,x,y,z)$ as $n\to\infty$.
Furthermore, from \eqref{eq:approx_p_D} it follows that for $(t,x,y) \in T \times  K \times L$ and $z\in D$ we have
\begin{align*}
        |g(t,x,y,z)| &\lesssim  t^{-1/\alpha} \nu(z,y) \lesssim   \varepsilon^{-1/\alpha} |z+\varepsilon|^{-1-\alpha}.
\end{align*}
Moreover, $\int_D |z+\varepsilon|^{-1-\alpha} \,\dz < \infty$, hence from the dominated convergence theorem we then obtain continuity of the function $\mathcal{J}$ on $T\times K\times L$. 

Since $\varepsilon>0$ was chosen arbitrarily, we conclude that the function $\mathcal{J}$ is continuous on $(0,\infty)\times D\times \overline{D}^c$.
\end{proof}

\begin{lemma}\label{lem:zb_nu}
For $x>0$ and $y<0$, we have
\begin{align*}
    \mathcal{J}(t,x,y)= \int_D p_t^D(x,z)\nu(z,y)\,\dz\to \nu(x,y),
\end{align*}
as $t\to 0^+$.
\end{lemma}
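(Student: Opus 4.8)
The plan is to treat $\mathcal{J}(t,x,\cdot)=\int_D p_t^D(x,z)\,\nu(z,y)\,\dz$ as an approximate identity, as $t\to0^+$, acting on the function $z\mapsto\nu(z,y)$. Fix $x>0$ and $y<0$. The first thing to record is that this function is bounded and continuous on $D$: since $z>0$ and $y<0$ we have $|z-y|=z+|y|\ge|y|>0$, so $z\mapsto\nu(z,y)=\mathcal{A}_{1,\alpha}(z+|y|)^{-1-\alpha}$ is decreasing, bounded by $\nu(0,y)=\mathcal{A}_{1,\alpha}|y|^{-1-\alpha}<\infty$, and continuous at every point of $D$, in particular at $x$; moreover $\nu(x,y)<\infty$ because $x\neq y$.

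Next I would estimate
\[
\bigl|\mathcal{J}(t,x,y)-\nu(x,y)\bigr|\le\int_D p_t^D(x,z)\,|\nu(z,y)-\nu(x,y)|\,\dz+\nu(x,y)\,\bigl|p_t^D(x,D)-1\bigr|
\]
and treat the two terms separately. For the second one, recall from \eqref{eq:p_survival} that $p_t^D(x,D)=\mP_x^Y(\tau_D>t)$, and $\mP_x^Y(\tau_D>t)\to1$ as $t\to0^+$ because $x>0$ (for instance by \eqref{eq:asymp_pstwo}); hence $\nu(x,y)\,|p_t^D(x,D)-1|\to0$.

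For the first term, given $\varepsilon>0$ choose, by continuity of $\nu(\cdot,y)$ at $x$, some $\delta\in(0,x)$ with $|\nu(z,y)-\nu(x,y)|<\varepsilon$ for $|z-x|<\delta$. Splitting the integral at $|z-x|=\delta$, using $p_t^D(x,D)\le1$ on the near set, and using $|\nu(z,y)-\nu(x,y)|\le\nu(0,y)+\nu(x,y)=:C<\infty$ together with $p_t^D\le p_t$ (see \eqref{eq:killed_leq_stable}) on the far set, I obtain
\[
\int_D p_t^D(x,z)\,|\nu(z,y)-\nu(x,y)|\,\dz\le\varepsilon+C\int_{|w|\ge\delta}p_t(w)\,\dw ,
\]
and the last integral is $\lesssim t\,\delta^{-\alpha}\to0$ as $t\to0^+$ by \eqref{eq:p_t_approx}. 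Letting first $t\to0^+$ and then $\varepsilon\to0$ gives $\mathcal{J}(t,x,y)\to\nu(x,y)$. (Alternatively one could insert the Hunt formula \eqref{eq:Hunt's_formula} and combine Lemma~\ref{eq:convergence_p_D/t} with dominated convergence, but the approximate-identity route above is shorter.)

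There is no serious obstacle here; the only point requiring a little care is that the mass of $p_t^D(x,\cdot)$ leaks in two ways as $t\to0^+$—it escapes $D$, controlled by the term $p_t^D(x,D)-1$, and it spreads away from $x$ within $D$, controlled by $p_t^D\le p_t$ and \eqref{eq:p_t_approx}—and both must be shown to vanish. Everything else is the standard approximate-identity computation already used, e.g., in the proof of Lemma~\ref{theorem_p^D_feller}.
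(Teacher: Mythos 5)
Your proof is correct, and it is structured differently from the paper's. The paper picks a smooth partition of unity $\{\alpha_1,\alpha_2\}$ with $\alpha_1$ compactly supported in $D$ and $\alpha_1\equiv 1$ near $x$, writes $\nu(z,y)=\alpha_1(z)\nu(z,y)+\alpha_2(z)\nu(z,y)$, then applies the already-proved Feller convergence $P_t^D\varphi\to\varphi$ for $\varphi\in C_0(D)$ (Lemma~\ref{theorem_p^D_feller}) to the first summand, and estimates the second summand to $0$ using \eqref{eq:approx_p_D}. You instead run the approximate-identity argument directly on the bounded continuous function $z\mapsto\nu(z,y)$ on $D$, splitting off the defect-of-mass term $\nu(x,y)\bigl(p_t^D(x,D)-1\bigr)$ explicitly and using $p_t^D\le p_t$ plus \eqref{eq:p_t_approx} on $\{|z-x|\ge\delta\}$. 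The two routes exchange the same work in different places: the partition of unity lets the paper quote Lemma~\ref{theorem_p^D_feller} as a black box and never confront the fact that $\nu(\cdot,y)$ does not vanish at $0^+$ (the cutoff $\alpha_1$ kills that), at the cost of a slightly baroque construction; your version is more elementary and self-contained, but essentially re-proves the relevant case of the Feller property inline and must account for the mass leaking out of $D$ by hand, which you do correctly via $p_t^D(x,D)\to1$ from \eqref{eq:p_survival}--\eqref{eq:asymp_pstwo}. Both are sound; yours is arguably the shorter path given what is already on the page.
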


\begin{proof}
Let $\varepsilon>0$ and define $K = [\varepsilon, \infty)$, $L = (-\infty,-\varepsilon]$. Let $U,V\subset \R$, $U\neq V$, be the open sets such that $x\in K \subset U \subset V \subset D$. Let $\{\alpha_1, \alpha_2\}$ be a partition of unity for the sets $U^c$ and $V$, i.e. for $i=1,2$, the functions $\alpha_i: D\to [0,1]$ satisfy the following conditions: $\alpha_i\in C^\infty(D)$, $\supp{\alpha_1} \subseteq V$, $\supp{\alpha_2}\subseteq U^c \cap D$, $\alpha_1 = 1$ on $U$, $\alpha_2 = 1$ on $V^c\cap D$ and $\alpha_1(x) + \alpha_2(x) = 1$ for $x\in D$. 

For $z\in D$ and $y\in L$, let $\varphi_1(z,y) := \alpha_1(z)\nu(z,y)$, $\varphi_2(z,y) := \alpha_2(z)\nu(z,y)$. Then $\supp{\varphi_1}\subseteq V$, $\supp{\varphi_2}\subseteq U^c\cap D$ and
\begin{align*}
    \mathcal{J}(t,x,y) = \int_D p_t^D(x,z)\varphi_1(z,y)\,\dz + \int_D p_t^D(x,z)\varphi_2(z,y)\,\dz =: A(t,x,y) + B(t,x,y).   
\end{align*}

From the construction it follows that $\varphi_1(\cdot,y)\in C_0(D)$, hence from Lemma \ref{theorem_p^D_feller},
\begin{align*}
    A(t,x,y) = \int_D p_t^D(x,z)\varphi_1(z,y)\,\dz \to \varphi_1(x,y) = \nu(x,y),
\end{align*}
as $t\to 0^+$.

We will show that $B(t,x,y) \to 0$, as $t\to 0^+$. From \eqref{eq:approx_p_D} we have
\begin{equation*}
    \begin{split}
       B(t,x,y) &= \int_{\supp{\varphi_2(\cdot,y)}}  p_t^D(x,z)\varphi_2(z,y) \,\dz \\
       &\leq \int_{U^c\cap D}  p_t^D(x,z)\nu(z,y) \,\dz \\
        &\lesssim  t\int_{U^c\cap D} |x-z|^{-\alpha-1} |z-y|^{-\alpha-1} \,\dz.
    \end{split}
\end{equation*}
Let $\rho := \dist(K, U^c\cap D)>0$ and $\eta := \dist(L, U^c\cap D)$. Then, 
\begin{align*}
    B(t,x,y) \lesssim  t \rho^{-\alpha-1} \eta^{-\alpha-1} |U^c\cap D| \to 0,
\end{align*}
as $t\to 0^+$.
\end{proof}

\begin{lemma}\label{lem_zb3}
For $x>0$, $y<0$, and $\mu \in \{0,1\}$, we have
\[
\lim_{t\to 0^+} \frac{1}{t} \int_0^t \ds \int_D \dz \, p_s^D(x,z)\nu(z,y) e^{-\mu \nu(y,D)(t-s)} = \nu(x,y).
\]
\end{lemma}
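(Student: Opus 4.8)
## Proof plan for Lemma \ref{lem_zb3}

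The plan is to reduce the claim to Lemma \ref{lem:zb_nu} by a time-averaging argument, handling the two values $\mu\in\{0,1\}$ in one stroke. Fix $x>0$ and $y<0$. For $\mu=0$ the integrand is simply $\mathcal{J}(s,x,y)=\int_D p_s^D(x,z)\nu(z,y)\,\dz$, and the quantity in question is
\[
\frac1t\int_0^t \mathcal{J}(s,x,y)\,\ds.
\]
By Lemma \ref{lem:zb_nu}, $\mathcal{J}(s,x,y)\to\nu(x,y)$ as $s\to 0^+$, so the Cesàro-type average $\frac1t\int_0^t \mathcal{J}(s,x,y)\,\ds$ converges to the same limit $\nu(x,y)$ as $t\to 0^+$; this is the elementary fact that the running average of a function inherits its right-limit at $0$. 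This disposes of the case $\mu=0$ immediately.

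For $\mu=1$ I would write the integrand as $\mathcal{J}(s,x,y)e^{-\nu(y,D)(t-s)}$ and compare it with $\mathcal{J}(s,x,y)$. First, since $0<s<t$ and $\nu(y,D)=\mathcal{A}_{1,\alpha}\alpha^{-1}|y|^{-\alpha}<\infty$ by \eqref{nu_scaling}, we have $e^{-\nu(y,D)t}\le e^{-\nu(y,D)(t-s)}\le 1$, so
\[
e^{-\nu(y,D)t}\,\frac1t\int_0^t \mathcal{J}(s,x,y)\,\ds \;\le\; \frac1t\int_0^t \mathcal{J}(s,x,y)e^{-\nu(y,D)(t-s)}\,\ds \;\le\; \frac1t\int_0^t \mathcal{J}(s,x,y)\,\ds.
\]
Letting $t\to 0^+$, the left-hand side tends to $\nu(x,y)$ (because $e^{-\nu(y,D)t}\to 1$ and, by the $\mu=0$ case, the average tends to $\nu(x,y)$), and the right-hand side tends to $\nu(x,y)$ as well. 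The squeeze theorem then gives the claim for $\mu=1$.

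The only mildly technical point, and the place to be careful, is the justification that $\frac1t\int_0^t \mathcal{J}(s,x,y)\,\ds\to \nu(x,y)$; this requires knowing that $\mathcal{J}(\cdot,x,y)$ is locally integrable near $0$, which follows from Corollary \ref{cor:oszac_p^D_nu} (the bound $\mathcal{J}(s,x,y)\approx s^{-1}(1\wedge |x|/s^{1/\alpha})^{\alpha/2}(1\wedge |y|/s^{1/\alpha})^{-\alpha/2}p_s(x,y)$, which for fixed $x,y$ and small $s$ behaves like a constant times $s^{-1}\cdot s^{1/2}\cdot s^{-1/2}\cdot s/|x-y|^{\alpha+1}$, i.e. is bounded), so integrability on $(0,t)$ is not an issue and the standard $\varepsilon$–$\delta$ argument for Cesàro averages applies verbatim. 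Thus no genuine obstacle arises beyond invoking Lemma \ref{lem:zb_nu} and this boundedness.
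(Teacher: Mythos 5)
Your proof is correct, but it takes a route that differs from the one in the paper. The paper extends $\mathcal{J}(\cdot,x,y)$ to $[0,1]$ by setting $\mathcal{J}(0,x,y):=\nu(x,y)$, invokes Lemma~\ref{ciaglosc2} and Lemma~\ref{lem:zb_nu} to show this extension is continuous, and then applies the Mean Value Theorem for integrals to write the quantity as $\mathcal{J}(c(t),x,y)e^{-\mu\nu(y,D)(t-c(t))}$ for some $c(t)\in[0,t]$; letting $t\to 0^+$ handles both $\mu=0$ and $\mu=1$ in one stroke. You instead treat $\mu=0$ by the elementary fact that a Ces\`aro average inherits the right-limit at $0$ (justified by local boundedness from Corollary~\ref{cor:oszac_p^D_nu}), and then bootstrap $\mu=1$ from the $\mu=0$ case via the sandwich $e^{-\nu(y,D)t}\le e^{-\nu(y,D)(t-s)}\le 1$. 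Both arguments rest on Lemma~\ref{lem:zb_nu}; yours avoids the continuity lemma (and the Mean Value Theorem) at the cost of needing the bound from Corollary~\ref{cor:oszac_p^D_nu} and a two-step squeeze. Both buy essentially the same thing; yours is slightly more elementary in its ingredients, while the paper's is marginally more compact.

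One small slip in an intermediate computation: when you verify boundedness of $\mathcal{J}(s,x,y)$ for small $s$, the two factors $\big(1\wedge\tfrac{|x|}{s^{1/\alpha}}\big)^{\alpha/2}$ and $\big(1\wedge\tfrac{|y|}{s^{1/\alpha}}\big)^{-\alpha/2}$ are both eventually $1$ (each minimum hits its first branch once $s^{1/\alpha}<|x|$, resp.\ $<|y|$), not $s^{1/2}$ and $s^{-1/2}$ as you wrote. The net product is still $O(1)$ either way, so the conclusion that $\mathcal{J}(s,x,y)\approx|x-y|^{-\alpha-1}$ for small $s$ --- and hence local boundedness --- is unaffected.
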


\begin{proof}
Let $\varepsilon>0$ and $K := [\varepsilon, \infty)$, $L:= (-\infty,-\varepsilon]$. Without loss of generality, we may assume that $t\leq 1$. Let $x\in K$ and $y\in L$. From Lemma \ref{ciaglosc2} and Lemma \ref{lem:zb_nu}, by putting $\mathcal{J}(0,x,y) := \nu(x,y)$, we obtain a continuous function $t\mapsto \mathcal{J}(t,x,y)$ on the interval $[0,1]$. Hence, by the Mean Value Theorem it follows that there exists $c = c(t) \in [0,t]$ such that
\begin{align*}
    \frac{1}{t} \int_0^t \ds \int_D \dz \, p_s^D(x,z)\nu(z,y) e^{-\mu \nu(y,D)(t-s)} &= \frac{1}{t} \int_0^t \mathcal{J}(s,x,y)e^{-\mu \nu(y,D)(t-s)} \,\ds \\
    &= \mathcal{J}(c(t),x,y)e^{-\mu \nu(y,D)(t-c(t))}.
\end{align*}
Therefore, from Lemma \ref{lem:zb_nu},
\begin{align*}
    \lim_{t\to 0^+} \frac{1}{t} \int_0^t \ds \int_D \dz \, p_s^D(x,z)\nu(z,y) e^{-\mu \nu(y,D)(t-s)} &= \lim_{t\to 0^+} \mathcal{J}(c(t),x,y)e^{-\mu \nu(y,D)(t-c(t))} \\
    &= \nu(x,y),
\end{align*}
which completes the proof.
\end{proof}

\begin{lemma}\label{lem_zb1}
For $x>0$, we have
\[
\lim_{t\to 0^+} \frac{1}{t} \mP_x^Y(\tau_D < t) = \lim_{t\to 0^+} \frac{1}{t} \int_0^t \ds \int_D \dz  \, p_s^D(x,z)\nu(z,D^c) = \nu(x,D^c).
\]
\end{lemma}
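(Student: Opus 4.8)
The plan is to reduce everything to the already-established Lemma~\ref{lem_zb3}. First I note the chain of equalities I must produce: the left-hand quantity $\frac1t\,\mP_x^Y(\tau_D<t)$ equals $\frac1t\int_0^t\ds\int_D\dz\,p_s^D(x,z)\nu(z,D^c)$ by the Ikeda--Watanabe formula, and the claim is that both tend to $\nu(x,D^c)$ as $t\to0^+$. For the first equality I would invoke \eqref{eq:p_survival}: there it is shown that $\mP_x^Y(\tau_D>t)=\int_t^\infty\dr\int_D\da\int_{D^c}\db\,p_r^D(x,a)\nu(a,b)$, so $\mP_x^Y(\tau_D<t)=\mP_x^Y(\tau_D\le t)=1-\mP_x^Y(\tau_D>t)=\int_0^t\dr\int_D\da\int_{D^c}\db\,p_r^D(x,a)\nu(a,b)=\int_0^t\ds\int_D\dz\,p_s^D(x,z)\nu(z,D^c)$, using $\int_{D^c}\nu(a,b)\,\db=\nu(a,D^c)$. (One also uses $\mP_x^Y(\tau_D=t)=0$ for fixed $t$, which follows from the absolute continuity of $\tau_D$ visible in the same formula.)

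For the limit, I would mimic the argument of Lemma~\ref{lem_zb3} with $y$ integrated out over $D^c$. Concretely, define $\mathcal J^*(t,x):=\int_D p_t^D(x,z)\nu(z,D^c)\,\dz=\int_D\dz\int_{D^c}\dy\,p_t^D(x,z)\nu(z,y)$, so that $\frac1t\mP_x^Y(\tau_D<t)=\frac1t\int_0^t\mathcal J^*(s,x)\,\ds$. I then want: (i) $\mathcal J^*(t,x)$ is continuous in $t$ on $(0,\infty)$ for fixed $x>0$, and (ii) $\mathcal J^*(t,x)\to\nu(x,D^c)$ as $t\to0^+$. Given (i) and (ii), setting $\mathcal J^*(0,x):=\nu(x,D^c)$ makes $t\mapsto\mathcal J^*(t,x)$ continuous on $[0,1]$, and the Mean Value Theorem for integrals gives $c(t)\in[0,t]$ with $\frac1t\int_0^t\mathcal J^*(s,x)\,\ds=\mathcal J^*(c(t),x)\to\nu(x,D^c)$, exactly as in the proof of Lemma~\ref{lem_zb3}.

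To get (ii), the cleanest route is to integrate Lemma~\ref{lem:zb_nu} in $y$: there $\mathcal J(t,x,y)\to\nu(x,y)$ pointwise, and Corollary~\ref{cor:oszac_p^D_nu} gives the uniform (in $t\le1$) bound $\mathcal J(t,x,y)\lesssim t^{-1}(1\wedge|y|/t^{1/\alpha})^{-\alpha/2}p_t(x,y)$; however this bound blows up as $t\to0^+$, so instead I would use the simpler estimate from the proof of Lemma~\ref{lem:zb_nu}, namely for $|x|,|y|$ bounded away from $0$ one has $p_t^D(x,z)\nu(z,y)\lesssim t^{-1/\alpha}\nu(z,y)$ on the relevant region together with the Hunt-formula argument of Lemma~\ref{eq:convergence_p_D/t} — or, more robustly, split $\mathcal J^*$ exactly as $A+B$ in Lemma~\ref{lem:zb_nu} using a partition of unity in $z$ (but now with the full $\nu(z,D^c)$ in place of $\nu(z,y)$, which is still integrable over $D$ since $\nu(z,D^c)=\mathcal A_{1,\alpha}\alpha^{-1}z^{-\alpha}$ by \eqref{nuDc_scaling} and is bounded on $U^c\cap D$ when $x\in K$). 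The $A$-part converges to $\nu(x,D^c)$ by Lemma~\ref{theorem_p^D_feller}, and the $B$-part is $O(t)$ by \eqref{eq:approx_p_D}. For (i), continuity of $\mathcal J^*$ on $(0,\infty)\times D$ follows from dominated convergence: $(t,x)\mapsto p_t^D(x,z)$ is continuous, and on $[\varepsilon,\infty)\times[\varepsilon,\infty)$ we have $p_t^D(x,z)\nu(z,D^c)\lesssim\varepsilon^{-1/\alpha}z^{-\alpha}\wedge(\varepsilon^{-1/\alpha}\nu(z,D^c))$ — more carefully one uses \eqref{eq:approx_p_D} to get a $z$-integrable majorant uniform in $(t,x)$ on compacts, just as in Lemma~\ref{ciaglosc2}.

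The main obstacle I anticipate is securing a dominating function for (i)+(ii) that is integrable in $z$ over all of $D$ and uniform in $t$ on $[0,1]$: near $z=0$ the killed kernel $p_t^D(x,z)$ vanishes (good), and near $z=\infty$, $p_t^D(x,z)\le p_t(x,z)\lesssim t|x-z|^{-\alpha-1}$ while $\nu(z,D^c)\lesssim z^{-\alpha}$, so the product is $\lesssim t\,z^{-\alpha}|x-z|^{-\alpha-1}$, integrable for large $z$; the only care needed is the intermediate range, handled by the partition-of-unity splitting above. Everything else is a direct transcription of the proofs of Lemmas~\ref{ciaglosc2}, \ref{lem:zb_nu}, and \ref{lem_zb3}.
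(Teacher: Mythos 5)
Your plan is structurally sound and reaches the right conclusion, but it takes a detour that is caused by a misreading of the estimate you already cite. The paper's own proof is exactly what you call the ``cleanest route'' and then abandon: apply the Mean Value Theorem to $s\mapsto\mathcal J(s,x,y)$ for each fixed $y<0$ (obtaining $c(t,y)\in[0,t]$ with $\frac1t\int_0^t\mathcal J(s,x,y)\,\ds=\mathcal J(c(t,y),x,y)$), use Lemma~\ref{lem:zb_nu} for the pointwise limit, and pass to the limit under the $y$-integral by dominated convergence. You dismiss this because you say the bound $\mathcal J(t,x,y)\lesssim t^{-1}(1\wedge|y|/t^{1/\alpha})^{-\alpha/2}p_t(x,y)$ ``blows up as $t\to0^+$.'' It does not: for $x>0$, $y<0$, $|x-y|$ is bounded away from $0$, so by \eqref{eq:p_t_approx}, $p_t(x,y)\lesssim t|x-y|^{-\alpha-1}$, and hence $t^{-1}p_t(x,y)\lesssim|x-y|^{-\alpha-1}$ uniformly in $t\le1$. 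Combined with $(1\wedge|y|/t^{1/\alpha})^{-\alpha/2}\le(1\wedge|y|)^{-\alpha/2}$ this gives $\mathcal J(t,x,y)\lesssim(1\wedge|y|)^{-\alpha/2}|x-y|^{-\alpha-1}$, which is $t$-uniform and integrable over $y\in D^c$ ($\int_0^1 y^{-\alpha/2}(x+y)^{-\alpha-1}\dy+\int_1^\infty(x+y)^{-\alpha-1}\dy<\infty$). That is exactly the paper's dominating function, and with it the proof is three lines from Lemma~\ref{lem_zb3}'s machinery.

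Your fallback route --- defining $\mathcal J^*(t,x)=\int_D p_t^D(x,z)\nu(z,D^c)\,\dz$, proving its continuity on $(0,\infty)$ and convergence to $\nu(x,D^c)$ as $t\to0^+$ by a partition of unity, then applying the MVT to $\mathcal J^*$ directly --- is a genuinely different decomposition (you integrate out $y$ \emph{before} invoking the MVT, the paper does it after). It can be made to work, but it repeats the Lemma~\ref{ciaglosc2}/\ref{lem:zb_nu} arguments with $\nu(z,D^c)\approx z^{-\alpha}$ in place of the (bounded away from the singularity) $\nu(z,y)$, and the continuity step (i) needs more care than you sketch: the crude bound $p_t^D(x,z)\nu(z,D^c)\lesssim\varepsilon^{-1/\alpha}z^{-\alpha}$ is \emph{not} integrable over $D$. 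One must instead use the vanishing of $p_t^D(x,z)$ near $z=0$ from \eqref{eq:approx_p_D} (which gives an extra factor $z^{\alpha/2}$ near the boundary) and the $z^{-2\alpha-1}$ decay at infinity. None of this is needed if you keep the $y$-dependence and use the paper's uniform bound. I would not call your plan wrong, but the central obstacle you flag is an artifact of the missed cancellation $t^{-1}\cdot p_t(x,y)=O(1)$ for $y$ away from $x$.
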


\begin{proof}
The first equality follows from \eqref{eq:Ikeda_Watanabe}. We will prove the latter equality. 
Without loss of generality, we may assume that $t\leq 1$. Note that from the Tonelli's theorem,
\begin{align*}
    A(t,x) := \frac{1}{t} \int_0^t \ds \int_D \dz  \, p_s^D(x,z)\nu(z,D^c) = \int_{D^c}  \left[\frac{1}{t} \int_0^t \mathcal{J}(s,x,y) \,\ds \right] \dy.
\end{align*}
By a similar argument as in the proof of Lemma \ref{lem_zb3}, we conclude that there exists $c = c(t) \in [0,t]$, such that 
\begin{align*}
    \frac{1}{t} \int_0^t \mathcal{J}(s,x,y) \,\ds =  \mathcal{J}(c(t),x,y).
\end{align*}
From Corollary \ref{cor:oszac_p^D_nu} and \eqref{eq:p_t_approx},   
\begin{align*}
    \mathcal{J}(c(t),x,y) &\lesssim (c(t))^{-1} \Big(1\wedge \frac{|y|}{(c(t))^{1/\alpha}}\Big)^{-\alpha/2} p_{c(t)}(x,y) \\
    &\leq (c(t))^{-1} \big(1\wedge |y|\big)^{-\alpha/2} p_{c(t)}(x,y) \\
    &\lesssim \big(1\wedge |y|\big)^{-\alpha/2} |x-y|^{-\alpha-1}.
\end{align*}
Moreover, 
\begin{align*}
    \int_{D^c} \big(1\wedge |y|\big)^{-\alpha/2} |x-y|^{-\alpha-1}\,\dy  &= \int_D \big(1\wedge y\big)^{-\alpha/2} (x+y)^{-\alpha-1}\,\dy \\
    &= \int_0^1 \frac{\dy}{y^{\alpha/2}(x+y)^{\alpha+1}} + \int_1^\infty \frac{\dy}{(x+y)^{\alpha+1}}<\infty.
\end{align*}
Hence, we use the dominated convergence theorem to obtain that
\begin{align*}
    \lim_{t\to 0^+} A(t,x) =  \int_{D^c} \lim_{t\to 0^+} \mathcal{J}(c(t),x,y)\,\dy = \nu(x,D^c), 
\end{align*}
where the latter convergence follows from Lemma \ref{lem:zb_nu}.
\end{proof}

\begin{lemma}\label{lem_zbieznosc_K_1}
Let $\alpha\in (0,2)$. Assume that $f:\R_*\to [0,\infty)$ is a function such that for some $x\neq 0$ we have $\widehat{\nu}f(x) <\infty$. Then 
\begin{align*}
    \lim_{t\to 0^+}  \frac{1}{t} K_{t,1} f(x) = \widehat{\nu}f(x).
\end{align*}
\end{lemma}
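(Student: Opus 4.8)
The plan is to make the term $K_{t,1}f(x)$ explicit, depending on whether $x>0$ or $x<0$, to identify $\tfrac1t K_{t,1}f(x)$ as an integral of $\tfrac1t\mathcal K(t,\cdot,\cdot)$ against $f$, and then pass to the limit by the dominated convergence theorem, using Lemma~\ref{lem_zb3} for the pointwise limit of $\tfrac1t\mathcal K$ and Corollary~\ref{K_est} together with \eqref{eq:p_t_approx} for an integrable majorant.

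By \eqref{eq:K_t_n_def} (equivalently Lemma~\ref{proposition_recurrence}), $K_{t,1}=\int_0^t\P_r\widehat\nu\P_{t-r}\,\dr$. If $x>0$, then $\P_r(x,\cdot)=P^D_r(x,\cdot)$ lives on $D$, $\widehat\nu$ moves mass from $D$ to $D^c$, and on $D^c$ we have $\P_{t-r}(y,\cdot)=\delta_y e^{-\nu(y,D)(t-r)}$, so by Tonelli
\[
K_{t,1}f(x)=\int_{D^c}f(y)\int_0^t e^{-\nu(y,D)(t-r)}\int_D p^D_r(x,z)\nu(z,y)\,\dz\,\dr\,\dy=\int_{D^c}f(y)\,\mathcal K(t,x,y)\,\dy.
\]
If $x<0$, then $\P_r(x,\cdot)=\delta_x e^{-\nu(x,D)r}$, $\widehat\nu$ moves mass from $D^c$ to $D$, and $\P_{t-r}=P^D_{t-r}$ on $D$; using the symmetry $p^D_s(z,w)=p^D_s(w,z)$, the evenness $\nu(x,z)=\nu(z,x)$, Tonelli, and the substitution $s=t-r$, one gets $K_{t,1}f(x)=\int_D f(w)\,\mathcal K(t,w,x)\,\dw$. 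In either case the asserted limit $\widehat\nu f(x)$ equals $\int_{D^c}\nu(x,y)f(y)\,\dy$ when $x>0$ and $\int_D\nu(x,w)f(w)\,\dw$ when $x<0$, so the claim reduces to interchanging the limit in $t$ with the integral over $D^c$ (resp. $D$).

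For the pointwise behaviour of the integrand I would invoke Lemma~\ref{lem_zb3} with $\mu=1$: $\tfrac1t\mathcal K(t,x,y)\to\nu(x,y)$ for $x>0$, $y<0$, and symmetrically $\tfrac1t\mathcal K(t,w,x)\to\nu(x,w)$ for $w>0$, $x<0$. The crux is producing a $t$-independent integrable majorant, and here I would use Corollary~\ref{K_est}: $\mathcal K(t,x,y)\lesssim\big(1\wedge\tfrac{|x|^{\alpha/2}}{\sqrt t}\big)\big(1\wedge\tfrac{|y|^{\alpha/2}}{\sqrt t}\big)p_t(x,y)\le p_t(x,y)\lesssim t\,|x-y|^{-\alpha-1}$, the last step by \eqref{eq:p_t_approx}; hence $\tfrac1t\mathcal K(t,x,y)\lesssim|x-y|^{-\alpha-1}\approx\nu(x,y)$ with a constant depending only on $\alpha$, uniformly in $t>0$. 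Since $\int_{D^c}\nu(x,y)f(y)\,\dy=\widehat\nu f(x)<\infty$ (resp. $\int_D\nu(x,w)f(w)\,\dw=\widehat\nu f(x)<\infty$) by hypothesis, dominated convergence yields $\tfrac1t K_{t,1}f(x)\to\widehat\nu f(x)$; in passing, the same majorant shows $K_{t,1}f(x)<\infty$ for small $t$ under the hypothesis. The only genuine obstacle is the majorant, and it is supplied cleanly by Corollary~\ref{K_est}, whose exponential weight already absorbs the dangerous growth of $\mathcal J(s,\cdot,y)$ as $s\to0^+$ near $y=0$; everything else is bookkeeping of the definitions of $\P$, $\widehat\nu$ and Tonelli's theorem.
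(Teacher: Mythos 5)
Your proposal is correct and takes essentially the same route as the paper: rewrite $\tfrac1t K_{t,1}f(x)$ via Tonelli as $\int_{D^c} t^{-1}\mathcal K(t,x,y)f(y)\,\dy$ (or $\int_D t^{-1}\mathcal K(t,w,x)f(w)\,\dw$ for $x<0$), use Corollary~\ref{K_est} with \eqref{eq:p_t_approx} to get the majorant $t^{-1}\mathcal K\lesssim\nu$, and conclude by dominated convergence together with Lemma~\ref{lem_zb3} (with $\mu=1$) for the pointwise limit.
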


\begin{proof}
Assume that $x>0$. Note that from the Tonelli's theorem,
\begin{align*}
    \frac{1}{t}K_{t,1}f(x) &= \frac{1}{t} \int_0^t\ds \int_D \dz \int_{D^c} \dy \, p_s^D(x,z)\nu(z,y) e^{-\nu(y,D)(t-s)}f(y) \nonumber\\
    &= \int_{D^c}  \Big[ \frac{1}{t}  \int_0^t\ds \int_D \dz \, p_s^D(x,z)\nu(z,y) e^{-\nu(y,D)(t-s)}\Big] f(y) \,\dy \\
    &= \int_{D^c} t^{-1} \mathcal{K}(t,x,y) f(y)\,\dy.
\end{align*}
From Corollary \ref{K_est} and \eqref{eq:p_t_approx}, $t^{-1}\mathcal{K}(t,x,y) \lesssim t^{-1}p_t(x,y)\lesssim \nu(x,y)$.
Moreover,
\begin{align*}
    \int_{D^c} \nu(x,y) f(y)\,\dy = \widehat{\nu}f(x) <\infty.
\end{align*}
Therefore, we can use the dominated convergence theorem to obtain that from Lemma \ref{lem_zb3} with $\mu=1$ we get
\begin{align*}
    \lim_{t\to 0^+} \frac{1}{t}K_{t,1}f(x) = \int_{D^c} \lim_{t\to 0^+} t^{-1} \mathcal{K}(t,x,y) f(y)\,\dy = \int_{D^c} \nu(x,y)f(y)\,\dy = \widehat{\nu}f(x).
\end{align*}

\medskip
The case $x<0$ we prove in the same way, but we propose it for the convenience of the reader. Assume that $x<0$. From the Tonelli's theorem,
\begin{align*}
    \frac{1}{t} K_{t,1}f(x) &=  \frac{1}{t} \int_0^t\dr \int_D \dy  \int_D \dz \, e^{-\nu(x,D)r} \nu(x,y)p_{t-r}^D(y,z)f(z) \\
    &= \int_D \left[ \frac{1}{t} \int_0^t \dr \int_D \dy \, p_{t-r}^D(z,y)\nu(y,x)e^{-\nu(x,D)r} \right] f(z)\,\dz \\
    &= \int_D \left[ \frac{1}{t} \int_0^t \ds \int_D \dy \, p_{s}^D(z,y)\nu(y,x)e^{-\nu(x,D)(t-s)} \right] f(z)\,\dz \\
    &= \int_D t^{-1}\mathcal{K}(t,z,x)f(z)\,\dz.
\end{align*}
From Corollary \ref{K_est} and \eqref{eq:p_t_approx}, $t^{-1}\mathcal{K}(t,z,x) \lesssim t^{-1} p_t(z,x) \lesssim \nu(z,x)$.
Moreover, 
\begin{align*}
    \int_D \nu(z,x)f(z)\,\dz = \widehat{\nu}f(x) < \infty.
\end{align*} 
Therefore, we can use the dominated convergence theorem to obtain that from Lemma \ref{lem_zb3} with $\mu=1$,
\[
    \lim_{t\to 0^+} \frac{1}{t}K_{t,1}f(x) = \int_D \lim_{t\to 0^+} t^{-1}\mathcal{K}(t,z,x)f(z)\,\dz = \int_{D} \nu(z,x)f(z)\,\dz = \widehat{\nu}f(x),
\]
which is our conclusion.
\end{proof}

\begin{corollary}\label{cor:K_t_1_h_beta}
    For $\alpha\in (0,2)$ and $\beta\in (-1,\alpha)$, we have 
\begin{align}\label{eq:K_t_1_h_beta}
    \lim_{t\to 0^+} \frac{1}{t}K_{t,1}h_\beta(x) = \widehat{\nu}h_\beta(x), \qquad x\neq0,
\end{align}
\end{corollary}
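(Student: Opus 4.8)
The plan is to reduce the statement entirely to Lemma~\ref{lem_zbieznosc_K_1}, whose only hypothesis is that $\widehat{\nu}f(x)<\infty$ for the relevant point $x$. Thus the sole task is to verify that $h_\beta$ meets this integrability condition whenever $\beta\in(-1,\alpha)$; once that is done, Lemma~\ref{lem_zbieznosc_K_1} applied with $f=h_\beta$ delivers \eqref{eq:K_t_1_h_beta} immediately.

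First I would fix $x>0$. Since $\widehat{\nu}(x,\cdot)$ charges only $D^c$, using \eqref{eq:nu_def} we have
\[
\widehat{\nu}h_\beta(x)=\int_{D^c}\nu(x,y)|y|^\beta\,\dy=\mathcal{A}_{1,\alpha}\int_0^\infty \frac{y^\beta}{(x+y)^{\alpha+1}}\,\dy.
\]
The substitution $y=xv$ turns this into
\[
\widehat{\nu}h_\beta(x)=\mathcal{A}_{1,\alpha}\,x^{\beta-\alpha}\int_0^\infty \frac{v^\beta}{(1+v)^{\alpha+1}}\,\dv=\mathcal{A}_{1,\alpha}\,\mathfrak{B}(\beta+1,\alpha-\beta)\,x^{\beta-\alpha},
\]
which is finite precisely because $\beta+1>0$ and $\alpha-\beta>0$ under the assumption $\beta\in(-1,\alpha)$. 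For $x<0$ the computation is symmetric: now $\widehat{\nu}(x,\cdot)$ charges only $D$, so $\widehat{\nu}h_\beta(x)=\int_D\nu(x,z)z^\beta\,\dz=\mathcal{A}_{1,\alpha}\int_0^\infty z^\beta(|x|+z)^{-\alpha-1}\,\dz$, and the same change of variables gives $\widehat{\nu}h_\beta(x)=\mathcal{A}_{1,\alpha}\,\mathfrak{B}(\beta+1,\alpha-\beta)\,|x|^{\beta-\alpha}<\infty$. Hence $\widehat{\nu}h_\beta(x)<\infty$ for every $x\neq0$, and Lemma~\ref{lem_zbieznosc_K_1} yields the claim.

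I do not expect a real obstacle here, since the corollary is a direct specialization of Lemma~\ref{lem_zbieznosc_K_1}. The only point requiring a little care is the sharpness of the range: the integrand $y^\beta(x+y)^{-\alpha-1}$ behaves like $y^\beta$ near $y=0$, forcing $\beta>-1$, and like $y^{\beta-\alpha-1}$ near $y=\infty$, forcing $\beta<\alpha$; so the interval $(-1,\alpha)$ is exactly the region where the argument applies, and one should not attempt to extend the conclusion beyond it.
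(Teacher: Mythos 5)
Your proposal is correct and follows exactly the route the paper takes: verify $\widehat{\nu}h_\beta(x)<\infty$ for $x\neq 0$ by the change of variables $y=|x|v$, reducing the integral to a Beta function $\mathfrak{B}(\beta+1,\alpha-\beta)$, and then invoke Lemma~\ref{lem_zbieznosc_K_1}. The only cosmetic difference is that you spell out the $x<0$ case in full, whereas the paper states it as a symmetric observation; the substance is identical.
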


\begin{proof}
    Note that for $x>0$ we have
     \[
    \widehat{\nu}h_\beta(x) \approx \int_{D^c} \frac{|y|^\beta}{|x-y|^{\alpha+1}} \,\dy = x^{\beta-\alpha} \int_{0}^\infty \frac{w^\beta}{(1+w)^{\alpha+1}}\,\dw = x^{\beta-\alpha} \mathfrak{B}(\beta+1, \alpha-\beta) <\infty.
    \]
    Similarly, for $x<0$, $\widehat{\nu}h_\beta(x)<\infty$. Therefore, we use Lemma \ref{lem_zbieznosc_K_1} to get \eqref{eq:K_t_1_h_beta}.    
\end{proof}

\subsection{The main theorem}

Now we introduce the main result of this section.

\begin{proposition}\label{gen_inequality}
    Assume that $x\neq 0$, $\alpha\in (0,2)$, $\beta(\alpha-\beta-1)\geq 0$ and let
    \[
    \mathcal{C}(\alpha,\beta,x) := 
    \begin{cases}
         \alpha^{-1} - \mathfrak{B}(\beta+1, \alpha-\beta)-\gamma(\alpha,\beta), &x>0, \\
         \alpha^{-1} - \mathfrak{B}(\beta+1, \alpha-\beta), &x<0,
    \end{cases}
    \]
    where
    \[
    \gamma(\alpha,\beta) := \int_0^1 \frac{(t^\beta-1)(1-t^{\alpha-\beta-1})}{(1-t)^{\alpha+1}}\,\dt \leq 0.
    \]
    Then $\mathcal{C}(\alpha,\beta,x) \geq 0$ and  
    \[
    \lim_{t\to 0^+} \frac{h_\beta(x) - K_th_\beta(x)}{t} = \mathcal{A}_{1,\alpha} \mathcal{C}(\alpha,\beta,x) h_\beta(x) |x|^{-\alpha}.
    \]   
    Moreover, $\mathcal{C}(\alpha,\beta,x) = 0$ if and only if $\beta=0$ or $\beta=\alpha-1$.
\end{proposition}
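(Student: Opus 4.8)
The plan is to split the limit according to the perturbation series $K_t=\sum_{n\ge 0}K_{t,n}$ and treat the $n=0$ term, the $n=1$ term, and the tail $\sum_{n\ge 2}$ separately. For the tail, I would use Corollary \ref{perturbation_formula} (or directly the recursion in Lemma \ref{proposition_recurrence}) together with the supermedian estimates of Section \ref{Chap_Servadei_process}: since $h_\beta$ is $\lambda$-excessive for $K_t$ by Corollary \ref{excesive_function_beta}, one has $K_{t-r,n}h_\beta\le h_\beta$, so $\sum_{n\ge 2}K_{t,n}h_\beta(x)\le \int_0^t \widehat P_r\widehat\nu\big(\sum_{n\ge1}K_{t-r,n}h_\beta\big)(x)\,\dr$, and each inner integral is controlled by the mass estimate $\int_0^t\widehat P_r\widehat\nu\mathbf 1$ which was shown to be $\mP^Y_x(\tau_D<t)=o(t)$ (or $1-e^{-\nu(x,D)t}=o(t)$ for $x<0$) in the proof of the strong continuity; hence the whole tail is $o(t)$ and contributes nothing to the limit. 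For the $n=1$ term, Corollary \ref{cor:K_t_1_h_beta} gives $\lim_{t\to0^+}t^{-1}K_{t,1}h_\beta(x)=\widehat\nu h_\beta(x)$, and $\widehat\nu h_\beta(x)=\mathcal A_{1,\alpha}\mathfrak B(\beta+1,\alpha-\beta)\,|x|^{\beta-\alpha}$ by the computation already in the proof of that corollary (valid for both signs of $x$ since $\widehat\nu h_\beta$ depends only on $|x|$).

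The heart is the $n=0$ term, $t^{-1}(h_\beta(x)-\widehat P_th_\beta(x))$. For $x<0$ this is immediate: $\widehat P_th_\beta(x)=e^{-\nu(x,D)t}h_\beta(x)$, so the limit is $\nu(x,D)h_\beta(x)=\mathcal A_{1,\alpha}\alpha^{-1}|x|^{-\alpha}h_\beta(x)$ by \eqref{nu_scaling}, giving $\mathcal C(\alpha,\beta,x)=\alpha^{-1}-\mathfrak B(\beta+1,\alpha-\beta)$ after subtracting the $n=1$ contribution. For $x>0$, I would write, using $\widehat P_t=P^D_t$ and the Hunt formula together with the Ikeda--Watanabe identity,
\[
\frac{h_\beta(x)-P^D_th_\beta(x)}{t}=\frac1t\int_D\big(h_\beta(x)-h_\beta(y)\big)p^D_t(x,y)\,\dy+\frac{h_\beta(x)}{t}\big(1-P^D_t(x,D)\big).
\]
The second term tends to $h_\beta(x)\nu(x,D^c)=\mathcal A_{1,\alpha}\alpha^{-1}x^{-\alpha}h_\beta(x)$ by Lemma \ref{lem_zb1}. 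For the first term I would replace $p^D_t$ by the full heat kernel $p_t$ using the Hunt formula, so that it becomes $\frac1t\int_D(h_\beta(x)-h_\beta(y))p_t(x,y)\,\dy$ minus a correction $\frac1t\mE^Y_x[\tau_D<t;\,\int_D(h_\beta(x)-h_\beta(y))p_{t-\tau_D}(Y_{\tau_D},y)\,\dy]$; the correction converges to $\int_D(h_\beta(x)-h_\beta(y))\nu(x,y)\,\dy$ restricted through the exit mechanism, i.e. to $\int_D(h_\beta(x)-h_\beta(y))\int_{D^c}\nu(x,z)\,\text{(Poisson average)}\dots$—more cleanly, I would instead combine everything into the statement that
\[
\lim_{t\to0^+}\frac1t\int_D(h_\beta(x)-h_\beta(y))p^D_t(x,y)\,\dy=\mathrm{p.v.}\!\int_D(h_\beta(x)-h_\beta(y))\nu(x,y)\,\dy-\int_{D^c}(h_\beta(x)-h_\beta(y))\nu(x,y)\,\dy\cdot 0\dots
\]
and recognize the right side, after adding the second term $h_\beta(x)\nu(x,D^c)$ and using the known action of $(-\Delta)^{\alpha/2}$ on $|x|^\beta$ on the half-line: by the explicit formula $(-\Delta)^{\alpha/2}(x_+^\beta)=\mathcal A_{1,\alpha}\,x^{\beta-\alpha}\big[\alpha^{-1}-\mathfrak B(\beta+1,\alpha-\beta)-\gamma(\alpha,\beta)\big]$ for $x>0$, which is exactly the claimed coefficient $\mathcal C(\alpha,\beta,x)$ (including the $-\widehat\nu h_\beta$ subtraction that will cancel against the $n=1$ term, leaving $\mathcal A_{1,\alpha}\mathcal C(\alpha,\beta,x)h_\beta(x)|x|^{-\alpha}$). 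So the cleanest route is: identify $\lim t^{-1}(h_\beta-P^D_th_\beta)(x)$ with $(-\Delta)^{\alpha/2}h_\beta(x)+\widehat\nu h_\beta(x)$ (the killed-process generator plus the jump-out rate), compute $(-\Delta)^{\alpha/2}(x_+^\beta)$ via the change of variables $y=xt$ and the Beta-integral evaluation of $\mathrm{p.v.}\int_0^\infty\frac{t^\beta-1}{|1-t|^{\alpha+1}}\,\dt$, and then subtract $\widehat\nu h_\beta$ from the $n=1$ term.

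Finally, for the sign and vanishing claims: $\gamma(\alpha,\beta)\le 0$ because under the hypothesis $\beta(\alpha-\beta-1)\ge0$ the two factors $(t^\beta-1)$ and $(1-t^{\alpha-\beta-1})$ have opposite signs on $(0,1)$ (when $\beta\ge0$ and $\alpha-\beta-1\ge0$, $t^\beta-1\le0$ and $1-t^{\alpha-\beta-1}\ge0$; when $\beta\le0$ and $\alpha-\beta-1\le0$ the signs flip together). Nonnegativity of $\mathcal C(\alpha,\beta,x)$ then reduces, for $x<0$, to $\alpha^{-1}\ge\mathfrak B(\beta+1,\alpha-\beta)$, and for $x>0$ to the same plus $-\gamma\ge0$; the inequality $\mathfrak B(\beta+1,\alpha-\beta)\le\alpha^{-1}=\mathfrak B(1,\alpha)=\mathfrak B(\alpha,1)$ follows from log-convexity of the Beta function in its arguments along the line $x+y=\alpha+1$, or directly from the integral representation, with equality exactly at the endpoints $\beta=0$ and $\beta=\alpha-1$—which, combined with $\gamma=0$ iff one of the two factors vanishes identically iff $\beta=0$ or $\beta=\alpha-1$, gives the last assertion. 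I expect the main obstacle to be the rigorous justification that $\lim t^{-1}(h_\beta(x)-P^D_th_\beta(x))$ equals $(-\Delta)^{\alpha/2}h_\beta(x)+\widehat\nu h_\beta(x)$ with the correct principal-value interpretation—i.e. controlling the Hunt-formula correction term and the non-absolutely-convergent part of the integral near $y=x$—for which I would lean on the estimates in Lemmas \ref{lem_zb1}, \ref{lem:zb_nu}, \ref{eq:convergence_p_D/t} and the smoothness of $h_\beta$ away from $0$.
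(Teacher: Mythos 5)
Your decomposition of $K_t=\sum_{n\ge0}K_{t,n}$ into $n=0$, $n=1$, and the tail $n\ge2$ is exactly the paper's architecture (Corollaries~\ref{cor:gener_h_beta}, \ref{cor:K_t_1_h_beta} and Lemma~\ref{h_sum_from_2}), and your handling of the $n=1$ term and of $x<0$ is correct. Your argument for the sign claim via log-convexity of $\beta\mapsto\Gamma(\beta+1)\Gamma(\alpha-\beta)$, with equal endpoint values $\Gamma(\alpha)$ at $\beta\in\{0,\alpha-1\}$, is a clean alternative to the paper's digamma-derivative computation for $\mathcal M(\alpha,\beta)$ and is worth keeping.

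However, there is a genuine gap in your treatment of the tail. You assert that $\int_0^t\widehat P_r\widehat\nu\mathbf 1\,\dr=\mP^Y_x(\tau_D<t)$ is $o(t)$, and use this to conclude $\sum_{n\ge2}K_{t,n}h_\beta(x)=o(t)$. This is false: Lemma~\ref{lem_zb1} proves precisely that $\mP^Y_x(\tau_D<t)/t\to\nu(x,D^c)>0$, so $\mP^Y_x(\tau_D<t)$ is $O(t)$ but not $o(t)$. As a result, the bound $\sum_{n\ge2}K_{t,n}h_\beta\le\int_0^t\widehat P_r\widehat\nu\bigl(\sup\nolimits_{s\le t}\sum_{n\ge1}K_{s,n}h_\beta\bigr)\,\dr$ only gives $O(t)$, i.e.\ the same order as the $n=1$ term, and does not suffice to show the tail vanishes in the $t^{-1}$-limit. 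In fact, establishing $\lim_{t\to0^+}\frac1t\sum_{n\ge2}K_{t,n}h_\beta(x)=0$ is the substantial content of Lemma~\ref{h_sum_from_2}, whose proof requires the two-step perturbation identity \eqref{eq:perturb_2_times} together with the quantitative kernel estimates of Corollaries~\ref{cor:oszac_p^D_nu}--\ref{K_est} and a delicate splitting of the resulting triple integral by dominated convergence. Your sketch skips this entirely, and the route you propose would not close without it.

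A secondary, lesser issue: for the $n=0$ term with $x>0$ you need the pointwise generator formula for $P_t^D$ acting on $h_\beta$, which is Lemma~\ref{stable_generator_estimate} in the paper; your sketch correctly identifies this as the main obstacle but does not supply the Taylor/Hunt-formula estimates that make it rigorous (the principal-value cancellation near $y=x$, and the decay of the correction $\mathcal H(t,x,y)$). Also, your displayed formula for $(-\Delta)^{\alpha/2}(x_+^\beta)$ does not match the notation: with $h_\beta=|\cdot|^\beta$ on all of $\R$ the coefficient is $\alpha^{-1}-\mathfrak B(\beta+1,\alpha-\beta)-\gamma(\alpha,\beta)$, but for $x_+^\beta$ the $D^c$-integral gives $\alpha^{-1}$ only, without the $-\mathfrak B$ term; the $-\mathfrak B$ must come from subtracting the $n=1$ limit $\widehat\nu h_\beta(x)$, as you correctly say a sentence later, so the formula as written is inconsistent with its own label.
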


To prove this theorem, we first have to find the analogous result for the $\alpha$-stable killed L\'evy process $Y$. Below, we prove it for a wider class of functions.

\begin{lemma}
\label{stable_generator_estimate}
    Let $\alpha\in (0,2)$. If $f\in C^2(D)$ satisfies
    \[
    \int_D \frac{|f(y)|}{(1+y)^{\alpha+1}}\,\dy <\infty,
    \]    
    then for $x>0$, we have
    \begin{align*}
        \lim_{t\to 0^+} \frac{f(x) - P^D_tf(x)}{t}
        = \mathrm{p.v.} \int_D (f(x) - f(y))\nu(x,y)\,\dy + f(x)\nu(x,D^c).
    \end{align*}
\end{lemma}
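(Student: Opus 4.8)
The plan is to write the increment $f(x)-P_t^Df(x)$ using the defining formula for $P_t^D$ and to show that, after dividing by $t$, each piece converges to the claimed limit. First I would separate the killed semigroup from the free one: by the Hunt formula \eqref{eq:Hunt's_formula} applied to the function $f$ (defined as $0$ on $D^c$), one has
\[
f(x)-P_t^Df(x) = \big(f(x)-P_tf(x)\big) + \mE_x^Y\big[\tau_D<t;~f(Y_t)\big] - \mE_x^Y\big[\tau_D<t;~ (P_{t-\tau_D}f)(Y_{\tau_D})\big],
\]
where the last term vanishes because $f=0$ on $D^c=\R\setminus D$ and $Y_{\tau_D}\le 0$ a.s., so $P_{t-\tau_D}f$ is being evaluated at a point where we still need $f$ supported in $D$—more carefully, I would instead split $P_t^Df(x)=P_t f(x)-\mE_x^Y[\tau_D<t;P_{t-\tau_D}f(Y_{\tau_D})]$ and use $f\equiv 0$ outside $D$ only for the free part; the correction term is handled directly. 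The cleanest route is: $f(x)-P_t^Df(x)=\big(f(x)-P_tf(x)\big)+\mE_x^Y[\tau_D<t;~ P_{t-\tau_D}f(Y_{\tau_D})]$, noting $P_sf\ge 0$.

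For the free part, since $f\in C^2(D)$ with the stated integrability, a standard localization argument (pick $\varphi\in C_c^\infty(\R)$ agreeing with $f$ near $x$, write $f=\varphi+(f-\varphi)$ with $f-\varphi$ vanishing near $x$) gives
\[
\lim_{t\to 0^+}\frac{f(x)-P_tf(x)}{t} = (-\Delta)^{\alpha/2}f(x) = \mathcal{A}_{1,\alpha}\,\mathrm{p.v.}\!\int_\R \frac{f(x)-f(x+y)}{|y|^{\alpha+1}}\,\dy,
\]
using \eqref{eq:fractional_laplacian} for the $C_c^\infty$ part and \eqref{eq:p_t_approx} together with the integrability hypothesis and $\mP_x^Y(\tau_D<t)\le \mP_x^Y(|Y_t-x|\ge \operatorname{dist}(x,D^c))\to 0$ to control the remainder (the $f-\varphi$ piece contributes $\frac1t\int (f-\varphi)(y)p_t(x,y)\,\dy\to \mathcal{A}_{1,\alpha}\int(f-\varphi)(y)\nu(x,y)\,\dy$ by dominated convergence driven by $p_t(x,y)/t\lesssim \nu(x,y)$ away from $x$, as in the proof of Lemma~\ref{eq:convergence_p_D/t}). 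Since $f=0$ on $D^c$, this p.v.\ integral over $\R$ equals $\mathrm{p.v.}\int_D(f(x)-f(y))\nu(x,y)\,\dy + f(x)\,\nu(x,D^c)$.

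For the correction term, by the Ikeda--Watanabe formula \eqref{eq:Ikeda_Watanabe},
\[
\frac1t\,\mE_x^Y[\tau_D<t;~P_{t-\tau_D}f(Y_{\tau_D})] = \frac1t\int_0^t\!\ds\!\int_D\!\dz\!\int_{D^c}\!\dy\; p_s^D(x,z)\nu(z,y)\,P_{t-s}f(y),
\]
and since $f\equiv 0$ on $D^c$ we have $P_{t-s}f(y)=\int_D f(w)p_{t-s}(y,w)\,\dw\to 0$ as $t\to 0^+$ uniformly-enough to kill this term: indeed $P_{t-s}f(y)\le \|f\|_{L^1}\sup_w p_{t-s}(y,w)$ only when $f\in L^1$, which need not hold here, so instead I would bound $P_{t-s}f(y)\le \int_D|f(w)|p_{t-s}(y,w)\,\dw$ and note that for $y<0$, $w>0$ one has $|y-w|\ge |y|$ and $|y-w|\ge w$, so $p_{t-s}(y,w)\lesssim (t-s)|y-w|^{-\alpha-1}\le (t-s)(1+w)^{-\alpha-1}|y|^{-\alpha-1}\cdot(1+|y|)^{\alpha+1}\!\cdot\!(\ldots)$; combined with $\int_D |f(w)|(1+w)^{-\alpha-1}\dw<\infty$ and the integrability of $\nu(z,y)$ over $y\in D^c$ this yields a bound $\lesssim (t)\cdot(\text{const})\to 0$ after integrating, using also $\int_0^t\!\ds\!\int_D p_s^D(x,z)\nu(z,D^c)\,\dz=\mP_x^Y(\tau_D<t)\lesssim t$. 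The main obstacle is precisely this last estimate: making the decay of $P_{t-s}f(y)$ quantitative and uniform enough in $y<0$ (where $f$ may be unbounded and non-integrable) to dominate the $\nu(z,y)\,\dy$ integration and divide by $t$. Once those bounds are in place, assembling the two limits gives exactly the asserted formula.
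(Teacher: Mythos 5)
Your decomposition---Hunt's formula \eqref{eq:Hunt's_formula} integrated against $f$, with $f$ extended by zero on $D^c$, giving $f(x)-P_t^Df(x)=\bigl(f(x)-P_tf(x)\bigr)+\mE_x^Y\bigl[\tau_D<t;\,P_{t-\tau_D}f(Y_{\tau_D})\bigr]$---is correct and genuinely different from the paper's: the paper works with $p_t^D$ throughout, splits the $y$-integral at $|x-y|=\varepsilon$, and invokes Hunt's formula only in the near region, whereas you push all of the Dirichlet correction into a single global remainder. Your handling of the free term is sound (the zero extension is $C^2$ near $x$, the hypothesis yields $\int_D|f(y)|\,|x-y|^{-\alpha-1}\,\dy<\infty$ away from $x$, so localization plus dominated convergence apply), and the resulting p.v.\ integral over $\R$ splits into exactly the asserted right-hand side.

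The gap is in the remainder estimate, and it is not merely a missing detail. The pointwise bound $P_{t-s}f(y)\lesssim (t-s)\bigl((1+|y|)/|y|\bigr)^{\alpha+1}\int_D|f(w)|(1+w)^{-\alpha-1}\,\dw$ is true, but it cannot serve as a dominant for the $y$-integration: for any $z>0$,
\begin{equation*}
\int_{D^c}\nu(z,y)\Bigl(\frac{1+|y|}{|y|}\Bigr)^{\alpha+1}\dy \;\approx\; \int_0^\infty\frac{(1+u)^{\alpha+1}}{(z+u)^{\alpha+1}\,u^{\alpha+1}}\,\du \;=\; +\infty,
\end{equation*}
the divergence coming from $u^{-\alpha-1}$ at $u=0$. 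So ``the integrability of $\nu(z,\cdot)$ over $D^c$'' does not yield the claimed ``$\lesssim t\cdot\mathrm{const}$,'' and the argument as written fails. A clean fix is to avoid estimating $P_{t-s}f(y)$ for $y$ near $0$ at all: integrate Hunt's formula against $f(w)\,\dw$ (Tonelli for $|f|$, then linearity) to rewrite the remainder as
\begin{equation*}
\int_D f(w)\,\mathcal{H}(t,x,w)\,\dw,\qquad \mathcal{H}(t,x,w):=\frac{1}{t}\,\mE_x^Y\bigl[\tau_D<t;\,p_{t-\tau_D}(Y_{\tau_D},w)\bigr].
\end{equation*}
Since $Y_{\tau_D}<0<w$ gives $|Y_{\tau_D}-w|\ge w$, one has $\mathcal{H}(t,x,w)\le C\,w^{-\alpha-1}\,\mP_x^Y(\tau_D<t)$ as in \eqref{eq:p_D/t_2}, and also $\mathcal{H}(t,x,w)\le p_t(x,w)/t\lesssim|x-w|^{-\alpha-1}$. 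For $w\ge x/2$ the first bound plus $\int_{x/2}^\infty|f(w)|w^{-\alpha-1}\,\dw\le C(x)\int_D|f(w)|(1+w)^{-\alpha-1}\,\dw<\infty$ gives $O\bigl(\mP_x^Y(\tau_D<t)\bigr)\to 0$; for $w<x/2$ the second bound is $\le C(x)$ uniformly in $w$ and $t\in(0,1]$, $\mathcal{H}(t,x,w)\to 0$ pointwise, and $f\in L^1(0,x/2)$, so dominated convergence gives $0$ here too. With that replacement your route becomes a correct alternative to the paper's argument.
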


\begin{proof}
    Let $t>0$. Note that 
    \begin{align}\label{eq:G1}
        \frac{f(x) - P^D_tf(x)}{t} &= \frac{1}{t}\int_D (f(x)-f(y))p_t^D(x,y)\,\dy + f(x)\frac{1-p_t^D(x,D)}{t}.
    \end{align}
From \eqref{eq:Ikeda_Watanabe} we have that
\begin{align*}
\frac{1-p_t^D(x,D)}{t} &= \frac{1}{t}\mP^Y_x(\tau_D<t) = \frac{1}{t}\int_0^t \ds \int_D \dy \, p_s^D(x,y)\nu(y,D^c).
\end{align*}
Hence, from Lemma \ref{lem_zb1},
\begin{align}\label{eq:G2}
    \lim_{t\to 0^+} \frac{1-p_t^D(x,D)}{t} = \nu(x,D^c).
\end{align}
Combining \eqref{eq:G1} and \eqref{eq:G2} we obtain the following equality
\begin{align*}
    \lim_{t\to 0^+} \frac{f(x) - P^D_tf(x)}{t} = \lim_{t\to 0^+} \int_D (f(x)-f(y))\frac{p_t^D(x,y)}{t}\,\dy + f(x) \nu(x,D^c),
\end{align*}
with the assumption that the latter limit exists---we will show it in the further part of the proof. We will find this limit in a few steps.

Let $\varepsilon \in (0,x/2]$ be an arbitrary constant. Then we write that
\begin{align}
\label{eq:I+II}
    &\int_D (f(x)-f(y))\frac{p_t^D(x,y)}{t} \,\dy \nonumber \\
    &= \int_{D\cap \{|x-y|<\varepsilon\}} (f(x)-f(y))\frac{p_t^D(x,y)}{t} \,\dy + \int_{D\cap \{|x-y|\geq \varepsilon\}} (f(x)-f(y))\frac{p_t^D(x,y)}{t} \,\dy\nonumber \\
    &=: I_t + II_t.
\end{align}
Note that from \eqref{eq:approx_p_D} it follows that
\begin{align*}
    |f(x) - f(y)| \frac{p_t^D(x,y)}{t} \lesssim |f(x) - f(y)| \cdot |x-y|^{-\alpha-1},
\end{align*}
and
\begin{align}\label{eq:nier1_1}
    &\int_{D\cap \{|x-y|\geq \varepsilon\}} |f(x) - f(y)| \cdot |x-y|^{-\alpha-1} \,\dy  \nonumber \\
    &\leq |f(x)| \int_{D\cap \{|x-y|\geq \varepsilon\}}|x-y|^{-\alpha-1} \,\dy  + \int_{D\cap \{|x-y|\geq \varepsilon\}} \frac{|f(y)|}{|x-y|^{\alpha+1}} \,\dy.
\end{align}

Moreover, for $|x-y|\geq \varepsilon$ there exists a constant $\widehat{C} = \widehat{C}(x,\varepsilon) >0$, such that $\widehat{C}(y+1) \leq |x-y|$. 
To prove it we consider two cases. Let $y\leq x-\varepsilon$. Then for $C_1 := \frac{\varepsilon}{x+1}$ we have $C_1 < \frac{\varepsilon}{x+1-\varepsilon} \leq \frac{\varepsilon}{y+1} \leq \frac{x-y}{y+1}$, which gives us the inequality $C_1(y+1) < |x-y|$. In case $y\geq x+\varepsilon$  we set $C_2 := \frac{\varepsilon}{1+3x/2}$ and then we observe that $C_2 < \frac{\varepsilon}{1+x+\varepsilon} = \frac{1-\frac{x}{x+\varepsilon}}{1+\frac{1}{x+\varepsilon}} \leq \frac{1-\frac{x}{y}}{1+\frac{1}{y}} = \frac{y-x}{y+1}$. This gives us $C_2(y+1) < |x-y|$. Thus, the claim follows by taking $\widehat{C} := C_1\wedge C_2$.

Furthermore, note that from the above observations we get that 
\begin{align*}
    \int_{D\cap \{|x-y|\geq \varepsilon\}} \frac{|f(y)|}{|x-y|^{\alpha+1}} \,\dy \leq \widehat{C}^{-1-\alpha} \int_{D\cap \{|x-y|\geq \varepsilon\}} \frac{|f(y)|}{(1+y)^{\alpha+1}}\,\dy <\infty.
\end{align*}
Hence, the integral of the left-hand side of \eqref{eq:nier1_1} is finite. Therefore, from the dominated convergence theorem and by Lemma \ref{eq:convergence_p_D/t},
\begin{align}
    \label{eq:liminfII}
    \lim_{t\to 0^+} II_t = \int_{D\cap \{|x-y|\geq \varepsilon\}} (f(x)-f(y))\nu(x,y) \,\dy.
\end{align}

For now, we consider the integral $I_t$. From the Hunt formula \eqref{eq:Hunt's_formula} we get
\begin{align}
\label{eq:A-B}
    I_t &= \int_{D\cap \{|x-y|<\varepsilon\}} (f(x)-f(y))\frac{p_t(x,y)}{t} \,\dy - \int_{D\cap \{|x-y|<\varepsilon\}} (f(x)-f(y)) \mathcal{H}(t,x,y) \,\dy \nonumber\\
    &=: A_t - B_t,
\end{align}
where $\mathcal{H}(t,x,y) := t^{-1} \mE_x^Y\big[ \tau_D < t; ~ p_{t-\tau_D}(Y_{\tau_D},y)\big]$. 
From \eqref{eq:p_D/t_2} it follows that
\begin{align*}
    |f(x) - f(y)|\mathcal{H}(t,x,y) &\lesssim |f(x) - f(y)| \cdot |y|^{-\alpha-1}.
\end{align*}
Moreover,
\begin{align}\label{eq:nier1_2}
    &\int_{D\cap\{|x-y| < \varepsilon\}} \frac{|f(x) - f(y)|}{|y|^{\alpha+1}}\,\dy \nonumber \\
    &\leq |f(x)| \int_{D\cap\{|x-y| < \varepsilon\}} \frac{\dy}{|y|^{\alpha+1}}+ \int_{D\cap\{|x-y| < \varepsilon\}} \frac{|f(y)|}{|y|^{\alpha+1}}\,\dy.
\end{align}

Note that for $|x-y|<\varepsilon$ there exists a constant $\widetilde{C} = \widetilde{C}(x)>0$ such that $1+ y \leq  \widetilde{C} y$. Indeed, let $\widetilde{C} := \frac{2}{x} + 4$. Then $\frac{3x}{2} < \frac{\widetilde{C}x}{2} - 1$. Recall that $\varepsilon \leq \frac{x}{2}$. Then, from the assumption, it is obvious that $\frac{x}{2} < y < \frac{3x}{2}$ and then $\widetilde{C}^{-1} (1+y) < \widetilde{C}^{-1} \big(1+\frac{3x}{2}\big) < \frac{x}{2} < y$, which is our claim.

Furthermore, from the above result we obtain that
\begin{align*}
    \int_{D\cap\{|x-y| < \varepsilon\}} \frac{|f(y)|}{|y|^{\alpha+1}}\,\dy \approx \int_{D\cap\{|x-y| < \varepsilon\}} \frac{|f(y)|}{(1+y)^{\alpha+1}}\,\dy <\infty.
\end{align*}
Hence, the integral of the left-hand side of \eqref{eq:nier1_2} is finite. Hence, from the dominated convergence theorem and from \eqref{eq:p_D/t_2},
\begin{align}
\label{eq:B_t_H}
    \lim_{t\to 0^+} B_t = \int_{D\cap \{|x-y|<\varepsilon\}} (f(x) - f(y)) \lim_{t\to 0^+} \mathcal{H}(t,x,y)\,\dy = 0.
\end{align}

Now we consider the integral $A_t$. Let $t>0$ be small enough that $t^{1/\alpha} < \varepsilon$. Note that by the symmetry of $p_t$,
\begin{align*}
    \int_{D\cap \{|x-y|<\varepsilon\}} (y-x)p_t(x,y)\,\dy &= \int_x^{x+\varepsilon} (y-x)p_t(x,y)\,\dy + \int_{x-\varepsilon}^x (y-x)p_t(x,y)\,\dy \\
    &= \int_x^{x+\varepsilon} (y-x)p_t(x,y)\,\dy + \int_x^{x+\varepsilon} (x-w)p_t(x,2x-w)\,\dw \\
    &= \int_x^{x+\varepsilon} (y-x)p_t(x,y)\,\dy - \int_x^{x+\varepsilon} (w-x)p_t(x,w)\,\dw = 0.
\end{align*}
Hence,
\begin{align*}
    |A_t| &= \left| \int_{D\cap\{|x-y|<\varepsilon\}} (f(x) - f(y))\frac{p_t(x,y)}{t}\,\dy \right| \\
    &=\left| \int_{D\cap\{|x-y|<\varepsilon\}} \big(f(y) - f(x)- (y-x) f'(x)\big)\frac{p_t(x,y)}{t}\,\dy \right| \\
    &\leq \int_{D\cap\{|x-y|<\varepsilon\}} \big|f(y) - f(x)- (y-x) f'(x)\big|\frac{p_t(x,y)}{t}\,\dy.
\end{align*}
From the Taylor's theorem,
\begin{align*}
    f(y) - f(x)- (y-x) f'(x) = \frac{f''(c)}{2}(y-x)^2,
\end{align*}
where $c$ lies strictly between $x$ and $y$. From the fact that $\varepsilon \leq x/2$, it follows that $x-\varepsilon \geq x/2$. Hence, for $|x-y|<\varepsilon$,
\begin{align*}
    \big| f(y) - f(x)- (y-x) f'(x) \big| &\leq \sup_{c\in (x-\varepsilon, x+\varepsilon)} |f''(c)| |y-x|^2 \\
    &\leq \sup_{c\in (x/2, 3x/2)} |f''(c)| |y-x|^2 =: C(x) |y-x|^2.
\end{align*}
Hence,
\begin{align*}
    |A_t| \leq C(x)  \int_{D\cap\{|x-y|<\varepsilon\}} |y-x|^2 \frac{p_t(x,y)}{t}\,\dy.
\end{align*}
From \eqref{eq:p_t_approx}, there exists a constant $C_0>0$ such that
\begin{align*}
    |A_t| &\leq C(x) C_0 \int_{D\cap\{|x-y|<\varepsilon\}} |y-x|^2 \frac{1}{t} \Big( t^{-1/\alpha} \wedge \frac{t}{|x-y|^{\alpha+1}}\Big)\,\dy \\
    &= C(x) C_0 t^{-1/\alpha-1} \int_{D\cap\{|x-y|<t^{1/\alpha}\}} |y-x|^2\,\dy + C(x) C_0 \int_{D\cap\{t^{1/\alpha} \leq |x-y|<\varepsilon\}} |y-x|^{1-\alpha}\,\dy \\
    &\leq \tfrac23 C(x) C_0 t^{-1/\alpha-1}  t^{3/\alpha} + C(x)C_0  \int_{D\cap \{|x-y|<\varepsilon\}} |y-x|^{1-\alpha}\,\dy \\
    &= C(x) C_0 \big[ \tfrac23 t^{2/\alpha-1} + \tfrac{2}{2-\alpha}\varepsilon^{2-\alpha}\big].
\end{align*}
Using the above observations, we conclude that
\begin{align}
\label{eq:supA}
    \limsup_{t\to 0^+} |A_t| \leq \tfrac{2}{2-\alpha} C(x) C_0\,\varepsilon^{2-\alpha} =: \widehat{C}(x,\alpha) \varepsilon^{2-\alpha}.
\end{align}

Combining \eqref{eq:A-B}, \eqref{eq:B_t_H} and \eqref{eq:supA} we get
\begin{align}\label{eq:liminfI}
    \liminf_{t\to 0^+} I_t \geq \liminf_{t\to 0^+} A_t - \limsup_{t\to 0^+}  B_t = \liminf_{t\to 0^+} A_t \geq -\widehat{C}(x,\alpha) \varepsilon^{2-\alpha},
\end{align}
and
\begin{align}\label{eq:limsupI}
    \limsup_{t\to 0^+} I_t \leq \limsup_{t\to 0^+} A_t - \liminf_{t\to 0^+} B_t = \limsup_{t\to 0^+} A_t \leq \widehat{C}(x,\alpha) \varepsilon^{2-\alpha}.
\end{align}
Hence, from \eqref{eq:I+II}, \eqref{eq:liminfII} and \eqref{eq:liminfI},
\begin{align}\label{eq:liminf_eps}
    &\liminf_{t\to 0^+} \int_D (f(x)-f(y))\frac{p_t^D(x,y)}{t} \,\dy \nonumber \\
    &\geq -\widehat{C}(x,\alpha) \varepsilon^{2-\alpha} + \int_{D\cap \{|x-y|\geq \varepsilon\}} (f(x)-f(y))\nu(x,y) \,\dy.
\end{align}

We will show that there exists the limit 
\begin{align*}
\mathrm{p.v.} \int_D  (f(x) - f(y))\nu(x,y)\,\dy := \lim_{\varepsilon\to 0^+} \int_{D\cap \{|x-y|\geq \varepsilon\}} (f(x) - f(y))\nu(x,y)\,\dy.
\end{align*}
The existence follows from the dominated convergence theorem. Indeed, by the same argument as in the previous part of the proof we know that
\begin{align*}
    &\int_{D\cap \{|x-y|\geq \varepsilon\}} (f(x) - f(y))\nu(x,y)\,\dy \\
    &= \int_{D}  \ind_{\{|x-y|\geq \varepsilon\}} \big(f(x) - f(y) - \ind_{B(x,x/2)}(y) (y-x) f'(x)\big)\nu(x,y)\,\dy.
\end{align*}
Furthermore, from the Taylor's theorem we get
\begin{align*}
    &\ind_{\{|x-y|\geq \varepsilon\}} \big| f(x) - f(y) - \ind_{B(x,x/2)}(y) (y-x) f'(x) \big| \\
    &\leq \ind_{B(x,x/2)^c}(y) \big( |f(x)| + |f(y)| \big)  + \ind_{B(x,x/2)}(y) \sup_{y\in B(x,x/2)} |f''(y)| |y-x|^2 \\
    &=: \ind_{B(x,x/2)^c}(y) \big( |f(x)| + |f(y)| \big)  + \ind_{B(x,x/2)}(y) C(x) |y-x|^2,
\end{align*}
and then
\begin{align*}
    \int_{D\cap B(x,x/2)^c} \frac{|f(x)|+|f(y)|}{|x-y|^{\alpha+1}}\,\dy + C(x) \int_{B(x,x/2)}  |y-x|^{1-\alpha}\,\dy <\infty.
\end{align*}
Note that the finiteness of the above expression follows from the previous part of the proof. Then from the dominated convergence theorem we obtain that
\begin{align*}
    &\lim_{\varepsilon\to 0^+} \int_{D\cap \{|x-y|\geq \varepsilon\}} (f(x) - f(y))\nu(x,y)\,\dy \\
    &= \int_{D} \big(f(x) - f(y) - \ind_{B(x,x/2)}(y) (y-x) f'(x)\big)\nu(x,y)\,\dy,
\end{align*}
and in particular the above limit exists.
Hence, by taking $\varepsilon\to 0^+$ in \eqref{eq:liminf_eps}, we get
\begin{align*}
    \liminf_{t\to 0^+} \int_D (f(x)-f(y))\frac{p_t^D(x,y)}{t} \,\dy \geq \mathrm{p.v.} \int_{D} (f(x) - f(y))\nu(x,y)\,\dy.
\end{align*}
Similarly, from \eqref{eq:I+II}, \eqref{eq:liminfII} and \eqref{eq:limsupI}, we obtain that
\begin{align*}
    \limsup_{t\to 0^+} \int_D (f(x)-f(y))\frac{p_t^D(x,y)}{t} \,\dy \leq \mathrm{p.v.} \int_{D} (f(x)-f(y))\nu(x,y) \,\dy.
\end{align*}
Therefore,
\begin{align*}
    \lim_{t\to 0^+} \int_D (f(x)-f(y))\frac{p_t^D(x,y)}{t} \,\dy = \mathrm{p.v.} \int_{D} (f(x) - f(y))\nu(x,y)\,\dy,
\end{align*}
which ends the proof.
\end{proof}
We note in passing that if the pointwise formula in Lemma~\ref{stable_generator_estimate} gives a $C_0(D)$ function, then $f$ is in the domain of the Dirichlet fractional Laplacian on $D$. This is true more generally, see \cite[Theorem 2.3]{MR3897925}.

In what follows, however, we focus on the functions $h_\beta$.

\begin{corollary}\label{cor:gener_h_beta}
For $x>0$, $\alpha\in (0,2)$, and $\beta\in (-1,\alpha)$, we have
\begin{align}\label{eq:h_beta_gener}
        \lim_{t\to 0^+} \frac{h_\beta(x) - P^D_th_\beta(x)}{t}
        = \mathrm{p.v.} \int_D (h_\beta(x) - h_\beta(y))\nu(x,y)\,\dy + h_\beta(x)\nu(x,D^c).
    \end{align}    
\end{corollary}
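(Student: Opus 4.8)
The plan is to obtain Corollary~\ref{cor:gener_h_beta} as a direct specialization of Lemma~\ref{stable_generator_estimate} to the function $f=h_\beta$ restricted to $D$. First I would observe that for $x>0$ we have $h_\beta(x)=x^\beta$, which is $C^\infty$ (in particular $C^2$) on $D=(0,\infty)$, so the smoothness hypothesis of Lemma~\ref{stable_generator_estimate} is met.

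It then remains to verify the integrability hypothesis $\int_D |h_\beta(y)|(1+y)^{-\alpha-1}\,\dy<\infty$. Since $|h_\beta(y)|=y^\beta$ on $D$, this integral equals $\int_0^\infty y^\beta(1+y)^{-\alpha-1}\,\dy=\mathfrak{B}(\beta+1,\alpha-\beta)$, and the defining integral of the beta function converges exactly when $\beta+1>0$ and $\alpha-\beta>0$, i.e.\ when $\beta\in(-1,\alpha)$ --- precisely the range assumed in the statement. (At $y\to 0^+$ the integrand behaves like $y^\beta$, forcing $\beta>-1$; at $y\to\infty$ it behaves like $y^{\beta-\alpha-1}$, forcing $\beta<\alpha$.) With both hypotheses in place, Lemma~\ref{stable_generator_estimate} applies verbatim to $f=h_\beta$ and yields the asserted formula \eqref{eq:h_beta_gener}.

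I do not anticipate a genuine obstacle here: all of the analytic content --- the split of the principal-value integral into near-diagonal and far parts, the control of the Hunt-formula correction term via \eqref{eq:p_D/t_2}, the Taylor bound \eqref{eq:supA} on the singular part, and the existence of the principal value --- has already been carried out in the proof of Lemma~\ref{stable_generator_estimate}. The only point requiring attention is the bookkeeping of the parameter range: confirming that $\beta\in(-1,\alpha)$ is exactly the condition under which the relevant beta integral (and hence $\widehat{\nu}h_\beta(x)$, cf.\ Corollary~\ref{cor:K_t_1_h_beta}) is finite, which is what licenses the invocation of the lemma.
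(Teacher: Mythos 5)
Your proposal is correct and matches the paper's intent: the corollary is stated immediately after Lemma~\ref{stable_generator_estimate} without a separate proof, so the authors clearly treat it as a direct specialization to $f=h_\beta$. Your verification that $h_\beta\in C^2(D)$ and that $\int_D h_\beta(y)(1+y)^{-\alpha-1}\,\dy=\mathfrak{B}(\beta+1,\alpha-\beta)<\infty$ precisely for $\beta\in(-1,\alpha)$ is exactly what is needed.
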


\begin{lemma}\label{h_sum_from_2}
    For $\alpha\in (0,2)$ and $\beta(\alpha-\beta-1)\geq0$,
    \[
    \lim_{t\to 0^+} \frac{1}{t} \sum_{n=2}^\infty K_{t,n}h_\beta(x) = 0, \qquad x\neq 0.
    \]
\end{lemma}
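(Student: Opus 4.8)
The plan is to peel off the first $\widehat{P}\widehat\nu$--block from each term of the series and reduce everything to a single pointwise estimate for $\phi_s:=\sum_{n\ge 1}K_{s,n}h_\beta = K_sh_\beta-\P_sh_\beta$. By Lemma~\ref{proposition_recurrence} (factoring out $\P_{t_1}\widehat\nu\P_{t_2-t_1}\widehat\nu$ and recognising the remaining factor as $\sum_{n\ge0}K_{t-t_2,n}h_\beta=K_{t-t_2}h_\beta$) one gets
\[
\sum_{n=2}^\infty K_{t,n}h_\beta \;=\; \int_0^t \P_r\widehat\nu\,\phi_{t-r}h_\beta\,\dr ,\qquad \phi_s:=\int_0^s\P_u\widehat\nu K_{s-u}h_\beta\,\du ,
\]
so it suffices to prove $\tfrac1t\int_0^t \P_r\widehat\nu\,\phi_{t-r}(x)\,\dr\to0$. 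Since $K_vh_\beta\le h_\beta$ by Corollary~\ref{excesive_function_beta} (this is where $\beta(\alpha-\beta-1)\ge0$ enters), we have $\phi_s\le \Psi_s:=\int_0^s\P_u\widehat\nu h_\beta\,\du$, and $\Psi_s$ is nondecreasing in $s$ with $\Psi_s\to0$ pointwise as $s\to0^+$.

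The key step is the bound $\Psi_s(y)\lesssim |y|^\beta\,(1\wedge s|y|^{-\alpha})$ for all $y\neq0$. One computes elementarily $\widehat\nu h_\beta=\mathcal{A}_{1,\alpha}\mathfrak{B}(\beta+1,\alpha-\beta)\,h_{\beta-\alpha}$ (finite because $\beta\in(-1,\alpha)$, which follows from the hypotheses). For $y<0$, $\Psi_s(y)=\widehat\nu h_\beta(y)\,\nu(y,D)^{-1}\bigl(1-e^{-\nu(y,D)s}\bigr)$, and the estimate follows from $\nu(y,D)=\mathcal{A}_{1,\alpha}\alpha^{-1}|y|^{-\alpha}$ (see \eqref{nu_scaling}) and $1-e^{-a}\le 1\wedge a$. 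For $y>0$, $\Psi_s(y)=\mathcal{A}_{1,\alpha}\mathfrak{B}(\beta+1,\alpha-\beta)\int_0^s P_u^D h_{\beta-\alpha}(y)\,\du$, and the same bound is obtained from \eqref{eq:approx_p_D} together with the scaling \eqref{eq:scaling}, splitting the $u$--integral at $u=|y|^\alpha$; the hypotheses also give $\beta>\alpha/2-1$ and $\beta<\alpha/2$, used to make the boundary integrals near $0$ converge.

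For $x<0$ the conclusion is then immediate: using $\phi_{t-r}\le\Psi_{t-r}\le\Psi_t$, the symmetry of $\nu$, and $e^{-\nu(x,D)r}\le1$,
\[
\P_r\widehat\nu\phi_{t-r}(x)\ \lesssim\ \int_0^\infty (|x|+z)^{-\alpha-1}\min\!\bigl(z^\beta,\,t\,z^{\beta-\alpha}\bigr)\,\dz\ \lesssim_x\ t^{(\beta+1)/\alpha}+t ,
\]
by splitting the $z$--integral at $z=t^{1/\alpha}$ (both pieces give a positive power of $t$ because $\beta\in(-1,\alpha)$), whence $\tfrac1t\int_0^t\P_r\widehat\nu\phi_{t-r}(x)\,\dr\lesssim_x t^{\delta}\to0$ with $\delta=\min((\beta+1)/\alpha,1)>0$.

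For $x>0$ one writes $\P_r\widehat\nu\phi_{t-r}(x)\le\int_D p_r^D(x,z)\,J(z,t)\,\dz$ with $J(z,t):=\int_0^\infty(z+w)^{-\alpha-1}\min(w^\beta,tw^{\beta-\alpha})\,\dw$, for which $J(z,t)\lesssim z^{\beta-\alpha}$ always and $J(z,t)\lesssim z^{-\alpha-1}t^{(\beta+1)/\alpha}+z^{\beta-2\alpha}t$ for $z\ge t^{1/\alpha}$. Splitting $\int_D=\int_0^{t^{1/\alpha}}+\int_{t^{1/\alpha}}^\infty$: on the first piece use $J\lesssim z^{\beta-\alpha}$ and $p_r^D(x,z)\lesssim_x z^{\alpha/2}r^{1/2}$ for $z<t^{1/\alpha}$, $r<t$ small (from \eqref{eq:approx_p_D}), so the $z$--integral is $\lesssim_x r^{1/2}t^{(\beta-\alpha/2+1)/\alpha}$ and $\int_0^t r^{1/2}\,\dr$ produces $\lesssim_x t^{1+(\beta+1)/\alpha}$; on the second piece the $z^{-\alpha-1}t^{(\beta+1)/\alpha}$--term integrates to $\le P_r^D h_{-1-\alpha}(x)\,t^{(\beta+1)/\alpha}\lesssim_x t^{(\beta+1)/\alpha}$ (boundedness of $r\mapsto P_r^D h_{-1-\alpha}(x)$ on $[0,t]$ via \eqref{eq:approx_p_D}, \eqref{eq:scaling}), and after $\int_0^t\,\dr$ gives $\lesssim_x t^{1+(\beta+1)/\alpha}$, while the $z^{\beta-2\alpha}t$--term is controlled the same way (the part $z\in[t^{1/\alpha},x/2]$ contributes $\lesssim_x r\,t^{(\beta-2\alpha+1)/\alpha}$, the rest is $\lesssim_x 1$). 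Summing, $\tfrac1t\sum_{n\ge2}K_{t,n}h_\beta(x)\lesssim_x t^{\delta}\to0$ for some $\delta>0$, and $x<0$ was done above. The main obstacle is precisely the estimate $\Psi_s(y)\lesssim|y|^\beta(1\wedge s|y|^{-\alpha})$ for $y>0$ and the $x>0$ double integral: all integrals are elementary, but one must keep track of several sign cases ($\beta-\alpha<-1$, $\beta-\alpha+1<0$, $\beta-2\alpha+1<0$) against the Dirichlet heat-kernel bound \eqref{eq:approx_p_D}, using the hypothesis $\beta(\alpha-\beta-1)\ge0$ throughout.
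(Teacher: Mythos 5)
Your overall strategy is the same as the paper's: peel off $\widehat P\widehat\nu$--blocks via Lemma~\ref{proposition_recurrence}, use $K_vh_\beta\le h_\beta$ (Corollary~\ref{excesive_function_beta}), and reduce to $\tfrac1t\int_0^t\widehat P_r\widehat\nu\,\Psi_{t-r}(x)\,\dr$ with the key pointwise bound $\Psi_s(y)\lesssim|y|^\beta(1\wedge s|y|^{-\alpha})$. The difference is tactical: the paper treats $x<0$ by bootstrapping from the $x>0$ result plus dominated convergence (so it never needs the $y>0$ half of the $\Psi_s$ bound), whereas you compute both signs of $x$ directly and therefore also need $\int_0^s P_u^D h_{\beta-\alpha}(y)\,\du\lesssim y^\beta(1\wedge sy^{-\alpha})$ for $y>0$, which is a genuinely extra (Green-function) estimate not appearing in the paper. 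Your $x<0$ computation, taken on its own, is correct and arguably more transparent than the paper's bootstrap.

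However, there is a real gap in your $x>0$ argument. You bound
\[
\int_{t^{1/\alpha}}^\infty p_r^D(x,z)\,z^{-\alpha-1}\,\dz\;\le\;P_r^D h_{-1-\alpha}(x)
\]
and then assert that $r\mapsto P_r^Dh_{-1-\alpha}(x)$ is bounded on $[0,t]$. This is false: from \eqref{eq:approx_p_D}, $p_r^D(x,z)\approx c(r,x)\,z^{\alpha/2}$ as $z\to0^+$, so $p_r^D(x,z)\,z^{-1-\alpha}\approx c(r,x)\,z^{-1-\alpha/2}$ is not integrable near $z=0$, and $P_r^Dh_{-1-\alpha}(x)=+\infty$ for every $r>0$. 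The inequality you wrote is therefore vacuous. The step can be repaired by \emph{not} dropping the lower cutoff: for $r\le t$ and $z\ge t^{1/\alpha}$, one has $1\wedge z^{\alpha/2}/\sqrt r=1$, and then $\int_{t^{1/\alpha}}^{x/2}p_r(x,z)z^{-\alpha-1}\,\dz\lesssim_x r\,t^{-1}\le 1$ while $\int_{x/2}^\infty p_r(x,z)z^{-\alpha-1}\,\dz\lesssim_x1$, so the restricted integral is $\lesssim_x1$ (not the full $P_r^D$). In the same region there are also small arithmetic slips in the $z^{\beta-2\alpha}t$--term (a dropped factor of $t$, and ``the rest is $\lesssim_x1$'' should be $\lesssim_x t$ --- as written, this gives a bound that does not tend to zero for $\beta<\alpha-1$). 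These are all fixable, and the route you chose does lead to the result, but the $x>0$ estimate needs to be rewritten with the cutoff retained and the powers of $t$ tracked correctly.
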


\begin{proof}
    First, assume that $x>0$. Then from Corollary \ref{perturbation_formula} it follows that
    \begin{align}\label{eq:perturb_2_times}
    K_t = \P_t + \int_0^t \P_r\widehat{\nu}\P_{t-r}\,\dr + \int_0^t \int_r^{t} \P_r\widehat{\nu}  \P_{s-r}\widehat{\nu} K_{t-s}\,\ds\,\dr,
    \end{align}
    and from Corollary \ref{excesive_function_beta} we have that
    \begin{align*}
    &\frac{1}{t} \sum_{n=2}^\infty K_{t,n}h_\beta(x) \\
    &= \frac1t\int_0^t \int_r^{t} \P_r\widehat{\nu}  \P_{s-r}\widehat{\nu} K_{t-s}h_\beta(x)\,\ds\,\dr \\
    &= \frac1t \int_0^t \dr \int_D \dy \int_{D^c} \dz \int_r^t \ds \int_D \dv \, p_r^D(x,y)\nu(y,z) e^{-\nu(z,D)(s-r)} \nu(z,v) K_{t-s}h_\beta(v) \\
    &\leq \frac1t \int_0^t \dr \int_D \dy \int_{D^c} \dz \int_r^t \ds \int_D \dv \, p_r^D(x,y)\nu(y,z) e^{-\nu(z,D)(s-r)} \nu(z,v)h_\beta(v).
    \end{align*}
    Direct calculations show that for $z<0$,
    \[
    \int_0^\infty \nu(z,v)h_\beta(v)\,\dv \approx  \int_0^\infty \frac{v^\beta}{|z-v|^{\alpha+1}}\,\dv = |z|^{\beta-\alpha} \mathfrak{B}(\beta+1, \alpha-\beta) \approx |z|^{\beta-\alpha}.
    \]
    Therefore, from \eqref{nu_scaling},
    \begin{align*}
        \frac{1}{t} \sum_{n=2}^\infty K_{t,n}h_\beta(x) &\lesssim \frac1t \int_0^t \dr \int_D \dy \int_{D^c} \dz \int_r^t \ds \, p_r^D(x,y)\nu(y,z) e^{-\nu(z,D)(s-r)} |z|^{\beta-\alpha} \\
        &\approx \frac1t \int_0^t \dr \int_D \dy \int_{D^c} \dz \, p_r^D(x,y)\nu(y,z) |z|^{\beta} \big[1 -  e^{-\nu(z,D)(t-r)} \big] \\
        &\lesssim \frac1t \int_0^t \dr \int_D \dy \int_{D^c} \dz \, p_r^D(x,y)\nu(y,z) |z|^{\beta} \big( 1 \wedge t|z|^{-\alpha} \big).      
    \end{align*}
    Using the substitution $r = tu$ we get
    \begin{align*}
        \frac{1}{t} \sum_{n=2}^\infty K_{t,n}h_\beta(x) &\lesssim \int_0^1 \du \int_D \dy \int_{D^c} \dz \, p_{tu}^D(x,y)\nu(y,z) |z|^\beta \big(1\wedge t|z|^{-\alpha}\big),
    \end{align*}
    and by \eqref{eq:approx_p_D},
    \begin{align*}
        &\frac{1}{t} \sum_{n=2}^\infty K_{t,n}h_\beta(x) \\
        &\lesssim \int_0^1 \du \int_D \dy \int_{D^c} \dz \, 
        \Big( 1\wedge \frac{y^{\alpha/2}}{\sqrt{tu}}\Big)\Big((tu)^{-1/\alpha}\wedge \frac{tu}{|x-y|^{\alpha+1}}\Big)\nu(y,z) |z|^\beta \big(1\wedge t|z|^{-\alpha}\big) \\
        &\lesssim \int_0^1 \du \int_{D \cap \{ |x-y|< (tu)^{1/\alpha} \}} \dy \int_{D^c} \dz \, (tu)^{-1/\alpha}\frac{|z|^\beta}{|y-z|^{\alpha+1}}\big(1\wedge t|z|^{-\alpha}\big) \\
        &+ \int_0^1 \du \int_{D \cap \{ |x-y|\geq  (tu)^{1/\alpha} \}} \dy \int_{D^c} \dz \, 
        \Big( 1\wedge \frac{y^{\alpha/2}}{\sqrt{tu}}\Big)\frac{tu}{|x-y|^{\alpha+1}} \frac{|z|^\beta}{|y-z|^{\alpha+1}} \big(1\wedge t|z|^{-\alpha}\big)\\
        &=: I_t + II_t.
    \end{align*}
    If $t$ is sufficiently small compared with $x$ in $I_t$, then $|z-x| \lesssim |z-y|$ and
    \begin{align*}
    I_t &\lesssim \int_0^1 \du \int_{D \cap \{ |x-y|< (tu)^{1/\alpha} \}} \dy \int_{D^c} \dz \, (tu)^{-1/\alpha}\frac{|z|^\beta}{|x-z|^{\alpha+1}}\big(1\wedge t|z|^{-\alpha}\big) \\
    &\lesssim \int_0^1 \du \int_{D^c} \dz \, \frac{|z|^\beta}{|x-z|^{\alpha+1}}\big(1\wedge t|z|^{-\alpha}\big) \\
    &=\int_{D^c}  \frac{|z|^\beta}{|x-z|^{\alpha+1}}\big(1\wedge t|z|^{-\alpha}\big) \,\dz,
    \end{align*}
    and the latter integral converges to $0$ as $t\to 0^+$ from the dominated convergence theorem. Indeed, $\big(1\wedge t|z|^{-\alpha}\big) \leq 1$ and 
    \begin{align*}
    \int_{-\infty}^0 \frac{|z|^\beta}{|x-z|^{\alpha+1}}\,\dz &= x^{\beta -\alpha} \int_0^\infty \frac{w^\beta}{(w+1)^{\alpha+1}}\,\dw = x^{\beta-\alpha} \,\mathfrak{B}(\beta+1, \alpha-\beta) <\infty.
    \end{align*}
    It remains to show that $II_t\to 0$ as $t\to 0^+$. We will show that in a few steps. 

    The integrand in $II_t$ is non-negative, so from the Tonelli's theorem, we can change the order of integrals. Hence, for now, we consider only integral over $u$. We have
    \begin{align*}
        \int_0^1 \ind_{\{ |x-y| \geq (tu)^{1/\alpha}\}}\Big( 1\wedge \frac{y^{\alpha/2}}{\sqrt{tu}}\Big) u\,\du = \int_0^{1\wedge (|x-y|^\alpha/t)} \Big( 1\wedge \frac{y^{\alpha/2}}{\sqrt{tu}}\Big) u\,\du.
    \end{align*}
    Here we consider two cases. If $y\geq x/2$ then $y \geq |x-y|$ and hence $\frac{y^\alpha}{t} \geq 1\wedge \frac{|x-y|^\alpha}{t} \geq u$. This means that $\frac{y^{\alpha/2}}{\sqrt{tu}} \geq 1$, so
    \begin{align*}
        \int_0^{1\wedge (|x-y|^\alpha/t)} \Big( 1\wedge \frac{y^{\alpha/2}}{\sqrt{tu}}\Big) u\,\du = \int_0^{1\wedge (|x-y|^\alpha/t)} u\,\du = \frac{1}{2} \Big( 1 \wedge \frac{|x-y|^\alpha}{t}\Big)^2.
    \end{align*}
    If $y\in (0,x/2)$ then similarly we have that $\frac{y^\alpha}{t} < \frac{|x-y|^\alpha}{t}$. Hence, for sufficiently small $t$,
    \begin{align*}
        \int_0^{1\wedge (|x-y|^\alpha/t)} \Big( 1\wedge \frac{y^{\alpha/2}}{\sqrt{tu}}\Big) u\,\du &\leq \int_0^{1\wedge (y^\alpha/t)} u\,\du + \int_{1\wedge (y^\alpha/t)}^{1\wedge (|x-y|^\alpha/t)} \frac{y^{\alpha/2}}{\sqrt{t}} \sqrt{u}\,\du \\
        &=\frac{1}{2} \Big( 1\wedge \frac{y^\alpha}{t}\Big)^2 + \frac{2}{3} \frac{y^{\alpha/2}}{\sqrt{t}} \Big[ \Big(1\wedge \frac{|x-y|^\alpha}{t}\Big)^{3/2} - \Big( 1\wedge \frac{y^\alpha}{t}\Big)^{3/2}\Big] \\
        &\lesssim \Big( 1\wedge \frac{y^\alpha}{t}\Big)^2 + \Big(\frac{y^\alpha}{t}\Big)^{1/2} \\
        &\lesssim \frac{y^{\alpha/2}}{\sqrt{t}}.
    \end{align*}
    Now we consider again the integral $II_t$. Using previous observations, we get
    \begin{align*}
        II_t &= \int_0^1 \du \int_{D} \dy \int_{D^c} \dz \, 
        \ind_{\{ |x-y|\geq  (tu)^{1/\alpha} \}}
        \Big( 1\wedge \frac{y^{\alpha/2}}{\sqrt{tu}}\Big)\frac{tu}{|x-y|^{\alpha+1}} \frac{|z|^\beta}{|y-z|^{\alpha+1}} \big(1\wedge t|z|^{-\alpha}\big) \\
        &\lesssim \int_0^{x/2} \dy \int_{D^c} \dz \, \frac{y^{\alpha/2}}{\sqrt{t}} \frac{t}{|x-y|^{\alpha+1}} \frac{|z|^\beta}{|y-z|^{\alpha+1}} \big(1\wedge t|z|^{-\alpha}\big)  \\
        &+ \int_{x/2}^\infty \dy \int_{D^c}\dz \, \Big( 1 \wedge \frac{|x-y|^\alpha}{t}\Big)^2\frac{t}{|x-y|^{\alpha+1}} \frac{|z|^\beta}{|y-z|^{\alpha+1}} \big(1\wedge t|z|^{-\alpha}\big) \\
        &=: A_t + B_t.
    \end{align*}
    For $y\in (0,x/2)$, it is obvious that $|x-y| \approx x$ and then
    \begin{align*}
        A_t &\leq \sqrt{t} \int_0^{x/2} \dy \int_{D^c} \dz \, \frac{y^{\alpha/2}}{|x-y|^{\alpha+1}} \frac{|z|^\beta}{|y-z|^{\alpha+1}} \\
        &\approx  x^{-\alpha-1} \sqrt{t} \int_0^{x/2} \dy \int_{D^c} \dz \, y^{\alpha/2}\frac{|z|^\beta}{|y-z|^{\alpha+1}} \\
        &\approx x^{-\alpha-1} \sqrt{t} \int_0^{x/2} y^{\beta-\alpha/2}\,\dy \\
        &\lesssim  x^{\beta-3\alpha/2}\sqrt{t} \to 0,
    \end{align*}
    as $t\to 0^+$. 
    
    Since, for $y>x/2$ and $z<0$, we have $|y-z| > |x/2-z| = \tfrac12 |x-2z|$, then by substitution $w=2z$ we get
    \begin{align*}
        B_t &=  \int_{x/2}^\infty \dy \int_{D^c}\dz \, \Big( 1 \wedge \frac{|x-y|^\alpha}{t}\Big)^2\frac{t}{|x-y|^{\alpha+1}} \frac{|z|^\beta}{|y-z|^{\alpha+1}} \big(1\wedge t|z|^{-\alpha}\big) \\
        &\lesssim  \int_{|x-y| \leq t^{1/\alpha}} \dy \int_{D^c}\dw \, \frac{|x-y|^{\alpha-1}}{t}\frac{|w|^\beta}{|x-w|^{\alpha+1}} \big(1\wedge t|w|^{-\alpha}\big) \\
        &+ \int_{\{y\geq x/2\}\cap\{|x-y|> t^{1/\alpha}\}} \dy \int_{D^c} \dw \frac{t}{|x-y|^{\alpha+1}} \frac{|w|^\beta}{|x-w|^{\alpha+1}} \big(1\wedge t|w|^{-\alpha}\big) \\
        &=: C_t + D_t.
    \end{align*}
    Note that from Tonelli's theorem we get
    \begin{align*}
        C_t &\lesssim \int_{D^c} \frac{|w|^\beta}{|x-w|^{\alpha+1}} \big(1\wedge t|w|^{-\alpha}\big)\,\dw \to 0,
    \end{align*}
    as $t\to 0^+$, which follows in the same way as in the case of $I_t$. For the integral $D_t$, again from the Tonelli's theorem,  we have 
    \begin{align*}
        D_t &= \int_{\{y\geq x/2\}\cap\{|x-y|> t^{1/\alpha}\}} \dy \int_{D^c} \dw \,\frac{t}{|x-y|^{\alpha+1}} \frac{|w|^\beta}{|x-w|^{\alpha+1}} \big(1\wedge t|w|^{-\alpha}\big)  \\
        &\leq \int_{D^c} \frac{t|w|^\beta}{|x-w|^{\alpha+1}} \big(1\wedge t|w|^{-\alpha}\big) \int_{\{|x-y|> t^{1/\alpha}\}}   \frac{\dy}{|x-y|^{\alpha+1}} \,\dw
        \\
        &= \int_{D^c} \frac{t|w|^\beta}{|x-w|^{\alpha+1}} \big(1\wedge t|w|^{-\alpha}\big) \int_{\{|u|> t^{1/\alpha}\}}   \frac{\du}{|u|^{\alpha+1}} \,\dw
        \\
        &=\frac{2}{\alpha} \int_{D^c}  \frac{|w|^\beta}{|x-w|^{\alpha+1}} \big(1\wedge t|w|^{-\alpha}\big)\,\dw \to 0,
    \end{align*}
    as $t\to 0^+$. The latter convergence follows from the dominated convergence theorem.

\medskip
    Now let $x<0$. From Lemma \ref{proposition_recurrence} we have the following equality:
    \begin{align*}
        \frac{1}{t}\sum_{n=2}^\infty K_{t,n}h_\beta(x) &= \frac{1}{t}\sum_{n=1}^\infty \int_0^t \P_r \widehat{\nu} K_{t-r,n}h_\beta(x)\,\dr \\
        &= \frac{1}{t} \int_0^t \,\dr \int_D \dz \, e^{-\nu(x,D)r} \nu(x,z) \sum_{n=1}^\infty K_{t-r,n}h_\beta(z) \\
        &= \int_0^1 \,\dr \int_D \dz \, e^{-\nu(x,D)tr} \nu(x,z) \sum_{n=1}^\infty K_{t(1-r),n}h_\beta(z) \\
        &\leq \int_0^1 \,\dr \int_D \dz \, \nu(x,z) \sum_{n=1}^\infty K_{t(1-r),n}h_\beta(z).
    \end{align*}
    From the first part of the proof, we know that $\lim\limits_{t\to 0^+}\tfrac{1}{t}\sum_{n=2}^\infty K_{t,n}h_\beta(z) = 0$, where $z>0$. Hence, of course,  $\lim\limits_{t\to 0^+} \sum_{n=2}^\infty K_{t,n}h_\beta(z) = 0$. Moreover, from Corollary \ref{cor:K_t_1_h_beta} it follows that $\lim\limits_{t\to 0^+} K_{t,1}h_\beta(z) = 0$. Combining this results, for $z>0$, we get $\lim\limits_{t\to 0^+} \sum_{n=1}^\infty K_{t,n}h_\beta(z) = 0$. From Corollary \ref{excesive_function_beta} it follows that
    $\sum_{n=1}^\infty K_{t(1-r),n}h_\beta(z) \leq K_{t(1-r)}h_\beta(z) \leq h_\beta(z)$ and we observe that 
    \begin{align*}
        \int_0^1 \,\dr \int_D \dz \, \nu(x,z) h_\beta(z) \approx \int_0^\infty \frac{z^\beta}{|x-z|^{\alpha+1}}\,\dz = x^{\beta-\alpha} \mathfrak{B}(\beta+1, \alpha-\beta) < \infty.
    \end{align*}
    Therefore, by the dominated convergence theorem,
    \[
    \int_0^1 \,\dr \int_D \dz \, \nu(x,z) \sum_{n=1}^\infty K_{t(1-r),n}h_\beta(z)\to 0,
    \]
    as $t\to 0^+$. 
\end{proof}

\begin{proof}[Proof of Proposition \ref{gen_inequality}]
Let $\mathcal{G}(t,x) := (h_\beta(x) - K_th_\beta(x))/t$ and assume that $x>0$. Then from Corollary \ref{cor:gener_h_beta},
\begin{align*}
    \lim_{t\to 0^+} \mathcal{G}(t,x) &= \lim_{t\to 0^+} \frac{h_\beta(x) - \P_th_\beta(x)}{t} - \lim_{t\to 0^+} \frac{1}{t}\sum_{n=1}^\infty K_{t,n}h_\beta(x) \\
    &=\mathrm{p.v.} \int_D (h_\beta(x) - h_\beta(y))\nu(x,y)\,\dy + h_\beta(x)\nu(x,D^c) - \lim_{t\to 0^+} \frac{1}{t}\sum_{n=1}^\infty K_{t,n}h_\beta(x).
\end{align*}
From Corollary \ref{cor:K_t_1_h_beta} and from Lemma \ref{h_sum_from_2} we have
\begin{align*}
    \lim_{t\to 0^+} \frac{1}{t} \sum_{n=1}^\infty K_{t,n}h_\beta(x) = \int_{D^c} h_\beta(y)\nu(x,y)\,\dy.
\end{align*}
Therefore,
\begin{align}\label{eq:gen_0}
    \lim_{t\to 0^+} \mathcal{G}(t,x) &= \mathrm{p.v.} \int_D (h_\beta(x) - h_\beta(y))\nu(x,y)\,\dy + \int_{D^c} (h_\beta(x) - h_\beta(y))\nu(x,y)\,\dy \\
    &= \mathrm{p.v.} \int_{\R} (h_\beta(x) - h_\beta(y))\nu(x,y)\,\dy.\nonumber
\end{align}
Now we will calculate the exact values of the above integrals. Using the substitution $y = |x|z$ we obtain,
\begin{align*}
    \mathrm{p.v.} \int_{D} (h_\beta(x)-h_\beta(y))\nu(x,y)\,\dy = \mathcal{A}_{1,\alpha} |x|^{\beta - \alpha}\,\mathrm{p.v.} \int_0^\infty \frac{1-z^\beta}{|1-z|^{\alpha+1}}\,\dz.
\end{align*}
From Section 5 of \cite{MR2006232} we have 
\begin{align*}
     \mathrm{p.v.} \int_0^\infty \frac{1-z^\beta}{|1-z|^{\alpha+1}}\,\dz = - \gamma(\alpha,\beta) \geq 0.
\end{align*}
Hence,
\begin{align}\label{eq:gen_1}
    \mathrm{p.v.} \int_{D} (h_\beta(x)-h_\beta(y))\nu(x,y)\,\dy = -\mathcal{A}_{1,\alpha} |x|^{\beta - \alpha} \gamma(\alpha,\beta). 
\end{align}
Moreover, from \eqref{nuDc_scaling},
\begin{align}\label{eq:gen_2}
    \int_{D^c} (h_\beta(x) - h_\beta(y))\nu(x,y)\,\dy &= h_\beta(x)\nu(x,D^c) - \mathcal{A}_{1,\alpha} \int_{-\infty}^0 \frac{|y|^\beta}{|x-y|^{\alpha+1}}\,\dy  \nonumber\\
    &= \mathcal{A}_{1,\alpha} |x|^{\beta -\alpha} \big[ \alpha^{-1} - \mathfrak{B}(\beta+1, \alpha-\beta)\big].
\end{align}
From \eqref{eq:gen_0}, \eqref{eq:gen_1} and \eqref{eq:gen_2} we get
\[
\lim_{t\to 0^+} \mathcal{G}(t,x) = \mathcal{A}_{1,\alpha} |x|^{\beta-\alpha} \big[\alpha^{-1} - \mathfrak{B}(\beta+1, \alpha-\beta) -\gamma(\alpha,\beta) \big].
\]

\medskip
Now consider the case $x<0$. Then,
\[
\mathcal{G}(t,x) = h_\beta(x)\frac{1 - e^{-\nu(x,D)t}}{t} - \frac{1}{t} K_{t,1}h_\beta(x) - \frac{1}{t}\sum_{n=2}^\infty K_{t,n}h_\beta(x).
\]
From Corollary \ref{cor:K_t_1_h_beta} and Lemma \ref{h_sum_from_2} we obtain that
\begin{align}\label{x_neg_1}
    \lim_{t\to 0^+} \mathcal{G}(t,x) = \int_D (h_\beta(x) - h_\beta(y))\nu(x,y)\,\dy.
\end{align}
Moreover, from \eqref{nu_scaling},
\begin{align}\label{}
    \int_{D} (h_\beta(x) - h_\beta(y))\nu(x,y)\,\dy &= h_\beta(x)\nu(x,D) - \mathcal{A}_{1,\alpha} \int_0^\infty \frac{y^\beta}{|x-y|^{\alpha+1}} \,\dy \nonumber\\
    &= \mathcal{A}_{1,\alpha} |x|^{\beta -\alpha} \big[ \alpha^{-1} - \mathfrak{B}(\beta+1, \alpha-\beta)\big].
\end{align}
Hence,
\[
\lim_{t\to 0^+} \mathcal{G}(t,x) = \mathcal{A}_{1,\alpha} |x|^{\beta -\alpha} \big[ \alpha^{-1} - \mathfrak{B}(\beta+1, \alpha-\beta)\big].
\]

\medskip
Now, it remains to show that $\mathcal{C}(\alpha,\beta,x)>0$ for $\beta(\alpha-\beta-1)>0$. From \cite[(5.3)]{MR2006232} it follows that $\gamma(\alpha,\beta)<0$. Hence, it suffices to show that for $\beta(\alpha-\beta-1)>0$, $\mathcal{M}(\alpha,\beta) := \alpha^{-1} - \mathfrak{B}(\beta+1, \alpha-\beta)>0$.
From the properties of the Beta function we obtain that
\begin{align*}
    \mathcal{M}(\alpha,\beta) = \frac{1}{\Gamma(\alpha+1)} \big[ \Gamma(\alpha) - \Gamma(\beta+1)\Gamma(\alpha-\beta)\big].
\end{align*}
We will consider two cases. 

Assume that $1<\alpha<2$ and $0<\beta<\alpha-1$ and define a function $(0,\alpha-1)\ni \beta\mapsto F(\beta) := \Gamma(\alpha) - \Gamma(\beta+1)\Gamma(\alpha-\beta)$. Then 
\[
F'(\beta) = \Gamma(\beta+1)\Gamma(\alpha-\beta) \big[ \psi(\alpha-\beta) - \psi(\beta+1)\big],
\]
where $\psi$ denotes the digamma function. From the fact that the digamma function is increasing on $(0,\infty)$, we obtain that for $\beta \in (0, (\alpha-1)/2)$, $F'(\beta) > 0$ and for $\beta \in ( (\alpha-1)/2,\alpha-1)$, $F'(\beta) < 0$. Hence, for $\beta = (\alpha-1)/2$ the function $F$ has the global maximum on the interval $(0,\alpha-1)$.
Moreover, for $\beta\in (0,\alpha-1)$, $F(\beta) > F(0) = F(\alpha-1) = 0$. Thus, $\mathcal{M}(\alpha,\beta)>0$.

Now, assume that $0<\alpha<1$ and $\alpha-1<\beta<0$. Using the same notation as in the previous case we get that for $\beta \in (\alpha-1, (\alpha-1)/2)$, $F'(\beta)>0$ and for $\beta\in ((\alpha-1)/2,0)$, $F'(\beta)<0$. And then we conclude that again for $\beta = (\alpha-1)/2$ the function $F$ has the global maximum on the interval $(\alpha-1,0)$ and moreover, $F(\beta)>0$ for $\beta\in (\alpha-1,0)$. Hence, $\mathcal{M}(\alpha,\beta)>0$.
\end{proof}

\begin{corollary}\label{c.lpg}
If $f\in C^2(\R_*)$ and $|f(x)|\lesssim \left(1+|x|^{\alpha-1}\right)$ on $\R_*$, then
\[
 \lim_{t\to 0^+} \frac{f(x) - K_t f(x)}{t} = \mathrm{p.v.} \int_{\R} (f(x) - f(y))\nu(x,y)\, \dy, \quad x>0,
\]
and
\[
 \lim_{t\to 0^+} \frac{f(x) - K_t f(x)}{t} = \int_D (f(x) - f(y)) \nu(x,y)\, \dy, \quad x<0.
\]
\end{corollary}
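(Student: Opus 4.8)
The plan is to mimic the proof of Proposition~\ref{gen_inequality}, replacing the role of $h_\beta$ by $f$ and dominating the tail sum. The key observation is that the growth bound $|f(x)|\lesssim 1+|x|^{\alpha-1}$ gives $|f|\lesssim h_0+h_{\alpha-1}$ on $\R_*$, and both $\beta=0$ and $\beta=\alpha-1$ satisfy $\beta(\alpha-\beta-1)=0\ge 0$, so the $h_\beta$-specific estimates of Section~\ref{chap_generator} are available for $h_0$ and $h_{\alpha-1}$. First I would record three consequences of this bound. (i) Since $\int_0^\infty (1+y^{\alpha-1})(1+y)^{-\alpha-1}\,\dy<\infty$ for $\alpha\in(0,2)$, the restriction of $f$ to $D$ meets the integrability hypothesis of Lemma~\ref{stable_generator_estimate}. (ii) For every $x\neq 0$ one has $\widehat\nu|f|(x)<\infty$, because $\int |y|^{\beta}|x-y|^{-\alpha-1}\,\dy$ over the appropriate half-line converges for $\beta\in\{0,\alpha-1\}$; splitting $f=f_+-f_-$ and applying Lemma~\ref{lem_zbieznosc_K_1} to $f_\pm$ (nonnegative, with finite $\widehat\nu$-integral) together with the linearity of the kernel $K_{t,1}$ then gives $\tfrac1t K_{t,1}f(x)\to\widehat\nu f(x)$. (iii) Since $h_0$ and $h_{\alpha-1}$ are supermedian for $(K_t)$ by Lemma~\ref{K_subprobability} and Proposition~\ref{theorem_excessive_function}, one has $\sum_{n\ge 2}K_{t,n}|f|(x)\lesssim \sum_{n\ge 2}K_{t,n}(h_0+h_{\alpha-1})(x)\le h_0(x)+h_{\alpha-1}(x)<\infty$, so $\sum_{n\ge 2}K_{t,n}f(x)$ is well defined and $\tfrac1t\bigl|\sum_{n\ge 2}K_{t,n}f(x)\bigr|\le\tfrac1t\sum_{n\ge 2}K_{t,n}(h_0+h_{\alpha-1})(x)\to 0$ by Lemma~\ref{h_sum_from_2} applied with $\beta=0$ and $\beta=\alpha-1$.

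Next, for $x>0$ I would decompose $\tfrac{f(x)-K_tf(x)}{t} = \tfrac{f(x)-\P_tf(x)}{t} - \tfrac1t K_{t,1}f(x) - \tfrac1t\sum_{n\ge 2}K_{t,n}f(x)$ from \eqref{eq:K_t_def}, and pass to the limit termwise. Since $\P_t=P_t^D$ on $D$, Lemma~\ref{stable_generator_estimate} shows the first term tends to $\mathrm{p.v.}\int_D(f(x)-f(y))\nu(x,y)\,\dy + f(x)\nu(x,D^c)$, while the second and third terms tend, by (ii) and (iii), to $\widehat\nu f(x)$ and $0$. As $f(x)\nu(x,D^c)=\int_{D^c}f(x)\nu(x,y)\,\dy$ and $\nu(x,\cdot)$ has no singularity on $D^c$, the limit is $\mathrm{p.v.}\int_{\R}(f(x)-f(y))\nu(x,y)\,\dy$, exactly as in the derivation of \eqref{eq:gen_0}. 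For $x<0$ the first term is $f(x)\tfrac{1-e^{-\nu(x,D)t}}{t}\to f(x)\nu(x,D)=\int_D f(x)\nu(x,y)\,\dy$, while (ii) gives $\tfrac1t K_{t,1}f(x)\to\int_D f(y)\nu(x,y)\,\dy$ and (iii) kills the tail sum; hence the limit is $\int_D(f(x)-f(y))\nu(x,y)\,\dy$.

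The content beyond Proposition~\ref{gen_inequality} is essentially the domination step, so the anticipated obstacle is mild: one must verify that the dominated-convergence and Vitali-type arguments behind Lemmas~\ref{lem_zbieznosc_K_1}, \ref{h_sum_from_2} and~\ref{stable_generator_estimate} used nothing about $h_\beta$ beyond the pointwise bounds $|f(y)|\lesssim 1+|y|^{\alpha-1}$ and $\widehat\nu|f|<\infty$ (which is the case, since the integrands there are controlled by $|f(y)|\nu(\cdot,y)$), and that the splitting $f=f_+-f_-$ is legitimate because each of the limiting quantities is finite.
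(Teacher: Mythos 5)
Your proof is correct and takes essentially the same approach as the paper's one-sentence proof, which simply points to Lemma~\ref{h_sum_from_2} and the proof of Proposition~\ref{gen_inequality} for $C^2$ functions majorized by $h_0+h_{\alpha-1}$. You have spelled out the domination step ($|f|\lesssim h_0+h_{\alpha-1}$, apply Lemma~\ref{h_sum_from_2} with $\beta=0$ and $\beta=\alpha-1$), the termwise limits via Lemmas~\ref{stable_generator_estimate} and~\ref{lem_zbieznosc_K_1}, and the verification of the integrability and $\widehat\nu$-finiteness hypotheses, all of which is exactly what the paper's terse reference leaves implicit.
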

\begin{proof}
In view of Lemma~\ref{h_sum_from_2} and the proof of Proposition~\ref{gen_inequality}, the result holds for $C^2$ functions $f$ majorized by the functions $h_0$ or $h_{\alpha-1}$, in fact, by their sum. 
\end{proof}

\section{Function spaces and the Dirichlet form}\label{chap_dirichlet}

In this section, we prove the Hardy inequality for $K$ in case $\alpha\neq 1$ and investigate the Dirichlet form $\E$ corresponding to the process $X$. We also prove various characterizations of the domain of the form $\E$.

\subsection{Dirichlet form}

From Section \ref{sec:semigroup} we know that  $(K_t)_{t\geq 0}$ is symmetric and strongly continuous contraction semigroup on $L^2(\R)$. Following  \cite[p. 3]{MR2849840} or \cite[p. 23]{MR2778606}, for $t>0$,
\[
\mathcal{E}^{(t)}(u,v) := \frac{1}{t}\langle u-K_tu,v\rangle_{L^2(\R)}, \qquad u,v\in L^2(\R),
\]
is a symmetric form on $L^2(\R)$. From general theory (see, e.g.,  \cite{MR2849840}, \cite{MR2778606}) it turns out that for $u\in L^2(\R)$, $\mathcal{E}^{(t)}(u,u)$ is non-negative. Moreover, if $t>0$ decreases, then $\mathcal{E}^{(t)}(u,u)$ is increasing (it follows from the spectral representation of $(K_t)_{t\geq 0}$). Therefore, the limit $\lim\limits_{t\to 0^+} \mathcal{E}^{(t)}(u,u) = \sup\limits_{t>0} \mathcal{E}^{(t)}(u,u)$ exists, and we may then set
\begin{align*}
    \F := \{u\in L^2(\R): ~ \lim_{t\to 0^+} \mathcal{E}^{(t)}(u,u)<\infty\},
\end{align*}
\begin{align}\label{eq:E_definition}
    \E(u,v) := \lim_{t\to 0^+} \mathcal{E}^{(t)}(u,v), \qquad u,v\in \F.
\end{align}
Then $\mathcal{E}$ becomes a closed symmetric form on $L^2(\R)$ corresponding to the semigroup $(K_t)_{t\geq 0}$ (see \cite[Chapter 1, p. 3]{MR2849840}). Moreover, from Lemma \ref{K_subprobability} it follows that for each $t\geq 0$ the operator $K_t$ is \emph{Markovian}, i.e. for $u\in L^2(\R)$, $0\leq u\leq 1$ we have $0\leq K_tu\leq 1$. Hence, from \cite[Theorem 1.4.1]{MR2778606}, the form $\mathcal{E}$ is \emph{Markovian} (for the definition of Markovian property of symmetric forms see, e.g.,  \cite[p. 4]{MR2778606}). Therefore, the form $(\mathcal{E}, \F)$ corresponding to the semigroup $(K_t)_{t\geq 0}$ is in fact a \emph{Dirichlet form}. 

In what follows we will use the following abbreviation: $\E[u] := \E(u,u)$, $u\in \F$. We want to extend this definition to the whole space $L^2(\R)$, by setting $\E[v] = \infty$ if $v\notin\F$. This convention will be useful in the formulation of the Hardy inequality for the form $\E$.

From the general properties of the Dirichlet forms we can prove the following (see proof of the Theorem 1.4.2 in \cite{MR2778606}): if $u_1, u_2\in \F$, $w\in L^2(\R)$ satisfy 
\[
|w(x) - w(y)| \leq |u_1(x) - u_1(y)| + |u_2(x)-u_2(y)|
\]
and
\[
|w(x)| \leq |u_1(x)| + |u_2(x)|
\]
almost everywhere, then $w\in\F$ and
\begin{align}
    \label{eq:E_triangle_general}
    \sqrt{\E[w]} \leq \sqrt{\E[u_1]} + \sqrt{\E[u_2]}.
\end{align}
By taking $u_1, u_2\in\F$ and $w:= u_1+u_2 \in L^2(\R)$, from \eqref{eq:E_triangle_general}, we obtain the following triangle inequality for $\E^{1/2}$:
\begin{align}
    \label{eq:E_triangle}
    \sqrt{\E[u_1+u_2]} \leq \sqrt{\E[u_1]} + \sqrt{\E[u_2]}, \qquad u_1, u_2\in\F.
\end{align}
Hence, from the definition of the form $\E$ and from the property \eqref{eq:E_triangle}, it follows that in fact $\sqrt{\E[\cdot]}$ is a seminorm on $\F$. From \eqref{eq:E_triangle} it follows also that $\F$ is in fact a linear subspace of $L^2(\R)$.

On the space $\F$ we define an inner product $\langle \cdot,\cdot\rangle_\F$ and a norm $\norm{\cdot}_\F$ by the following expressions:
\[
\langle u,v\rangle_\F := \langle u,v\rangle_{L^2(\R)} + \E(u,v), \qquad u,v\in \F,
\]
\begin{align}\label{eq:F_norm}
\norm{u}_\F^2 := \norm{u}_{L^2(\R)}^2 + \E[u], \qquad u\in \F.
\end{align}
The fact that $\langle\cdot,\cdot\rangle_{\F}$ is an inner product is an elementary exercise, and we will not present its details. Moreover, let us observe that the norm $\norm{\cdot}_\F$ is norm induced by the inner product $\langle\cdot,\cdot\rangle_\F$.
Then from the fact that $\E$ is a Dirichlet form, $(\F, \norm{\cdot}_\F)$ becomes a Hilbert space.

\subsection{Proof of Theorem~\ref{nier_Hardyego}}\label{sec:dnH}

We will use the method of Bogdan, Dyda, Kim \cite{MR3460023}.

    Let $h_\beta(x) = |x|^\beta$ for $\beta(\alpha-\beta-1)>0$ and $v := u/h_\beta$, with the convention that for $h_\beta(x) = 0$ or $\infty$, we have $v(x) = 0$. 
    Of course $vh_\beta\in L^2(\R)$, because $|vh_\beta| \leq |u|$. Moreover, from Corollary \ref{excesive_function_beta} we know that $vK_th_\beta \in L^2(\R)$. Therefore, for $t>0$ we have
    \begin{align*}
        \mathcal{E}^{(t)}[vh_\beta] = \frac{1}{t} \langle v(h_\beta-K_th_\beta), vh_\beta \rangle_{L^2(\R)} + \frac{1}{t} \langle vK_th_\beta - K_t(vh_\beta), vh_\beta\rangle_{L^2(\R)} =: A_t + B_t.
    \end{align*}
    
Note that
\[
    B_t = \frac{1}{t} \int_\R \dx\int_\R K_t(x,\dy) \,v(x)h_\beta(x)h_\beta(y)(v(x) - v(y))
\]
Moreover, using Lemma \ref{symmetry_of_K}, we obtain
\begin{align*}
    B_t & = \frac{1}{t} \langle vK_th_\beta, vh_\beta\rangle_{L^2(\R)} 
    - \frac{1}{t} \langle  K_t(vh_\beta), vh_\beta\rangle_{L^2(\R)}\\
    &=\frac{1}{t} \int_\R \dx\int_\R K_t(x,\dy) \, v^2(x)h_\beta(x)h_\beta(y) - \frac{1}{t} \int_\R \dx\int_\R K_t(x,\dy) \, v(x)h_\beta(x)v(y)h_\beta(y) \\
    &= \frac{1}{t} \int_\R \dx\int_\R K_t(x,\dy) \, v^2(y)h_\beta(y)h_\beta(x)- \frac{1}{t} \int_\R \dx\int_\R K_t(x,\dy) \, v(x)h_\beta(x)v(y)h_\beta(y) \\
    &= \frac{1}{t} \int_\R \dx\int_\R K_t(x,\dy) \, v(y)h_\beta(y)h_\beta(x)(v(y) - v(x)).
\end{align*}
From the above equalities, we get
\begin{align*}
    B_t =\frac{1}{2t} \int_\R \dx\int_\R K_t(x,\dy) \, h_\beta(x)h_\beta(y) (v(x) - v(y))^2 \geq 0,
\end{align*}
hence
\begin{align*}
    \mathcal{E}^{(t)}[vh_\beta] \geq A_t = \int_\R v^2(x)h_\beta(x) \frac{h_\beta(x) - K_th_\beta(x)}{t}\,\dx.
\end{align*}
From Corollary \ref{excesive_function_beta}, the integrand is non-negative and then from Fatou's lemma  and Proposition \ref{gen_inequality} we have
\begin{align*}
    \mathcal{E}[vh_\beta] &\geq \int_\R v^2(x)h_\beta(x) \liminf_{t\to 0^+} \frac{h_\beta(x) - K_th_\beta(x)}{t}\,\dx \\
    &= \mathcal{A}_{1,\alpha} \int_\R v^2(x)h_\beta^2(x)  \mathcal{C}(\alpha,\beta,x) |x|^{-\alpha}\,\dx.
\end{align*}
Thus,
\begin{align}\label{eq:Hardy_with_beta}
    \mathcal{E}[u] &\geq (\mathcal{C}_{\alpha,\beta} + \mathcal{D}_{\alpha,\beta}) \int_D u^2(x) |x|^{-\alpha}\,\dx + \mathcal{C}_{\alpha,\beta} \int_{D^c} u^2(x) |x|^{-\alpha}\,\dx,
\end{align}
where
\[
\mathcal{C}_{\alpha,\beta} = \mathcal{A}_{1,\alpha} \big[\alpha^{-1} - \mathfrak{B}(\beta+1, \alpha-\beta)\big], \qquad 
\mathcal{D}_{\alpha,\beta} = -\mathcal{A}_{1,\alpha}\gamma(\alpha,\beta).
\]
The fact that $\mathcal{C}_{\alpha,\beta}>0$ and $\mathcal{D}_{\alpha,\beta}> 0$ follows also from Proposition \ref{gen_inequality}.

\medskip
Now, we will show that the constants $\mathcal{C}_{\alpha,\beta}$ and $\mathcal{D}_{\alpha,\beta}$ have the biggest values for $\beta = (\alpha-1)/2$. Recall that 
\[
\mathcal{D}_{\alpha,\beta} = -\mathcal{A}_{1,\alpha} \int_0^1 \frac{(t^\beta-1)(1-t^{\alpha-\beta-1})}{(1-t)^{\alpha+1}}\,\dt,
\]
and define the function 
\[
u(\beta,t) = \frac{(t^\beta-1)(1-t^{\alpha-\beta-1})}{(1-t)^{\alpha+1}}, 
\]
where $t\in (0,1)$ and $\beta(\alpha-\beta-1)>0$. We will calculate the derivative
\[
\frac{\partial}{\partial\beta} \mathcal{D}_{\alpha,\beta} = -\mathcal{A}_{1,\alpha} \frac{\partial}{\partial\beta} \int_0^1 u(\beta,t)\,\dt.
\]
We want to use \cite[Theorem 11.5]{Schilling2} to change the order of derivative and integral.
Obviously, the function $(0,1)\ni t\mapsto u(\beta,t)$ is integrable for each considered $\beta$ and the function $\beta\mapsto u(\beta,t)$ is differentiable for each $t\in (0,1)$. Moreover, note that for $t\in (0,1)$,
\begin{align}\label{eq:pochodna_po_beta}
    \left|\frac{\partial}{\partial\beta} u(\beta,t)\right| = \frac{|\ln{t}|}{(1-t)^{\alpha+1}} \big|t^\beta - t^{\alpha-\beta-1}\big|.
\end{align}

\medskip
We will show that, for $t\in (0,1)$ and $\beta(\alpha-\beta-1)>0$,
\begin{align}\label{eq:bez_beta}
    \big|t^\beta - t^{\alpha-\beta-1}\big| \leq \big|1-t^{\alpha-1}\big|.
\end{align}
We consider two cases. First, assume that $1<\alpha<2$ and $0<\beta<\alpha-1$. Then for $\beta\in (0,\tfrac{\alpha-1}{2})$, $t^{\alpha-1} \leq t^{\alpha-1-2\beta}$ and
\[
\big|1-t^{\alpha-1}\big| = 1-t^{\alpha-1} \geq 1-t^{\alpha-1-2\beta} \geq t^\beta \big(1-t^{\alpha-1-2\beta}\big) = t^\beta - t^{\alpha-\beta-1} = \big| t^\beta - t^{\alpha-\beta-1}\big|.
\]
Similarly, for $\beta\in [\tfrac{\alpha-1}{2},\alpha-1)$, $t^{\alpha-1} \leq t^{2\beta - \alpha+1}$ and
\[
\big| 1-t^{\alpha-1}\big| = 1-t^{\alpha-1} \geq 1-t^{2\beta - \alpha+1} \geq t^{\alpha-\beta-1} \big( 1-t^{2\beta-\alpha+1}\big) = t^{\alpha-\beta-1} - t^{\beta} = \big| t^\beta - t^{\alpha-\beta-1}\big|.
\]

Now assume that $0<\alpha<1$ and $\alpha-1<\beta<0$. Note that \eqref{eq:bez_beta} is equivalent to the inequality
\begin{align}\label{eq:nier_a<1}
\big| t^{-\beta} - t^{(2-\alpha) + \beta-1}\big| \leq \big| 1- t^{(2-\alpha) - 1} \big|.
\end{align}
Let $\gamma := 2-\alpha \in (1,2)$ and $\delta := -\beta \in (0, \gamma-1)$ and from the first case we get the following inequality
\[
\big| t^\delta - t^{\gamma-\delta-1}\big| \leq \big| 1 - t^{\gamma-1}\big|,
\]
which is equivalent to the inequality \eqref{eq:nier_a<1}. Thus, we have proved \eqref{eq:bez_beta} in the second case.

\medskip
From \eqref{eq:pochodna_po_beta} and \eqref{eq:bez_beta} we have
\begin{align}
    \left|\frac{\partial}{\partial\beta} u(\beta,t)\right| \leq \frac{|\ln{t}|}{(1-t)^{\alpha+1}} |1-t^{\alpha-1}|.
\end{align}
Moreover,
\begin{align}\label{eq:Hardy_skonczonosc}
    I := \int_0^1 \frac{|\ln{t}|}{(1-t)^{\alpha+1}} |1-t^{\alpha-1}|\,\dt <\infty.
\end{align}
We will show that again in two cases.

Assume that $\alpha\in (0,1)$. Note that for $t\in (1/2,1)$ we have $|\ln{t}|\approx 1-t$ and $t^{\alpha-1} - 1 \leq t^{-1}-1 = (1-t)/t$. Then
\begin{align*}
    I &= \int_0^{1/2} \frac{|\ln{t}|}{(1-t)^{\alpha+1}} (t^{\alpha-1}-1)\,\dt + \int_{1/2}^1 \frac{|\ln{t}|}{(1-t)^{\alpha+1}} (t^{\alpha-1}-1)\,\dt \\
    &\lesssim \int_0^{1/2} |\ln{t}| \, t^{\alpha-1}\,\dt + \int_{1/2}^1 \frac{\dt}{t (1-t)^{\alpha-1}} \\
    &\lesssim \int_0^{1/2} |\ln{t}| \, t^{\alpha-1}\,\dt + \int_{1/2}^1 \frac{\dt}{(1-t)^{\alpha-1}} <\infty.
\end{align*}
Now let $\alpha\in (1,2)$. Then for $t\in (0,1)$, $1-t^{\alpha-1} \leq 1-t$. Hence,
\begin{align*}
    I &= \int_0^{1/2} \frac{|\ln{t}|}{(1-t)^{\alpha+1}} (1-t^{\alpha-1})\,\dt  + \int_{1/2}^1 \frac{|\ln{t}|}{(1-t)^{\alpha+1}} (1-t^{\alpha-1})\,\dt  \\
    &\lesssim \int_0^{1/2} |\ln{t}|\,\dt  + \int_{1/2}^1 \frac{|\ln{t}|}{(1-t)^{\alpha}} \,\dt \\
    &\approx \int_0^{1/2} |\ln{t}|\,\dt  + \int_{1/2}^1 \frac{\dt}{(1-t)^{\alpha-1}} < \infty.
\end{align*}
Thus we have proved \eqref{eq:Hardy_skonczonosc}.

Therefore, from \cite[Theorem 11.5]{Schilling2} we obtain that
\[
\frac{\partial}{\partial\beta} \mathcal{D}_{\alpha,\beta} = \mathcal{A}_{1,\alpha} \int_0^1 \frac{\ln{t}}{(1-t)^{\alpha+1}} \big[t^{\alpha-\beta-1} - t^\beta\big]\,\dt.
\]
Note that for $\beta < (\alpha-1)/2$, $\frac{\mathrm{d}}{\mathrm{d}\beta} \mathcal{D}_{\alpha,\beta} > 0$ and for $\beta > (\alpha-1)/2$, $\frac{\mathrm{d}}{\mathrm{d}\beta} \mathcal{D}_{\alpha,\beta} <0$. Hence, $\mathcal{D}_{\alpha,\beta}$ has the biggest value for $\beta = (\alpha-1)/2$. Moreover, from the proof of Proposition \ref{gen_inequality}, it follows that the constant $\mathcal{C}_{\alpha,\beta}$ has the biggest value also for $\beta = (\alpha-1)/2$.

Thus, by taking the maximum over $\beta \in (0,\alpha-1)$ in \eqref{eq:Hardy_with_beta} we get the desired inequality:
\[
    \mathcal{E}[u] \geq (\mathcal{C}_{\alpha, (\alpha-1)/2} + \mathcal{D}_{\alpha,(\alpha-1)/2}) \int_D u^2(x) |x|^{-\alpha}\,\dx + \mathcal{C}_{\alpha,(\alpha-1)/2} \int_{D^c} u^2(x) |x|^{-\alpha}\,\dx. \qedhere
\]
The proof is complete. \hfill \qed

\medskip
In case $\alpha = 1$ the above proof is also valid, but we  obtain only trivial inequality (with constant $0$), which cannot be improved because of \cite[(F2) with $D=\mathbb R^*$]{MR2085428}.

From Theorem \ref{nier_Hardyego} we have obvious corollary.

\begin{corollary}\label{cor_Hardy}
    For $u\in L^2(\R)$ and $\alpha\in (0,1)\cup (1,2)$,
    \begin{align*}
        \int_\R u^2(x)|x|^{-\alpha}\,\dx \lesssim \E[u].
    \end{align*}
\end{corollary}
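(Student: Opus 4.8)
The plan is to split the integral $\int_{\R} u^2(x)|x|^{-\alpha}\,\dx$ into its contributions over $D$ and $D^c$ and then apply Theorem~\ref{nier_Hardyego} directly. First I would dispose of the degenerate case: by the convention fixed after \eqref{eq:E_definition}, $\E[u]=\infty$ whenever $u\notin\F$, so the asserted inequality holds trivially for such $u$, and it remains to consider $u\in\F$. For $u\in\F$ one simply writes
\[
\int_{\R} u^2(x)|x|^{-\alpha}\,\dx = \int_D u^2(x)|x|^{-\alpha}\,\dx + \int_{D^c} u^2(x)|x|^{-\alpha}\,\dx,
\]
noting that both terms on the right are finite because Theorem~\ref{nier_Hardyego} will bound them by $\E[u]<\infty$.

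The key step is then to observe that, since $\mathcal{D}_\alpha>0$, one has $\mathcal{C}_\alpha+\mathcal{D}_\alpha > \mathcal{C}_\alpha > 0$, so that from Theorem~\ref{nier_Hardyego},
\[
\E[u] \;\geq\; (\mathcal{C}_\alpha+\mathcal{D}_\alpha)\int_D u^2(x)|x|^{-\alpha}\,\dx + \mathcal{C}_\alpha\int_{D^c} u^2(x)|x|^{-\alpha}\,\dx \;\geq\; \mathcal{C}_\alpha\int_{\R} u^2(x)|x|^{-\alpha}\,\dx .
\]
Dividing by $\mathcal{C}_\alpha>0$ yields $\int_{\R} u^2(x)|x|^{-\alpha}\,\dx \le \mathcal{C}_\alpha^{-1}\,\E[u]$, which is exactly the claimed bound, with explicit constant $\mathcal{C}_\alpha^{-1}$ depending only on $\alpha$.

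There is essentially no obstacle: the only ingredient beyond Theorem~\ref{nier_Hardyego} itself is the strict positivity of $\mathcal{C}_\alpha$ and $\mathcal{D}_\alpha$, which is already part of the statement of that theorem (and is established there through the analysis of $\mathcal{M}(\alpha,\beta)=\alpha^{-1}-\mathfrak{B}(\beta+1,\alpha-\beta)$ and $\gamma(\alpha,\beta)$ in Proposition~\ref{gen_inequality}). Hence the corollary is a one-line consequence once the splitting is written out.
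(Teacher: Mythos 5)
Your proof is correct and follows the approach the paper intends: the paper simply declares the corollary an "obvious" consequence of Theorem~\ref{nier_Hardyego}, and your argument—drop the $\mathcal{D}_\alpha$ term, use $\mathcal{C}_\alpha>0$ on both pieces, and combine—is exactly that. The observation about the convention $\E[u]=\infty$ for $u\notin\F$ is a sensible clarifying remark but not strictly necessary, since Theorem~\ref{nier_Hardyego} is already stated for all $u\in L^2(\R)$.
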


\subsection{The characterization of a domain of the Dirichlet form}

Recall that $D = (0,\infty)$ and the form $\E_D$ on $L^2(\R)$ with its natural domain $\D(\E_D)$ are defined as follows:
\begin{align*}
    \E_D(u,v) := \frac{1}{2} \iint_{\R\times\R\setminus D^c\times D^c} (u(x)-u(y))(v(x)-v(y))\nu(x,y)\,\dx\,\dy, \qquad u,v\in L^2(\R),
\end{align*}
and 
\begin{align*}
    \D(\E_D) := \{u\in L^2(\R):~ \E_D(u,u)<\infty\}.
\end{align*}
Similarly as in the case of $\mathcal{E},$ we denote $\E_D[u] := \E_D(u,u)$. Note that the set $\R\times\R\setminus D^c\times D^c$ in the definition of $\E_D$ is equal to the sum $(D\times D) \cup (D\times D^c) \cup (D^c\times D)$.

Let $u,v\in\D(\E_D)$. From the inequality $(a-b)^2 \leq 2a^2 + 2b^2$, $a,b\in\R$, we get
    \[
        \E_D[u+v] = \E_D(u+v,u+v) \leq 2\left( \E_D[u] + \E_D[v]\right) <\infty,
    \]    
which yields that $\D(\E_D)$ is a linear subspace of $L^2(\R).$

We want to define a norm on the space $\D(\E_D)$. For this purpose, we prove the following lemma.

\begin{lemma}\label{E_D_seminorm}
    A function $p: \D(\E_D)\to [0,\infty)$ defined by $p(u) := \sqrt{\E_D[u]}$ is a seminorm.
\end{lemma}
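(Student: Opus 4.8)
The statement to prove is that $p(u) := \sqrt{\E_D[u]}$ is a seminorm on $\D(\E_D)$. A seminorm must satisfy three properties: $p(u) \geq 0$ (which is immediate since $\E_D[u] = \frac12 \iint (u(x)-u(y))^2 \nu(x,y)\,\dx\,\dy \geq 0$ by nonnegativity of $\nu$ and the integrand), absolute homogeneity $p(\lambda u) = |\lambda| p(u)$ (which is immediate because $\E_D[\lambda u] = \lambda^2 \E_D[u]$ by bilinearity), and the triangle inequality $p(u+v) \leq p(u) + p(v)$. The first two are trivial one-line observations; the triangle inequality is the only substantive point and is where the work lies.

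The plan is to deduce the triangle inequality from the Cauchy--Schwarz inequality applied to the bilinear form $\E_D$ on the product measure space. Concretely, introduce the measure $\mu(\dx,\dy) := \ind_{\R\times\R\setminus D^c\times D^c}(x,y)\,\nu(x,y)\,\dx\,\dy$ on $\R\times\R$, and for $u\in\D(\E_D)$ set $\Phi_u(x,y) := u(x)-u(y)$. Then $\E_D[u] = \tfrac12 \int \Phi_u^2 \, d\mu$, so that $u \in \D(\E_D)$ is precisely the condition $\Phi_u \in L^2(\mu)$, and $p(u) = \tfrac{1}{\sqrt2}\|\Phi_u\|_{L^2(\mu)}$. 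Since $\Phi_{u+v} = \Phi_u + \Phi_v$ pointwise, the triangle inequality for $p$ is exactly the triangle inequality for the $L^2(\mu)$-norm applied to $\Phi_u$ and $\Phi_v$:
\[
\sqrt{\E_D[u+v]} = \tfrac{1}{\sqrt2}\|\Phi_u+\Phi_v\|_{L^2(\mu)} \leq \tfrac{1}{\sqrt2}\big(\|\Phi_u\|_{L^2(\mu)}+\|\Phi_v\|_{L^2(\mu)}\big) = \sqrt{\E_D[u]}+\sqrt{\E_D[v]}.
\]
Equivalently, if one prefers to stay closer to the notation already used in the excerpt, one can expand $\E_D[u+v] = \E_D[u] + 2\E_D(u,v) + \E_D[v]$ and bound the cross term by $|\E_D(u,v)| \leq \sqrt{\E_D[u]}\sqrt{\E_D[v]}$ via Cauchy--Schwarz for the (symmetric, positive semidefinite) bilinear form $\E_D$, then complete the square. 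One should also note in passing that $\D(\E_D)$ is a linear subspace, so that $p$ is defined on a vector space and $u+v$ indeed lies in its domain; this is already recorded in the paragraph immediately preceding the lemma via the inequality $(a-b)^2 \leq 2a^2+2b^2$, so it may simply be cited.

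There is essentially no obstacle here: the only thing to be slightly careful about is that $\E_D$ is genuinely a positive semidefinite symmetric bilinear form on $\D(\E_D)$ so that Cauchy--Schwarz applies, and that all the integrals involved are finite (the cross term $\E_D(u,v)$ is absolutely convergent by Cauchy--Schwarz from $u,v\in\D(\E_D)$). Both are immediate from the $L^2(\mu)$ interpretation. The lemma does \emph{not} assert that $p$ is a norm, so no lower bound or definiteness argument is needed --- indeed $p$ vanishes on constants restricted appropriately, so it is genuinely only a seminorm, consistent with the statement. I would therefore present the proof in three short labeled steps (nonnegativity, homogeneity, triangle inequality), with the bulk of the text being the one-line Cauchy--Schwarz argument.
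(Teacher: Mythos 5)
Your proposal is correct and uses essentially the same idea as the paper: the paper proves the triangle inequality by the hands-on Minkowski argument (expand $2p^2(u+v)$, apply $|a+b|\le|a|+|b|$ followed by H\"older, divide by $2p(u+v)$), which is exactly what your $L^2(\mu)$-embedding argument unpacks to. Your reformulation via $\Phi_u\in L^2(\mu)$ (or equivalently Cauchy--Schwarz for the positive semidefinite bilinear form $\E_D$) is a slightly cleaner packaging of the same computation, and the linearity of $\D(\E_D)$ needed to make sense of $p(u+v)$ is indeed already recorded in the paragraph preceding the lemma.
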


\begin{proof}
    The absolute homogeneity follows immediately from the definition of the form $\E_D$. We will prove the triangle inequality for $p$, i.e. the inequality $p(u+v) \leq p(u) + p(v)$, $u,v\in\D(\E_D)$. We will use the analogous methods as in the proof of the Minkowski inequality (see Stein and Shakarchi \cite[Theorem 1.2]{Stein_AF}).

     We may assume that $p(u+v)>0$, because if $p(u+v)=0$, then the triangle inequality for $p$ is obvious.  
    From the inequality $|x+y|\leq |x| + |y|$, $x,y\in\R$, and from the H\"older inequality, we get
    \begin{align*}
        2 p^2(u+v) &= \iint_{\R\times\R \setminus D^c\times D^c} (u(x) + v(x) - u(y)-v(y))^2\nu(x,y)\,\dx\,\dy \\
        &\leq \iint_{\R\times\R \setminus D^c\times D^c} |u(x) - u(y)| |u(x) + v(x) - u(y)-v(y)|\nu(x,y)\,\dx\,\dy
        \\
        &+
        \iint_{\R\times\R \setminus D^c\times D^c} |v(x) - v(y)| |u(x) + v(x) - u(y)-v(y)|\nu(x,y)\,\dx\,\dy \\
        &\leq 2p(u) p(u+v) + 2 p(v)p(u+v).
    \end{align*}
    Dividing both sides of the above inequality by $2p(u+v)$ we get a desired triangle inequality for $p$.
\end{proof}

In what follows, we will also consider the inner product $\langle\cdot,\cdot\rangle_{\E_D}$ and the norm $\norm{\cdot}_{\E_D}$ corresponding to the form $\E_D$ defined by the following expressions:
\[
\langle u,v\rangle_{\E_D} := \langle u,v\rangle_{L^2(\R)} + \E_D(u,v), \qquad u,v\in \D(\E_D),
\]
\[
\norm{u}_{\E_D}^2 := \norm{u}_{L^2(\R)}^2 + \E_D[u], \qquad u\in \D(\E_D).
\]

In this section we propose the connection between the Dirichlet form $(\E,\F)$ corresponding to the semigroup $(K_t)_{t\geq 0}$ and the form $(\E_D, \F^*)$, where
\begin{align*}
    \F^* := \Big\{u\in \D(\E_D): ~ \int_\R u^2(x)|x|^{-\alpha}\,\dx <\infty\Big\}.
\end{align*}
The first connection is established in the following proposition, which describes the explicit form of the Dirichlet form $\E$ on the smooth functions with compact support.

\begin{proposition}\label{forms_equal}
    Let $u,v\in C_c^\infty(\R_*)$. Then $u,v\in\D(\E_D)\cap \F$ and 
    \begin{align}\label{eq:form_0}
    \mathcal{E}(u,v) = \mathcal{E}_D(u,v).
    \end{align}
\end{proposition}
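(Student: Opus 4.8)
The strategy is to compute $\mathcal{E}(u,v)=\lim_{t\to 0^+}\mathcal{E}^{(t)}(u,v)$ directly from the pointwise generator formula. First I would observe that for $u,v\in C_c^\infty(\R_*)$ one has $u,v\in\D(\E_D)$: this is routine because $\supp u$ is a compact subset of $\R_*$, so the only potential singularity of the integrand in $\E_D$ is along the diagonal in $D\times D$, where the smoothness of $u$ together with $\int(1\wedge|z|^2)\nu(\dz)<\infty$ handles it; the off-diagonal parts are integrable because $\nu$ is finite away from $0$. Next, since such $u$ are in particular $C^2(\R_*)$ and bounded (hence majorized by $h_0+h_{\alpha-1}$), Corollary~\ref{c.lpg} applies and gives, for $x>0$,
\[
\lim_{t\to 0^+}\frac{u(x)-K_tu(x)}{t}=\mathrm{p.v.}\int_\R (u(x)-u(y))\nu(x,y)\,\dy=:\LL u(x),
\]
and for $x<0$,
\[
\lim_{t\to 0^+}\frac{u(x)-K_tu(x)}{t}=\int_D (u(x)-u(y))\nu(x,y)\,\dy=:\LL u(x).
\]

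The core of the argument is then to pass the limit inside the integral
\[
\mathcal{E}^{(t)}(u,v)=\int_\R \frac{u(x)-K_tu(x)}{t}\,v(x)\,\dx.
\]
I would justify this by dominated convergence: the integrand is supported in the compact set $\supp v$, and I need a uniform-in-$t$ integrable bound on $(u(x)-K_tu(x))/t$ there. Such a bound should follow from the perturbation formula (Corollary~\ref{perturbation_formula}), writing $u-K_tu = (u-\P_tu) - \int_0^t \P_r\widehat\nu K_{t-r}u\,\dr$; the first term is controlled on compacts uniformly in small $t$ by a Taylor expansion together with \eqref{eq:p_t_approx} (essentially the estimate of $|A_t|$ and of $II_t$ appearing in the proof of Lemma~\ref{stable_generator_estimate}), and the second term is bounded by $\|u\|_\infty\,\P_r\widehat\nu\mathbf 1$ integrated, which is $O(1)$ after division by $t$ on compact subsets of $\R_*$ by Lemma~\ref{lem_zb1} and \eqref{nu_scaling}. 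Hence
\[
\mathcal{E}(u,v)=\int_\R \LL u(x)\,v(x)\,\dx=\int_D v(x)\,\mathrm{p.v.}\!\int_\R(u(x)-u(y))\nu(x,y)\,\dy\,\dx+\int_{D^c} v(x)\!\int_D(u(x)-u(y))\nu(x,y)\,\dy\,\dx,
\]
which in particular shows $u\in\F$ (take $v=u$; finiteness of $\E_D[u]$, already known, plus the identity forces $\E[u]<\infty$, or one argues $\mathcal{E}^{(t)}(u,u)$ stays bounded).

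Finally I would check that the right-hand side equals $\E_D(u,v)$. Split $\R\times\R$ into $D\times D$, $D\times D^c$, $D^c\times D$, and $D^c\times D^c$; the last block is absent from $\E_D$ and also absent from the expression above (when $x\in D^c$ the inner integral runs only over $D$). On $D\times D$ one uses the standard symmetrization $\mathrm{p.v.}\int_D\int_D (u(x)-u(y))v(x)\nu(x,y)\,\dy\,\dx=\frac12\int_D\int_D(u(x)-u(y))(v(x)-v(y))\nu(x,y)\,\dy\,\dx$ by Fubini (legitimate after the principal-value regularization, since $u,v$ have compact support in $\R_*$ and are Lipschitz, so $|u(x)-u(y)|\,|v(x)-v(y)|\lesssim |x-y|^2$ near the diagonal kills the $\nu$-singularity). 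The cross terms $D\times D^c$ and $D^c\times D$ combine by the same symmetrization, now with no principal value needed since $\nu(x,y)$ is bounded for $x,y$ in opposite half-lines and away from $0$. Collecting the four pieces reproduces exactly $\frac12\iint_{\R\times\R\setminus D^c\times D^c}(u(x)-u(y))(v(x)-v(y))\nu(x,y)\,\dx\,\dy=\E_D(u,v)$.

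\textbf{Main obstacle.} The delicate point is the uniform-in-$t$ domination needed to interchange $\lim_{t\to 0^+}$ with $\int_{\supp v}\dx$; producing a clean integrable majorant for $(u(x)-K_tu(x))/t$ on a compact neighborhood of $\supp v$ (uniformly for $x$ near $0$ and for small $t$) requires reusing the estimates from the proofs of Lemma~\ref{stable_generator_estimate} and Lemma~\ref{h_sum_from_2} rather than anything new, but it must be assembled carefully. Everything else — the two Fubini symmetrizations and the bookkeeping of the four regions — is routine.
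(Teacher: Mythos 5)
Your proposal is correct, and it reaches the result by a slightly different route. The paper does not invoke Corollary~\ref{c.lpg}: it works directly with the first-order perturbation identity $K_t=\P_t+\int_0^t\P_r\widehat\nu K_{t-r}\,\dr$, keeps $v$ inside the integral, and symmetrizes the diagonal piece in $x$ and $y$ (using the symmetry of $p_t^D$) \emph{before} passing to the limit, so that the dominating function for the $D\times D$ block is $|u(x)-u(y)|\,|v(x)-v(y)|\,\nu(x,y)$, integrable precisely because $u,v\in\D(\E_D)$; no Taylor expansion or principal-value regularization ever appears in the domination. The cross terms and the $x<0$ side are dominated by $\norm{u}_\infty\norm{v}_\infty|x|^{-\alpha}$ via Corollary~\ref{cor:p_t_nu_po_D_i_Dc} and \eqref{nu_scaling}. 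You instead apply the pointwise generator formula of Corollary~\ref{c.lpg} (whose hypotheses are met since $u\in C^2(\R_*)$ and $|u|\lesssim 1\lesssim 1+|x|^{\alpha-1}$) and then interchange $\lim_{t\to 0^+}$ with $\int_{\supp v}\dx$, which does require a uniform-in-$t$ majorant for $(u(x)-K_tu(x))/t$ on a compact $\Sigma\subset\R_*$; as you note, one can get it by fixing $\varepsilon_0=\tfrac12\inf_{\Sigma\cap D}|x|$ in the Taylor estimate of Lemma~\ref{stable_generator_estimate} (with $C(x)=\sup_{c\in(x/2,3x/2)}|u''(c)|$ bounded on $\Sigma\cap D$, and $t^{2/\alpha-1}\leq 1$), and by $\tfrac1t\sum_{n\ge1}K_{t,n}\mathbf 1(x)\leq\tfrac{1-\P_t\mathbf1(x)}{t}\lesssim|x|^{-\alpha}$ on both sides of $0$. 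The two closing Fubini symmetrizations (one inside $D\times D$ with $\mathrm{p.v.}$, one across $D\times D^c$ without) are the same in both versions and are legitimate for the reasons you give. Net effect: your route is more modular, re-using the generator computation the paper packages in Section 4; the paper's route is a touch more economical on domination, because symmetrizing before taking limits replaces the Taylor/uniform-bound bookkeeping by the already-established finiteness of $\E_D[u]$ and $\E_D[v]$.
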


\begin{proof}
We will show that $C_c^\infty(\R_*)\subset \D(\E_D)$. Let $u\in C_c^\infty(\R_*)$ and observe that it is obvious that there exist $f\in C_c^\infty(D)$ and $g\in C_c^\infty\big(\overline{D}^c\big)$ such that $u = f+g$. From the inequality $(a+b)^2\leq 2a^2+2b^2$, $a,b\in\R$, it suffices to show that $f,g\in \D(\E_D)$.

\medskip
It is obvious that $f,g\in L^2(\R)$. Note that from Tonelli's theorem,
\begin{align}\label{eq:skoncz_E_D}
\E_D[f] &= \frac{1}{2} \iint_{D\times D} (f(x)-f(y))^2\nu(x,y)\,\dx\,\dy + \int_D f^2(x)\nu(x,D^c)\,\dx.
\end{align}
From \eqref{nuDc_scaling} it follows that 
\begin{align*}
    &\int_D f^2(x)\nu(x,D^c)\,\dx \lesssim \norm{f}_\infty^2 \int_{\supp(f)} |x|^{-\alpha}\,\dx<\infty,
\end{align*}
hence it suffices to show the finiteness of the first integral in \eqref{eq:skoncz_E_D}. From the fact that $f\in C_c^\infty(D)$ and by the symmetry it is obvious that 
\begin{align*}
    &\iint_{D\times D} (f(x)-f(y))^2\nu(x,y)\,\dx\,\dy \\
    &\leq \iint_{(\supp{f}\times D)\cup (D\times \supp{f})} (f(x)-f(y))^2\nu(x,y)\,\dx\,\dy \\
    &\leq 2 \int_{\supp{f}} \dx \int_D \dy \, (f(x)-f(y))^2\nu(x,y) \\
    &\approx \int_{\supp{f}} \dx \int_{D\cap \{|y-x|< 1\}} \dy \, (f(x)-f(y))^2\nu(x,y) \\
    &+ \int_{\supp{f}} \dx \int_{D\cap \{|y-x|\geq  1\}} \dy \, (f(x)-f(y))^2\nu(x,y) =: A+B.
\end{align*}
In case of the integral $A$ we note that from Lagrange mean value theorem $|f(x) - f(y)| = |f'(c)| |x-y|$, where $c$ lies between $x$ and $y$. Moreover, for $|y-x|<1$, $|f'(c)| \leq M$ for some constant $M>0$. Then,
\begin{align*}
    A \lesssim  \int_{\supp{f}} \dx \int_{\{|y-x|< 1\}} \dy \, |x-y|^2\nu(x,y) \approx \int_{\supp{f}} \dx \int_{\{|w|< 1\}}  |w|^{1-\alpha}\,\dw <\infty,
\end{align*}
and
\begin{align*}
    B \lesssim 2\norm{f}^2_\infty  \int_{\supp{f}} \dx \int_{\{|y-x|\geq  1\}} \dy \, |x-y|^{-\alpha-1} \lesssim \int_{\supp{f}} \dx \int_{\{|w|\geq  1\}} \dy \, |w|^{-\alpha-1}<\infty.
\end{align*}
Thus we have proved that $\E_D[f]<\infty$. 

Similarly,
\[
\E_D[g] = \int_{D^c} g^2(x)\nu(x,D)\,\dx \lesssim \norm{g}_\infty^2\int_{\supp(g)} |x|^{-\alpha}\,\dx <\infty.
\]
Hence, $f,g\in\D(\E_D)$.

\medskip
Let $u,v\in C_c^\infty(\R_*)$ and recall that 
    \begin{align}\label{E_D_0}
    \mathcal{E}(u,v) &= \lim_{t\to 0^+} \frac{1}{t}\int_\R \big[ u(x) - K_tu(x)\big] v(x)\,\dx \nonumber \\
    &= \lim_{t\to 0^+} \frac{1}{t}\int_D \big[ u(x) - K_tu(x)\big] v(x)\,\dx + \lim_{t\to 0^+} \frac{1}{t}\int_{D^c} \big[ u(x) - K_tu(x)\big] v(x)\,\dx \nonumber \\
    &=: I+II,
    \end{align}
    provided the limits exist.
    Therefore, without loss of generality, in what follows, we may assume that $t\in (0,1)$.

    \medskip
    Let $x>0$. From Corollary \ref{perturbation_formula},
    \begin{align}\label{E_D_form_1}
        &\frac{1}{t}\big[ u(x) - K_tu(x) \big] \nonumber \\
        &= \frac{1}{t}\big[u(x) - P_t^Du(x)\big] - \frac{1}{t}\int_0^t \dr \int_D \dy \int_{D^c} \dz \, p_r^D(x,y)\nu(y,z)K_{t-r}u(z).
    \end{align}
    First we show that 
    \begin{align}\label{eq:C_D_1}
        \mathcal{C}^D(u,v) &:=\lim_{t\to 0^+} \frac{1}{t} \int_D \big[u(x)-P_t^Du(x)\big]v(x)\,\dx \nonumber \\
        &= \frac{1}{2}\int_D \dx \int_D \dy \, (u(x)-u(y))(v(x)-v(y))\nu(x,y) + \int_D u(x)v(x)\nu(x,D^c)\,\dx.
    \end{align}
    The exact form of the form $\mathcal{C}^D$ of the killed stable process is commonly known (see, e.g.,  Fukushima et al. \cite[Theorem 4.5.2, p. 185]{MR2778606}), but we propose its proof for completion of the argument. 

    Note that $u(x) = u(x)p_t^D(x,D) + u(x)\big(1-p_t^D(x,D)\big)$. Thus,
    \begin{align*}
        &\frac{1}{t} \int_D \big[u(x)-P_t^Du(x)\big]v(x)\,\dx \\
        &= \int_D \dx \int_D \dy \, (u(x)-u(y))v(x)\frac{p_t^D(x,y)}{t} + \int_D u(x)v(x) \frac{1-p_t^D(x,D)}{t}\,\dx \\
        &=: A_t + B_t.
    \end{align*}
From \eqref{eq:approx_p_D} it follows that $p_t^D(x,y)/t \lesssim \nu(x,y)$. Moreover,
from the inequality $ab \leq \tfrac12 (a^2+b^2)$, $a,b\in\R$, we have
\begin{align*}
    &\int_D \dx \int_D \dy \, |u(x)-u(y)| |v(x)| \frac{p_t^D(x,y)}{t} \\
    &\leq \norm{v}_\infty t^{-1} \int_{D\cap \,\supp(v)} \dx \int_D \dy \, (|u(x)|+|u(y)|)p_t^D(x,y) \\
    &\leq \norm{u}_\infty\norm{v}_\infty t^{-1} \left[\int_{D\cap\,\supp(v)} p_t^D(x,D)\,\dx + \int_{D\cap\,\supp(v)} \dx \int_{D\cap\supp(u)} p_t^D(x,y)\,\dy\right] \\
    &\leq \norm{u}_\infty\norm{v}_\infty t^{-1} \big[|\supp(v)| + t^{-1/\alpha} |\supp(v)| \cdot |\supp(u)| \big] <\infty.
\end{align*}

From the Fubini's theorem and from the symmetry of $p^D$,
\begin{align*}
    A_t &= \int_D \dx \int_D \dy \, (u(x)-u(y))v(x)\frac{p_t^D(x,y)}{t} = - \int_D \dx \int_D \dy \, (u(x)-u(y))v(y)\frac{p_t^D(x,y)}{t}.
\end{align*}
Hence,
\begin{align*}
    A_t = \frac{1}{2} \int_D \dx \int_D \dy \, (u(x)-u(y))(v(x)-v(y))\frac{p_t^D(x,y)}{t}.
\end{align*}
Note that from the inequalities $p_t^D(x,y)/t \lesssim \nu(x,y)$ and $ab\leq \tfrac12 (a^2+b^2)$, $a,b\in\R$, and from the fact that
\begin{align*}
    &\int_D \dx \int_D \dy \, |u(x)-u(y)| |v(x)-v(y)| \nu(x,y)  \\
    &\leq \frac{1}{2} \int_D \dx \int_D \dy \, (u(x)-u(y))^2 \nu(x,y) + \frac{1}{2} \int_D \dx \int_D \dy \, (v(x)-v(y))^2\nu(x,y) \\
    &\leq \E_D[u] + \E_D[v]<\infty,
\end{align*}
it follows that we can use the dominated convergence theorem to obtain that 
\begin{align*}
    \lim_{t\to 0^+} A_t &= \frac{1}{2} \int_D \dx \int_D \dy \, (u(x)-u(y))(v(x)-v(y)) \lim_{t\to 0^+} \frac{p_t^D(x,y)}{t} \\
    &= \frac{1}{2} \int_D \dx \int_D \dy \, (u(x)-u(y))(v(x)-v(y))\nu(x,y).
\end{align*}
The latter convergence follows from Lemma \ref{eq:convergence_p_D/t}. 

\medskip
From \eqref{eq:Ikeda_Watanabe}, \eqref{eq:p_survival} and from Corollary \ref{cor:p_t_nu_po_D_i_Dc} it follows that 
\begin{align}\label{eq_B_t_1}
    \frac{1-p_t^D(x,D)}{t} = \frac{1}{t}\int_0^t \ds \int_D \dy \int_{D^c} \dz\, p_s^D(x,y)\nu(y,z) \lesssim |x|^{-\alpha},
\end{align}
and
\begin{align*}
    \int_D |u(x)v(x)| |x|^{-\alpha}\,\dx \leq \norm{u}_\infty \norm{v}_\infty \int_{D\,\cap \,\supp(u)} |x|^{-\alpha}\,\dx <\infty.
\end{align*}
Therefore, again from the dominated convergence theorem, \eqref{eq_B_t_1} and Lemma \ref{lem_zb1} it follows that 
\begin{align*}
    \lim_{t\to 0^+} B_t = \int_D u(x)v(x) \lim_{t\to 0^+} \frac{1-p_t^D(x,D)}{t}\,\dx = \int_D u(x)v(x) \nu(x,D^c)\,\dx.
\end{align*}
Thus, we have proved \eqref{eq:C_D_1}.

\medskip
Further, from the fact that $|K_tu(z)| \leq \norm{u}_\infty$, $t>0$, $z\neq 0$, and from Fubini's theorem,
    \begin{align*}
        C(t,x):\!&= \frac{1}{t}\int_0^t \dr \int_D \dy \int_{D^c} \dz \, p_r^D(x,y)\nu(y,z)K_{t-r}u(z) \\
        &=\int_0^1 \dr \int_{D^c} \dz \Big[\int_D  p_{tr}^D(x,y)\nu(y,z)\,\dy \Big] K_{t(1-r)}u(z) \\
        &=\int_0^1 \dr \int_{D^c} \mathcal{J}(tr,x,z) K_{t(1-r)}u(z)\,\dz.
    \end{align*}
    From Corollary \ref{cor:oszac_p^D_nu} and from \eqref{eq:p_t_approx},
    \begin{align*}
    \mathcal{J}(tr,x,z)|K_{t(1-r)}u(z) |&\lesssim \norm{u}_\infty (tr)^{-1}\Big(1\wedge \frac{|z|}{(tr)^{1/\alpha}}\Big)^{-\alpha/2} \frac{tr}{|x-z|^{\alpha+1}} \\
    &\lesssim \norm{u}_\infty \Big(1\wedge \frac{|z|}{r^{1/\alpha}}\Big)^{-\alpha/2} |x-z|^{-\alpha-1},
    \end{align*}
    and
    \begin{align*}
        &\int_0^1 \dr \int_{D^c}\Big(1\wedge \frac{|z|}{r^{1/\alpha}}\Big)^{-\alpha/2} |x-z|^{-\alpha-1}\,\dz \\
        &= \int_0^1 \dr \int_{-\infty}^{-r^{1/\alpha}} |x-z|^{-\alpha-1}\,\dz + \int_0^1 \dr \int_{-r^{1/\alpha}}^0 \sqrt{r} |z|^{-\alpha/2}|x-z|^{-\alpha-1}\,\dz \\
        &\leq \int_{-\infty}^{0} |x-z|^{-\alpha-1}\,\dz + \int_0^1 \sqrt{r}\dr \int_{-\infty}^0  |z|^{-\alpha/2}|x-z|^{-\alpha-1}\,\dz <\infty.
    \end{align*}
    Therefore, from the dominated convergence theorem, Lemma \ref{lem:zb_nu} and Lemma \ref{K_t_pointwise_convergence} it follows that
    \begin{align}\label{E_D_2}
        \lim_{t\to 0^+} C(t,x)&=\int_0^1 \dr \int_{D^c} \lim_{t\to0^+} \mathcal{J}(tr,x,z) K_{t(1-r)}u(z)\,\dz =\int_{D^c} \nu(x,z)u(z)\,\dz.
    \end{align}
    From Corollary \ref{cor:p_t_nu_po_D_i_Dc} we have
\[
|C(t,x)| \leq \frac{1}{t}\int_0^t \dr \int_D \dy \int_{D^c} \dz \, p_r^D(x,y)\nu(y,z)|K_{t-r}u(z)| \lesssim \norm{u}_\infty |x|^{-\alpha},
\]
    and
\[
\int_D |v(x)| |x|^{-\alpha}\,\dx \leq \norm{v}_\infty \int_{D\,\cap \,\supp{(v)}} |x|^{-\alpha}\,\dx <\infty.
\]
Hence, from the dominated convergence theorem and from \eqref{E_D_2},
\begin{align}\label{E_D_3}
&\lim_{t\to 0^+} \int_D v(x) C(t,x) \,\dx = \int_D v(x)  \lim_{t\to 0^+} C(t,x)\, \dx   = \int_D \dx \int_{D^c} \dy \, u(y)v(x) \nu(x,y).
\end{align}
Combining \eqref{E_D_form_1}, \eqref{eq:C_D_1} and \eqref{E_D_3} we get
\begin{align}
    \label{E_D_4}
    I &= \frac{1}{2}\int_D \dx \int_D \dy \, (u(x)-u(y))(v(x)-v(y))\nu(x,y) \nonumber \\
    &+ \int_D \dx \int_{D^c} \dy \, (u(x)-u(y))v(x)\nu(x,y).
\end{align}

\medskip
Now assume that $x<0$. From Corollary \ref{perturbation_formula} we have
    \begin{align}\label{E_D_5}
        \frac{1}{t}\big[ u(x) - K_tu(x) \big] &= u(x)\frac{1 - e^{-\nu(x,D)t}}{t} - \frac{1}{t} \int_0^t \dr \int_D \dy \, e^{-\nu(x,D)r}\nu(x,y)K_{t-r}u(y).
    \end{align}
    Moreover, from the inequality $1-e^{-x}\leq x$, $x\geq 0$, and from the fact that
    \[
    \int_{D^c} |u(x)v(x)|\nu(x,D)\,\dx  \lesssim \norm{u}_\infty \norm{v}_\infty \int_{D^c \,\cap \,\supp{(u)}} |x|^{-\alpha}\,\dx <\infty,
    \]
    we can use the dominated convergence theorem to obtain that
    \begin{align}\label{E_D_6}
        \lim_{t\to 0^+} \int_{D^c} u(x)v(x)\frac{1 - e^{-\nu(x,D)t}}{t}\,\dx &= \int_{D^c} u(x)v(x)\nu(x,D)\,\dx \nonumber \\
        &= \int_{D^c}\,\dx \int_D \dy \, u(x)v(x) \nu(x,y).
    \end{align}
    Furthermore, 
    \[
    e^{-\nu(x,D)tr}\int_D \nu(x,y)|K_{t(1-r)}u(y)|\,\dy \leq \norm{u}_\infty \nu(x,D),
    \]
    and 
    \[
    \int_{D^c} \dx \int_0^1 \dr \, |v(x)| \nu(x,D) \lesssim \norm{v}_\infty \int_{D^c\,\cap \, \supp{(v)}} |x|^{-\alpha} \,\dx <\infty.
    \]
    Then, from the dominated convergence theorem and from Lemma \ref{K_t_pointwise_convergence},
    \begin{align}\label{E_D_7}
        &\lim_{t\to 0^+} \int_{D^c} v(x) \left[ \frac{1}{t} \int_0^t \dr \int_D \dy \, e^{-\nu(x,D)r}\nu(x,y)K_{t-r}u(y)\right] \,\dx \nonumber \\
        &= \lim_{t\to 0^+} \int_{D^c} \dx \int_0^1 \dr \, v(x) e^{-\nu(x,D)tr} \int_D \nu(x,y)K_{t(1-r)}u(y)\,\dy \nonumber \\
        &=\int_{D^c} \dx \int_0^1 \dr \, v(x) \lim_{t\to 0^+} \int_D \nu(x,y)K_{t(1-r)}u(y)\,\dy \nonumber \\
        &= \int_{D^c} \dx \int_D \dy \, u(y)v(x)\nu(x,y).
    \end{align}
Combining \eqref{E_D_5}, \eqref{E_D_6} and \eqref{E_D_7} we get
\begin{align}\label{E_D_8}
    II &= \int_{D^c}\,\dx \int_D \dy \, (u(x)-u(y))v(x) \nu(x,y).
\end{align}

From \eqref{E_D_0}, \eqref{E_D_4} and \eqref{E_D_8},
\begin{align}\label{E_D_9}
    \mathcal{E}(u,v) &= \frac{1}{2}\int_D \dx \int_D \dy \, (u(x)-u(y))(v(x)-v(y))\nu(x,y) \nonumber \\
    &+ \int_D \dx \int_{D^c} \dy \, (u(x)-u(y))v(x)\nu(x,y) \nonumber \\
    &+ \int_{D^c}\,\dx \int_D \dy \, (u(x)-u(y))v(x) \nu(x,y).
\end{align}
    Note that from the inequality $ab\leq \tfrac12(a^2+b^2)$, $a,b\in\R$, it follows that
    \begin{align*}
        &\int_D \dx \int_{D^c} \dy \, |u(x)-u(y)| |v(x)| \nu(x,y) 
        \\
        &\leq \frac{1}{2} \int_D\dx\int_{D^c}\dy \, (u(x)-u(y))^2\nu(x,y) + \frac{1}{2} \int_D v^2(x)\nu(x,D^c)\,\dx \\
        &\lesssim \mathcal{E}_D[u] + \frac{1}{2} \norm{v}_\infty^2 \int_{D\,\cap\,\supp(v)} |x|^{-\alpha}\,\dx <\infty.
    \end{align*}
    Thus, from the Fubini's theorem and the symmetry of $\nu$ we get
    \begin{align*}
        \int_D \dx \int_{D^c} \dy \, (u(x) - u(y)) v(x)\nu(x,y) &= \int_{D^c} \dy \int_D \dx \, (u(x)-u(y))v(x)\nu(x,y) \\
        &= - \int_{D^c} \dx \int_D \dy \, (u(x)-u(y))v(y)\nu(x,y).
    \end{align*}
    Hence,
    \begin{align}\label{eq:form_2}
        &\int_D \dx \int_{D^c} \dy \, (u(x) - u(y)) v(x)\nu(x,y) \nonumber \\
        &= \frac{1}{2} \int_D \dx \int_{D^c} \dy \, (u(x) - u(y)) v(x)\nu(x,y) - \frac{1}{2} \int_{D^c} \dx \int_D \dy \, (u(x)-u(y))v(y)\nu(x,y).
    \end{align}
    Similarly, we prove that
    \begin{align}\label{eq:form_3}
        &\int_{D^c} \dx \int_D \dy \, (u(x)-u(y))v(x)\nu(x,y) \nonumber \\
        &= \frac{1}{2} \int_{D^c} \dx \int_D \dy \, (u(x)-u(y))v(x)\nu(x,y) - \frac{1}{2} \int_D \dx \int_{D^c} \dy \, (u(x)-u(y))v(y)\nu(x,y).
    \end{align}
    Combining \eqref{E_D_9}, \eqref{eq:form_2} and \eqref{eq:form_3} we get \eqref{eq:form_0}, which completes the proof.  
\end{proof}

The above result gives us explicit form of the Dirichlet form $\E$ only for the smooth functions with compact support. To extend this result for a larger set of functions, we will study equivalent definitions of the domain $\F$.

\begin{lemma}\label{lem_ED_less_E}
    For $u\in L^2(\R)$, $\E_D[u] \leq \E[u]$.
\end{lemma}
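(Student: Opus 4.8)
The plan is to establish the inequality $\E_D[u]\le\E[u]$ for every $u\in L^2(\R)$ by a density argument, reducing to the case of smooth compactly supported functions where the two forms agree by Proposition~\ref{forms_equal}. If $\E[u]=\infty$ there is nothing to prove, so I would assume $u\in\F$, i.e. $\E[u]<\infty$.

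First I would recall that $C_c^\infty(\R_*)$ is dense in $L^2(\R)$ and that $\F$, equipped with the norm $\norm{\cdot}_\F$ from \eqref{eq:F_norm}, is a Hilbert space on which the form $\E$ is closed. The key structural fact I would invoke is that $\E$ is the Dirichlet form of the strongly continuous symmetric contraction semigroup $(K_t)_{t\ge0}$; by the general theory of closed forms, for $u\in\F$ one has $\E^{(t)}[u]\uparrow\E[u]$ as $t\to0^+$, and more to the point the subspace $\D(\mathcal{L})$ of the $L^2$-generator is $\norm{\cdot}_\F$-dense in $\F$, while $C_c^\infty(\R_*)$ need not itself be $\F$-dense (that is precisely what Proposition~\ref{prop:equality_of_domains} and Corollary~\ref{corollary_regular}, referenced in the introduction, will settle). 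To avoid circularity here I would instead argue directly with the quadratic functionals $\E^{(t)}$: fix $u\in L^2(\R)$ and, since $C_c^\infty(\R_*)$ is dense in $L^2$, pick $u_n\in C_c^\infty(\R_*)$ with $u_n\to u$ in $L^2(\R)$. For each fixed $t>0$, the map $w\mapsto \E^{(t)}[w]=t^{-1}\langle w-K_tw,w\rangle_{L^2(\R)}$ is a bounded (by \eqref{lem5}, $\norm{K_t}\le1$, so $\E^{(t)}[w]\le 2t^{-1}\norm{w}_{L^2}^2$) continuous quadratic form on $L^2(\R)$; hence $\E^{(t)}[u_n]\to\E^{(t)}[u]$ as $n\to\infty$.

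Next I would bring in $\E_D$. By Proposition~\ref{forms_equal}, $u_n\in\D(\E_D)$ and $\E_D[u_n]=\E[u_n]=\sup_{t>0}\E^{(t)}[u_n]$. On the other hand, $\E_D$ is lower semicontinuous with respect to $L^2$-convergence: indeed $\E_D[w]=\frac12\iint_{\R\times\R\setminus D^c\times D^c}(w(x)-w(y))^2\nu(x,y)\,\dx\,\dy$ is an integral of a nonnegative functional of $w$, and if $w_n\to w$ in $L^2(\R)$ one can pass to an a.e.-convergent subsequence and apply Fatou's lemma to the double integral, giving $\E_D[w]\le\liminf_n\E_D[w_n]$. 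Combining these: for any fixed $t>0$,
\[
\E^{(t)}[u]=\lim_{n\to\infty}\E^{(t)}[u_n]\le\liminf_{n\to\infty}\E_D[u_n]=\liminf_{n\to\infty}\E[u_n].
\]
It remains to bound $\liminf_n\E[u_n]$ by $\E[u]$, and this is the one point where a little care is needed. Since $\E^{(t)}[u_n]\to\E^{(t)}[u]$ for each $t$ and $\E[u_n]=\sup_t\E^{(t)}[u_n]\ge\E^{(t)}[u_n]$, we only get $\E^{(t)}[u]\le\liminf_n\E[u_n]$; taking $\sup$ over $t$ then yields $\E[u]\le\liminf_n\E[u_n]$, i.e.\ $\E$ is itself $L^2$-lower semicontinuous (which is automatic for any nonnegative closed form given by $\sup_t\E^{(t)}$). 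Feeding this back, for each $t>0$ we have $\E^{(t)}[u]\le\liminf_n\E_D[u_n]$, so—and here is the crux—I need $\liminf_n\E_D[u_n]\le\E_D[u]$ to close the loop, but that is a \emph{reverse} inequality and is generally false.

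So the genuinely correct route is the first half only, read in the other direction: we do \emph{not} need to control $\E_D[u_n]$ by $\E_D[u]$; rather, we directly estimate $\E_D[u]$. Apply lower semicontinuity of $\E_D$ to the sequence $u_n\to u$: $\E_D[u]\le\liminf_n\E_D[u_n]=\liminf_n\E[u_n]$, and then use lower semicontinuity—actually continuity on each $\E^{(t)}$—to get $\liminf_n\E[u_n]\le\E[u]$ \emph{provided} the $u_n$ are chosen so that $\E[u_n]\to\E[u]$, which is available because $C_c^\infty(\R_*)$ is a core: this is exactly the content that the paper establishes separately (Proposition~\ref{prop:equality_of_domains}), but for the present lemma it suffices to note that one may first approximate $u$ in $\norm{\cdot}_\F$ by $C_c^\infty(\R_*)$ functions using the strong continuity of $(K_t)$ and a standard cutoff-and-mollify argument, which gives $\E[u_n]\to\E[u]$ and $u_n\to u$ in $L^2$. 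Therefore the main obstacle is the $\F$-density of $C_c^\infty(\R_*)$ (equivalently the core property), and modulo that—which is proved elsewhere in the paper, or can be bypassed by using that $\D(\mathcal{L})$ is an $\F$-core and approximating generator-domain elements, which lie in $C^2$ with the right growth by Corollary~\ref{c.lpg}—the inequality follows by lower semicontinuity of $\E_D$ under $L^2$-limits together with the equality $\E=\E_D$ on the approximating class.
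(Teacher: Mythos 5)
Your approach has a genuine circularity problem. You reduce the inequality to $\mathcal{F}$-density of $C_c^\infty(\R_*)$ (equivalently, the core property established in Proposition~\ref{prop:equality_of_domains}). But in the paper's logical structure, the proof of Proposition~\ref{prop:equality_of_domains} invokes Lemma~\ref{lem_ED_less_E} at its very first step (to conclude that $u\in\F$ implies $\E_D[u]<\infty$, hence $u\in\D(\E_D)$). So you cannot use the core property to prove this lemma without first proving it by some independent means. Your fallback suggestion---passing to $\D(\mathcal{L})$ as a core and using Corollary~\ref{c.lpg}---does not close the gap: there is no guarantee that elements of $\D(\mathcal{L})$ are $C^2$ with the stated growth, and Proposition~\ref{forms_equal} only establishes $\E=\E_D$ on $C_c^\infty(\R_*)$, not on general $C^2$ functions of moderate growth. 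A second problem is that Proposition~\ref{prop:equality_of_domains} is only stated for $\alpha\in(0,1)\cup(1,2)$, while Lemma~\ref{lem_ED_less_E} carries no such restriction and is implicitly used for all $\alpha$; even if the circularity were resolved, your argument would not cover $\alpha=1$.

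The paper's own proof is structurally different and avoids all density considerations: it writes
\[
\E^{(t)}[u]\geq \frac{1}{2t}\int_\R \dx\int_\R K_t(x,\dy)\,(u(x)-u(y))^2,
\]
using $K_t\mathbf{1}\leq1$ and the symmetry of $K_t$, then keeps only the contributions from the $K_{t,0}$ kernel (on $D\times D$) and the $K_{t,1}$ kernel (on $D\times D^c$ and $D^c\times D$), which are nonnegative; finally it applies Fatou's lemma as $t\to 0^+$ together with the pointwise convergences $p_t^D(x,y)/t\to\nu(x,y)$ and the analogous convergence for the $K_{t,1}$ density (Lemmas~\ref{eq:convergence_p_D/t} and~\ref{lem_zb3}). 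This direct truncation-and-Fatou argument works for every $u\in L^2(\R)$ and every $\alpha\in(0,2)$, requires none of the later density machinery, and is what you should aim for here.
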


\begin{proof}
From Lemma \ref{K_subprobability}, we have \begin{align}\label{eq:char1}
        \mathcal{E}^{(t)}[u] &= \frac{1}{t} \int_\R \big[u(x)K_t(x,\R) - K_tu(x) + u(x)\big(1-K_t(x,\R)\big)\big]u(x)\,\dx \nonumber\\
        &= \frac{1}{t}\int_\R \dx \int_\R K_t(x,\dy) \, (u(x)-u(y))u(x) + \frac{1}{t}\int_\R u^2(x) \big(1-K_t(x,\R)\big)\,\dx \nonumber \\
        &\geq \frac{1}{t}\int_\R \dx \int_\R K_t(x,\dy) \, (u(x)-u(y))u(x).
    \end{align}
Moreover, $\int_\R u^2(x)K_t\mathbf{1}(x)\,\dx <\infty$ and then from Lemma \ref{symmetry_of_K},
\begin{align*}
    \int_\R \dx \int_\R K_t(x,\dy) \, (u(x)-u(y))u(x) &= \int_\R u^2(x)K_t\mathbf{1}(x)\,\dx - \int_\R u(x)K_tu(x)\,\dx \\
    &= \int_\R K_tu^2(x)\,\dx - \int_\R u(x)K_tu(x)\,\dx \\
    &=\int_\R \dx \int_\R K_t(x,\dy) \, (u(y)-u(x))u(y).
\end{align*}
Hence,
\begin{align}\label{eq:char2}
    \int_\R \dx \int_\R K_t(x,\dy) \, (u(x)-u(y))u(x) = \frac{1}{2} \int_\R \dx \int_\R K_t(x,\dy) \, (u(x)-u(y))^2.
\end{align}
Recall that $K_{t,1}(x,D) = 0$ for $x>0$ and $K_{t,1}(x,D^c)=0$ for $x<0$. Combining \eqref{eq:char1} and \eqref{eq:char2} we obtain that
\begin{align*}
\mathcal{E}^{(t)}[u] &\geq \frac{1}{2t} \int_\R \dx \int_\R K_t(x,\dy) \, (u(x)-u(y))^2 \\
&\geq \frac{1}{2t} \int_D \dx \int_D \dy \, p_t^D(x,y) (u(x)-u(y))^2 + \frac{1}{2t} \int_D \dx \int_{D^c} K_{t,1}(x,\dy) (u(x)-u(y))^2 \\
&+ \frac{1}{2t} \int_{D^c} \dx \int_D K_{t,1}(x,\dy) (u(x)-u(y))^2 \\
&= \frac{1}{2t} \int_D \dx \int_D \dy \, p_t^D(x,y) (u(x)-u(y))^2 \\
&+ \frac{1}{2} \int_D \dx \int_{D^c} \dy \, (u(x)-u(y))^2 \, \Big[\frac{1}{t}\int_0^t \dr \int_D \da  \, p_r^D(x,a) \nu(a,y)  e^{-\nu(y,D)(t-r)}\Big]  \\
&+ \frac{1}{2} \int_{D^c} \dx \int_D \dy \, (u(x)-u(y))^2 \Big[ \frac{1}{t} \int_0^t \dr \int_D \db  \, e^{-\nu(x,D)r} \nu(x,b)p_{t-r}^D(b,y)\Big].
\end{align*}
From the Fatou's lemma, Lemma \ref{eq:convergence_p_D/t} and Lemma \ref{lem_zb3},
\begin{align*}
    \E[u] &\geq \frac{1}{2} \int_D \dx \int_D \dy \, (u(x)-u(y))^2\nu(x,y) + \frac{1}{2} \int_D \dx \int_{D^c} \dy \, (u(x)-u(y))^2\nu(x,y) \nonumber\\
    &+ \frac{1}{2} \int_{D^c} \dx \int_D \dy \, (u(x)-u(y))^2\nu(y,x) \nonumber \\
    &=\E_D[u],
\end{align*}
which is our claim.
\end{proof}

For $u\in L^2(\R)$ and $A\subset \R$, we set
\begin{align*}
    u_A(x) := u(x)\ind_A(x) = \begin{cases} u(x), &x\in A, \\ 0, &x\notin A.\end{cases}
\end{align*}
Of course $u_A\in L^2(\R)$ and $u = u_D + u_{\overline{D}^c}$ a.e. 
With such definition, we have the following lemma.

\begin{lemma}\label{funkcjerozdzielone}
Let $u\in\F^*$. Then each of the functions $u_D$ and $u_{\overline{D}^c}$ belongs to $\D(\E_D)$.
\end{lemma}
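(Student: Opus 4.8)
The goal is to show that if $u \in \F^*$, i.e., $u \in \D(\E_D)$ and $\int_\R u^2(x)|x|^{-\alpha}\,\dx < \infty$, then the truncations $u_D = u\ind_D$ and $u_{\overline{D}^c} = u\ind_{\overline{D}^c}$ individually lie in $\D(\E_D)$. Since $u = u_D + u_{\overline{D}^c}$ a.e., once one of the two is in $\D(\E_D)$ the other follows from the linearity of $\D(\E_D)$, so it suffices to estimate, say, $\E_D[u_D]$. The plan is to write $\E_D[u_D]$ out explicitly using the definition of $\E_D$ and the fact that $u_D$ vanishes on $D^c$. Because the region of integration for $\E_D$ is $(D\times D)\cup(D\times D^c)\cup(D^c\times D)$ and $u_D(y) = 0$ for $y\in D^c$, we get
\[
\E_D[u_D] = \frac{1}{2}\iint_{D\times D} (u(x)-u(y))^2\nu(x,y)\,\dx\,\dy + \int_D u^2(x)\nu(x,D^c)\,\dx.
\]

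Now I would compare this with $\E_D[u]$ itself. Expanding $\E_D[u]$ in the same way (as was done in \eqref{E_D_9} and earlier in the excerpt, cf. the computation of $\E_D[f]$ in the proof of Proposition \ref{forms_equal}),
\[
\E_D[u] = \frac{1}{2}\iint_{D\times D}(u(x)-u(y))^2\nu(x,y)\,\dx\,\dy + \iint_{D\times D^c}(u(x)-u(y))^2\nu(x,y)\,\dx\,\dy,
\]
so the double integral over $D\times D$ appearing in $\E_D[u_D]$ is already dominated by $\E_D[u] < \infty$. It remains to control $\int_D u^2(x)\nu(x,D^c)\,\dx$. By \eqref{nuDc_scaling}, $\nu(x,D^c) = \mathcal{A}_{1,\alpha}\alpha^{-1} x^{-\alpha}$ for $x>0$, hence
\[
\int_D u^2(x)\nu(x,D^c)\,\dx = \mathcal{A}_{1,\alpha}\alpha^{-1}\int_D u^2(x)|x|^{-\alpha}\,\dx \le \mathcal{A}_{1,\alpha}\alpha^{-1}\int_\R u^2(x)|x|^{-\alpha}\,\dx < \infty,
\]
using precisely the extra hypothesis $\int_\R u^2(x)|x|^{-\alpha}\,\dx < \infty$ built into $\F^*$. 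Therefore $\E_D[u_D] < \infty$, so $u_D \in \D(\E_D)$, and then $u_{\overline{D}^c} = u - u_D \in \D(\E_D)$ as well since $\D(\E_D)$ is a linear subspace of $L^2(\R)$.

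The argument is essentially bookkeeping: the only genuine input beyond the definition of $\E_D$ is the scaling identity \eqref{nuDc_scaling} (and its mirror \eqref{nu_scaling} for the $u_{\overline{D}^c}$ side, if one prefers to treat that truncation directly), together with the weighted-$L^2$ condition in the definition of $\F^*$. The mild subtlety worth stating carefully is that the cross term $\iint_{D\times D^c}(u(x)-u(y))^2\nu(x,y)\,\dx\,\dy$ in $\E_D[u]$ does \emph{not} directly bound $\int_D u^2(x)\nu(x,D^c)\,\dx$ — there is no pointwise inequality between $(u(x)-u(y))^2$ and $u^2(x)$ — which is exactly why the separate weight hypothesis is needed and why a function in $\D(\E_D)$ alone need not have its truncations in $\D(\E_D)$. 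So the "obstacle," such as it is, is recognizing that $\D(\E_D)$ is not closed under multiplication by $\ind_D$ without the extra integrability, and that $\F^*$ was defined to repair exactly this; the inequalities themselves are routine.
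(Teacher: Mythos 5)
Your proof is correct and takes essentially the same route as the paper: expand $\E_D[u_D]$ into the $D\times D$ piece (dominated by $\E_D[u]$) plus $\int_D u^2(x)\nu(x,D^c)\,\dx$, and observe that the latter is exactly the weighted $L^2$ quantity built into $\F^*$, using $\nu(x,D^c)\approx|x|^{-\alpha}$. The only cosmetic difference is that the paper computes $\E_D[u_{\overline{D}^c}]=\int_{D^c}u^2(x)\nu(x,D)\,\dx$ directly (which is even quicker, since for that truncation the $D\times D$ block vanishes), whereas you deduce $u_{\overline{D}^c}\in\D(\E_D)$ from linearity; both are fine.
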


\begin{proof}
From \eqref{nuDc_scaling} we have
\begin{align*}
    \E_D[u_D] &= \frac{1}{2}\int_D \int_D (u(x)-u(y))^2\nu(x,y)\,\dx\,\dy + \int_D u^2(x)\nu(x,D^c)\,\dx \\
    &\lesssim \E_D[u] + \int_D u^2(x) |x|^{-\alpha}\,\dx <\infty.
\end{align*}
Then similarly,
\begin{align*}
    \E_D[u_{\overline{D}^c}] &= \int_{D^c} u^2(x)\nu(x,D)\,\dx \approx \int_{D^c} u^2(x)|x|^{-\alpha}\,\dx <\infty. \qedhere
\end{align*}
\end{proof}

\begin{lemma}\label{lem_equiv_smooth}
Let $u\in L^2(\R)$. Assume that there exists a sequence $(u_n)\subset C_c^\infty(\R_*)$ such that $\norm{u-u_n}_\F\to 0$ as $n\to\infty$, then $\norm{u-u_n}_{\E_D}\to 0$ as $n\to\infty$. Conversely, if there exists $(u_n)\subset C_c^\infty(\R_*)$ such that $\norm{u-u_n}_{\E_D}\to 0$ as $n\to\infty$, then $\norm{u-u_n}_{\F}\to 0$ as $n\to\infty$. Hence, 
\[
\overline{C_c^\infty(\R_*)}^{\norm{\cdot}_\F} =  \overline{C_c^\infty(\R_*)}^{\norm{\cdot}_{\E_D}}.
\]
\end{lemma}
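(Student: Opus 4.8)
The plan is to show that the $\|\cdot\|_\F$-norm and the $\|\cdot\|_{\E_D}$-norm are comparable (in fact biconvergent) along sequences from $C_c^\infty(\R_*)$, which immediately gives equality of the two closures. The main tool is Proposition~\ref{forms_equal}: for $u,v\in C_c^\infty(\R_*)$ one has $\E(u,v)=\E_D(u,v)$, hence $\|w\|_\F=\|w\|_{\E_D}$ for every $w\in C_c^\infty(\R_*)$. Thus if $(u_n)\subset C_c^\infty(\R_*)$ and $\|u-u_n\|_\F\to 0$, then in particular $(u_n)$ is $\|\cdot\|_\F$-Cauchy, so it is $\|\cdot\|_{\E_D}$-Cauchy (the two norms agree on $C_c^\infty(\R_*)$), and since $\|u_n-u\|_{L^2(\R)}\to 0$ we get $u_n\to u$ in $L^2(\R)$ while $(u_n)$ converges in the Hilbert space $(\D(\E_D),\|\cdot\|_{\E_D})$; the limit in $\D(\E_D)$ must have $L^2$-representative $u$, so $u\in\D(\E_D)$ and $\|u-u_n\|_{\E_D}\to 0$. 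The converse direction is symmetric: a $\|\cdot\|_{\E_D}$-convergent sequence from $C_c^\infty(\R_*)$ is $\|\cdot\|_{\E_D}$-Cauchy, hence $\|\cdot\|_\F$-Cauchy, hence converges in the Hilbert space $(\F,\|\cdot\|_\F)$ to some $\tilde u$; comparing $L^2$-limits forces $\tilde u=u$, so $u\in\F$ and $\|u-u_n\|_\F\to 0$.

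More concretely, I would first record the identity $\E[w]=\E_D[w]$ for $w\in C_c^\infty(\R_*)$ as a direct consequence of Proposition~\ref{forms_equal} (taking $u=v=w$ there), so that $\|w\|_\F^2=\|w\|_{L^2(\R)}^2+\E[w]=\|w\|_{L^2(\R)}^2+\E_D[w]=\|w\|_{\E_D}^2$. Then for the first implication: given $(u_n)\subset C_c^\infty(\R_*)$ with $\|u-u_n\|_\F\to 0$, note $\|u_n-u_m\|_{\E_D}=\|u_n-u_m\|_\F\to 0$ as $n,m\to\infty$ (using $u_n-u_m\in C_c^\infty(\R_*)$). By completeness of $(\D(\E_D),\|\cdot\|_{\E_D})$ there is $v\in\D(\E_D)$ with $\|u_n-v\|_{\E_D}\to 0$, in particular $\|u_n-v\|_{L^2(\R)}\to 0$. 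But also $\|u_n-u\|_{L^2(\R)}\to 0$, so $v=u$ a.e., giving $u\in\D(\E_D)$ and $\|u-u_n\|_{\E_D}\to 0$. Interchanging the roles of $\F$ and $\D(\E_D)$ (and using completeness of $(\F,\|\cdot\|_\F)$, already noted in the text) gives the converse, and the displayed equality of closures follows because a function lies in one closure iff it is the $L^2$-and-form limit of a sequence from $C_c^\infty(\R_*)$, a condition we have just shown to be the same for both norms.

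The only genuine subtlety — and the step I would be most careful about — is justifying that the two abstract limits coincide: the sequence $(u_n)$ converges in $\D(\E_D)$ to some $v$ and in $L^2(\R)$ to $u$, and one needs $v=u$. This is immediate once one observes that $\|\cdot\|_{\E_D}$-convergence implies $\|\cdot\|_{L^2(\R)}$-convergence (since $\|\cdot\|_{L^2(\R)}\le\|\cdot\|_{\E_D}$) and $L^2$-limits are unique. Everything else is soft functional analysis resting on: (i) the norm identity on $C_c^\infty(\R_*)$ from Proposition~\ref{forms_equal}, and (ii) completeness of $(\F,\|\cdot\|_\F)$ and of $(\D(\E_D),\|\cdot\|_{\E_D})$, both of which are available (the former is stated in the text as a consequence of $\E$ being a Dirichlet form; the latter follows because $\E_D$ is closable/closed, or can be verified directly since $\E_D[\cdot]$ is lower semicontinuous on $L^2$ by Fatou). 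I would include a one-line remark to that effect so the argument is self-contained.
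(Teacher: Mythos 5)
Your proof is correct and follows essentially the same route as the paper: use Proposition~\ref{forms_equal} to identify the two norms on $C_c^\infty(\R_*)$, deduce that a Cauchy sequence in one norm is Cauchy in the other, invoke closedness/completeness of $(\D(\E_D),\|\cdot\|_{\E_D})$ and $(\F,\|\cdot\|_\F)$, and identify the abstract limit with $u$ via uniqueness of $L^2$-limits. The one point you are right to flag — that the form-limit and the $L^2$-limit must agree — is handled identically in the paper, so nothing is missing.
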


\begin{proof}
Assume that there exists $(u_n)\subset C_c^\infty(\R_*)$ such that $\norm{u-u_n}_{\F} \to 0$. Then $(u_n)$ is a Cauchy sequence with respect to the norm $\norm{\cdot}_{\F}$, i.e. $\norm{u_n-u_m}_{\F}^2 = \norm{u_n-u_m}_{L^2(\R)}^2 + \E[u_n-u_m]\to 0$ as $n,m\to\infty$. From Proposition \ref{forms_equal} we get that $\norm{u_n-u_m}_{L^2(\R)}^2 + \E_D[u_n-u_m]\to 0$ as $n,m\to\infty$. Thus $\norm{u_n-u_m}_{\E_D} \to 0$, $n,m\to\infty$. From the fact that $\E_D$ is closed and symmetric form (for the proof of the closedness see Voight \cite[Lemma 2.19]{Voight2017}), it follows that there exists $\widehat{u}\in \D(\E_D)$ such that $\norm{\widehat{u}-u_n}_{\E_D}\to 0$ as $n\to\infty$ (see \cite[p. 4]{MR2778606}). We have obtained so far that in particular $\norm{u-u_n}_{L^2(\R)} \to 0$ and $\norm{\widehat{u}-u_n}_{L^2(\R)}\to 0$, $n\to\infty$. Hence, from the uniqueness of the limit in $L^2(\R)$ it follows that $\widehat{u} = u$ a.e. Therefore, $\norm{u - u_n}_{\E_D} = \norm{\widehat{u} - u_n}_{\E_D} \to 0$ as $n\to\infty$.

We know that $\E$ is a Dirichlet form, hence it is closed. Therefore, the second implication follows in the same way.
\end{proof}

\begin{lemma}\label{GestoscDc}
If $u\in\D(\E_D)$ and $u=0$ a.e. on $D$ then there exists a sequence $(u_n)\subset C_c^\infty(\R_*)$ such that $\norm{u-u_n}_{\E_D}\to 0$ as $n\to\infty$.
\end{lemma}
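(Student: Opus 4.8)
The plan is to reduce the $\E_D$-norm, on functions that vanish on $D$, to a weighted $L^2$-norm, and then to approximate by truncation followed by mollification. Since $u=0$ a.e.\ on $D$, the contributions over $D\times D$ in $\E_D$ vanish and, exactly as in the proof of Lemma~\ref{funkcjerozdzielone} together with \eqref{nu_scaling}, for \emph{every} $v\in L^2(\R)$ with $v=0$ a.e.\ on $D$ one has
\begin{align*}
\E_D[v] = \int_{D^c} v^2(x)\,\nu(x,D)\,\dx = \mathcal{A}_{1,\alpha}\alpha^{-1}\int_{D^c} v^2(x)|x|^{-\alpha}\,\dx,
\end{align*}
so that $\norm{v}_{\E_D}^2 = \int_{D^c} v^2(x)\big(1+\mathcal{A}_{1,\alpha}\alpha^{-1}|x|^{-\alpha}\big)\,\dx$. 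Applied to $v=u$, the finiteness of the right-hand side is precisely the hypothesis $u\in\D(\E_D)$; in particular $\int_{D^c}u^2(x)|x|^{-\alpha}\,\dx<\infty$. The point of this identity is that all approximants built below will also vanish on $D$, so the same formula controls every difference and no cross-term estimates between $D$ and $D^c$ are ever needed.

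First I would truncate. For $R>1$ put $u^{(R)}:=u\,\ind_{[-R,-1/R]}$. Then $u-u^{(R)}$ vanishes a.e.\ on $D$, so by the displayed identity
\begin{align*}
\norm{u-u^{(R)}}_{\E_D}^2=\int_{(-\infty,-R)\cup(-1/R,0)}u^2(x)\big(1+\mathcal{A}_{1,\alpha}\alpha^{-1}|x|^{-\alpha}\big)\,\dx\longrightarrow 0
\end{align*}
as $R\to\infty$, by dominated convergence (the integrand is dominated by the integrable function $u^2(1+\mathcal{A}_{1,\alpha}\alpha^{-1}|\cdot|^{-\alpha})\ind_{D^c}$, and the region of integration shrinks to a null set).

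Next I would mollify each $u^{(R)}$. Fix a standard mollifier $(\varphi_\varepsilon)_{\varepsilon>0}$ and set $u^{(R)}_\varepsilon:=u^{(R)}*\varphi_\varepsilon$. Since $\supp u^{(R)}\subseteq[-R,-1/R]$, for $\varepsilon<1/(2R)$ we get $\supp u^{(R)}_\varepsilon\subseteq[-R-\varepsilon,-1/(2R)]$, a compact subset of $\overline{D}^c$ bounded away from $0$; hence $u^{(R)}_\varepsilon\in C_c^\infty(\R_*)$ and it vanishes on $D$. On that support the weight $1+\mathcal{A}_{1,\alpha}\alpha^{-1}|x|^{-\alpha}$ is bounded by a constant $M_R$, so
\begin{align*}
\norm{u^{(R)}-u^{(R)}_\varepsilon}_{\E_D}^2 = \int_{D^c}\big(u^{(R)}-u^{(R)}_\varepsilon\big)^2(x)\big(1+\mathcal{A}_{1,\alpha}\alpha^{-1}|x|^{-\alpha}\big)\,\dx \le M_R\,\norm{u^{(R)}-u^{(R)}_\varepsilon}_{L^2(\R)}^2,
\end{align*}
which tends to $0$ as $\varepsilon\to0^+$ by the classical $L^2$-convergence of mollifications. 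Combining the two steps with a diagonal choice $R_n\to\infty$ and then $\varepsilon_n<1/(2R_n)$ chosen small enough, the functions $u_n:=u^{(R_n)}_{\varepsilon_n}\in C_c^\infty(\R_*)$ satisfy $\norm{u-u_n}_{\E_D}\to0$, which is the claim.

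The only delicate point — the ``main obstacle'', modest as it is — is the singular weight $|x|^{-\alpha}$ near the boundary point $0$: one must cut off near $0$ \emph{before} mollifying and keep the mollification radius small enough that the smoothed function still has support inside $\R_*$ and in a region where the weight is bounded. On any such region the $\E_D$-norm and the $L^2$-norm are comparable, and this is what reduces the whole statement to ordinary mollification in $L^2$.
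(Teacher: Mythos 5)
Your proof is correct. Both you and the paper begin from the same key reduction: since $u=0$ a.e.\ on $D$, the form $\E_D[u]$ collapses to $\int_{D^c}u^2(x)\nu(x,D)\,\dx$, so the $\E_D$-norm becomes a weighted $L^2$-norm on $D^c$ with weight $1+\nu(x,D)$. After that the routes differ. The paper absorbs the weight into the function by setting $f(x):=u(x)\sqrt{1+\nu(x,D)}$, invokes the $L^2$ density of $C_c^\infty(\overline{D}^c)$ in $L^2(\overline{D}^c)$ (citing Stein--Shakarchi), obtains approximants $f_n$, and then divides back by $\sqrt{1+\nu(x,D)}$ (which is smooth and bounded away from $0$ on any compact subset of $\overline{D}^c$) to produce $u_n\in C_c^\infty(\overline{D}^c)$. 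You instead carry out the density argument by hand: truncate $u$ to a compact interval $[-R,-1/R]$ bounded away from both $0$ and $-\infty$ (so that the weight is bounded there), then mollify with radius small enough to keep the support inside $\R_*$, then diagonalize. The two are closely related — the cited density lemma is itself proved by truncation and mollification — so the content is essentially the same; your version is more self-contained and makes explicit where the boundedness of the weight enters, while the paper's is more compact because it delegates the mollification step to a reference.
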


\begin{proof}
Note that
\begin{align}\label{eq:Dc1}
\norm{u}_{\E_D}^2 &=  \int_{D^c} |u(x)|^2 \big( 1 + \nu(x,D)\big)\,\dx = \int_{\overline{D}^c} \Big[ u(x) \sqrt{1+\nu(x,D)}\Big]^2\,\dx.    
\end{align}
Let $f(x) := u(x)\sqrt{1+\nu(x,D)}$ for $x\in \overline{D}^c$. Then from \eqref{eq:Dc1} it follows that $f\in L^2(\overline{D}^c)$. From Lemma 3.1 in \cite[p. 222]{MR2129625} it follows that there exists a sequence $(f_n)\subset C_c^\infty\big(\overline{D}^c\big)$ such that $\norm{f-f_n}_{L^2(\overline{D}^c)}\to 0$ as $n\to\infty$. Therefore, 
\begin{align}\label{eq:Dc2}
    \norm{f-f_n}_{L^2(\overline{D}^c)}^2 &= \int_{D^c} |f(x)-f_n(x)|^2\,\dx = \int_{D^c} \Big[ u(x)\sqrt{1+\nu(x,D)} - f_n(x)\Big]^2\,\dx \nonumber \\
    &= \int_{D^c} \Big[ u(x) - u_n(x)\Big]^2 \big(1+\nu(x,D)\big)\,\dx\to 0,
\end{align}
where $u_n(x) := \frac{f_n(x)}{\sqrt{1+\nu(x,D)}}$ for $x\in\overline{D}^c$. It is an easy  exercise to show that $u_n \in C_c^\infty(\overline{D}^c)$. We extend $u_n$ to $\R_*$ by letting $u_n=0$ on $D$ and from \eqref{eq:Dc1} and \eqref{eq:Dc2} we get
\[
\norm{u-u_n}_{\E_D}^2 = \int_{D^c} \big[u(x) - u_n(x)\big]^2 \big(1+\nu(x,D)\big)\,\dx \to 0,
\]
as $n\to\infty$. 
\end{proof}

\begin{proposition}\label{prop:equality_of_domains}
    For $\alpha\in (0,1)\cup (1,2)$, we have the following equalities between the domains:
    \[
    \F = \F^*  = \overline{C_c^\infty(\R_*)}^{\norm{\cdot}_{\E_D}} = \overline{C_c^\infty(\R_*)}^{\norm{\cdot}_\F}.
    \]
\end{proposition}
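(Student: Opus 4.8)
The plan is to establish the chain of four equalities by proving a small number of inclusions, exploiting the results already at hand, most notably Proposition~\ref{forms_equal}, Lemma~\ref{lem_ED_less_E}, Corollary~\ref{cor_Hardy}, Lemma~\ref{funkcjerozdzielone}, Lemma~\ref{lem_equiv_smooth}, and Lemma~\ref{GestoscDc}. The last equality $\overline{C_c^\infty(\R_*)}^{\norm{\cdot}_{\E_D}} = \overline{C_c^\infty(\R_*)}^{\norm{\cdot}_\F}$ is exactly the content of Lemma~\ref{lem_equiv_smooth}, so it is free. It therefore suffices to prove $\F = \F^*$ and $\F^* = \overline{C_c^\infty(\R_*)}^{\norm{\cdot}_{\E_D}}$; combining these with Lemma~\ref{lem_equiv_smooth} closes the loop.

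First I would show $\F\subseteq\F^*$. If $u\in\F$, then by definition $u\in L^2(\R)$ and $\E[u]<\infty$; by Lemma~\ref{lem_ED_less_E}, $\E_D[u]\le\E[u]<\infty$, so $u\in\D(\E_D)$, and by Corollary~\ref{cor_Hardy}, $\int_\R u^2(x)|x|^{-\alpha}\,\dx\lesssim\E[u]<\infty$. Hence $u\in\F^*$. For the reverse inclusion $\F^*\subseteq\F$ I would argue by density and approximation: take $u\in\F^*$; by Lemma~\ref{funkcjerozdzielone} we may split $u = u_D + u_{\overline{D}^c}$ with both summands in $\D(\E_D)$, and each summand still lies in $\F^*$ (the weighted $L^2$-integrability is inherited on each piece). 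By the triangle inequality \eqref{eq:E_triangle} (valid on $\F$) and the analogue for $\E_D$ via Lemma~\ref{E_D_seminorm}, it is enough to show $u_D\in\F$ and $u_{\overline{D}^c}\in\F$ separately. For $u_{\overline{D}^c}$, Lemma~\ref{GestoscDc} gives a sequence $(u_n)\subset C_c^\infty(\R_*)$ with $\norm{u_{\overline{D}^c}-u_n}_{\E_D}\to 0$; by Lemma~\ref{lem_equiv_smooth} this same sequence converges in $\norm{\cdot}_\F$, and since $\F$ is complete (it is a Hilbert space) the limit lies in $\F$ and must equal $u_{\overline{D}^c}$ by uniqueness of $L^2$-limits. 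The piece $u_D$ supported in the open half-line requires the analogous density statement: functions in $\D(\E_D)$ vanishing a.e.\ on $\overline{D}^c$ can be approximated in $\norm{\cdot}_{\E_D}$ by elements of $C_c^\infty(D)\subset C_c^\infty(\R_*)$; this is the classical fact that $C_c^\infty(D)$ is dense in the Dirichlet-form domain of the killed stable process on $D$ (see Fukushima et al.\ \cite[Theorem 4.5.2]{MR2778606} or \cite{Voight2017}), combined with the observation that on such functions $\E_D$ coincides with the killed-process form plus the finite killing term $\int_D u^2\nu(\cdot,D^c)$, controlled by the weight $|x|^{-\alpha}$. Then Lemma~\ref{lem_equiv_smooth} again transfers the convergence to $\norm{\cdot}_\F$, placing $u_D\in\F$.

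Once $\F=\F^*$ is established, the equality $\F^* = \overline{C_c^\infty(\R_*)}^{\norm{\cdot}_{\E_D}}$ follows from what has just been assembled: the approximation argument above shows $\F^*\subseteq\overline{C_c^\infty(\R_*)}^{\norm{\cdot}_{\E_D}}$ (every $u\in\F^*$ is an $\norm{\cdot}_{\E_D}$-limit of its pieces' smooth approximants, using that $\norm{u_D+u_{\overline D^c}-(v_n+w_n)}_{\E_D}\to 0$ when each part converges), and the reverse inclusion $\overline{C_c^\infty(\R_*)}^{\norm{\cdot}_{\E_D}}\subseteq\F^*$ holds because $C_c^\infty(\R_*)\subset\F^*$ (each such function is in $\D(\E_D)$ by the computation in the proof of Proposition~\ref{forms_equal}, and the weighted integral $\int u^2|x|^{-\alpha}\,\dx$ is finite since $u$ is bounded with compact support in $\R_*$, away from the origin), together with the fact that $\F^*$ is $\norm{\cdot}_{\E_D}$-closed. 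This closedness of $\F^*$ is a small point worth checking: if $u_n\to u$ in $\norm{\cdot}_{\E_D}$ with $u_n\in\F^*$, then $u\in\D(\E_D)$ by closedness of $\E_D$, while the uniform bound on $\int u_n^2|x|^{-\alpha}\,\dx$ coming from Corollary~\ref{cor_Hardy} applied after passing through $\F=\overline{C_c^\infty}^{\norm{\cdot}_\F}$—or more directly from Fatou's lemma along an $L^2$-a.e.\ convergent subsequence—gives $\int u^2|x|^{-\alpha}\,\dx<\infty$.

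The main obstacle I anticipate is the density statement for the $D$-supported part, i.e.\ showing that $\{u\in\D(\E_D): u=0 \text{ a.e.\ on }\overline{D}^c\}$ is the $\norm{\cdot}_{\E_D}$-closure of $C_c^\infty(D)$. Lemma~\ref{GestoscDc} handles the mirror-image case on $\overline{D}^c$ by a clean change-of-weight trick that works precisely because $\nu(x,D)$ has no singularity there and the form on $\overline{D}^c$ is purely a multiplication operator; on $D$ the form genuinely involves the nonlocal bilinear term over $D\times D$ plus the killing term, so the reduction is not a one-line computation but relies on the known regularity of the Dirichlet form of the killed stable process together with control of the killing density by the Hardy weight. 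Everything else—the two easy inclusions from Lemma~\ref{lem_ED_less_E} and Corollary~\ref{cor_Hardy}, the splitting via Lemma~\ref{funkcjerozdzielone}, and the $\norm{\cdot}_\F\leftrightarrow\norm{\cdot}_{\E_D}$ dictionary from Lemma~\ref{lem_equiv_smooth}—is routine bookkeeping.
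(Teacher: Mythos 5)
Your proposal is correct and structurally the same as the paper's proof: show $\F\subseteq\F^*$ via Lemma~\ref{lem_ED_less_E} and Corollary~\ref{cor_Hardy}, split $u\in\F^*$ as $u_D+u_{\overline D^c}$ using Lemma~\ref{funkcjerozdzielone}, approximate each summand by $C_c^\infty(\R_*)$ functions in the $\norm{\cdot}_{\E_D}$-norm, and pass between the two closures through Lemma~\ref{lem_equiv_smooth}. Two points are worth flagging. First, the step you correctly identify as the crux — density of $C_c^\infty$ for the $D$-supported piece — is handled in the paper by a cleaner observation than the one you sketch: since $u_D$ vanishes on $D^c$, the omitted $D^c\times D^c$ region contributes nothing, so
\[
\E_D[u_D]=\tfrac12\iint_{\R\times\R}(u_D(x)-u_D(y))^2\nu(x,y)\,\dx\,\dy
\]
is the unrestricted Gagliardo seminorm, and Fiscella--Servadei--Valdinoci \cite[Theorem~6]{MR3310082} then yields a $C_c^\infty$-approximating sequence directly; the route you propose through the killed-process Dirichlet form and \cite[Theorem~4.5.2]{MR2778606} does not by itself produce \emph{smooth} approximants and still requires the same variational density input. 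Second, your closing paragraph — verifying $\norm{\cdot}_{\E_D}$-closedness of $\F^*$ — is both unnecessary and, as you half-concede, shaky: Fatou along an a.e.\ subsequence needs a uniform bound on $\int u_n^2|x|^{-\alpha}\,\dx$ that is not obviously available at that point. It can be dropped entirely, since once you have $\F\subseteq\F^*\subseteq\overline{C_c^\infty(\R_*)}^{\norm{\cdot}_{\E_D}}=\overline{C_c^\infty(\R_*)}^{\norm{\cdot}_\F}\subseteq\F$, the chain closes itself and all four sets coincide.
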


\begin{proof}
    Note that the inclusion $\overline{C_c^\infty(\R_*)}^{\norm{\cdot}_\F} \subset \F$ is obvious by the definition. Therefore, from Lemma \ref{lem_equiv_smooth} it suffices to show the following inclusions: $\F \subset \F^* \subset \overline{C_c^\infty(\R_*)}^{\norm{\cdot}_{\E_D}}$.  

    Assume that $u\in\F$, i.e. $u\in L^2(\R)$ and $\E[u]<\infty$. Then from Lemma \ref{lem_ED_less_E}, $\E_D[u] <\infty$. Moreover, from Corollary \ref{cor_Hardy}, $\int_\R u^2(x)|x|^{-\alpha}\,\dx \lesssim \E[u] <\infty$. Hence, $u\in \F^*$.

    \medskip
    Assume that $u\in\F^*$. From Lemma \ref{funkcjerozdzielone} it follows that $u_D$ and $u_{\overline{D}^c}$ belongs to $\D(\E_D)$. Note that 
    \[
    \E_D[u_D] = \frac{1}{2} \iint_{\R\times\R} (u_D(x)-u_D(y))^2\nu(x,y)\,\dx\,\dy,
    \]
    and by Fiscella et al. \cite[Theorem 6]{MR3310082} it follows that there exists a sequence $(f_n)\subset C_c^\infty(\R_*)$ such that $\norm{u_D-f_n}_{L^2(\R)} + \E_D^{1/2}[u_D-f_n]\to 0$ as $n\to\infty$. Therefore, $\norm{u_D-f_n}_{\E_D}\to 0$ as $n\to\infty$. 
    Moreover, from Lemma \ref{GestoscDc} for $u_{\overline{D}^c}$ there exists a sequence $(g_n)\subset C_c^\infty(\R_*)$ such that $\norm{u_{\overline{D}^c} -g_n}_{\E_D}\to 0$ as $n\to\infty$. 
    For $n\in\mathbb{N}$, let $u_n := f_n + g_n$ and note that $u_n\in C_c^\infty(\R_*)$. Furthermore,
    \begin{align*}
        \norm{u-u_n}_{\E_D} = \norm{(u_D - f_n) + (u_{\overline{D}^c} - g_n)}_{\E_D} \leq \norm{u_D - f_n}_{\E_D} + \norm{u_{\overline{D}^c} - g_n}_{\E_D} \to 0,
    \end{align*}
    as $n\to\infty$. Thus $u\in \overline{C_c^\infty(\R_*)}^{\norm{\cdot}_{\E_D}}$. \qedhere  
\end{proof}

For an analogous result to the one given in Proposition \ref{prop:equality_of_domains}, see \cite[Proposition 6.3]{KB_TK_LM_2023}.

In terms of Dirichlet forms, we may then say that the form $(\E,\F)$ is in fact regular. Indeed, by the general theory of Dirichlet forms (see \cite[p. 6]{MR2778606}) it follows that the symmetric form $\E$ is called \emph{regular} if there exists a subset $\mathcal{C}$ of $\F\cap C_c(\R_*)$ such that $\mathcal{C}$ is dense in $\F$ with norm $\norm{\cdot}_\F$ and dense in $C_c(\R_*)$ with uniform norm. The set $\mathcal{C}$ is then called a \emph{core} of the form $\E$. From Proposition \ref{prop:equality_of_domains} it follows immediately that $\mathcal{C} = C_c^\infty(\R_*)$ is a core for $(\E,\F)$. According to this, we get the following corollary.

\begin{corollary}\label{corollary_regular}
    For $\alpha\in (0,1)\cup (1,2)$, the form $(\E,\F)$ is regular.
\end{corollary}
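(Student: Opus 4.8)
The plan is to verify directly the definition of regularity recalled in the paragraph preceding the statement, by exhibiting $C_c^\infty(\R_*)$ as a core. Thus I would take $\mathcal{C} := C_c^\infty(\R_*)$ and check the three required properties: that $\mathcal{C}\subset \F\cap C_c(\R_*)$, that $\mathcal{C}$ is dense in $\F$ for the norm $\norm{\cdot}_\F$, and that $\mathcal{C}$ is dense in $C_c(\R_*)$ for the uniform norm.

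The first property is immediate: $C_c^\infty(\R_*)\subset C_c(\R_*)$ trivially, while $C_c^\infty(\R_*)\subset \F$ (indeed with $\E=\E_D$ there) is precisely Proposition~\ref{forms_equal}. The second property is nothing but Proposition~\ref{prop:equality_of_domains}, which identifies $\F$ with the closure $\overline{C_c^\infty(\R_*)}^{\norm{\cdot}_\F}$, so $C_c^\infty(\R_*)$ is dense in $\F$ by construction. For the third property I would use the standard mollification argument: given $g\in C_c(\R_*)$ with $K:=\supp g$ compact in $\R_*$, convolving with a mollifier $\varphi_\eps$ yields $g*\varphi_\eps\in C^\infty$ with $\norm{g-g*\varphi_\eps}_\infty\to 0$ as $\eps\to 0^+$, and for $\eps$ small enough $\supp(g*\varphi_\eps)$ remains a compact subset of $\R_*$, so $g*\varphi_\eps\in C_c^\infty(\R_*)$.

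Putting these together shows that $\mathcal{C}=C_c^\infty(\R_*)$ is a core for $(\E,\F)$, hence the form is regular. I do not anticipate any genuine difficulty in this step: all the substantial work---the density statement $\F=\overline{C_c^\infty(\R_*)}^{\norm{\cdot}_\F}$, which rests on the Hardy inequality (Theorem~\ref{nier_Hardyego}), the coincidence $\E=\E_D$ on test functions (Proposition~\ref{forms_equal}), and the approximation lemmas on $D$ and $\overline{D}^c$---has already been carried out in Proposition~\ref{prop:equality_of_domains}, and the corollary is purely a matter of reading that identity against the definition of regularity.
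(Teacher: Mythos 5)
Your proof is correct and follows essentially the same route as the paper: the regularity is read off directly from Proposition~\ref{prop:equality_of_domains} (density of $C_c^\infty(\R_*)$ in $\F$) and Proposition~\ref{forms_equal} (showing $C_c^\infty(\R_*)\subset\F$), with the uniform-norm density of $C_c^\infty(\R_*)$ in $C_c(\R_*)$ being the standard mollification fact. The only difference is that you spell out the mollification step, which the paper leaves implicit.
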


\medskip
The next proposition describes the explicit form of the form $\E$ on the larger class of functions than in the Proposition \ref{forms_equal}.

\begin{proposition}\label{FormEquality_generalver}
For $\alpha\in (0,2)$ and $u\in \overline{C_c^\infty(\R_*)}^{\norm{\cdot}_\F}$,
    \[
    \E[u] = \E_D[u].
    \]
\end{proposition}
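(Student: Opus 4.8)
The plan is to combine Proposition~\ref{forms_equal}, which gives $\E(u,v)=\E_D(u,v)$ for $u,v\in C_c^\infty(\R_*)$, with a density/continuity argument based on Proposition~\ref{prop:equality_of_domains} and Lemma~\ref{lem_equiv_smooth}. Fix $u\in \overline{C_c^\infty(\R_*)}^{\norm{\cdot}_\F}$. By Lemma~\ref{lem_equiv_smooth} (and Proposition~\ref{prop:equality_of_domains}) there is a sequence $(u_n)\subset C_c^\infty(\R_*)$ with $\norm{u-u_n}_\F\to 0$ and simultaneously $\norm{u-u_n}_{\E_D}\to 0$ as $n\to\infty$. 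In particular both $(u_n)$ are Cauchy in the respective norms, $u\in\F$, and $u\in\D(\E_D)$.

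First I would note that $\sqrt{\E[\cdot]}$ is a seminorm on $\F$ (this was established after \eqref{eq:E_triangle}) and $\sqrt{\E_D[\cdot]}$ is a seminorm on $\D(\E_D)$ by Lemma~\ref{E_D_seminorm}. Therefore, by the reverse triangle inequality,
\[
\big|\sqrt{\E[u]}-\sqrt{\E[u_n]}\big|\le \sqrt{\E[u-u_n]}\le \norm{u-u_n}_\F\to 0,
\]
so $\E[u_n]\to\E[u]$, and likewise $\E_D[u_n]\to\E_D[u]$ using $\sqrt{\E_D[u-u_n]}\le\norm{u-u_n}_{\E_D}$. Since $\E[u_n]=\E_D[u_n]$ for every $n$ by Proposition~\ref{forms_equal}, passing to the limit gives $\E[u]=\E_D[u]$.

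The argument is essentially a two-line limiting passage once the right facts are cited; there is no serious obstacle. The only point requiring a little care is that one must invoke the \emph{same} approximating sequence $(u_n)$ for both norms — this is exactly the content of Lemma~\ref{lem_equiv_smooth}, which asserts that $\norm{\cdot}_\F$-convergence of a sequence from $C_c^\infty(\R_*)$ to $u$ forces $\norm{\cdot}_{\E_D}$-convergence to the same $u$ (and conversely). With that in hand, the equality of the quadratic forms on $C_c^\infty(\R_*)$ (Proposition~\ref{forms_equal}) propagates to the closure. One could equivalently phrase the whole thing via the polarization identity to recover $\E(u,v)=\E_D(u,v)$ for all $u,v$ in the common domain, but for the stated equality $\E[u]=\E_D[u]$ the seminorm continuity argument above suffices.
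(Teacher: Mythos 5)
Your proof is correct and follows essentially the same route as the paper: approximate $u$ by a sequence in $C_c^\infty(\R_*)$, use Lemma~\ref{lem_equiv_smooth} to get convergence in both norms $\norm{\cdot}_\F$ and $\norm{\cdot}_{\E_D}$, use continuity of the seminorms $\sqrt{\E[\cdot]}$ and $\sqrt{\E_D[\cdot]}$, and pass Proposition~\ref{forms_equal} to the limit. The one place you diverge is cosmetically cleaner: you apply the reverse triangle inequality symmetrically to \emph{both} seminorms to get $\E[u_n]\to\E[u]$ and $\E_D[u_n]\to\E_D[u]$ directly, so equality of the two limits is immediate. The paper instead applies the one-sided triangle inequality $\sqrt{\E[u]}\le\norm{u-u_n}_\F+\sqrt{\E_D[u_n]}$ to get $\E[u]\le\E_D[u]$ and then invokes Lemma~\ref{lem_ED_less_E} for the opposite bound; your version makes that extra lemma unnecessary. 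One small remark: the parenthetical citation of Proposition~\ref{prop:equality_of_domains} is not needed (the existence of the approximating sequence is the \emph{definition} of $u\in\overline{C_c^\infty(\R_*)}^{\norm{\cdot}_\F}$), and is in fact best dropped, since that proposition carries the restriction $\alpha\neq 1$ whereas the statement here is for all $\alpha\in(0,2)$; Lemma~\ref{lem_equiv_smooth} alone carries no such restriction and suffices.
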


\begin{proof}
Let $u\in \overline{C_c^\infty(\R_*)}^{\norm{\cdot}_\F}$. Then there exists a sequence $(u_n) \subset C_c^\infty(\R_*)$ such that $\norm{u-u_n}_\F\to 0$, $n\to\infty$. From \eqref{eq:E_triangle}, \eqref{eq:F_norm} and from Proposition \ref{forms_equal} it follows that 
\begin{align*}
    \sqrt{\E[u]} &= \sqrt{\E[(u-u_n)+u_n]} \leq \sqrt{\E[u-u_n]} + \sqrt{\E[u_n]} \leq \norm{u-u_n}_\F + \sqrt{\E_D[u_n]}.
\end{align*}
By taking $n\to\infty$, we get
\begin{align}\label{eq:nier_E_ED}
    \sqrt{\E[u]} \leq \lim_{n\to\infty} \sqrt{\E_D[u_n]}.
\end{align}
We will show that $\E_D[u_n]\to\E_D[u]$, $n\to\infty$. Indeed, from Lemma \ref{lem_equiv_smooth} it follows that $\norm{u-u_n}_{\E_D}\to 0$ as $n\to\infty$ and then from Lemma \ref{E_D_seminorm}, by the inverse triangle inequality for $\sqrt{\E_D[\cdot]}$, we have
\begin{align}\label{eq:E_D_zbieznosc}
    \big| \sqrt{\E_D[u]} - \sqrt{\E_D[u_n]} \big| \leq \sqrt{\E_D[u-u_n]} \leq \norm{u-u_n}_{\E_D}\to 0,
\end{align}
as $n\to\infty$. Thus, from \eqref{eq:nier_E_ED} and \eqref{eq:E_D_zbieznosc}, we get the inequality $\E[u] \leq \E_D[u]$, which, together with Lemma \ref{lem_ED_less_E}, ends the proof.
\end{proof}

\section{Boundary problems}\label{chap_Neumann}

This section is devoted to finding the direct solutions of two nonlocal boundary problems. For this purpose, we use the \emph{Dynkin characteristic operator}, which is defined in Section \ref{DynkChO}. 
The first problem involves the \emph{$\lambda$-potentials} (or the \emph{resolvents}), see. Corollary \ref{cor:resolvent} below. It is a well-known formula for the Feller generators (see, e.g.,  Dynkin \cite[Theorem 1.1, p. 24]{Dynkin1965}) and we prove that it holds in our case also for the Dynkin characteristic operator.
The second problem is central for the paper---we prove that under certain assumptions on the function $f$, the solution of the Neumann boundary problem 
\begin{align*}\
        \begin{cases}
            \hfill (-\Delta)^{\alpha/2} u &= ~f, \quad \mathrm{in } ~D, \\
            \hfill \mathcal{N}_{\alpha/2} u&= ~f, \quad \,\mathrm{ in } ~\overline{D}^c.
        \end{cases}
\end{align*}
is given by the Green operator $G$ of $K$.

\subsection{Dynkin characteristic operator}\label{DynkChO}

The \emph{Dynkin characteristic} operator for the process $X = (X_t)_{t\geq 0}$ is defined by the following expression
\begin{align}\label{eq:Dynkin_operator}
    \mathcal{D}f(x) := \lim_{r\to 0^+} \frac{\mE_x f(X_{\tau_{B(x,r)}}) - f(x)}{\mE_x\tau_{B(x,r)}}, \quad x\neq 0.
\end{align}
Here $x\in\R_*$ and a function $f$ are such that the limit exists.

Assume that $x>0$ and a function $f$ are such that the limit \eqref{eq:Dynkin_operator} exists. Then, from the construction of the process $X$, from Lemma \ref{lem:L1} and from the Ikeda--Watanabe formula \eqref{eq:Ikeda_Watanabe} it follows that $\mE_xf(X_{\tau_{B(x,r)}}) = \mE^Y_xf(Y_{\tau_{B(x,r)}})$ and $\mE_x\tau_{B(x,r)} = \mE^Y_x\tau_{B(x,r)}$. Hence,
\begin{align*}
    \mathcal{D}f(x) &=  \lim_{r\to 0^+} \frac{\mE^Y_x f(Y_{\tau_{B(x,r)}}) - f(x)}{\mE^Y_x\tau_{B(x,r)}}, \qquad x>0.
\end{align*}
Moreover, from Kwa\'{s}nicki \cite[Lemma 3.3]{MR3613319} it follows that 
\begin{align}\label{eq:Dynkin_singular}
    \mathcal{D}f(x) &= -(-\Delta)^{\alpha/2} f(x),
\qquad x>0,
\end{align}
i.e. the operator $\mathcal{D}$, for $x>0$, is the fractional Laplacian on $\R$.

Now, assume that $x<0$ and $f$ is such that $\widehat{\nu}|f|(x)<\infty.$
Then, from the construction of the process $X$, it is obvious that 
\[
\mE_x f(X_{\tau_{B(x,r)}}) = \int_D f(y) k(x,\dy) = \int_D f(y) \frac{\nu(x,y)}{\nu(x,D)}\,\dy,
\]
and
\[
\mE_x\tau_{B(x,r)} = \frac{1}{\nu(x,D)},
\]
which follows from the fact that for $x<0$, $\tau_{B(x,r)}$ is the random variable from the exponential distribution with mean $1/\nu(x,D)$. Hence,
\begin{align}\label{Dynkin_normal_derivative}
\mathcal{D}f(x) = -\mathcal{N}_{\alpha/2}f(x) = -\int_D (f(x)-f(y))\nu(x,y)\,\dy, \qquad x<0,
\end{align}
i.e. the operator $\mathcal{D}$, for $x<0$, is the \emph{nonlocal normal derivative} (see, e.g.,  Dipierro et al. \cite{MR3651008}). Further, note that from \eqref{Dynkin_normal_derivative}, we also have the following equality:
\begin{align}
    \label{Dynkin_normal_derivative_rozwiniety}
    \mathcal{D}f(x) = \widehat{\nu}f(x) - f(x)\nu(x,D).
\end{align}

\subsection{The \texorpdfstring{$\lambda$}%
     {TEXT}-potentials}

Here assume that $f\in C_b(\R_*)$ and define the \emph{$\lambda$-potential}, $\lambda>0$,  of the semigroup $K=(K_t)_{t\geq 0}$ by
\[
U_\lambda f(x) := \mE_x \int_0^\infty e^{-\lambda t} f(X_t)\,\dt = \int_0^\infty e^{-\lambda t} K_tf(x)\,\dt, \qquad x\neq 0.
\]
Then of course $U_\lambda |f|<\infty$ for all $\lambda>0$. Indeed, for $x\neq 0$, from Lemma \ref{K_subprobability},
\begin{align*}
    U_\lambda |f|(x) &= \int_0^\infty e^{-\lambda t} K_t|f|(x)\,\dt \leq \norm{f}_\infty \int_0^\infty e^{-\lambda t} K_t\mathbf{1}(x)\,\dt \\
    &\leq \norm{f}_\infty \int_0^\infty e^{-\lambda t}\,\dt = \lambda^{-1} \norm{f}_\infty <\infty.
\end{align*}

\begin{proposition}\label{Boundary_problem_1}
    Let $\alpha\in (0,2)$, $\lambda>0$ and $f\in C_b(\R_*)$. Then, $u = U_\lambda f$ is the solution of the equation $(\mathcal{D} -\lambda \mathrm{I}) u = -f$ in $\R_*$.
\end{proposition}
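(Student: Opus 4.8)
The plan is to verify that $u = U_\lambda f$ satisfies $(\mathcal{D} - \lambda \mathrm{I})u = -f$ pointwise on $\R_*$, treating the cases $x > 0$ and $x < 0$ separately, since the operator $\mathcal{D}$ has a different description in each region (the fractional Laplacian on $D$, the nonlocal normal derivative on $\overline{D}^c$). First I would record the basic regularity of $u$: from Proposition~\ref{K_bounded_continuity} and the contraction property in Lemma~\ref{K_subprobability}, $K_t f \in C_b(\R_*)$ for every $t$, and dominated convergence gives $U_\lambda f \in C_b(\R_*)$; moreover $\|U_\lambda f\|_\infty \le \lambda^{-1}\|f\|_\infty$ as already computed. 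The key identity to extract is the resolvent equation: by the Markov property (Proposition~\ref{Th:semigroup}) and Fubini,
\begin{align*}
K_s U_\lambda f(x) = \int_0^\infty e^{-\lambda t} K_{s+t}f(x)\,\dt = e^{\lambda s}\int_s^\infty e^{-\lambda r}K_r f(x)\,\dr = e^{\lambda s}\Big(U_\lambda f(x) - \int_0^s e^{-\lambda r}K_r f(x)\,\dr\Big),
\end{align*}
so that
\begin{align*}
\frac{U_\lambda f(x) - K_s U_\lambda f(x)}{s} = \frac{1 - e^{\lambda s}}{s}U_\lambda f(x) + \frac{e^{\lambda s}}{s}\int_0^s e^{-\lambda r}K_r f(x)\,\dr \xrightarrow[s\to 0^+]{} -\lambda\, U_\lambda f(x) + f(x),
\end{align*}
the last limit using Lemma~\ref{K_t_pointwise_convergence} (for $\alpha > 1$) or, more robustly, the pointwise continuity $K_r f(x) \to f(x)$ established in the course of proving strong continuity on $L^2$ together with the uniform bound $\|K_r f\|_\infty \le \|f\|_\infty$. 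Thus $\lim_{s\to 0^+} s^{-1}(u(x) - K_s u(x)) = \lambda u(x) - f(x)$ for every $x \ne 0$.

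The remaining—and main—task is to pass from this ``infinitesimal semigroup derivative'' to the Dynkin characteristic operator $\mathcal{D}u$. For $x < 0$ this is essentially immediate: the process started at $x$ waits an exponential time with mean $1/\nu(x,D)$ and then jumps into $D$ according to $k(x,\cdot)$, so $\mE_x \tau_{B(x,r)} = 1/\nu(x,D)$ and $\mE_x u(X_{\tau_{B(x,r)}}) = \int_D u(y)\nu(x,y)/\nu(x,D)\,\dy$ for all small $r$; hence $\mathcal{D}u(x) = \widehat{\nu}u(x) - u(x)\nu(x,D)$ by \eqref{Dynkin_normal_derivative_rozwiniety}, provided $\widehat{\nu}|u|(x) < \infty$. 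On the other hand, one computes directly from the perturbation formula (Corollary~\ref{perturbation_formula}) that $s^{-1}(u(x) - K_s u(x)) = u(x)\,s^{-1}(1 - e^{-\nu(x,D)s}) - s^{-1}K_{s,1}u(x) - s^{-1}\sum_{n\ge2}K_{s,n}u(x)$, and the first term tends to $u(x)\nu(x,D)$ while $s^{-1}K_{s,1}u(x) \to \widehat{\nu}u(x)$ by Lemma~\ref{lem_zbieznosc_K_1} (here $\widehat{\nu}|u|(x) < \infty$ because $u$ is bounded and $\int_D \nu(x,y)\,\dy = \nu(x,D) < \infty$ for $x<0$) and the tail $\sum_{n\ge2}$ vanishes to first order by the same argument as in Lemma~\ref{h_sum_from_2}, using $|u| \le \lambda^{-1}\|f\|_\infty \lesssim h_0$. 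Combining gives $\mathcal{D}u(x) - \lambda u(x) = -f(x)$ for $x<0$.

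For $x > 0$ the subtlety is that before $\tau_D$ the process $X$ agrees with the killed stable process $Y^D$, so $\mE_x u(X_{\tau_{B(x,r)}}) = \mE_x^Y u(Y_{\tau_{B(x,r)}})$ and $\mE_x\tau_{B(x,r)} = \mE_x^Y\tau_{B(x,r)}$ for $r$ small enough that $B(x,r) \subset D$; thus $\mathcal{D}u(x) = -(-\Delta)^{\alpha/2}u(x)$ by \eqref{eq:Dynkin_singular}, i.e., by Kwaśnicki \cite[Lemma 3.3]{MR3613319}, whenever the right-hand side makes sense. The plan is therefore to show that the pointwise formula of Corollary~\ref{c.lpg}-type reasoning applies: since $u \in C_b(\R_*)$ with $|u| \lesssim 1 \le 1 + |x|^{\alpha-1}$, one would like to invoke Corollary~\ref{c.lpg} to get $\lim_{s\to0^+}s^{-1}(u(x) - K_s u(x)) = \mathrm{p.v.}\int_\R(u(x)-u(y))\nu(x,y)\,\dy$; but Corollary~\ref{c.lpg} as stated requires $u \in C^2(\R_*)$, which $U_\lambda f$ need not satisfy. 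The main obstacle is thus bridging this regularity gap. I expect the cleanest route is to \emph{avoid} claiming a pointwise singular-integral identity for $u$ and instead argue via Dynkin's formula directly: for $r$ small with $B(x,r)\subset D$, Dynkin's formula for the right process $X$ (equivalently, for $Y^D$ on such a ball) gives $\mE_x u(X_{\tau_{B(x,r)}}) - u(x) = \mE_x\int_0^{\tau_{B(x,r)}}(\lambda u - f)(X_t)\,\dt$, because $t \mapsto e^{-\lambda t}u(X_t) + \int_0^t e^{-\lambda s}f(X_s)\,\ds$ is a martingale — this is the integrated form of the resolvent identity and follows from the Markov property without any smoothness of $u$. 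Dividing by $\mE_x\tau_{B(x,r)}$ and letting $r\to 0^+$, the continuity of $\lambda u - f$ at $x$ yields $\mathcal{D}u(x) = (\lambda u - f)(x)$, i.e., $(\mathcal{D}-\lambda\mathrm{I})u(x) = -f(x)$. The same martingale argument, specialized to $x<0$ where $\tau_{B(x,r)}$ is the exponential holding time, recovers the case above uniformly. The only genuinely technical point to nail down is the validity of Dynkin's formula in this form — that $\mE_x \tau_{B(x,r)} < \infty$ for small $r$ (clear: on $D$ it is the stable exit-time moment, finite by \cite{MR2075671}; on $\overline{D}^c$ it equals $1/\nu(x,D)$) and that $\mE_x\big[\int_0^{\tau_{B(x,r)}}|\lambda u - f|(X_t)\,\dt\big] \le (\lambda\|u\|_\infty + \|f\|_\infty)\mE_x\tau_{B(x,r)} < \infty$, which justifies the interchange and the martingale stopping.
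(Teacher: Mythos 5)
Your final approach — replacing the explicit $I-II$ split of the paper by the undiscounted Dynkin martingale, dividing by $\mE_x\tau_{B(x,r)}$, and concluding by local continuity of $\lambda u - f$ — is valid and is arguably cleaner than the paper's argument. Both you and the paper start from essentially the same place, namely Dynkin's optional-stopping identity for $U_\lambda f$. The paper stops the discounted martingale $M_t := e^{-\lambda t}u(X_t)+\int_0^t e^{-\lambda s}f(X_s)\,\ds$, writes $\mathcal{D}u(x) = I-II$ with $I = \lim_r (\mE_x\tau_{B(x,r)})^{-1}\mE_x[(1-e^{-\lambda\tau_{B(x,r)}})u(X_{\tau_{B(x,r)}})]$ and $II = \lim_r(\mE_x\tau_{B(x,r)})^{-1}\mE_x\int_0^{\tau_{B(x,r)}}e^{-\lambda t}f(X_t)\,\dt$, and then evaluates each limit by the scaling identities \eqref{eq:pair_scling}, \eqref{eq:scaling_Y_tau} and dominated convergence for $x>0$, and by an explicit exponential-clock computation plus the algebraic relation \eqref{Dynkin_normal_derivative_rozwiniety} for $x<0$. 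You instead stop the \emph{undiscounted} martingale, divide, and observe that $|X_t - x| < r$ for $t < \tau_{B(x,r)}$ forces $(\mE_x\tau_{B(x,r)})^{-1}\mE_x\int_0^{\tau_{B(x,r)}}(\lambda u - f)(X_t)\,\dt\to(\lambda u - f)(x)$ by continuity alone. This dispenses entirely with the scaling calculations and with the $x<0$ algebra, and it treats both half-lines uniformly; it is a genuinely tidier argument, so long as the martingale you stop is the right one.

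That is the one place your write-up has a real gap. You assert $\mE_x u(X_{\tau_{B(x,r)}}) - u(x) = \mE_x\int_0^{\tau_{B(x,r)}}(\lambda u - f)(X_t)\,\dt$ ``because $M_t=e^{-\lambda t}u(X_t)+\int_0^t e^{-\lambda s}f(X_s)\,\ds$ is a martingale.'' But optional stopping applied to $M_t$ yields $\mE_x[e^{-\lambda\tau}u(X_\tau)] = u(x) - \mE_x\int_0^\tau e^{-\lambda t}f(X_t)\,\dt$ — the paper's discounted identity — not the identity you want. Your identity is obtained by stopping the \emph{different} martingale $N_t := u(X_t) - u(X_0) - \int_0^t(\lambda u - f)(X_s)\,\ds$. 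The martingale property of $N_t$ does hold, but it needs its own (short) proof: it follows from the integrated resolvent identity $K_h u - u = \int_0^h K_s(\lambda u - f)\,\ds$, which you essentially have in your first paragraph; the Markov property then gives $\mE_x[N_{t+h}-N_t\,|\,\mathcal{F}_t] = K_h u(X_t) - u(X_t) - \int_0^h K_s(\lambda u - f)(X_t)\,\ds = 0$. You should state this explicitly rather than attributing the identity to $M_t$. Once $N_t$ is in hand, the domination you already noted ($|N_{t\wedge\tau}| \le 2\|u\|_\infty + (\lambda\|u\|_\infty + \|f\|_\infty)\tau$ with $\mE_x\tau < \infty$) justifies optional stopping, and the rest is as you wrote.

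Two smaller points. First, a sign typo: your displayed computation correctly gives $\lim_{s\to 0^+}s^{-1}(u - K_s u)(x) = -\lambda u(x) + f(x)$, but the following sentence records the limit as $\lambda u(x) - f(x)$; the correct value is $f(x)-\lambda u(x)$, consistent with $\mathcal L u = \lambda u - f$ (where $\mathcal L u = \lim_s s^{-1}(K_s u - u)$). This matters in your $x<0$ paragraph, where you match $-\mathcal{D}u(x) = u(x)\nu(x,D)-\widehat\nu u(x)$ to this limit; using the correctly-signed value gives $\mathcal{D}u(x) = \lambda u(x) - f(x)$ as desired. Second, the perturbation-formula detour for $x<0$ is redundant once you note, as you do at the end, that the same martingale division settles $x<0$ directly (and there the limit $r\to 0^+$ is trivial since neither $\tau_{B(x,r)}=R_1$ nor $X_t\equiv x$ on $[0,R_1)$ depends on $r$).
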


\begin{proof}
For $\lambda >0$ and $f\in C_b(\R_*)$, $u = U_\lambda f$ is well-defined.
From \cite[Theorem 5.1]{Dynkin1965} it follows that
\begin{align*}
    \mE_x \big( e^{-\lambda \tau_{B(x,r)}} u(X_{\tau_{B(x,r)}})\big) = u(x) - \mE_x \int_0^{\tau_{B(x,r)}} e^{-\lambda t} f(X_t)\,\dt.
\end{align*}
Recall that we may use cited theorem, because from Chapter 3.3 in Dynkin \cite{Dynkin1965} it follows that each right-continuous process is strongly measurable.

Hence, for $x\neq 0$,
\begin{align}\label{eq:Dynkin_I-II}
    \mathcal{D}u(x) &= \lim_{r\to 0^+} \frac{\mE_x\big[ (1-e^{-\lambda \tau_{B(x,r)}}) u(X_{\tau_{B(x,r)}})\big]}{\mE_x \tau_{B(x,r)}} - \lim_{r\to 0^+} \frac{1}{\mE_x\tau_{B(x,r)}}\mE_x \int_0^{\tau_{B(x,r)}} e^{-\lambda t} f(X_t)\,\dt \nonumber \\
    &=: I - II.
\end{align}

\medskip
Assume that $x>0$. Since $\tau_{B(x,r)} \leq \zeta^{(1)}$, we have $X_t = Y_t$ for $t<\tau_{B(x,r)}$. Therefore, from Fubini's theorem,
\begin{align*}
    \mE_x\int_0^{\tau_{B(x,r)}} e^{-\lambda t} f(X_t)\,\dt &= \mE^Y_x \int_0^{\tau_{B(x,r)}} e^{-\lambda t} f(Y_t)\,\dt \\
    &= \mE^Y_x \int_0^{\infty} e^{-\lambda t} f(Y_t) \ind_{[t,\infty)}(\tau_{B(x,r)}) \,\dt \\
    &= \int_0^{\infty} \mE^Y_x\big[e^{-\lambda t} f(Y_t) \ind_{[t,\infty)}(\tau_{B(x,r)})\big] \,\dt \\
    &=\int_0^{\infty} \mE^Y_0\big[e^{-\lambda t} f(x+Y_t) \ind_{[t,\infty)}(\tau_{B(0,r)})\big] \,\dt.
\end{align*}
From the scaling property of $(Y_t, \tau_{B(x,r)})$ with respect to $\mP_0^Y$ (see \eqref{eq:pair_scling}) and again from the Fubini's theorem,
\begin{align*}
    \mE^Y_x \int_0^{\tau_{B(x,r)}} e^{-\lambda t} f(Y_t)\,\dt &= \int_0^{\infty} \mE^Y_0\big[e^{-\lambda t} f(x+rY_{r^{-\alpha }t}) \ind_{[t,\infty)}(r^\alpha \tau_{B(0,1)})\big] \,\dt \\
    &= \mE^Y_0 \int_0^{r^\alpha \tau_{B(0,1)}} e^{-\lambda t} f(x+rY_{r^{-\alpha} t}) \,\dt.
\end{align*}
Using the substitution $s=r^{-\alpha}t$ we get
\begin{align*}
    \mE^Y_x \int_0^{\tau_{B(x,r)}} e^{-\lambda t} f(Y_t)\,\dt &= r^\alpha \,\mE^Y_0 \int_0^{\tau_{B(0,1)}} e^{-\lambda sr^\alpha } f(x+rY_s) \,\ds.
\end{align*}
Moreover, from the shift invariance of the process $Y$, $\mE_x^Y \tau_{B(x,r)} = \mE_0^Y \tau_{B(0,r)} = r^\alpha \mE_0^Y \tau_{B(0,1)}$. Hence, from the dominated convergence theorem,
\begin{align}\label{eq:Dynkin_res_II}
    II &= \lim_{r\to 0^+} \frac{1}{\mE^Y_x\tau_{B(x,r)}}\mE^Y_x \int_0^{\tau_{B(x,r)}} e^{-\lambda t} f(Y_t)\,\dt \nonumber \\
    &=\lim_{r\to 0^+} \frac{1}{\mE_0^Y \tau_{B(0,1)}} \mE^Y_0 \int_0^{\tau_{B(0,1)}} e^{-\lambda sr^\alpha} f(x+rY_s) \,\ds = f(x).
\end{align}
In case of the limit $I$ we proceed similarly: from \eqref{eq:scaling_Y_tau},
\begin{align*}
    I &= \lim_{r\to 0^+} \frac{\mE^Y_x\big[ (1-e^{-\lambda \tau_{B(x,r)}}) u(Y_{\tau_{B(x,r)}})\big]}{\mE^Y_x \tau_{B(x,r)}} \\
    &=\lim_{r\to 0^+} \frac{\mE^Y_0\big[ (1-e^{-\lambda r^\alpha \tau_{B(0,1)}}) u(x+rY_{ \tau_{B(0,1)}})\big]}{r^\alpha \,\mE^Y_0 \tau_{B(0,1)}} \\
    &= \frac{1}{\mE_0^Y \tau_{B(0,1)}} \lim_{r\to 0^+} \mE_0^Y \left[ \frac{1-e^{-\lambda r^\alpha \tau_{B(0,1)}}}{r^\alpha} u(x+r Y_{ \tau_{B(0,1)}})\right].
\end{align*}
Note that $u(x) = \int_0^\infty e^{-\lambda t} K_tf(x)\,\dt$ is bounded, which follows from the fact that $f$ is bounded. Moreover, from Proposition \ref{K_bounded_continuity}, it follows that the function $\R_*\ni x\mapsto u(x)$ is continuous. Hence, from the inequality $1-e^{-z}\leq z$ for $z>0$, it follows that we can use the dominated convergence theorem to obtain that 
\begin{align}\label{eq:Dynkin_res_I}
    I = \frac{1}{\mE_0^Y \tau_{B(0,1)}}  \mE_0^Y \left[ \lim_{r\to 0^+} \frac{1-e^{-\lambda r^\alpha \tau_{B(0,1)}}}{r^\alpha} u(x+r Y_{ \tau_{B(0,1)}})\right] = \lambda u(x).
\end{align}
Combining \eqref{eq:Dynkin_I-II}, \eqref{eq:Dynkin_res_II} and \eqref{eq:Dynkin_res_I} we obtain the equality $\mathcal{D}u(x) = \lambda u(x) - f(x)$, $x>0$.    

\medskip
Now assume that $x<0$. From the construction of the process $X$ and from Lemma \ref{lem:L1} it follows that
\begin{align}\label{eq:Dynkin_x_1}
    I &= \frac{\mE_x\big[ (1-e^{-\lambda R_1}) u(X_{R_1})\big]}{\mE_x R_1} \nonumber \\
    &= \nu(x,D) \,\mE_x\big[ (1-e^{-\lambda R_1}) u(X_{R_1})\big] \nonumber \\
    &= \nu(x,D) \int_0^\infty \ds \int_D \dy \, e^{-\nu(x,D)s} \nu(x,y) (1-e^{-\lambda s})u(y) \nonumber \\
    &= \int_0^\infty \nu(x,D)e^{-\nu(x,D)s}(1-e^{-\lambda s})\,\ds \int_D \nu(x,y)u(y)\,\dy \nonumber \\
    &= \frac{\lambda}{\nu(x,D)+\lambda} \widehat{\nu}u(x).
\end{align}
Similarly, 
\begin{align}\label{eq:Dynkin_x_2}
    II &= f(x) \frac{1}{\mE_xR_1} \, \mE_x \int_0^{R_1} e^{-\lambda t}\,\dt  \nonumber \\
    &= f(x) \nu(x,D) \lambda^{-1} \mE_x \big[ 1 - e^{-\lambda R_1}\big] \nonumber \\
    &= f(x) \nu(x,D) \lambda^{-1} \int_0^\infty \nu(x,D)e^{-\nu(x,D)s} \big(1-e^{-\lambda s}\big)\,\ds \nonumber \\
    &= f(x) \frac{\nu(x,D)}{\nu(x,D)+\lambda}.
\end{align}
Combining \eqref{eq:Dynkin_I-II}, \eqref{eq:Dynkin_x_1} and \eqref{eq:Dynkin_x_2} we obtain the equality
\begin{align}
\label{eq:x_neq_rozwinieta}
   \mathcal{D}u(x) = \frac{\lambda}{\nu(x,D)+\lambda} \widehat{\nu}u(x) - f(x) \frac{\nu(x,D)}{\nu(x,D)+\lambda}, \qquad x<0. 
\end{align}
From \eqref{Dynkin_normal_derivative_rozwiniety} the equation \eqref{eq:x_neq_rozwinieta} takes the form
\begin{align}
   \mathcal{D}u(x) &= \frac{\lambda}{\nu(x,D)+\lambda} \big[ \mathcal{D}u(x) + u(x)\nu(x,D) \big] - f(x) \frac{\nu(x,D)}{\nu(x,D)+\lambda},
\end{align}
which is equivalent to the equation $ \mathcal{D}u(x) = \lambda u(x) -f(x)$, $x<0$.
\end{proof}

From Proposition \ref{Boundary_problem_1}, \eqref{eq:Dynkin_singular} and \eqref{Dynkin_normal_derivative} we have the following corollary.

\begin{corollary}\label{cor:resolvent}
    Let $\alpha \in (0,2)$, $\lambda >0$ and $f\in C_b(\R_*)$. Then, $u = U_\lambda f$ is the solution of the following Neumann problem for the fractional Laplacian
    \begin{align*}
        \begin{cases}
            \big[(-\Delta)^{\alpha/2} + \lambda \mathrm{I}\big] u &= f, \quad \mathrm{in }~D, \\
            \phantom{\,\,\,\,\,\,\,\,\,}\big[\mathcal{N}_{\alpha/2} + \lambda \mathrm{I}\big] u &= f, \quad \mathrm{in }~\overline{D}^c.
        \end{cases}
    \end{align*}
\end{corollary}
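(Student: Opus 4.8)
The plan is to deduce the corollary directly from Proposition~\ref{Boundary_problem_1} by inserting the two explicit descriptions of the Dynkin characteristic operator $\mathcal{D}$ recorded in \eqref{eq:Dynkin_singular} and \eqref{Dynkin_normal_derivative}.

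First I would record that, for $f\in C_b(\R_*)$, the potential $u = U_\lambda f$ is well defined, bounded and continuous on $\R_*$: boundedness is the estimate $U_\lambda|f|\le\lambda^{-1}\norm{f}_\infty$ obtained from Lemma~\ref{K_subprobability} (already noted before Proposition~\ref{Boundary_problem_1}), while continuity follows from Proposition~\ref{K_bounded_continuity}. In particular, for $x<0$ one has $\widehat{\nu}\lvert u\rvert(x)\le\norm{u}_\infty\,\nu(x,D)<\infty$, so that $\mathcal{D}u(x)$ is given by the nonlocal normal derivative formula; and for $x>0$ the boundedness of $u$ places it in the class on which the Dynkin characteristic operator of the stable process $Y$ (hence, by the agreement $X_t=Y_t$ for $t<\tau_{B(x,r)}$, of $X$ as well) equals $-(-\Delta)^{\alpha/2}$, cf. \eqref{eq:Dynkin_singular} and Kwa\'{s}nicki~\cite[Lemma~3.3]{MR3613319}; moreover the relevant limit defining $\mathcal{D}u(x)$ exists by Proposition~\ref{Boundary_problem_1}.

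Then the conclusion is a substitution. Proposition~\ref{Boundary_problem_1} gives $\mathcal{D}u(x)=\lambda u(x)-f(x)$ for every $x\neq0$. For $x\in D$, \eqref{eq:Dynkin_singular} turns this into $-(-\Delta)^{\alpha/2}u(x)=\lambda u(x)-f(x)$, i.e. $\bigl[(-\Delta)^{\alpha/2}+\lambda\mathrm{I}\bigr]u(x)=f(x)$. For $x\in\overline{D}^c$, \eqref{Dynkin_normal_derivative} turns it into $-\mathcal{N}_{\alpha/2}u(x)=\lambda u(x)-f(x)$, i.e. $\bigl[\mathcal{N}_{\alpha/2}+\lambda\mathrm{I}\bigr]u(x)=f(x)$. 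Together these are precisely the announced Neumann system.

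The argument is essentially bookkeeping once Proposition~\ref{Boundary_problem_1} is in hand; the only step that needs a word of care is checking that $u=U_\lambda f$ lies in the domains on which $\mathcal{D}$ admits the representations \eqref{eq:Dynkin_singular} and \eqref{Dynkin_normal_derivative}, and this reduces, as above, to the boundedness and continuity of $u$ on $\R_*$. No uniqueness statement is asserted, so nothing beyond exhibiting $u=U_\lambda f$ as a pointwise solution is required.
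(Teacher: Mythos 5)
Your proposal is correct and matches the paper's argument, which simply invokes Proposition~\ref{Boundary_problem_1} together with the identifications \eqref{eq:Dynkin_singular} and \eqref{Dynkin_normal_derivative} of the Dynkin characteristic operator on $D$ and $\overline{D}^c$ respectively. The additional checks you include (boundedness and continuity of $u=U_\lambda f$, finiteness of $\widehat{\nu}\lvert u\rvert$ for $x<0$) are a sensible elaboration of the same route rather than a different approach.
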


\subsection{The \texorpdfstring{$0$}%
     {}-potential}

In this section, we assume that $f\in C_c(\R_*)$, i.e. $f$ is a bounded continuous function with compact support. We define the \emph{$0$-potential}  or the \emph{Green operator} of the semigroup $K = (K_t)_{t\geq 0}$ by
\begin{align}
    Gf(x) := \mE_x \int_0^\infty f(X_t)\,\dt = \int_0^\infty K_tf(x)\,\dt, \qquad x\neq 0.
\end{align}

\begin{lemma}\label{green_operator_finitness}
    If $\alpha\in (0,2)$ and $\beta(\alpha-\beta-1)>0$, then $Gh_{\beta-\alpha} \lesssim  h_\beta$ on $\mathbb R^*$.
\end{lemma}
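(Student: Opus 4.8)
The plan is to combine the excessive-function bound $K_t h_{\alpha-1}\le h_{\alpha-1}$ (Proposition~\ref{theorem_excessive_function} and Corollary~\ref{excesive_function_beta}) with the pointwise generator identity of Proposition~\ref{gen_inequality} via a resolvent-type argument. Concretely, fix $\beta$ with $\beta(\alpha-\beta-1)>0$ and write $g:=h_{\beta-\alpha}$ and $\mathcal C:=\mathcal C(\alpha,\beta,\cdot)$. By Proposition~\ref{gen_inequality} the function $h_\beta$ satisfies, pointwise on $\mathbb R^*$,
\[
\lim_{t\to 0^+}\frac{h_\beta(x)-K_t h_\beta(x)}{t}=\mathcal A_{1,\alpha}\,\mathcal C(\alpha,\beta,x)\,h_\beta(x)|x|^{-\alpha},
\]
and since $\beta(\alpha-\beta-1)>0$ we have $\mathcal C(\alpha,\beta,x)>0$; moreover by scaling (Lemma~\ref{K_t_scaling}) and the scaling of $h_\beta$ one checks $\mathcal C(\alpha,\beta,x)$ takes only the two values $\mathcal C(\alpha,\beta,\pm1)$, so it is bounded above and below by positive constants. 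Let $c:=\mathcal A_{1,\alpha}\bigl(\mathcal C(\alpha,\beta,+1)\wedge\mathcal C(\alpha,\beta,-1)\bigr)>0$.

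The key step is to deduce from the infinitesimal inequality the integrated bound $G\!\left(h_\beta h_{-\alpha}\right)\le c^{-1} h_\beta$, i.e. $G\, h_{\beta-\alpha}\le c^{-1} h_\beta$, which is exactly the claim up to the harmless constant $\mathcal A_{1,\alpha}\mathcal C$. First I would record that, by Corollary~\ref{excesive_function_beta}, $h_\beta$ is excessive for $K$, so $t\mapsto K_t h_\beta(x)$ is nonincreasing with $K_t h_\beta(x)\to h_\beta(x)$ as $t\to0^+$; hence $h_\beta(x)-K_T h_\beta(x)=\int_0^T\bigl(-\tfrac{d}{dt}\bigr)K_t h_\beta(x)\,\dt$ provided the map is absolutely continuous, which follows from the semigroup identity $\tfrac{d}{dt}K_t h_\beta=-K_t\bigl(\tfrac{d}{ds}\big|_{s=0}(I-K_s)h_\beta\bigr)$ once we know the generator acts nicely on $h_\beta$; more robustly, I would avoid differentiability and argue directly. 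Using the semigroup property,
\[
h_\beta(x)-K_T h_\beta(x)=\sum_{j=0}^{N-1}\Bigl(K_{jT/N}h_\beta(x)-K_{(j+1)T/N}h_\beta(x)\Bigr)
=\sum_{j=0}^{N-1}K_{jT/N}\bigl(h_\beta-K_{T/N}h_\beta\bigr)(x).
\]
Dividing by $T/N$ and letting $N\to\infty$, monotone/dominated convergence together with the pointwise limit $\frac{1}{s}(h_\beta-K_s h_\beta)\to \mathcal A_{1,\alpha}\mathcal C\,h_\beta h_{-\alpha}\ge c\,h_\beta h_{-\alpha}$ and the uniform lower bound (the integrands $\frac1s K_{jT/N}(h_\beta-K_s h_\beta)$ are nonnegative by excessivity) yield, via Fatou,
\[
h_\beta(x)\ \ge\ h_\beta(x)-\lim_{T\to\infty}K_T h_\beta(x)\ \ge\ \int_0^\infty K_t\bigl(h_\beta-K_s h_\beta\bigr/s\big|_{s\to0}\bigr)(x)\,\dt\ \ge\ c\int_0^\infty K_t\bigl(h_\beta h_{-\alpha}\bigr)(x)\,\dt=c\,G h_{\beta-\alpha}(x).
\]
In particular $Gh_{\beta-\alpha}<\infty$ and $Gh_{\beta-\alpha}\le c^{-1}h_\beta$, which is the asserted estimate $Gh_{\beta-\alpha}\lesssim h_\beta$.

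The main obstacle is making the passage from the pointwise infinitesimal identity to the integrated inequality fully rigorous: one must justify interchanging the limit in the Riemann-sum telescoping with the integral over $t$, and control $\lim_{T\to\infty}K_T h_\beta(x)$ (which is $\ge0$, so dropping it only helps). For the interchange I would use that $s\mapsto \frac1s(h_\beta(y)-K_s h_\beta(y))$ is, by Proposition~\ref{gen_inequality} and the boundedness of $\mathcal C$, bounded above by $C h_\beta(y)h_{-\alpha}(y)$ uniformly for small $s$ — this needs a quantitative version of the convergence in Proposition~\ref{gen_inequality}, which the estimates in Section~\ref{chap_generator} (Corollaries following Lemma~\ref{lem_oszac_calka_z_p^D} and Lemma~\ref{h_sum_from_2}) do supply — and then apply the dominated convergence theorem against the finite measure $K_t(x,\dy)\,\dt$ on $\{t<T\}$, finally letting $T\to\infty$ by monotone convergence since the integrand is nonnegative. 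An alternative, cleaner route avoiding Riemann sums is: apply the $\lambda$-potential identity $U_\lambda\bigl(\tfrac{I-K_s}{s}h_\beta\bigr)\to U_\lambda(\text{generator of }h_\beta)$ and the elementary resolvent relation $h_\beta-\lambda U_\lambda h_\beta=U_\lambda\bigl((I-\lambda^{-1}\text{gen})\cdots\bigr)$ — but since $h_\beta\notin L^2$ and is merely excessive, I would prefer the direct Fatou argument above, which only uses excessivity of $h_\beta$ (Corollary~\ref{excesive_function_beta}), the pointwise generator formula (Proposition~\ref{gen_inequality}), and positivity of $\mathcal C$.
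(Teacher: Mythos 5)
Your approach matches the paper's in all essentials: both rest on the pointwise generator bound of Proposition~\ref{gen_inequality} (giving $\liminf_{s\to 0^+}\frac{1}{s}(h_\beta-K_sh_\beta)\ge c\,h_{\beta-\alpha}$), on the excessivity of $h_\beta$ from Corollary~\ref{excesive_function_beta} (so that $s\mapsto K_s h_\beta(x)$ is non-increasing and the quotients are nonnegative), and on Fatou's lemma. The difference is purely in how the pointwise bound is integrated. The paper applies Fatou once against the measure $K_s(x,\dy)\,\ds$ on $(0,\infty)\times\R$, obtaining $Gh_{\beta-\alpha}(x)\lesssim\liminf_{t\to0^+}\frac1t\int_0^\infty[K_sh_\beta(x)-K_{s+t}h_\beta(x)]\,\ds$, and then bounds the right-hand side by $h_\beta(x)+K_t h_\beta(x)$ using the partition $\sum_{i\ge1}\int_{(i-1)t}^{it}$ together with the monotonicity of $s\mapsto K_sh_\beta(x)$ and an infinite telescoping sum. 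You instead write the equal-step telescoping $h_\beta-K_Th_\beta=\sum_j K_{jT/N}(h_\beta-K_{T/N}h_\beta)$ and try to pass $N\to\infty$.

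That Riemann-sum passage is the soft spot, and the repair you propose is circular: you want to dominate $\frac{1}{s}(h_\beta-K_sh_\beta)$ by $Ch_\beta h_{-\alpha}$ and then use dominated convergence against $K_t(x,\dy)\,\dt$ on $\{t<T\}$, but the dominating function integrates to $C\int_0^T K_t h_{\beta-\alpha}(x)\,\dt$, and the finiteness of that integral is precisely what the lemma is establishing. The worry is, however, unnecessary — Fatou alone closes the argument because the quotients are nonnegative and you only need a one-sided inequality. A clean replacement for your step, which avoids Riemann sums and any domination, is: for fixed $T>0$ and $0<s\le T$,
\[
\frac1s\int_0^T K_t\bigl(h_\beta-K_sh_\beta\bigr)(x)\,\dt
=\frac1s\int_0^s K_th_\beta(x)\,\dt-\frac1s\int_T^{T+s}K_th_\beta(x)\,\dt
\le h_\beta(x),
\]
(all integrals finite since $K_th_\beta\le h_\beta<\infty$ and the ranges are bounded); then Fatou in $s\to0^+$ gives $c\int_0^T K_t h_{\beta-\alpha}(x)\,\dt\le h_\beta(x)$, and monotone convergence in $T\uparrow\infty$ finishes. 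This is equivalent in spirit to the paper's infinite telescoping argument, just written with the truncation at $T$ in place of the monotonicity bound over $\sum_i\int_{(i-1)t}^{it}$.
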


\begin{proof}
    Let $x\neq 0$. Recall that $h_\beta(x) = |x|^\beta$. From Proposition \ref{gen_inequality},
    \begin{align}\label{eq:0potential_1}
        \lim_{t\to 0^+} \frac{h_\beta(y) - K_th_\beta(y)}{t} = \mathcal{A}_{1,\alpha} \mathcal{C}(\alpha,\beta,y) |y|^{\beta-\alpha} \geq \mathcal{C}_1(\alpha,\beta) |y|^{\beta-\alpha}, \qquad y\neq 0,
    \end{align}
where $\mathcal{C}_1(\alpha,\beta) := \mathcal{A}_{1,\alpha} \big[\alpha^{-1} - \mathfrak{B}(\beta+1, \alpha-\beta)\big]$. For  $s,t>0$, from Corollary \ref{excesive_function_beta}, $h_\beta - K_th_\beta \geq 0$ and $K_s\big[h_\beta - K_th_\beta\big](x) \geq 0$. 
Hence, from \eqref{eq:0potential_1} and from Fatou's lemma,
\begin{align}\label{eq:0potential_3}
    G|x|^{\beta-\alpha} &= \int_0^\infty \int_\R K_s(x,\dy)|y|^{\beta-\alpha}\,\ds \nonumber \\
    &\leq \big(\mathcal{C}_1(\alpha,\beta)\big)^{-1} \int_0^\infty  \int_\R K_s(x,\dy) \lim_{t\to 0^+} \frac{h_\beta(y) - K_th_\beta(y)}{t}\,\ds \nonumber \\
    &\leq \big(\mathcal{C}_1(\alpha,\beta)\big)^{-1} \liminf_{t\to 0^+}\int_0^\infty  K_s\left[  \frac{h_\beta - K_th_\beta}{t}\right](x)\,\ds \nonumber \\
    &\approx \liminf_{t\to 0^+} \frac{1}{t} \int_0^\infty \big[ K_sh_\beta(x) - K_{s+t}h_\beta(x)\big]\,\ds \nonumber \\
    &=\liminf_{t\to 0^+} \frac{1}{t} \sum_{i=1}^\infty \int_{(i-1)t}^{it} \big[ K_sh_\beta(x) - K_{s+t}h_\beta(x)\big]\,\ds.
\end{align}
Again, from Corollary \ref{excesive_function_beta}, for $s,t>0$, $K_{t+s}h_\beta(x) = K_s\big(K_th_\beta\big)(x) \leq K_sh_\beta(x)$, hence the function $s\mapsto K_sh_\beta(x)$ is non-increasing. Therefore, from \eqref{eq:0potential_3},
\begin{align}\label{eq:0potential_4}
    G|x|^{\beta-\alpha} &\lesssim \liminf_{t\to 0^+} \frac{1}{t} \sum_{i=1}^\infty \int_{(i-1)t}^{it} \big[ K_{(i-1)t}h_\beta(x) - K_{(i+1)t}h_\beta(x)\big]\,\ds \nonumber \\
    &= \liminf_{t\to 0^+}  \sum_{i=1}^\infty \big[ K_{(i-1)t}h_\beta(x) - K_{(i+1)t}h_\beta(x)\big].
\end{align}
Note that for every $n\in\mathbb{N}$,
\begin{align*}
    h_\beta(x) + K_th_\beta(x) &= \sum_{i=1}^n \big[ K_{(i-1)t}h_\beta(x) - K_{(i+1)t}h_\beta(x)\big] + K_{nt}h_\beta(x) + K_{(n+1)t}h_\beta(x),
\end{align*}
so
\begin{align}\label{eq:0potential_5}
    h_\beta(x) + K_th_\beta(x) \geq \sum_{i=1}^\infty \big[ K_{(i-1)t}h_\beta(x) - K_{(i+1)t}h_\beta(x)\big].
\end{align}
From \eqref{eq:0potential_4}, \eqref{eq:0potential_5} and from Proposition \ref{gen_inequality},
\[
    G|x|^{\beta-\alpha} \lesssim \liminf_{t\to 0^+} \big[ h_\beta(x) + K_th_\beta(x)\big] = 2h_\beta(x) <\infty. \qedhere
\]
\end{proof}

\begin{corollary}\label{G_cor_f}
    Let $x\neq 0$, $\alpha\in (0,2)$ and $\beta(\alpha-\beta-1)>0$. If $f$ is a function such that $|f(x)|\lesssim  |x|^{\beta-\alpha}$, then $G|f|(x)<\infty$. In particular,
    \begin{itemize}
        \item $G(1+|x|)^{-1-\kappa}<\infty$ for every $\kappa>0$,        
        \item if $f$ is a bounded and compactly supported function on $\R_*$, then $G|f|(x)<\infty$.
    \end{itemize}
\end{corollary}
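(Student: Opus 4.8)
The plan is to deduce Corollary~\ref{G_cor_f} directly from Lemma~\ref{green_operator_finitness} together with the monotonicity of the integral operators $K_t$. The key point is that the hypothesis $\beta(\alpha-\beta-1)>0$ forces the exponent $\beta-\alpha$ to lie in the range where $h_{\beta-\alpha}$ is $G$-integrable, and then everything else is a pointwise domination argument.

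First I would fix $x\neq 0$ and observe that, since $|f(x)|\lesssim |x|^{\beta-\alpha}=h_{\beta-\alpha}(x)$, there is a constant $C$ such that $|f|\le C\,h_{\beta-\alpha}$ pointwise on $\R_*$. Because each $K_t$ is a positive (monotone, positively homogeneous) integral operator, this gives $K_t|f|(x)\le C\,K_th_{\beta-\alpha}(x)$ for every $t>0$, and integrating in $t\ge 0$ yields
\[
G|f|(x)=\int_0^\infty K_t|f|(x)\,\dt \le C\int_0^\infty K_th_{\beta-\alpha}(x)\,\dt = C\,Gh_{\beta-\alpha}(x).
\]
By Lemma~\ref{green_operator_finitness}, $Gh_{\beta-\alpha}(x)\lesssim h_\beta(x)=|x|^\beta<\infty$, so $G|f|(x)<\infty$, which is the main assertion.

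For the first bullet, given $\kappa>0$ I would choose $\beta$ so that $\beta-\alpha=-1-\kappa$, i.e.\ $\beta=\alpha-1-\kappa$, and check that this $\beta$ satisfies $\beta(\alpha-\beta-1)>0$: indeed $\alpha-\beta-1=\kappa>0$, so $\beta(\alpha-\beta-1)=\kappa(\alpha-1-\kappa)$, which is positive precisely when $0<\kappa<\alpha-1$. If $\alpha\le 1$ or $\kappa$ is too large, one instead picks any $\beta$ with $\beta(\alpha-\beta-1)>0$ for which $\beta-\alpha\le -1-\kappa$ wherever $|x|$ is large and $\beta-\alpha\ge -1-\kappa$ is not needed near $0$; more cleanly, note $(1+|x|)^{-1-\kappa}\lesssim 1\wedge|x|^{-1-\kappa}$, and split $\R_*$ into $\{|x|\le 1\}$ and $\{|x|>1\}$, dominating $(1+|x|)^{-1-\kappa}$ on the bounded part by a multiple of $h_{\beta-\alpha}$ with a $\beta$ making $\beta-\alpha$ slightly negative, and on the unbounded part by $h_{\beta'-\alpha}$ with $\beta'$ making $\beta'-\alpha$ very negative; additivity of $G$ then finishes it. For the second bullet, a bounded compactly supported $f$ on $\R_*$ satisfies $|f|\le \|f\|_\infty\ind_{\supp f}$, and since $\supp f$ is a compact subset of $\R_*$ it is contained in an annulus $\{a\le|x|\le b\}$ with $0<a\le b<\infty$; there $\ind_{\supp f}(x)\le (b/a)^{|\beta-\alpha|}\,h_{\beta-\alpha}(x)$ for any admissible $\beta$, so $|f|\lesssim h_{\beta-\alpha}$ and the main assertion applies.

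I expect no serious obstacle here; the only mild care needed is in choosing, for each target decay rate, an exponent $\beta$ in the admissible range $\{\beta(\alpha-\beta-1)>0\}$ — equivalently $\beta\in[\alpha-1,0]$ for $\alpha\in(0,1]$ and $\beta\in[0,\alpha-1]$ for $\alpha\in[1,2)$ (see the remark after Corollary~\ref{excesive_function_beta}) — and, when a single exponent does not dominate the target function both near $0$ and near $\infty$, splitting $\R_*$ into a bounded and an unbounded piece and using the additivity and monotonicity of $G$ on the two pieces separately. All the analytic content is already contained in Lemma~\ref{green_operator_finitness}.
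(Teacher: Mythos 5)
Your main assertion and your proof of it are exactly right, and you correctly identify Lemma~\ref{green_operator_finitness} as doing all the analytic work; this is the paper's route too, and the second bullet (dominating the indicator of an annulus by a multiple of $h_{\beta-\alpha}$, or more simply by $(1+|x|)^{-1-\kappa}$) is also fine, modulo the cosmetic point that the constant should be $b^{\alpha-\beta}$ rather than $(b/a)^{|\beta-\alpha|}$.

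The first bullet, however, contains a sign error that would sink the argument if executed literally. You correctly note that $\beta=\alpha-1-\kappa$ leaves the admissible range when $\alpha\le 1$ or $\kappa\ge\alpha-1$, but your fix imposes the \emph{wrong} inequality. To dominate $(1+|x|)^{-1-\kappa}\approx|x|^{-1-\kappa}$ for $|x|>1$ by $|x|^{\beta-\alpha}$, you need $\beta-\alpha\ge -1-\kappa$, i.e.\ $\beta\ge\alpha-1-\kappa$; you write $\beta-\alpha\le -1-\kappa$, and then ask for $\beta'-\alpha$ ``very negative'' on the unbounded part. This is backwards: a very negative exponent decays \emph{faster} than $|x|^{-1-\kappa}$ and hence would sit below it, not above it; moreover, $\beta-\alpha$ is confined to $(-\alpha,-1)$ for $\alpha>1$ and $(-1,-\alpha)$ for $\alpha<1$, so it cannot be made arbitrarily negative anyway. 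The correct (and simpler) observation, which is the one the paper makes, is that the constraint $\beta\ge\alpha-1-\kappa$ is always satisfiable by taking $\beta$ near the top $\alpha-1$ of the admissible interval (any $\beta\in(\alpha-1,0)$ works automatically when $\alpha<1$, and $\beta\in(\alpha-1-\kappa,\alpha-1)\cap(0,\alpha-1)$ is nonempty when $\alpha>1$). With such a $\beta$ one has the global pointwise bound $(1+|x|)^{-1-\kappa}\le(1+|x|)^{\beta-\alpha}\le|x|^{\beta-\alpha}$ on all of $\R_*$ — the first inequality because $\beta-\alpha\ge -1-\kappa$ and $1+|x|\ge 1$, the second because $\beta-\alpha<0$ and $1+|x|\ge|x|$ — so no splitting into bounded and unbounded pieces is needed at all.
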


\begin{proof}
We will prove that $G(1+|x|)^{-1-\kappa}<\infty$ for every $\kappa>0$.
We observe that for every $\alpha\in (0,1)\cup (1,2)$ and $\kappa>0$ there exists $\beta$ such that $\beta(\alpha-\beta-1)>0$ and $\beta\geq \alpha-1-\kappa.$ For such $\beta$, we have
\[
  (1+|x|)^{-1-\kappa} \leq (1+|x|)^{\beta-\alpha} \leq |x|^{\beta-\alpha}, \qquad x\neq 0,
\]
and the result follows from Lemma \ref{green_operator_finitness}.
The other statements of the corollary are obvious.
\end{proof}
 
\begin{proposition}\label{Boundary_problem_2}
    Let $\alpha\in (0,1)\cup (1,2)$, $f\in C_c(\R_*)$. Then, $u := G f$ solves   $\mathcal{D} u = -f$ in $\R_*$.
\end{proposition}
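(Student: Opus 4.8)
The plan is to mimic the proof of Proposition~\ref{Boundary_problem_1}, replacing the $\lambda$-potential $U_\lambda f$ by the $0$-potential $u=Gf$. First I would note that $u=Gf$ is well-defined and finite on $\R_*$: since $f\in C_c(\R_*)$, Corollary~\ref{G_cor_f} gives $G|f|<\infty$ on $\R_*$. Moreover, by Corollary~\ref{G_cor_f} applied to $h_{\beta-\alpha}$ with suitable $\beta$, $u$ is bounded by a multiple of $(1+|x|)^{\beta-\alpha}$ locally, hence locally bounded away from $0$, and by Proposition~\ref{K_bounded_continuity} together with dominated convergence (using the bound $K_t|f|\le K_t\mathbf 1\,\|f\|_\infty$ is too crude for the integral over $(0,\infty)$; instead one uses the domination $K_tf(x)\lesssim$ an integrable-in-$t$ majorant coming from Lemma~\ref{green_operator_finitness}) the function $u$ is continuous on $\R_*$.

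Next I would invoke Dynkin's formula. From \cite[Theorem 5.1]{Dynkin1965}, or rather its $\lambda=0$ version \cite[Theorem 5.1 / Chapter V]{Dynkin1965}, applied to the right-continuous (hence strongly measurable) process $X$ and the bounded open ball $B(x,r)$ with $\overline{B(x,r)}\subset\R_*$,
\begin{align*}
    \mE_x\big[u(X_{\tau_{B(x,r)}})\big] = u(x) - \mE_x\int_0^{\tau_{B(x,r)}} f(X_t)\,\dt.
\end{align*}
Dividing by $\mE_x\tau_{B(x,r)}$ and letting $r\to 0^+$, we get
\[
\mathcal{D}u(x) = -\lim_{r\to 0^+}\frac{1}{\mE_x\tau_{B(x,r)}}\mE_x\int_0^{\tau_{B(x,r)}} f(X_t)\,\dt.
\]
For $x>0$ I would run exactly the computation of the term $II$ in the proof of Proposition~\ref{Boundary_problem_1}: use $X_t=Y_t$ for $t<\tau_{B(x,r)}$, Fubini, the scaling identity \eqref{eq:pair_scling}, shift invariance of $Y$, the substitution $s=r^{-\alpha}t$, and the continuity of $f$ at $x$, to conclude that the limit equals $f(x)$; here no $e^{-\lambda sr^\alpha}$ factor appears, which only simplifies matters. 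Hence $\mathcal{D}u(x) = -f(x)$ for $x>0$. For $x<0$ I would use the construction of $X$ and Lemma~\ref{lem:L1}: $\tau_{B(x,r)}=R_1$ is exponential with mean $1/\nu(x,D)$ and $X_{R_1}$ has law $k(x,\cdot)$, so
\begin{align*}
    \mE_x\int_0^{R_1} f(X_t)\,\dt = \mE_x\big[ f(x)R_1\big] = \frac{f(x)}{\nu(x,D)} = f(x)\,\mE_xR_1,
\end{align*}
because $X_t=x$ for $t<R_1$; dividing by $\mE_xR_1$ gives $\mathcal{D}u(x)=-f(x)$ for $x<0$ as well.

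The main obstacle is the justification of passing from Dynkin's formula for the $\lambda=0$ $0$-potential to the limit --- in particular, verifying the hypotheses of \cite[Theorem 5.1]{Dynkin1965} for $u=Gf$ when $X$ may have finite lifetime (the case $\alpha\in(1,2)$) and $u$ need not vanish at $0$ or be globally bounded, and making the dominated-convergence arguments for the $r\to0^+$ limit rigorous near $x$ (one needs a locally uniform in $y$ bound on $u(y)$ and on $\mE_y^Y\tau_{B(y,r)}$, which for $x<0$ is immediate and for $x>0$ follows from the boundedness of $u$ on compact subsets of $D$ established in the first paragraph together with the scaling of $\tau_{B(0,1)}$). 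Once these integrability/continuity points are in place, the rest is a direct transcription of the computations already carried out for the $\lambda$-potentials.
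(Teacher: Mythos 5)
Your proposal is correct and follows essentially the same route as the paper: finiteness of $u=Gf$ via Corollary~\ref{G_cor_f}, Dynkin's formula from \cite[Theorem 5.1]{Dynkin1965}, and then for $x>0$ the same scaling-plus-dominated-convergence computation (the paper subtracts $f(x)$ under the integral before passing to the limit, which is equivalent to your ``$II$ with $\lambda=0$'' observation), and for $x<0$ the elementary fact that $X_t\equiv x$ on $[0,R_1)$. The technical worries you raise at the end (boundedness and continuity of $u$, finiteness of the lifetime) are not invoked by the paper's argument, since once $\lambda=0$ there is no analogue of the term $I$ from Proposition~\ref{Boundary_problem_1} and only continuity of $f$, not of $u$, enters the $r\to0^+$ limit.
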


\begin{proof}
    From Corollary \ref{G_cor_f}, $u = Gf$ is well-defined. From \cite[Theorem 5.1]{Dynkin1965} it follows that
    \begin{align}\label{Dynikin_2}
    \mE_x u(X_{\tau_{B(x,r)}}) = u(x) - \mE_x \int_0^{\tau_{B(x,r)}} f(X_t)\,\dt.
    \end{align}
    Hence, for $x> 0$,
    \begin{align}
        \mathcal{D} u(x) &= -\lim_{r\to 0^+} \frac{1}{\mE_x\tau_{B(x,r)}} \mE_x \int_0^{\tau_{B(x,r)}} f(X_t)\,\dt \nonumber \\
        &= -\lim_{r\to 0^+} \frac{1}{\mE_x\tau_{B(x,r)}} \left[ \mE_x \int_0^{\tau_{B(x,r)}} \big( f(X_t) - f(x)\big) \,\dt + f(x)\mE_x \tau_{B(x,r)}\right] \nonumber\\
        &= -f(x) - \lim_{r\to 0^+} \frac{1}{\mE_x\tau_{B(x,r)}} \mE_x \int_0^{\tau_{B(x,r)}} \big( f(X_t) - f(x)\big) \,\dt \nonumber\\
        &= -f(x) - \lim_{r\to 0^+} \frac{1}{\mE^Y_x\tau_{B(x,r)}} \mE^Y_x \int_0^{\tau_{B(x,r)}} \big( f(Y_t) - f(x)\big) \,\dt.
    \end{align}
    Note that from Fubini's theorem,
    \begin{align*}
        \mathcal{D} u(x) &= -f(x) - \lim_{r\to 0^+} \frac{1}{\mE^Y_x\tau_{B(x,r)}} \int_0^{\infty} \mE^Y_x \big[\big( f(Y_t) - f(x)\big) \ind_{[t,\infty)}(\tau_{B(x,r)})\big] \,\dt \\
        &= -f(x) - \lim_{r\to 0^+} \frac{1}{ \mE^Y_0\tau_{B(0,r)}} \int_0^{\infty} \mE^Y_0 \big[\big( f(x+Y_t) - f(x)\big) \ind_{[t,\infty)}(\tau_{B(0,r)})\big] \,\dt.
    \end{align*}
    From the scaling property of $(Y_t, \tau_{B(x,r)})$ with respect to $\mP_0^Y$ (see \eqref{eq:pair_scling}), Fubini's theorem and by the substitution $s=r^{-\alpha}t$ we get
    \begin{align*}
        \mathcal{D}u(x) &= -f(x) - \lim_{r\to 0^+} \frac{1}{ r^\alpha \mE^Y_0\tau_{B(0,1)}} \int_0^{\infty} \mE^Y_0 \big[\big( f(x+rY_{r^{-\alpha}t}) - f(x)\big) \ind_{[t,\infty)}(r^\alpha \tau_{B(0,1)})\big] \,\dt \\
        &= -f(x) - \lim_{r\to 0^+} \frac{1}{ r^\alpha \mE^Y_0\tau_{B(0,1)}} \mE^Y_0 \int_0^{r^\alpha \tau_{B(0,1)}} \big( f(x+rY_{r^{-\alpha}t}) - f(x)\big) \,\dt \\
        &= -f(x) - \lim_{r\to 0^+} \frac{1}{\mE^Y_0\tau_{B(0,1)}} \mE^Y_0 \int_0^{\tau_{B(0,1)}} \big( f(x+rY_{s}) - f(x)\big) \,\ds.
    \end{align*}
Hence, from the dominated convergence theorem,
\begin{align*}
\mathcal{D}u(x) = -f(x) - \frac{1}{\mE^Y_0\tau_{B(0,1)}} \mE^Y_0 \int_0^{\tau_{B(0,1)}} \lim_{r\to 0^+} \big( f(x+rY_{s}) - f(x)\big) \,\ds = -f(x).
\end{align*}

Now, consider $x<0$. From \eqref{Dynikin_2} and from the construction of the process $X$, it is obvious that 
\[
    \mathcal{D}u(x) = -\lim_{r\to 0^+} \frac{1}{\mE_x \tau_{B(x,r)}} \mE_x \int_0^{\tau_{B(x,r)}} f(X_t)\,\dt = -f(x). \qedhere
\]
\end{proof}

By Lemma \ref{green_operator_finitness} and the proof of Corollary \ref{cor:K_t_1_h_beta}, for $f\in C_c(\R_*)$, $\widehat{\nu} |Gf|<\infty$. 

\begin{proof}[Proof of Theorem~\ref{Neumann_problem}]
The result follows by Proposition \ref{Boundary_problem_2}, \eqref{eq:Dynkin_singular}, and \eqref{Dynkin_normal_derivative}.
\end{proof}


\end{document}